\setlist[enumerate, 1]{label=\upshape (\arabic*)}
\setlist{leftmargin=2.5em}
\numberwithin{equation}{section}
\newtheorem{thm}{Theorem}[section]
\newaliascnt{prp}{thm}
\newtheorem{prp}[prp]{Proposition}
\newaliascnt{lemma}{thm}
\newtheorem{lemma}[lemma]{Lemma}
\newaliascnt{claim}{thm}
\newaliascnt{cor}{thm}
\newtheorem{cor}[cor]{Corollary}
\theoremstyle{definition}
\newaliascnt{dfn}{thm}
\newtheorem{dfn}[dfn]{Definition}
\newaliascnt{xpl}{thm}
\newtheorem{xpl}[xpl]{Example}
\newaliascnt{rmk}{thm}
\newtheorem{rmk}[rmk]{Remark}
\newaliascnt{obs}{thm}
\newtheorem{obs}[obs]{Observation}
\newcommand{\sqin}{\mathrel{\vphantom{\sqsubset}\text{\mathsurround=0pt\ooalign{$\sqsubset$\cr$-$\cr}}}}
\newcommand{\sqni}{\mathrel{\vphantom{\sqsupset}\text{\mathsurround=0pt\ooalign{$\sqsupset$\cr$-$\cr}}}}
\newcommand{\sqiin}{\mathrel{\vphantom{\sqsupset}\text{\mathsurround=0pt\ooalign{$\sqsubset$\cr$=$\cr}}}}
\newcommand{\sqnii}{\mathrel{\vphantom{\sqsupset}\text{\mathsurround=0pt\ooalign{$\sqsupset$\cr$=$\cr}}}}
\providecommand*{\Dashv}{%
 \mathrel{%
 \mathpalette\@Dashv\vDash
 }%
}
\newcommand*{\@Dashv}[2]{%
 \reflectbox{$\m@th#1#2$}%
}
\newcommand*{\op}{\mathrm{op}}
\newcommand*{\maps}{\colon}
\newcommand*{\PP}{\mathbb{P}}
\newcommand*{\QQ}{\mathbb{Q}}
\newcommand*{\RR}{\mathbb{R}}
\newcommand*{\im}[1]{[#1]}
\newcommand*{\Spec}[1]{\mathsf{S}#1}
\newcommand*{\diam}{\operatorname{diam}}
\newcommand*{\Fraisse}{Fra\"{i}ss\'e }
\definecolor{myblue}{rgb}{0,0.2,0.7}
\definecolor{mygreen}{rgb}{0,0.5,0}
\author{Adam Barto\v{s}}
\thanks{Adam Barto\v{s} was supported by the GA\v{C}R project EXPRO 20-31529X and RVO: 67985840.}
\address{Institute of Mathematics of the Czech Academy of Sciences, \v{Z}itn\'a 25, Prague}
\email{bartos@math.cas.cz}
\urladdr{https://math.cas.cz/~bartos/}
\author{Tristan Bice}
\thanks{Tristan Bice was supported by the GA\v{C}R project 22-07833K and RVO: 67985840.}
\address{Institute of Mathematics of the Czech Academy of Sciences, \v{Z}itn\'a 25, Prague}
\email{bice@math.cas.cz}
\author{Alessandro Vignati}
\thanks{Alessandro Vignati was partially supported by an Emergence en recherche grant from Universit\'e Paris Cit\'e}
\address{
Institut de Math\'ematiques de Jussieu - Paris Rive Gauche (IMJ-PRG)\\
Universit\'e Paris Cit\'e, Institut Universitaire de France\\
B\^atiment Sophie Germain\\
8 Place Aur\'elie Nemours \\ 75013 Paris, France}
\email{ale.vignati@gmail.com}
\urladdr{http://www.automorph.net/avignati}
\keywords{Graphs, Posets, Compacta, Continua, Fra\"{i}ss\'e limits}
\subjclass[2020]{05C62, 54D80, 54F15}
\title[Generic Compacta from Graphs]{Generic Compacta from Relations between Finite Graphs: Theory building and examples}
\begin{document}

\begin{abstract}
In recent work, the authors developed a simple method of constructing topological spaces from certain partially ordered sets -- those coming from
sequences of relations between finite sets.
This method associates a given poset with its spectrum, which is a compact $\mathsf T_1$ topological space.

In this paper, we focus on the case where such finite sets have a graph structure and the relations belong to a given graph category. We relate topological properties of the spectrum to combinatorial properties of the graph categories involved. We then utilise this to exhibit elementary combinatorial constructions of well-known continua as \Fraisse limits of finite graphs in categories with relational morphisms.
\end{abstract}

\maketitle

\setcounter{tocdepth}{2}
\tableofcontents

\section{Introduction}

Almost since the birth of topology in the early twentieth century, people have been interested in constructing compact topological spaces from finite combinatorial structures. For example, Alexandroff \cite{Alexandroff1928} showed how to construct Hausdorff compacta from sequences of finite abstract simplicial complexes, which inspired later constructions of Flachsmeyer \cite{Flachsmeyer1961} and Kopperman et al. \cite{KoppermanTkachukWilson2003} from finite posets. More recently, we have constructions from functions between finite graphs due to Irwin and Solecki \cite{IrwinSolecki2006} and D\polhk{e}bski and Tymchatyn \cite{DebskiTymchatyn2018}. The Irwin--Solecki construction is aimed at spaces like the pseudoarc which arise naturally as projective \Fraisse limits and has since been applied to other well-known continua like the Lelek fan \cite{BartosovaKwiatkowska2015}, the Menger curve \cite{PanagiotopoulosSolecki2022}, fences \cite{BassoCamerlo.Fence}, Wa\.{z}ewski dendrites \cite{charatonik2024projectivefraisselimitstrees,codenotti2022projective}, and a previously unknown indecomposable one-dimensional hereditarily unicoherent continuum \cite{charatonik2022projective}.
One common feature of these constructions is that the space is not obtained directly from the finite structures, but rather as a quotient of an intermediate `pre-space'. This ends up putting some distance between the combinatorics and topology. Also, unlike algebraic quotients, topological quotients are generally not so well-behaved and are usually avoided if at all possible.

In a somewhat different direction, constructions of spaces like the pseudoarc and pseudosolenoids have recently been obtained in \cite{BartosKubis2022} as certain approximate \Fraisse limits. This has the advantage of not going through any kind of pre-space, while the general setting of metric enriched categories can also easily handle richer structures like the Gurari\u{\i} space and other objects naturally appearing in functional analysis (see \cite{Kubis2012}). However, here the structures are already more analytic and less combinatorial from the outset. There is also inevitably some technical machinery that has to be developed in the approximate setting.

In the present paper we combine the best parts of both approaches, while avoiding the need to deal with quotient spaces or metric-enriched categories. Specifically, we construct compacta directly (i.e. without a pre-space) from finite graphs but this time with more general morphisms between them -- relations instead of functions. These relations could be viewed as approximating functions, but as the structures involved are finite, these approximations can be handled in a purely combinatorial way.
In this approach, the relationship between the graphs and the resulting spaces is also more transparent. Specifically, the graphs get represented as a coinitial (w.r.t. refinement) family of open covers of the space -- each vertex corresponds to an open set of the cover while the edge relation indicates precisely when the sets overlap. The relations between the graphs correspond exactly to the inclusion relation between the associated open covers.

The starting point of this paper is the construction of \cite{BartosBiceV.Compacta}, which is briefly outlined for convenience in \autoref{S.preliminaries}, and specifically in \autoref{ss:spectrum}. In short, in \cite{BartosBiceV.Compacta} we associated to a poset $\PP$ a topological space $\Spec{\PP}$, the \emph{spectrum} of $\PP$, by topologising the space of certain subsets of $\PP$ called \emph{selectors}. The spectrum is, as long as the poset is suitable (an $\omega$-poset), a compact $\mathsf T_1$ second-countable space. We then described how combinatorial properties of the poset $\PP$ relate to topological properties of the space $\Spec{\PP}$. Moreover, we studied closed subspaces of the spectrum and, through the development of \emph{refiners} (relations between posets), we coded maps between topological spaces, ultimately obtaining a duality between appropriate categories.


In this article, we focus on the case where the poset arises from a sequence of graphs together with relational morphisms between them. A relation $\sqsupset$ between two graphs $G$ and $H$ generalises a function from $G$ to $H$, viewed as a subset of $H\times G$. Fix a sequence of finite graphs $G_n$ and relations  ${\sqsupset_{n+1}^n}\subseteq G_n\times G_{n+1}$. For $m\leq n$, set ${\sqsupset_n^m}={\sqsupset_{m+1}^m\circ\cdots\circ\sqsupset_n^{n-1}}$, where $\sqsupset_n^n$ is the identity. These graphs and relations, under some extremely mild and reasonable well-behavedness conditions, give rise to a graded $\omega$-poset $\mathbb{P}=\bigsqcup_nG_n$ ordered by $\bigcup_{m\leq n}\sqsupset_n^m$.  Then the machinery of \cite{BartosBiceV.Compacta} applies and we obtain a topological space $\mathsf{S}\mathbb{P}$, the spectrum of $\mathbb{P}$, constructed concretely from the given sequence of graphs and relations.

Two questions interest us: 
\begin{enumerate}
\item\label{q1}If the graphs and relations to belong to a given category of graphs $\mathbf C$, what can we say about the class of spaces obtained this way?
\item\label{q2} Say $\mathbf C$ is rich enough to have a Fra\"iss\'e sequence (i.e. $\mathbf C$ is directed and  has amalgamation). Can we then refer to the \emph{generic} compact space one gets, and if so, can we concretely derive such a space and its properties from the structure of $\mathbf C$?
\end{enumerate}

Most of our results are designed to answer the above questions. Let us streamline them, while summarising the structure of the paper.

In \autoref{S.preliminaries}, we recall the needed background from \cite{BartosBiceV.Compacta} on how to construct the spectrum of a given graded $\omega$-poset. 

In \autoref{S.GraphCategory}, we introduce notable  categories and ideals of morphisms between graphs, and see how these interact with each other. With a minimal requirement represented by edge-preservation (i.e. a relation from a graph $G$ to a graph $H$ should send adjacent vertices to adjacent vertices), there are several additional conditions one can impose to eventually obtain interesting information regarding the associated poset (or rather its spectrum). Particular attention is paid to star-refining morphisms (which ultimately ensure Hausdorffness of the spectrum), and monotone morphisms, especially in the setting of trees.

The main part of the theory is developed in \autoref{s.InducedPoset}. We start by providing several answers to Question~\ref{q1}.  After introducing the key notion of the \emph{induced poset} defined from a sequence of graphs (\autoref{dfn:inducedposet}), we start analysing subsequences and restrictions. This is done in \autoref{ss.subsequences}, after comparing our approach with previous work of \cite{IrwinSolecki2006} and \cite{DebskiTymchatyn2018} in \autoref{ss.threads}.

In \autoref{ss.faithful} we start heavily using the graph structure. We compare order-theoretic properties of the induced poset, graph-theoretic properties of the relations involved, and topological properties of the spectrum (see, for example, Propositions~\ref{HausdorffLimit}--\ref{SequenceForSpace}). In \autoref{ss.connected}, we analyse closed connected subsets of the spectrum in terms of the trace these leave on the associated poset, as well as unicoherence of the spectrum (for spectra of posets induced from sequences of trees).

In \autoref{ss.FraisseSequences} we switch gears and move towards answering Question~\ref{q2}. 
We define Fra\"{i}ss\'e categories and their lax versions. The tension between on-the-nose amalgamation and lax-amalgamation, not present in the function setting, has to be assessed here. We then go into proving key properties of (lax-)Fra\"iss\'e sequences. Notably, we show in \autoref{LaxFraisseSpectra} that under natural assumptions two lax-Fra\"iss\'e sequences give posets with homeomorphic spectra; this allows us to talk about a topological space as \emph{the} Fra\"iss\'e limit of a certain category of graphs.
The main summarizing result, which could be dubbed ``Fra\"iss\'e theorem for categories of graphs and relations'' is \autoref{cor:FraisseLimit}.

In \autoref{ss.clique} we study the clique functor, allowing us to often reduce the technical effort required to show that a given category has amalgamation. A clique in a graph $G$ is a complete subgraph of $G$. From a graph $G$, it is possible to construct the graph of cliques $\mathsf XG$, which partially remembers the graph structure of $G$. The main result of \autoref{ss.clique}, \autoref{cor:cliqueclosed}, asserts that if a category of graphs $\mathbf C$ is sufficiently closed by the $\mathsf X$ operation, then it is enough to check amalgamation in $\mathbf C$ for morphisms which are induced by functions. 

In \autoref{s.Examples} we study specific categories and their Fra\"iss\'e sequences, giving specific answers to Question~\ref{q2}.
The section starts with a summarizing checklist for application of the theory to particular examples.
Each subsection then contains a main theorem characterizing the Fra\"iss\'e limit and (lax-)Fra\"iss\'e sequences in the corresponding category.
We first consider the category $\mathbf{D}$ discrete graphs (giving, unsurprisingly, the Cantor space), and then move to richer categories, actually utilizing relational morphisms, such the category $\mathbf{A}$ of paths with monotone morphisms. We show that this category has amalgamation, and identify the interval $[0,1]$ as the spectrum of the poset induced by any Fra\"iss\'e sequence in it. In \autoref{ss.Fans}, we consider two categories of fans, $\mathbf X$ and $\mathbf L$, obtaining the Cantor and the Lelek fan as generic spaces respectively. A notable result is that $\mathbf{L}$ recovers all smooth fans (i.e. subfans of the Cantor fans) as spectra of sequences (\autoref{SmoothFansRealizable}). Lastly, \autoref{s.Concluding} is dedicated to concluding remarks.

\section{Preliminaries: Relations, graphs, and the spectrum}\label{S.preliminaries}
This introductory section summarises results contained in \cite{BartosBiceV.Compacta}.

\subsection{Relation and graphs}\label{ss.relations}
Graphs, and relations between them (including the edge relation encoded in the definition of a graph) are our main focus. 
\subsubsection{Relations}
We view any ${\sqsupset}\subseteq B\times A$ as a relation `from $A$ to $B$'.
We call $\sqsupset$
\begin{itemize}
\item \emph{a function} if every $a\in A$ is related to exactly one $b\in B$.
\item \emph{surjective} if every $b\in B$ is related to at least one $a\in A$.
\item \emph{injective} if, for every $a\in A$, we have some $b\in B$ which is only related to $a$.
\item \emph{bijective} if $\sqsupset$ is both surjective and injective.
\end{itemize}

These notions of sur/in/bijectivity for relations generalise the usual notions for functions. The prefix `co' will be used to refer to the opposite/inverse relation $\sqsupset^{-1}\ =\ \sqsubset\ \subseteq A\times B$ (where $a\sqsubset b$ means $b\sqsupset a$), e.g. we say $\sqsubset$ is co-injective to mean that $\sqsupset$ is injective. For example, one can note that every co-injective relation is automatically surjective, and the converse also holds for functions.

If ${\sqni}\subseteq C\times B$ and ${\sqsupset}\subseteq B\times A$ are relations, the relation ${\sqni}\circ{\sqsupset}\subseteq C\times A$ is the composition of $\sqni$ and $\sqsupset$, that is, for all $a\in A$ and $c\in C$,
\[
c\sqni\circ\sqsupset a\qquad\Leftrightarrow\qquad\exists b\in B\, (c\sqni b\text{ and } b\sqsupset a).
\]
If ${\sqsupset}\subseteq B\times A$ is a relation and $T\subseteq B$, the \emph{preimage} of $T$ is given by
\begin{align}
\tag{Preimage}T^\sqsupset=\{a\in A:\exists t\in T\ (t\sqsupset a)\}.\\
\intertext{Likewise, the \emph{image} of any $S\subseteq A$ is the preimage of the opposite relation $\sqsubset$, i.e.}
\tag{Image}S^\sqsubset=\{b\in B:\exists s\in S\ (b\sqsupset s)\}.
\end{align}
We say $S\subseteq A$ \emph{refines} $T\subseteq B$ if it is contained in its preimage. The resulting refinement relation will also be denoted by $\sqsubset$, i.e. for any $S\subseteq A$ and $T\subseteq B$,
\[ 
S\sqsubset T\qquad\Leftrightarrow\qquad S\subseteq T^\sqsupset\qquad\Leftrightarrow\qquad\forall s\in S\ \exists t\in T\ (s\sqsubset t).\]
We will be mainly interested in the refinement relation between open covers of a topological space.

\subsubsection{Graphs}
A \emph{graph} is a set $G$ (the set of \emph{vertices}) on which we are given a symmetric reflexive relation $\sqcap$ (the \emph{edge relation}). For example, if $G$ is any family of non-empty sets then $\sqcap$ could be the overlap relation, i.e. where $A\mathrel{\sqcap}B$ if and only if $A\cap B\neq\emptyset$. These overlap graphs are our primary motivating examples and this is the main reason we require the edge relation on graphs to be reflexive.  In this paper, a graph will also always be finite and non-empty unless stated otherwise.

Following standard practice, we often refer to `the graph $G$', leaving the edge relation implicit. By an \emph{edge}, we mean a single element of $\sqcap$, i.e. a pair $(g,h)$ satisfying $g\sqcap h$ -- if $g=h$ then we call $(g,h)$ a \emph{loop}, while if $g\neq h$ then we say that $g$ and $h$ are \emph{adjacent}. The resulting adjacency relation will be denoted by $\sim$, i.e.
\[
\tag{Adjacency}g\sim h\qquad\Leftrightarrow\qquad g\sqcap h\quad\text{and}\quad g\neq h.
\]
For any $g\in G$, we call $g^\sim=g^\sqcap\setminus\{g\}$  the \emph{neighbourhood} of $g$ in $G$. We call $g\in G$ \emph{isolated} if it has no neighbours, and a graph is \emph{discrete} if all its vertices are isolated.

An \emph{induced subgraph} of a graph $G$ is a subset $H\subseteq G$ with the same edge relation just restricted to $H$. When applying graph theoretic terms to subsets of vertices, it is always the induced subgraph structure we are referring to unless otherwise specified. For example, we call $D\subseteq G$ discrete if the induced subgraph $D$ is discrete.

We will work with relations between graphs which behave well with respect to the edge relation.

\begin{dfn}\label{defin:relationsgraphs}
We call a relation ${\sqsupset}\subseteq H\times G$ between graphs $G$ and $H$
\begin{itemize}
\item \emph{edge-preserving} if $h\sqsupset g\sqcap g'\sqsubset h'$ implies $h\sqcap h'$.
\item \emph{edge-surjective} if $h\sqcap h'$ implies $h\sqsupset g\sqcap g'\sqsubset h'$, for some $g,g'\in G$.
\item \emph{edge-witnessing} if $h\sqcap h'$ implies $h\sqsupset g\sqsubset h'$, for some $g\in G$.
\end{itemize}
\end{dfn}

As the edge relation is always reflexive, we immediately see that
\[
\sqsupset\,\text{is edge-witnessing}\ \ \Rightarrow\ \ \sqsupset\,\text{is edge-surjective}\ \ \Rightarrow\ \ \sqsupset\,\text{is surjective}.
\]

\subsection{The spectrum of a poset}\label{ss:spectrum}

If $X$ is a topological space and $B\subseteq\mathsf{P}(X)$ is a (sub)basis of open sets, the inclusion relation between elements of $B$ turns it into a poset. Topological properties of $X$, and combinatorial properties of the basis, are reflected by order theoretic properties of the associated poset.
The main focus of \cite{BartosBiceV.Compacta} is to reverse this process, constructing topological spaces from posets.

For the remaining part of this section, we fix a poset $(\PP,\leq)$. When mentioning $\PP$, the order $\leq$ will be implicit. We view $\geq\ \subseteq\mathbb P\times\mathbb P$ as a relation; we can therefore use the notation introduced in \autoref{ss.relations}, so for example, if $B\subseteq\mathbb P$, the set $B^{\leq}$ equals $\{p\in\mathbb P\mid \exists b\in B\ (b\leq p)\}$.
\begin{dfn}
Let $(\mathbb P,\leq)$ be a poset. Let $B\subseteq \mathbb P$. Then
\begin{itemize}
\item $B$ is a \emph{band} if it is finite and $\mathbb P= B^{\leq}\cup B^\geq$;
\item $B$ is a \emph{cap} if it is refined by a band, i.e. if we have a band $B'\leq B$.
\end{itemize}
\end{dfn}
The collections of bands and caps of $\PP$ will be denoted by $\mathsf B\PP$ and $\mathsf C\PP$ respectively.

Fix a $\mathsf T_1$ topological space $X$, and say that $\PP$ is a basis of non empty open subsets of $X$ ordered by inclusion $\subseteq$. Then every cap of $\PP$ is a cover of $X$, by \cite[Proposition 1.7]{BartosBiceV.Compacta}.  When the converse holds, i.e. when every cover of $X$ by elements of $\PP$ is a cap, $\PP$ is said to be a \emph{cap-basis}. Cap-bases exist for all compact second countable $\mathsf T_1$ topological spaces, and in this case they can even be found by `massaging' a given basis -- see \cite[Proposition 1.11]{BartosBiceV.Compacta}.

The minimal elements of $\mathbb{P}$ will be called \emph{atoms}, i.e. $p\in\mathbb{P}$ is an atom if and only if $p^>=\emptyset$. The poset $\PP$ is said to be:
\begin{itemize}
\item \emph{predetermined} if for every $p\in\PP$ which is not an atom there is $q<p$ such that $q^<=p^\leq$;
\item \emph{Noetherian} if it has no infinite increasing sequences, or, alternatively, if the partial order $\geq$ is well-founded.
\end{itemize} 
Noetherian posets come with a \emph{rank} ordinal valued function, defined inductively, which associates to $p\in\PP$ the height of the set $p^<$.
More formally, the rank of $p\in\PP$ is given by  $\mathsf r(p)=\sup\{\mathsf r(q)+1: q > p\}$, where maximal elements are given rank $0$.
\begin{dfn}
A Noetherian poset $\mathbb P$ is an $\omega$\emph{-poset} if every principal filter $p^\leq$ is finite and there are only finitely many principal filters of a given size.
(Equivalently, a Noetherian poset is an $\omega$-poset if every element has finite rank and every set $\{p \in \PP: \mathsf{r}(p) \leq n\}$ is finite.) 

An $\omega$-poset is \emph{graded} if the rank function maps intervals to intervals, that is, for every $q<p$ and $n$ such that $\mathsf r(p)<n<\mathsf r(q)$ there is $p'$ with $q<p'<p$ and $\mathsf r(p')=n$.
\end{dfn}
If $\PP$ is an $\omega$-poset, we write $\PP=\bigcup_{n \in \omega} \PP_n$, where $\PP_n$ is the $n$\textsuperscript{th} level of $\PP$, given by those elements of $\PP$ which are minimal in the set of elements of rank $\leq n$, or, more formally, 
\[
\PP_n=\{p\in\PP: \mathsf r(p)\leq n\text{ and } \mathsf r(q)>n \text{ if } q<p\}.
\]
Note that every level is finite. Furthermore, levels are co-initial among all caps,  by \cite[Proposition~1.13]{BartosBiceV.Compacta}, i.e.
\[\tag{Co-initial}\label{CapsCoInitial}
\forall C\in\mathsf C\PP\ \exists n\ (\PP_n\leq C).
\]
If the $\omega$-poset $\PP$ is graded, distinct levels share only atoms, by \cite[Proposition 1.31]{BartosBiceV.Compacta}. In fact, a graded poset $\PP$ is atomless if and only if all its levels are disjoint and hence $\mathbb P_n=\{p\in\mathbb P : \mathsf r(p)=n\}$, for all $n\in\mathbb{N}$.  This will hold for all the posets $\mathbb{P}$ we apply the theory to in this article.

The following directly follows from \cite[Corollary~1.27 and Theorem~1.34]{BartosBiceV.Compacta}.
\begin{prp}\label{prp:allspaces}
Every second-countable compact $\mathsf T_1$ space has a cap-basis which is a graded predetermined $\omega$-poset (when ordered by inclusion).
\end{prp}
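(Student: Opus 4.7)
The plan is to strengthen the standard cap-basis construction for compact second-countable $\mathsf T_1$ spaces -- already supplied by \cite[Proposition~1.11]{BartosBiceV.Compacta} -- so that the cap-basis additionally carries a graded, predetermined, $\omega$-poset structure. Concretely, starting from a countable base $\mathcal B = \{B_n\}_{n \in \omega}$ of nonempty open sets and $\mathcal U_0 := \{X\}$, I would inductively construct a sequence of finite minimal open covers $\mathcal U_n$ with $\mathcal U_n$ consolidating $\mathcal U_{n+1}$, then set $\PP := \bigcup_n \mathcal U_n$ ordered by $\subseteq$, declaring $\PP_n := \mathcal U_n$ as the $n$-th level.

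At each inductive step, beyond the standard massaging, I would enforce three extra conditions: (i) $\mathcal U_{n+1}$ is a $\subseteq$-antichain; (ii) each $B_k$ with $k < n$ equals the union of the elements of $\mathcal U_{n+1}$ it contains, ensuring $\PP$ becomes a basis for $X$; (iii) every $p \in \mathcal U_n$ admits a \emph{super-witness} $x_p \in p$ lying in exactly one element of each coarser cover $\mathcal U_m$, $m \leq n$. The $\omega$-poset property then follows since any $\subsetneq$-chain in $\PP$ strictly drops in level: principal filters $p^{\leq}$ for $p \in \mathcal U_n$ are contained in the finite set $\bigcup_{m \leq n} \mathcal U_m$, and since $|p^{\leq}| \geq n+1$ by consolidation, filters of a given cardinality come from a bounded range of levels and are thus finite in number. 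Gradedness follows by iterating \cite[Proposition~1.4]{BartosBiceV.Compacta}. The cap-basis property holds because any cover of $X$ by elements of $\PP$ has a finite subcover by compactness, and by (ii) that subcover is refined by a late enough $\mathcal U_n$, which is a band (consolidation places every $r \in \PP$ in $\mathcal U_n^{\leq} \cup \mathcal U_n^{\geq}$).

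Predetermined is the subtle condition, and is precisely what (iii) is designed to deliver. Given a non-atomic $p \in \mathcal U_n$, let $q$ be the unique element of $\mathcal U_{n+1}$ containing the super-witness $x_p$ provided by (iii); the uniqueness of the $\mathcal U_n$-element through $x_p$ combined with consolidation gives $q \subsetneq p$. For any $r \in \PP$ with $q \subsetneq r$ sitting at some level $m$: if $m \leq n$, then $r$ is forced by (iii) to be the unique $\mathcal U_m$-element containing $x_p$, and the nested containment of these unique elements places $p$ inside $r$; while $m > n$ is ruled out, since any such $r$ would yield, via consolidation, a strict $\subseteq$-containment between two elements of $\mathcal U_{n+1}$, violating (i). Thus $q^{<} \subseteq p^{\leq}$, and the reverse inclusion is automatic from $q < p$. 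The \emph{main obstacle} lies in engineering (iii): plain minimality only supplies a witness at a single level, so at each inductive stage one must refine further to separate the chosen super-witness from all overlapping coarser-level sets -- this is where the technical weight of the construction is concentrated.
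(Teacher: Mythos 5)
This proposition is quoted from \cite{BartosBiceV.Compacta} and the present paper gives no proof of it, so your construction has to be judged on its own terms. The centrepiece of your argument, condition (iii), cannot be realised. Take $X=[0,1]$ and let $\mathcal U_1=\{U,V\}$ be any minimal two-element open cover; then $U\cap V$ is a non-empty open set, and since $\PP$ must be a basis, some sufficiently fine level $\mathcal U_N$ contains an element $p\subseteq U\cap V$. Every point of such a $p$ lies in two elements of $\mathcal U_1$, so $p$ has no super-witness. You flag engineering (iii) as ``where the technical weight is concentrated'', but the problem is not weight: the condition is simply unsatisfiable beyond the first few levels. Fortunately it is also unnecessary. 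Predeterminedness only needs each non-atomic $p\in\mathcal U_n$ to have a point $y_p$ private to $p$ \emph{within its own level}, which minimality of the cover already supplies. If $q\in\mathcal U_{n+1}$ contains $y_p$ and is properly contained in $p$, then $q^<=p^\le$: any $r\supsetneq q$ must lie in some $\mathcal U_m$ with $m\leq n$ (your antichain-plus-refinement argument), and iterating \cite[Proposition~1.4]{BartosBiceV.Compacta} factors $q\subseteq r$ through an element of $\mathcal U_n$ containing $y_p$, which is forced to be $p$. This lighter mechanism is exactly what underlies \autoref{prp:Summary1}\ref{CaseCoinjective} and \autoref{SequenceForSpace}.

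Two further conditions are missing. Nothing in (i)--(iii) prevents a non-atomic element from persisting through many consecutive levels (antichains constrain a single level only). Such persistence desynchronises rank from level: one can produce $q<p$ with $\mathsf r(q)>\mathsf r(p)+1$ and nothing strictly between them, so gradedness genuinely fails; the same phenomenon falsifies your claim that $|p^\le|\geq n+1$ (the consolidation chain may have repeats), and it also breaks the predetermined argument above, which needs the witness $q$ to sit at the level immediately below $p$. You must additionally arrange that every non-singleton element of $\mathcal U_n$ is \emph{properly} refined in $\mathcal U_{n+1}$, which $\mathsf T_1$ makes possible by deleting a point. Finally, condition (ii) only makes each $B_k$ a union of later-level elements, hence makes $\PP$ a basis; it does not make any single level $\mathcal U_N$ refine a given finite cover, which is what the cap-basis property requires (a level is a band, but the subfamily of $\mathcal U_N$ lying inside the given cover need not be one). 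The standard repair is to have $\mathcal U_{n+1}$ refine the $n$\textsuperscript{th} member of an enumeration of all finite basic covers. With these corrections the construction goes through, but as written the proof does not.
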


A key notion is the auxiliary relation of cap-order. For $A,B\subseteq\PP$, we say that $A$ is cap-below $B$ if it is easier to make $B$ a cap than to make $A$ one by simply adding more elements of $\mathbb{P}$. More precisely,
\[
\tag{Cap-Order} A\precsim B\quad\Leftrightarrow\quad\forall F\subseteq\PP\ (A\cup F\in\mathsf C\PP\Rightarrow B\cup F\in \mathsf C\PP).
\]
If $p\leq q$, then $p\precsim q$. When the converse holds, we say that $\PP$ is \emph{cap-determined}. Examples of cap-determined posets are given by cap-bases of $\mathsf T_1$ spaces (see \cite[Proposition 1.38]{BartosBiceV.Compacta}).

Before moving on, we introduce two weaker notions that we will need later on. These can be viewed as properties that any reasonable basis of a compact second countable Hausdorff space should have.

If there is no $p\in\mathbb{P}$ such that $p\precsim\emptyset$ then we say that $\PP$ is \emph{prime}. In general, a subset $Q\subseteq\PP$ is prime if there is no $q\in Q$ such that $q\precsim \PP\setminus Q$. As we will see below, prime subsets correspond naturally to closed subspaces. Lastly, a poset $\PP$ is \emph{branching} if, whenever $p<q$, we can find $r<q$ which is not comparable with $p$.

The diagram in \autoref{figure1} summarises some of the implications between the different concepts we just introduced, in the setting of $\omega$-posets (see \cite[Figure~1]{BartosBiceV.Compacta}):
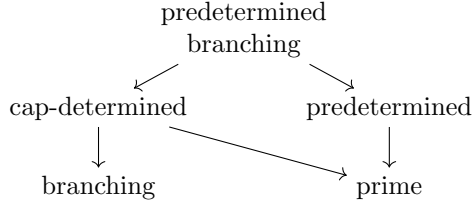
\begin{figure}[!ht]
\centering
\begin{tikzpicture}[
 x = {(5.5em, 0)},
 y = {(0, 3em)},
 text height = 1.5ex,
 text depth = 0.25ex,
 multiline/.style = {
 align = center,
 text height =,
 text width =,
 }
 ]
 \node (predetermined) at (1, -1) {predetermined};
 \node (prime) at (1, -2) {prime};
 \node (capdetermined) at (-1, -1) {cap-determined};
 \node (branching) at (-1, -2) {branching};
 \node (pb) at (0,0) [multiline] {predetermined \\ branching};
 
 \graph{
 (pb) -> {(capdetermined), (predetermined)},
 (predetermined)-> {(prime)},
 (capdetermined) -> {(branching), (prime)}
 };
\end{tikzpicture}
\caption{Implications between properties of $\omega$-posets.}
\label{figure1}
\end{figure}

\subsubsection{The spectrum}\label{ss:thespectrum}
Next we recall how to construct topological spaces from posets.
\begin{dfn}
Let $(\PP,\leq)$ be a poset. A subset $S\subseteq\PP$ is a \emph{selector} if it has non-empty intersection with every cap.
\end{dfn}
By Zorn's Lemma, every selector contains a minimal selector. We topologise the space of minimal selectors as follows.
\begin{dfn}
Let $(\mathbb P,\leq)$ be a poset. The \emph{spectrum} of $\mathbb P$ is the topological space 
\[
\mathsf S\mathbb P=\{S\subseteq\mathbb P : S\text{ is a minimal selector}\},
\] 
with the topology generated by the subbasic open sets of the form
\[p^\in=\{S\in \mathsf S\mathbb P: p\in S\}.\]
\end{dfn}
The representation $p\mapsto p^\in$ is not necessarily faithful, and there can even be $p\in\PP$ such that $p^\in$ is empty. This does not happen for prime posets and, in fact, $\PP$ is prime if and only if $p^\in\neq\emptyset$, for every $p\in\PP$ (see \cite[Definition 2.27]{BartosBiceV.Compacta}). More than that, one can characterise prime subsets of a poset $\PP$ as exactly those subsets which are unions of minimal selectors. The following was proved in \cite[\S 2.2]{BartosBiceV.Compacta}:
\begin{thm}\label{SpectrumCompactT1}
Let $(\PP,\leq)$ be a poset.
\begin{enumerate}
\item The spectrum $\mathsf S\PP$ is a compact $\mathsf T_1$ space.
\item If $\PP$ is an $\omega$-poset, then $\mathsf S\PP$ is second countable, every $S \in \Spec{\PP}$ is a filter, and $\{p^\in : p\in \PP\}$ is a basis.
\item If $\PP$ is a prime $\omega$-poset, then $\{p^\in: p\in \PP\}$ is a cap-basis for $\mathsf S\PP$.
\item $\PP$ is cap-determined if and only if the map $p\mapsto p^\in$ defines an order isomorphism onto $\{p^\in: p\in \PP\}$.
\end{enumerate}
\end{thm}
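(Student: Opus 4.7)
My proof plan rests on one observation linking caps to the topology of $\Spec{\PP}$: for any $A \subseteq \PP$, the subbasic family $\{p^\in : p \in A\}$ covers $\Spec{\PP}$ if and only if $A$ meets every minimal selector, which by a Zorn argument is equivalent to $\PP \setminus A$ failing to be a selector, hence to $A$ containing a cap. Since any superset of a cap inherits refinement by the witnessing band, $A$ covers if and only if $A$ is itself a cap. I would establish this equivalence first and apply it repeatedly below.

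With this in hand, (1) follows quickly. For $\mathsf T_1$, if $S \neq T$ are minimal selectors then $S \not\subseteq T$ (otherwise $S$ would contradict minimality of $T$), so any $p \in S \setminus T$ gives an open set $p^\in$ separating them. For compactness I apply Alexander's subbasis theorem: given a subbasic cover $\{p^\in : p \in A\}$, the equivalence supplies a cap $C \subseteq A$ and a band $B \leq C$; choosing $c(b) \in C$ with $b \leq c(b)$ for each $b \in B$ yields a finite subset $A_0 = \{c(b) : b \in B\} \subseteq A$ which is refined by $B$, hence is a cap and therefore covers. Part (3) is then immediate: primeness is precisely the condition that each $p^\in$ is non-empty, so $\{p^\in\}$ is a genuine basis, and the equivalence says any cover by it corresponds to a cap, making it a cap-basis.

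For (2), countability of $\PP$ is built into the $\omega$-poset definition, giving a countable subbasis and hence second countability. For the filter property of a minimal selector $S$, upward closure comes from a cap-swap trick: if $p \in S$, $p \leq q$ and $q \notin S$, minimality yields a cap $C$ with $S \cap C = \{p\}$, and replacing $p$ by $q$ gives a cap $C' = (C \setminus \{p\}) \cup \{q\}$ (any band $B \leq C$ still refines $C'$, since $b \leq p \leq q$) disjoint from $S$, contradicting that $S$ is a selector. Directedness I expect to be the main technical point; here I would use the $\omega$-poset structure directly, namely that the levels $\PP_n$ are finite bands coinitial among caps, to extract from the finite traces $S \cap \PP_n$ a common lower bound in $S$ for any finitely many elements of $S$. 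Once $S$ is known to be a filter, any $S \in p^\in \cap q^\in$ contains some $r \leq p, q$ with $r^\in \subseteq p^\in \cap q^\in$, so the subbasic sets already form a basis.

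Finally, (4) reduces to the biconditional $p \precsim q \Leftrightarrow p^\in \subseteq q^\in$. The forward direction reuses the cap-swap of (2): if $p \in S$ and $q \notin S$, minimality produces a cap $\{p\} \cup F$ with $S \cap F = \emptyset$, whereupon $p \precsim q$ upgrades $\{q\} \cup F$ to a cap, contradicting $S \cap (\{q\} \cup F) = \emptyset$. The reverse uses the central equivalence contrapositively: if $\{p\} \cup F$ is a cap but $\{q\} \cup F$ is not, pick a minimal selector $S$ disjoint from $\{q\} \cup F$; it must meet the cap $\{p\} \cup F$ and, being disjoint from $F$, must contain $p$, placing $S \in p^\in \setminus q^\in$. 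Given this biconditional, cap-determinedness (the coincidence of $\leq$ and $\precsim$) is exactly equivalent to $p \mapsto p^\in$ being an order isomorphism onto its image: injectivity follows since $p^\in = q^\in$ gives mutual $\precsim$, hence mutual $\leq$, while the order is recovered by the biconditional itself.
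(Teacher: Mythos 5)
The paper does not actually prove this theorem here; it is recalled verbatim from the companion paper and cited to \cite[\S 2.2]{BartosBiceV.Compacta}, so there is no in-text proof to compare against. That said, your reconstruction is correct and follows exactly the route that the companion paper takes: your central equivalence (``$\{p^\in : p\in A\}$ covers $\Spec{\PP}$ iff $A$ is a cap'') is precisely the result invoked elsewhere in this paper as \cite[Proposition~2.8]{BartosBiceV.Compacta}, and the cap-swap argument, the Alexander subbase application, and the reduction of (4) to the biconditional $p\precsim q\Leftrightarrow p^\in\subseteq q^\in$ are all sound. The one step you leave genuinely under-specified is directedness of a minimal selector $S$ in (2): knowing that the traces $S\cap\PP_n$ are finite and non-empty does not by itself yield a common lower bound for $p,q\in S$. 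To close it, combine your own cap-swap with coinitiality: minimality gives caps $C_p$ and $C_q$ with $S\cap C_p=\{p\}$ and $S\cap C_q=\{q\}$; choose a level $\PP_n$ refining both (levels refine one another, since $\PP_n\subseteq\PP_m^{\leq}\cup\PP_m^{\geq}$ and rank considerations rule out the second alternative for $n\geq m$ except degenerately); then any $r\in S\cap\PP_n$ lies below some $c\in C_p$, upward closure of $S$ forces $c\in S\cap C_p=\{p\}$, so $r\leq p$, and likewise $r\leq q$. With that inserted, the proof is complete.
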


Before moving on, we record two lemmas about subposets of $\omega$-posets and their spectra. These did not appear in \cite{BartosBiceV.Compacta}, so we also provide proofs.

\begin{lemma} \label{UpwardsClosureHypothesis}
    Let $\QQ \subseteq \PP$ be atomless $\omega$-posets and let $S \in \Spec{\QQ}$.
    Then $S^{\leq_\PP} \in \Spec{\PP}$ if and only if $T \cap \QQ$ is infinite whenever $T \subseteq S^{\leq_\PP}$ is a minimal selector.
\end{lemma}

\begin{proof}
    Recall that minimal selectors are necessarily upwards closed and, for upwards closed subsets in atomless posets, being infinite is equivalent to being a selector.  So if $S^{\leq_\PP} \in \Spec{\PP}$ then the only possibility for $T$ is $S^{\leq_\PP}$ itself, and $S^{\leq_\PP}\cap\mathbb{Q}=S$ is infinite.  On the other hand, $S$ and hence $S^{\leq_\PP}$ is infinite and thus a selector in $\mathbb{P}$.  So $S^{\leq_\PP}$ contains a minimal selector $T$, necessarily with $T\cap\QQ\subseteq S$.  If $T\cap\QQ$ is infinite then it is a selector in $\mathbb{Q}$ and hence $T\cap\QQ=S$, by the minimality of $S$.  It then follows that $S^{\leq_\PP}=T\in\mathsf{S}\mathbb{P}$.
\end{proof}

\begin{lemma} \label{UpwardsClosureHomeomorphism}
  Let $\QQ \subseteq \PP$ be atomless $\omega$-posets.
  If $S^{\leq_\PP} \in \Spec{\PP}$, for every $S \in \Spec{\QQ}$, then $S \mapsto S^{\leq_\PP}$ embeds $\Spec{\QQ}$ onto the subspace $\{T \in \Spec{\PP}: T \cap \QQ\text{ is infinite}\}$, with the inverse map $T \mapsto T \cap \QQ$.
\end{lemma}
\begin{proof}
    Let $S \in \Spec{\QQ}$ and let $T = S^{\leq_\PP} \in \Spec{\PP}$.
    Since $S \subseteq \QQ$ is upwards closed, we have $T \cap \QQ = S$, which is infinite as $\QQ$ is atomless.
    For every $p \in S^{\leq_\PP}$ there is $q \in S$ with $q \leq p$, so $S \in q^\in$ and for every $S' \in q^\in \cap \Spec{\QQ}$ we have $S'^{\leq_\PP} \in q^\in \subseteq p^\in$.
    Hence, $S \mapsto S^{\leq_\PP}$ is continuous.

    Now take $T \in \Spec{\PP}$ such that $T \cap \QQ$ is infinite and let $S = T \cap \QQ$.
    It follows that $S$ is an upwards closed selector in $\QQ$ as $T$ is upwards closed and intersects all levels of $\QQ$ since the intersection is infinite.
    Hence, there is $S' \in \Spec{\QQ}$ with $S' \subseteq S$.
    We have $S'^{\leq_\PP} \in \Spec{\PP}$ with $S'^{\leq_\PP} \subseteq T$, so $S'^{\leq_\PP} = T$ since $T$ is minimal.
    By the first paragraph it follows that $S' = T \cap \QQ = S$, and we have $S^{\leq_\PP} = T$, so we have the desired mutually inverse bijections.
    Let $q \in T \cap \QQ$.
    Then $T \in q^\in$ and for every $T' \in q^\in$ we have $T' \cap \QQ \in q^\in$, so $T \mapsto T \cap \QQ$ is continuous.
\end{proof}

For two elements $p, q$ of a poset $\PP$, we write $p\wedge q$ if there is $r\in\PP$ with $r\leq p,q$.
Levels of a predetermined $\omega$-poset faithfully represent a co-initial family of minimal open covers of the spectrum. (Recall, a cover $C$ of a set $X$ is minimal if $C\setminus\{c\}$ does not cover $X$ for any $c\in C$.)

\begin{prp} \label{prp:coverslevels}
    Let $\PP$ be a predetermined $\omega$-poset, and for every $n \in \omega$ let $C_n = \{p^\in: p \in \PP_n\}$.
    \begin{enumerate}
        \item\label{itm:MinCover} Every $C_n$ is a minimal open cover of $\Spec{\PP}$.
        \item\label{itm:Overlap} For every $p, q \in C_n$ we have $p \wedge q$ if and only if $p^\in \cap q^\in \neq \emptyset$.
        \item\label{itm:CoInitial} Every open cover of $\Spec{\PP}$ is refined by some $C_n$.
    \end{enumerate}
\end{prp}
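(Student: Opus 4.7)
My plan is to treat each clause independently, leaning on \autoref{SpectrumCompactT1}, the fact that levels of an $\omega$-poset are antichains (forced by the definition of $\PP_n$: if $p, p' \in \PP_n$ with $p < p'$, then $\mathsf{r}(p) > n$ from $p'$'s defining condition, contradicting $\mathsf{r}(p) \leq n$), and the co-initial property \eqref{CapsCoInitial}.

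For (\ref{itm:MinCover}), the cover property follows because $\PP_n$ is itself a cap (as \eqref{CapsCoInitial} gives levels as a co-initial family of caps) and every minimal selector meets every cap. For minimality, given $p \in \PP_n$, I aim to produce a minimal selector $S$ with $p \in S$ and $S \cap (\PP_n \setminus \{p\}) = \emptyset$; since $S$ will be a filter and levels are antichains, this reduces to arranging that $S$ contains no element strictly below some $q \in \PP_n \setminus \{p\}$. The predetermined property is the essential tool: if $p$ is an atom, the principal filter $p^\leq$ is already a selector (any band element comparable with $p$ must equal $p$, yielding a cap-element above $p$) and meets $\PP_n$ only in $\{p\}$ by antichain; otherwise I iterate predetermined to build a descending chain $p = q_0 > q_1 > q_2 > \cdots$ with $q_{k+1}^< = q_k^\leq$. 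By induction the upward closure $\bigcup_k q_k^\leq$ is a filter whose intersection with $\PP_n$ remains $\{p\}$ (any other level-$n$ element in the union would have to lie in some $q_k^<$, i.e.\ in $\{q_0, \ldots, q_{k-1}\} \cup p^\leq$, all of which is precluded by antichain and rank considerations). I then extend by Zorn to a minimal selector while preserving disjointness from $\PP_n \setminus \{p\}$.

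For (\ref{itm:Overlap}), identifying elements of $C_n$ with their indices in $\PP_n$: if $r \leq p, q$, primality (predetermined $\Rightarrow$ prime by \autoref{figure1}) gives $r^\in \neq \emptyset$, and any $S \in r^\in$ is a filter (\autoref{SpectrumCompactT1}) so contains both $p$ and $q$; conversely, any $S \in p^\in \cap q^\in$ is a filter containing both, so filter directedness supplies a common lower bound $r \in S$ witnessing $p \wedge q$. For (\ref{itm:CoInitial}), given a cover $\mathcal U$, I refine by basic opens $\{p^\in : p \in A\}$ using the basis from \autoref{SpectrumCompactT1}, extract a finite covering subfamily indexed by $A' \subseteq A$ by compactness of $\Spec{\PP}$, and conclude $A'$ is a cap via the cap-basis property from \autoref{SpectrumCompactT1}, which applies because $\PP$ is a prime $\omega$-poset. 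Then \eqref{CapsCoInitial} supplies $n$ with $\PP_n \leq A'$; since $p \leq q$ implies $p^\in \subseteq q^\in$ (filters are upward closed), $C_n$ refines $\{p^\in : p \in A'\}$ and hence $\mathcal U$.

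The main obstacle — and the only place where predetermined plays an essential role — is the minimality step in (\ref{itm:MinCover}): one must carefully verify that the filter manufactured by iterating predetermined extends to a minimal selector without accidentally acquiring a second element of $\PP_n$, which is where the very specific form $q_{k+1}^< = q_k^\leq$ has to be used.
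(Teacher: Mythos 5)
Your proof is correct, but where the paper outsources each clause to a citation in the companion work, you unfold those citations into direct arguments, so the comparison is worth recording. For \ref{itm:MinCover} the paper's route is: predetermined $\Rightarrow$ level-injective $\Rightarrow$ each level $\PP_n$ is a minimal cap, and then minimality of $C_n$ follows from the correspondence between covers by basic open sets and caps. You instead exhibit, for each $p \in \PP_n$, a point of $\Spec{\PP}$ lying in $p^\in$ but in no other member of $C_n$, by iterating the predetermined property to get $p = q_0 > q_1 > \cdots$ with $q_{k+1}^< = q_k^\leq$ and taking a minimal selector inside $\bigcup_k q_k^\leq$; this is self-contained, and as a bonus shows $p \mapsto p^\in$ is injective on $\PP_n$, at the cost of two verifications you leave implicit: that $\bigcup_k q_k^\leq$ is in fact a selector (given a band $B$, if no $q_k$ were below a member of $B$ then pigeonhole would place some $b \in B$ below infinitely many $q_k$, making the principal filter $b^\leq$ infinite and contradicting the $\omega$-poset condition), and that the iteration may terminate at an atom $q_k$, in which case $q_k^\leq = \{q_k, \dots, q_1\} \cup p^\leq$ plays the same role. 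Note also that the Zorn step \emph{shrinks} your selector to a minimal one rather than extending it, which is exactly why disjointness from $\PP_n \setminus \{p\}$ is preserved for free, and that $\PP_n$ being a cap is not literally what \eqref{CapsCoInitial} says (that states caps are refined by levels); the correct justification is the companion fact that levels are bands. For \ref{itm:Overlap} your direct filter argument (primality gives $r^\in \neq \emptyset$ for a lower bound $r$; conversely directedness of a minimal selector produces the lower bound) replaces the paper's citation, and for \ref{itm:CoInitial} the two arguments coincide, though your detour through compactness and the cap-basis property is unnecessary given the cleaner fact that a basic family $\{p^\in : p \in C\}$ covers if and only if $C$ is a cap.
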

\begin{proof}
By \cite[Proposition~2.8]{BartosBiceV.Compacta}, a family $\{p^\in: p \in C\}$ for some $C \subseteq \PP$ is a cover if and only if $C$ is a cap.
For \ref{itm:MinCover}, note predetermined $\omega$-posets are level-injective (see \cite[\S1.4]{BartosBiceV.Compacta}), by \cite[Corollary~1.26]{BartosBiceV.Compacta}, so each level $\mathbb{P}_n$ is a minimal cap, by \cite[Corollary~1.21]{BartosBiceV.Compacta}.
Therefore, every $C_n$ is a minimal cover as subcovers of $C_n$ correspond to subcaps of $\PP_n$. 
\ref{itm:Overlap} follows from \cite[Proposition~2.30]{BartosBiceV.Compacta} as every level-injective $\omega$-poset is prime, by \cite[Proposition~2.28]{BartosBiceV.Compacta}.
For \ref{itm:CoInitial}, every open cover is refined by a basic cover of the form $\{p^\in: p \in C\}$, which is refined by some $C_n$ as the cap $C$ is refined by some level $\PP_n$ in the poset $\PP$, by \eqref{CapsCoInitial}.
\end{proof}

The concepts below are designed to characterise topological properties of $\mathsf S\PP$ (e.g. being Hausdorff) via properties of $\PP$. If $C\in\mathsf C\PP$ and $p\in\PP$, the \emph{star} of $p$ in $C$ is given by
\[
Cp=\{q\in C: q\wedge p\}.
\]
From this we define an auxiliary relation on $\PP$.  Specifically, if $p,q\in\PP$, we define
\[
p\vartriangleleft_C q\ \ \iff\ \ Cp\leq q.
\]
Uniformising over all possible caps, we get the \emph{star-below} relation defined by
\[\tag{Star-Below}
p\vartriangleleft q\ \ \iff\ \ \exists C\in\mathsf C\PP\ (p\vartriangleleft_C q).
\]
When $\mathbb{P}$ is prime, the relation $\vartriangleleft$ encodes closed containment of the corresponding open sets, i.e.~$p\vartriangleleft q$ if and only if $\mathrm{cl}(p^\in)\subseteq q^\in$, by \cite[Proposition 2.33]{BartosBiceV.Compacta}.

Important for us will be the following two facts,  isolated for future reference:
\begin{itemize}
\item $\vartriangleleft$ and $\leq$ are compatible, i.e.~for any cap $C$ and $p,p',q,q'\in\PP$, 
\[
\tag{Compatibility}\label{Compatibility}p\leq p'\vartriangleleft_Cq'\leq q\ \ \Rightarrow \ \ p\vartriangleleft_C q.
\]
\end{itemize}
Since if $B\leq C$ and $p\vartriangleleft_Cq$ then $pB\leq pC\leq q$ and hence $p\vartriangleleft_Bq$, it follows that
\begin{itemize}
\item if $B$ and $C$ are caps then 
\begin{equation}\label{RefiningStars}
B\leq C\ \ \Rightarrow\ \ {\vartriangleleft_C} \subseteq {\vartriangleleft_B}.
\end{equation}
\end{itemize}
For more information about stars see \cite[\S2.5]{BartosBiceV.Compacta}.

The last condition we introduce is a regularity condition: An $\omega$-poset $\PP$ is said to be \emph{regular} if every cap is star-refined by another cap, i.e.
\[
\forall C\in\mathsf C\PP\ \exists D\in\mathsf C\PP\ (D\vartriangleleft C).
\]
Regularity allows us to separate points in the spectrum, by \cite[Corollary 2.40]{BartosBiceV.Compacta}.

\begin{thm}\label{RegularImpliesHausdorff}
Let $\PP$ be a prime $\omega$-poset. Then $\mathsf S\PP$ is Hausdorff if and only if $\PP$ is regular.
\end{thm}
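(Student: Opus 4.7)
The plan is to translate between topological star-refinement and the poset star-below relation $\vartriangleleft$, using two correspondences from the preliminaries: $p \vartriangleleft q \iff \overline{p^\in} \subseteq q^\in$ from \eqref{StarBelowClosed}, and $p \wedge q \iff p^\in \cap q^\in \neq \emptyset$ from Proposition~\ref{prp:coverslevels}(\ref{itm:Overlap}).

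For the necessity direction, suppose $\Spec{\PP}$ is Hausdorff. Given any cap $C \in \mathsf C\PP$, the family $\{c^\in : c \in C\}$ is a finite open cover of the compact Hausdorff space $\Spec{\PP}$, so it admits an open star-refinement. Refining through the basis $\{p^\in : p \in \PP\}$ (Theorem~\ref{SpectrumCompactT1}(2)) and extracting a finite subcover by compactness produces a finite $D \subseteq \PP$ whose basic cover star-refines $\{c^\in : c \in C\}$. Since this basis is a cap-basis (Theorem~\ref{SpectrumCompactT1}(3)), $D$ is itself a cap; translating the topological star-refinement back through the two correspondences then yields $D \vartriangleleft C$.

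For the sufficiency direction, suppose $\PP$ is regular. I would separate two distinct minimal selectors $S \neq T$ directly. Pick $p \in S \setminus T$ and let $B$ be a band containing $p$---for instance, the level $\PP_{\mathsf r(p)}$, which is an antichain by the definition of levels and a band because the rank function forces every element of $\PP$ to sit above or below some element of $\PP_{\mathsf r(p)}$. Apply regularity of $\PP$ to get a cap $D$ with $D \vartriangleleft B$. The goal is to produce $s \in S \cap D$ and $t \in T \cap D$ with $s \not\wedge t$: by Proposition~\ref{prp:coverslevels}(\ref{itm:Overlap}), the basic opens $s^\in$ and $t^\in$ then form disjoint neighborhoods of $S$ and $T$.

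The main obstacle is producing such $s$ and $t$. If instead every element of $S \cap D$ were $\wedge$-related to every element of $T \cap D$, then fixing $s \in S \cap D$ with $s \vartriangleleft_D b_s$ for some $b_s \in B$ would place the whole of $T \cap D$ inside the star $Ds \leq b_s$; the filter property of $T$ (Theorem~\ref{SpectrumCompactT1}(2)) would then put $b_s$ into $T$, while \eqref{StarBelowClosed} applied to $s \in S$ symmetrically puts $b_s$ into $S$. The delicate concluding step is to exploit the antichain structure of $B$ together with $p \in S \setminus T$ to force $b_s = p$, yielding the contradiction $p \in T$. Achieving this matching cleanly is the heart of the argument, and may require iterating regularity or sharpening the initial band so that $B \cap S = \{p\}$, before the star-refinement is applied.
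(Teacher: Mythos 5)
Your necessity direction is essentially sound, but be careful with the final translation. From the topological star-refinement you obtain, for each $d\in D$, that $\bigcup\{d'^\in : d'\in D,\ d'^\in\cap d^\in\neq\emptyset\}\subseteq c^\in$ for some $c\in C$. The right way to convert this into $d\vartriangleleft c$ is via \eqref{StarBelowClosed}: every point of $\overline{d^\in}$ lies in some member $d'^\in$ of the cover, and that member meets $d^\in$, so $\overline{d^\in}\subseteq c^\in$. It is \emph{not} via the overlap correspondence followed by ``$d'^\in\subseteq c^\in$ implies $d'\leq c$'' --- that last implication requires $\PP$ to be cap-determined, which a prime $\omega$-poset need not be. Relatedly, \autoref{prp:coverslevels}\ref{itm:Overlap} is stated for predetermined posets; the equivalence $p\wedge q\Leftrightarrow p^\in\cap q^\in\neq\emptyset$ does hold under primeness alone (minimal selectors are upward-closed filters, and primeness gives $r^\in\neq\emptyset$ for a common lower bound $r$), but you should argue it that way rather than cite a proposition whose hypotheses you do not have.

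The sufficiency direction has a genuine gap, which you partly acknowledge, and neither of your proposed repairs closes it. There is no reason a band $B$ with $B\cap S=\{p\}$ should exist: when $\PP$ is prime but not predetermined, $S\cap\PP_n$ may contain several elements, and removing the extra ones from a level need not leave a band. More importantly, your contradiction hypothesis only produces an element $b_s\in S\cap T\cap B$; since distinct minimal selectors may share many elements, this is no contradiction, and nothing in the star-refinement ties $b_s$ to your chosen $p$ --- iterating regularity just lands you again on \emph{some} common upper bound rather than on $p$. The missing idea is to use minimality of $S$ directly. Let $S^\vartriangleleft=\{q\in\PP:\exists r\in S\ (r\vartriangleleft q)\}$. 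If $r\in S$ and $r\vartriangleleft_C q$, pick $c\in S\cap C$; then $c\wedge r$ because $S$ is a filter, so $c\in Cr\leq q$, hence $q\in S$ by upward closure, and $S^\vartriangleleft\subseteq S$. Regularity makes $S^\vartriangleleft$ a selector: given a cap $C$, take a cap $D\vartriangleleft C$ and $d\in S\cap D$; the $c\in C$ with $d\vartriangleleft c$ lies in $S^\vartriangleleft\cap C$. By minimality $S^\vartriangleleft=S$. Now for $p\in S\setminus T$ choose $q\in S$ with $q\vartriangleleft p$; by \eqref{StarBelowClosed} we get $\overline{q^\in}\subseteq p^\in$ and $T\notin p^\in$, so $q^\in$ and $\Spec{\PP}\setminus\overline{q^\in}$ are the disjoint neighbourhoods of $S$ and $T$ you want. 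This bypasses the band and antichain bookkeeping entirely.
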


\section{The graph category}\label{S.GraphCategory}

Officially, we denote the objects and morphisms of a category $\mathbf{C}$ by $\mathsf{ob}(\mathbf{C})$ and $\mathsf{mor}(\mathbf{C})$ respectively. However, following standard abuse of notation, we also let $\mathbf{C}$ itself stand for both $\mathsf{ob}(\mathbf{C})$ and $\mathsf{mor}(\mathbf{C})$ whenever convenient. For two objects $A,B\in\mathbf{C}$, we denote the hom-set of morphisms from $A$ to $B$ by
\[
\mathbf{C}^B_A=\{{\sqsupset}\in\mathsf{mor}(\mathbf{C}):\mathrm{dom}(\sqsupset)=A\text{ and }\mathrm{cod}(\sqsupset)=B\}.
\]
We work exclusively in the graph category $\mathbf{G}$ (and various subcategories thereof). The objects of the graph category are just non-empty finite graphs, i.e.
\[
\mathsf{ob}(\mathbf{G})=\{(G,\sqcap):0<|G|<\infty\text{ and}=_G\ \subseteq\sqcap=\sqcap^{-1}\subseteq G\times G\}.
\]
The morphisms of $\mathbf{G}$ are edge-preserving relations (\autoref{defin:relationsgraphs}) under their usual composition (note that a composition of edge-preserving relations is again edge-preserving), i.e.
\[
\mathbf{G}_G^H=\{{\sqsupset}\subseteq H\times G:\forall g,g'\in G, h,h'\in H\ (h\sqsupset g\sqcap g'\sqsubset h'\Rightarrow h\sqcap h')\}.
\]
While the graph category is convenient for general definitions and observations, our real focus will be on certain subcategories $\mathbf{C}$ of $\mathbf{G}$ where both the graphs and morphisms satisfy various extra conditions. Often the morphisms satisfying these conditions will still be quite `large', in either or both of the following senses.

\begin{dfn}\label{dfn:idealswide}
We call a collection of morphisms $M$ in a category $\mathbf{C}$
\begin{itemize}
\item \emph{wide} if, for all $H\in\mathbf{C}$, there is $G\in\mathbf{C}$ such that $\mathbf{C}_G^H\cap M\neq\emptyset$;
\item an \emph{ideal} if $M$ is closed under composition with arbitrary morphisms of $\mathbf{C}$;\footnote{Ideals are always two-sided, i.e.~closed under composition on the right and on the left.}
\item \emph{lax-closed} if $M$ is closed under taking superrelations that are morphisms in $\mathbf{C}$, i.e. for every ${\sqsupset} \in M$ and ${\sqni} \in \mathbf{C}$ such that ${\sqsupset} \subseteq {\sqni}$ we have ${\sqni} \in M$.
\end{itemize}
\end{dfn}

If $\mathbf{D}$ is a subcategory of some larger category $\mathbf{C}$, we immediately see that its morphisms are wide in $\mathbf{C}$ precisely when $\mathbf{D}$ has the same objects as $\mathbf{C}$. In this case, following standard practice, we call $\mathbf{D}$ a \emph{wide subcategory} of $\mathbf{C}$. 

In particular, $\mathbf{G}$ has wide subcategories of co-injective, co-surjective and edge-surjective morphisms, which we denote by $\mathbf{I}$, $\mathbf{S}$ and $\mathbf{E}$, i.e.
\begin{align*}
\mathbf{I}_G^H&=\{{\sqsupset}\in\mathbf{G}_G^H:\ \sqsupset\text{is co-injective}\}=\{{\sqsupset}\in \mathbf G_G^H: \forall h\in H\ \exists g\in G\ (g^{\sqsubset}=\{h\})\}.\\
\mathbf{S}_G^H&=\{{\sqsupset}\in\mathbf{G}_G^H:\ \sqsupset\text{is co-surjective}\}=\{{\sqsupset}\in\mathbf G_G^H: \forall g\in G\ (g^\sqsubset\neq\emptyset)\}.\\
\mathbf{E}_G^H&=\{{\sqsupset}\in\mathbf{G}_G^H:\ \sqsupset\text{is edge-surjective}\}=\{{\sqsupset}\in\mathbf G_G^H:\forall h\in H\ (h^\sqcap\subseteq h^{\sqsupset\sqcap\sqsubset})\}.
\end{align*}
Recall that $h^\sqcap\subseteq h^{\sqsupset\sqcap\sqsubset}$ means 
\[
\forall h'\,(h\sqcap h'\text{ implies }h\sqsupset g\sqcap g'\sqsubset h'\text{ for some }g,g'\in G).
\]
Let $\mathbf{B}=\mathbf{I}\cap\mathbf{S}$ denote the wide subcategory of co-bijective morphisms, i.e.
\[
\mathbf{B}^H_G=\mathbf{I}^H_G\cap\mathbf{S}^H_G=\{{\sqsupset}\in\mathbf{G}_G^H:\ \sqsupset\text{is co-bijective}\}.
\]
If ${\sqsupset}\in\mathbf{G}^H_G$ and $C\subseteq H$, we  define the \emph{strict preimage} by
\[
C_\sqsupset=\{g\in G:g^\sqsubset=C\}.
\]
As usual, we omit the curly braces for singletons, writing $q_\sqsupset$ instead of $\{q\}_\sqsupset$.

Let us isolate further properties of morphisms.
\begin{dfn}\label{defin:relationsgraphs2}
A morphism ${\sqsupset}\in\mathbf{G}^H_G$ is said to be
\begin{itemize}
 \item \emph{anti-injective} if $|h^\sqsupset|\geq2$, for all $h\in H$.
 \item \emph{strictly anti-injective} if $|h_\sqsupset|\geq2$, for all $h\in H$.
 \item \emph{star-refining} if for all $g\in G$ we have $h\in H$ with $g^\sqcap\subseteq h^\sqsupset$.
\end{itemize}
\end{dfn}

These notions are crucial for us, see for example \autoref{prp:Summary1} below for an indication of why morphisms with the above properties are significant. As a toy example, note that every function from a discrete graph is trivially star-refining, while every function onto a discrete graph is trivially edge-witnessing.

We dedicate the rest of this section to study the above properties, how they interact, and some of their refinements in particular categories.

The following simple observation is easy to see from the definitions.
\begin{lemma} \label{LaxClosed}
    The families of anti-injective, edge-witnessing, and star-refining morphisms in $\mathbf{G}$ (and so in every subcategory) are lax-closed. \qed
\end{lemma}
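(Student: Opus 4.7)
The plan is to observe that all three properties are monotone under enlarging the relation, for essentially trivial reasons. The key point is that if ${\sqsupset}\subseteq{\sqni}$ as subsets of $H\times G$, then for every $h\in H$ we have $h^\sqsupset\subseteq h^\sqni$, and dually $g^\sqsubset\subseteq g^\sqin$ for every $g\in G$. Each of the three listed properties is phrased in terms of a lower bound on such images, so it is inherited automatically.

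For anti-injectivity, the inclusion $h^\sqsupset\subseteq h^\sqni$ gives $|h^\sqni|\geq|h^\sqsupset|\geq 2$ for every $h\in H$. For edge-witnessing, given $h,h'\in H$ with $h\sqcap h'$, by hypothesis we have some $g\in G$ with $h\sqsupset g\sqsubset h'$; since ${\sqsupset}\subseteq{\sqni}$, the same $g$ satisfies $h\sqni g$ and $h'\sqni g$, so it witnesses the edge $h\sqcap h'$ for $\sqni$. For star-refining, given $g\in G$ pick $h\in H$ with $g^\sqcap\subseteq h^\sqsupset$; then $g^\sqcap\subseteq h^\sqsupset\subseteq h^\sqni$, and the same $h$ works for $\sqni$.

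The addendum \emph{``and so in every subcategory''} is free from the definition of lax-closedness: if $\mathbf{C}$ is a subcategory of $\mathbf{G}$, then the candidate superrelations ${\sqni}\in\mathbf{C}^H_G$ of a given $\sqsupset$ form a subclass of those in $\mathbf{G}^H_G$, so lax-closedness in $\mathbf{G}$ descends to $\mathbf{C}$ without further work. There is no genuine obstacle here; the lemma is purely definitional, and the only thing worth recording is why each property is truly monotone in the relation rather than, say, requiring a tight or minimal image (which is exactly what prevents, for instance, \emph{strict} anti-injectivity from appearing in the list).
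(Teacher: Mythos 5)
Your proof is correct and matches the paper's (implicit) argument: the paper states this lemma with no proof precisely because each property is monotone under enlarging the relation, which is exactly the observation you make and verify. Your closing remark about why \emph{strict} anti-injectivity fails to be lax-closed is a nice bonus but not required.
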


Note that being star-refining strengthens co-surjectivity and strict anti-injectivity strengthens co-injectivity. Further note that identity morphisms can not be anti-injective. Discrete graphs aside, they are not star-refining or edge-witnessing either. For this reason, such morphisms do not form a subcategory. However, they do form ideals within appropriate subcategories.

\begin{prp}\label{Ideals}\ 
\begin{enumerate}
\item\label{P.Idealsc1} The anti-injective co-injective morphisms form an ideal in $\mathbf{I}$.
\item\label{P.Idealsc2} The strictly anti-injective morphisms form an ideal in $\mathbf{I}$.
\item\label{P.Idealsc3} The star-refining morphisms form an ideal in $\mathbf{S}$.
\item\label{P.Idealsc4} The edge-witnessing morphisms form an ideal in $\mathbf{E}$.
\end{enumerate}
\end{prp}

\begin{proof}
\ref{P.Idealsc1}: Take any anti-injective ${\sqsupset}\in\mathbf{G}_G^H$. If ${\sqni}\subseteq I\times H$ is co-injective and, in particular, surjective then, for every $i\in I$, we have $h\in H$ with $i\sqni h$. Then $|i^{\sqni\sqsupset}|\geq|h^\sqsupset|\geq2$, showing that $\sqni\circ\sqsupset$ is also anti-injective.

Now take co-injective ${\sqni}\subseteq G\times F$. For any $h\in H$, the anti-injectivity of $\sqsupset$ yields distinct $g,g'\in h^\sqsupset$. The injectivity of $\sqin$ then yields $f,f'\in F$ with $f^{\sqin}=\{g\}$ and $f'^{\sqin}=\{g'\}$. In particular, $f$ and $f'$ are distinct elements of $h^{\sqsupset\sqni}$, again showing that $\sqsupset\circ\sqni$ is anti-injective.

\ref{P.Idealsc2}: Take any strictly anti-injective ${\sqsupset}\in\mathbf{G}_G^H$. If ${\sqni}\in\mathbf{I}^I_H$ and $i\in I$ then we have $h\in i_\sqsupset$ so $|i_{\sqni\sqsupset}|\geq|h_\sqsupset|\geq2$, showing $\sqni\circ\sqsupset$ is also strictly anti-injective.

On the other hand, if ${\sqni}\in\mathbf{I}^G_F$ and $h\in H$ then the strict preimages $g_{\sqni}$, for $g\in h_\sqsupset$, are all disjoint and non-empty and hence $|h_{\sqsupset\sqni}|\geq|h_\sqsupset|\geq2$, showing that $\sqsupset\circ\sqni$ is again strictly anti-injective.



\ref{P.Idealsc3}: Take any star-refining ${\sqsupset}\in\mathbf{G}_G^H$. For any $g\in G$, we then have $h\in H$ with $g^\sqcap\subseteq h^\sqsupset$. If ${\sqni}\subseteq I\times H$ is co-surjective, we then have $i\in I$ with $i\sqni h$ so $h^\sqsupset\subseteq i^{\sqni\sqsupset}$ and hence $g^\sqcap\subseteq i^{\sqni\sqsupset}$, showing that $\sqni\circ\sqsupset$ is also star-refining.

Now take ${\sqni}\in\mathbf{S}_F^G$ so, for every $f\in F$, we have $g\in G$ with $g\sqni f$. For every $f'\sqcap f$, we then also have $g'\in G$ with $g'\sqni f'$, which implies $g\sqcap g'$, as $\sqni$ is $\sqcap$-preserving, showing that $f^\sqcap \subseteq g^{\sqcap\sqni}$. Taking $h\in H$ with $g^\sqcap\subseteq h^\sqsupset$, it follows that $f^\sqcap\subseteq h^{\sqsupset\sqni}$, so again ${\sqsupset}\circ{\sqni}$ is star-refining.

\ref{P.Idealsc4} Take any edge-witnessing ${\sqsupset}\in\mathbf{G}_G^H$. If ${\sqni}\in\mathbf{E}_F^G$ then $\sqni$ is edge-surjective and, in particular, surjective. So if $h,h'\in H$ satisfy $h\sqcap h'$ then we have $g\in G$ with $h,h'\sqsupset g$ and then we can further take $f\in F$ with $g\sqni f$, showing that ${\sqsupset}\circ{\sqni}$ is also edge-witnessing.

Now take ${\sqni}\in\mathbf{E}_H^I$. If $i,i'\in I$ satisfy $i\sqcap i'$, this means we have $h\sqin i$ and $h'\sqin i'$ with $h\sqcap h'$. As $\sqsupset$ is edge-witnessing, we then have $g\in G$ with $h,h'\sqsupset g$, again showing that ${\sqni}\circ{\sqsupset}$ is edge-witnessing.
\end{proof}

We summarize the main classes of special morphisms and implications between them in \autoref{fig:morphisms}.
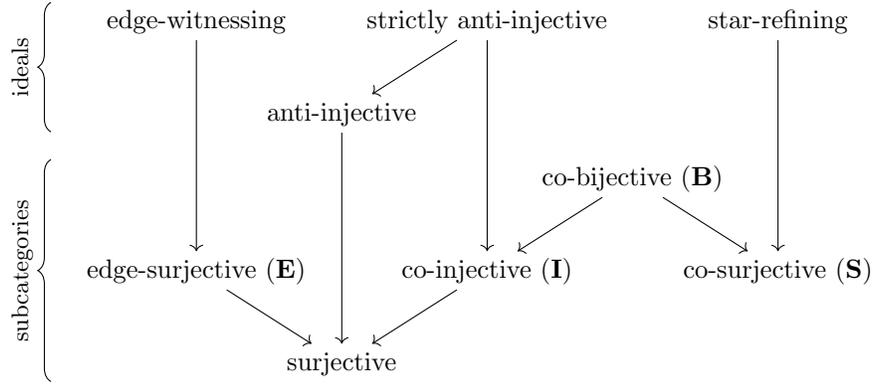
\begin{figure}[!ht]
\centering
\begin{tikzpicture}[
 x = {(5.5em, 0)},
 y = {(0, 3.5em)},
 text height = 1.5ex,
 text depth = 0.25ex,
 ]
 \node (s) at (0, 0) {surjective};
 \node (es) at (-1, 1) {edge-surjective ($\mathbf{E}$)};
 \node (ci) at (1, 1) {co-injective ($\mathbf{I}$)};
 \node (cs) at (3, 1) {co-surjective ($\mathbf{S}$)};
 \node (cb) at (2, 2) {co-bijective ($\mathbf{B}$)};
 \node (ew) at (-1, 3.7) {edge-witnessing};
 \node (ss) at (3, 3.7) {star-refining};
 \node (sai) at (1, 3.7) {strictly anti-injective};
 \node (ai) at (0, 2.7) {anti-injective};
 
 \graph{
 (cb) -> (ci) -> (s),
 (cb) -> (cs),
 (ew) -> (es) -> (s),
 (ss) -> (cs),
 (sai) -> {(ai), (ci)},
 (ai) -> (s),
 };
 
 \draw[decorate, decoration={brace, amplitude=5pt}] (-2, -0.2) -- node[above=4pt, font=\small, sloped]{subcategories} (-2, 2.2);
 \draw[decorate, decoration={brace, amplitude=5pt}] (-2, 2.5) -- node[above=4pt, font=\small, sloped]{ideals} (-2, 3.9);
\end{tikzpicture}
\caption{Implications between main classes of special morphisms.}
\label{fig:morphisms}
\end{figure}

Star-refining morphisms are related to co-edge-witnessing morphisms ${\sqsupset}\in\mathbf{G}_G^H$, i.e. where $\sqsubset$ is edge-witnessing. Explicitly this means that, for all $f,g\in G$,
\[\tag{Co-Edge-Witnessing}f\mathrel{\sqcap}g\qquad\Rightarrow\qquad\exists h\in H\ (f\sqsubset h\sqsupset g).\]
This is especially true for graphs of low degree. First recall that the \emph{degree} of a vertex $g$ is the size of its neighbourhood $|g^\sim|$, i.e. the number of vertices it is adjacent to. The degree of a graph $G$ is defined to be the maximum degree of its vertices, which we denote by
\begin{equation*}
\mathrm{deg}(G)=\max_{g\in G}|g^\sim|.
\end{equation*}

\begin{prp}\label{StarSurjectiveCoEdgeWitnessing}
 Any star-refining ${\sqsupset}\in\mathbf{G}_G^H$ is co-edge-witnessing. If $\mathrm{deg}(F)\leq2$, a composition of co-edge-witnessing ${\sqsupset}\in\mathbf{G}_G^H$ and ${\sqni}\in\mathbf{G}^G_F$ is star-refining, i.e.
 \[
 \text{$\sqsupset$ and $\sqni$ are co-edge-witnessing}\qquad\Rightarrow\qquad\text{$\sqsupset\circ\sqni$ is star-refining}.
 \]
\end{prp}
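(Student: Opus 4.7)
The plan is to handle the two parts separately, relying heavily on reflexivity of the edge relation $\sqcap$ in each step.

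\textbf{First part (star-refining $\Rightarrow$ co-edge-witnessing).} Given $f, g \in G$ with $f \sqcap g$, apply the star-refining property at the vertex $f$ to obtain $h \in H$ with $f^\sqcap \subseteq h^\sqsupset$. Since $\sqcap$ is reflexive, $f \in f^\sqcap$, and by hypothesis $g \in f^\sqcap$, so both $f \sqsubset h$ and $g \sqsubset h$, giving $f \sqsubset h \sqsupset g$ as required.

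\textbf{Second part (composition of co-edge-witnessing is star-refining when $\mathrm{deg}(F) \leq 2$).} First I would observe that any co-edge-witnessing morphism is automatically co-surjective: applied to the reflexive edge $f \sqcap f$, co-edge-witnessing of $\sqni$ yields some $g \in G$ with $f \sqin g \sqni f$, in particular $g \sqni f$, and similarly for $\sqsupset$. This will be needed for the degenerate case. Now fix $f \in F$. Because $\mathrm{deg}(F) \leq 2$, we have $f^\sqcap = \{f\} \cup f^\sim$ with $|f^\sim| \leq 2$; I split into cases by $|f^\sim|$.

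If $|f^\sim| \leq 1$, say $f^\sqcap = \{f, f_1\}$ (the case $f^\sqcap = \{f\}$ is similar), co-edge-witnessing of $\sqni$ applied to $f \sqcap f_1$ yields $g \in G$ with $f \sqin g \sqni f_1$, and then co-surjectivity of $\sqsupset$ on $g$ yields $h \in H$ with $h \sqsupset g$. Both $f$ and $f_1$ then lie in $h^{\sqsupset\circ\sqni}$ via $g$. If $|f^\sim| = 2$, say $f^\sim = \{f_1, f_2\}$, apply co-edge-witnessing of $\sqni$ twice, to $f \sqcap f_1$ and $f \sqcap f_2$, obtaining $g_1, g_2 \in G$ with $f \sqin g_i \sqni f_i$ for $i=1,2$. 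The key step is to note that $g_1 \sqni f \sqcap f \sqin g_2$ (using reflexivity of $\sqcap$ at $f$) together with edge-preservation of $\sqni$ forces $g_1 \sqcap g_2$. Now apply co-edge-witnessing of $\sqsupset$ to the pair $g_1 \sqcap g_2$, producing $h \in H$ with $h \sqsupset g_1$ and $h \sqsupset g_2$. Then $h \sqsupset g_1 \sqni f, f_1$ and $h \sqsupset g_2 \sqni f_2$, so $f^\sqcap = \{f, f_1, f_2\} \subseteq h^{\sqsupset\circ\sqni}$, as required.

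The essential obstacle is the argument in the $|f^\sim| = 2$ case: one must promote two independent co-edge-witnessing applications (one for each neighbour of $f$) into a single witness $h$ covering the whole star of $f$. The trick is that the two intermediate vertices $g_1, g_2$ both sit above $f$ in $G$, so reflexivity of $\sqcap$ on $f$ combined with edge-preservation of $\sqni$ forces $g_1 \sqcap g_2$, which is exactly the input needed for one more application of co-edge-witnessing (this time of $\sqsupset$) to unify them under a common $h$. This is precisely where the degree bound $\mathrm{deg}(F) \leq 2$ is used: with three or more neighbours, we would need to unify three intermediate vertices pairwise in $G$ and then find one $h$ dominating all of them, which a single application of co-edge-witnessing cannot deliver.
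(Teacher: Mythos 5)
Your proof is correct and follows essentially the same route as the paper's: apply star-refinement at one endpoint of the edge for the first part, and for the second part use co-edge-witnessing of $\sqni$ on each edge at $f$, observe that the resulting intermediate vertices are adjacent in $G$ because they share $f$ in their $\sqni$-preimages (via reflexivity and edge-preservation), and then apply co-edge-witnessing of $\sqsupset$ once to unify them. Your explicit case split on $|f^\sim|$ and the observation that co-edge-witnessing implies co-surjectivity merely make precise the degenerate cases the paper treats implicitly.
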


\begin{proof}
If ${\sqsupset}\in\mathbf{G}_G^H$ is star-refining then, for any $f,g\in G$ with $f\mathrel{\sqcap}g$, we have $h\in H$ with $f,g\in g^\sqcap\subseteq h^\sqsupset$, showing that $\sqsupset$ is co-edge-witnessing. On the other hand, say $\mathrm{deg}(F)\leq2$ and we have co-edge-witnessing ${\sqsupset}\in\mathbf{G}_G^H$ and ${\sqni}\in\mathbf{G}^G_F$. For any $f\in F$, we then have at most two other $e,e'\in f^\sqcap$. As $\sqni$ is co-edge-witnessing, we have $g$ and $g'$ with $e,f\in g^{\sqni}$ and $e,f'\in g'^{\sqni}$. In particular, $e\in g^{\sqni}\cap g'^{\sqni}$ so $g\sqcap g'$. As $\sqsupset$ is also co-edge-witnessing, we have $h\in H$ with $g,g'\in h^\sqsupset$ and hence $f^\sqcap=\{e,e',f\}\subseteq\{g,g'\}^{\sqni}\subseteq h^{\sqsupset\sqni}$. This shows that $\sqsupset\circ\sqni$ is star-refining.
\end{proof}

A \emph{triangle} $T$ in a graph $G$ is a complete subgraph with exactly $3$ vertices, i.e. $T=\{f,g,h\}\subseteq G$ and $f\sim g\sim h\sim f$. We call $G$ \emph{triangle-free} if it has no triangles.

\begin{prp}\label{StarEdgeSurjectiveEdgeWitnessing}
 If ${\sqsupset}\in\mathbf{G}_G^H$ and $H$ is triangle-free then
 \[\sqsupset\,\text{is star-refining and edge-surjective}\qquad\Rightarrow\qquad\sqsupset\,\text{is edge-witnessing}.\]
\end{prp}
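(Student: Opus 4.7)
The plan is to verify edge-witnessing directly from the definitions, using star-refinement to produce a single common preimage and using triangle-freeness of $H$ to eliminate a potential obstruction.

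First I would dispose of the degenerate case. Suppose $h \sqcap h'$ with $h = h'$. Since $\sqsupset$ is edge-surjective it is surjective (see \autoref{fig:morphisms}), so we can pick any $g \in h^{\sqsupset}$; then $h \sqsupset g \sqsubset h$ witnesses the required conclusion because $\sqcap$ is reflexive. So we may assume $h \sim h'$, i.e.\ $h \neq h'$.

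Next, invoke edge-surjectivity to obtain $g, g' \in G$ with $h \sqsupset g$, $h' \sqsupset g'$, and $g \sqcap g'$. Apply star-refinement to $g$: there is some $h^* \in H$ with $g^\sqcap \subseteq h^{*\sqsupset}$. Since both $g$ and $g'$ lie in $g^\sqcap$, we get $h^* \sqsupset g$ and $h^* \sqsupset g'$. Edge-preservation applied to $h \sqsupset g \sqsubset h^*$ gives $h \sqcap h^*$, and analogously $h' \sqsupset g' \sqsubset h^*$ gives $h' \sqcap h^*$.

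Now the case analysis concludes the proof. If $h^* = h$, then $h \sqsupset g'$ and $h' \sqsupset g'$, so $g'$ is the desired common preimage. Symmetrically, if $h^* = h'$, then $g$ works. The remaining possibility $h^* \notin \{h, h'\}$ is ruled out by triangle-freeness of $H$: the three vertices $h, h', h^*$ would be pairwise distinct and pairwise $\sqcap$-related, yielding a triangle in $H$, contradicting the hypothesis.

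No step is particularly delicate; the only subtlety is remembering to treat the reflexive edge separately (since star-refinement only yields co-surjectivity of $\sqsupset$, not necessarily surjectivity, so we rely on edge-surjectivity to furnish preimages) and ensuring the vertices in the final triangle are genuinely distinct, which is precisely why we split off the $h=h'$ case at the start.
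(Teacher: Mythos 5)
Your proof is correct and follows essentially the same route as the paper's: edge-surjectivity supplies the adjacent pair $g\sqcap g'$ below $h,h'$, star-refinement supplies a common vertex $h^*$ with $g,g'\sqsubset h^*$, and triangle-freeness forces $h^*\in\{h,h'\}$. The paper organizes this as a proof by contradiction whereas you argue directly (and explicitly split off the loop case $h=h'$, which the paper handles implicitly via surjectivity), but the ingredients and their roles are identical.
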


\begin{proof}
 Assume ${\sqsupset}\in\mathbf{G}_G^H$ is star-refining and edge-surjective but not edge-witnessing. This means we have adjacent $h,h'\in H$ such that $h^\sqsupset$ and $h'^\sqsupset$ are disjoint. However, we still have $g\sqsubset h$ and $g'\sqsubset h'$ with $g\sqcap g'$, by edge-surjectivity. If we had $i\in H$ with $g,g'\sqsubset i$, necessarily with $h\neq i\neq h'$, then edge-preservation would imply that $\{h,h',i\}$ is a triangle of $H$. Thus if $H$ is triangle-free then $i^\sqsupset$ can not contain both $g$ and $g'$, for any $i\in H$. In particular, $g^\sqcap\nsubseteq i^\sqsupset$, for any $i\in H$, which contradicts our assumption that $\sqsupset$ is star-refining.
\end{proof}

Recall that a vertex $g\in G$ is isolated if it has degree $0$. If the graphs in question have no isolated vertices, co-injective morphisms are often anti-injective.

\begin{prp} \label{StarEdgeAntiInjective}
 If $H$ has no isolated vertices then, for any ${\sqsupset}\in\mathbf{I}^H_G$,
 \begin{align*}
 \sqsupset\,\text{is edge-witnessing}\qquad&\Rightarrow\qquad\sqsupset\,\text{is anti-injective}.\\
 \intertext{Similarly, if $G$ has no isolated vertices then, for any ${\sqsupset}\in\mathbf{I}^H_G$,}
 \sqsupset\,\text{is star-refining}\qquad&\Rightarrow\qquad\sqsupset\,\text{is anti-injective}.
 \end{align*}
\end{prp}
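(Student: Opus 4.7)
For both implications, the plan is to use co-injectivity to produce a ``witness'' vertex $g \in G$ whose only relation upward is to a fixed $h \in H$, and then produce a second vertex in $h^\sqsupset$ by exploiting the extra hypothesis (edge-witnessing or star-refining).

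For the first implication, fix $h \in H$. Co-injectivity of $\sqsupset$ supplies $g \in G$ with $g^\sqsubset = \{h\}$; in particular $h \sqsupset g$. Since $H$ has no isolated vertices, pick $h' \in H$ with $h \sim h'$, so in particular $h \sqcap h'$. Edge-witnessing then yields $g' \in G$ with $h \sqsupset g' \sqsubset h'$. Since $g'^\sqsubset \supseteq \{h, h'\}$ has two elements while $g^\sqsubset = \{h\}$, we have $g \ne g'$, and both lie in $h^\sqsupset$. Hence $|h^\sqsupset| \geq 2$, as required.

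For the second implication, again fix $h \in H$ and use co-injectivity to get $g \in G$ with $g^\sqsubset = \{h\}$. Now use the assumption that $G$ has no isolated vertices to pick $g' \in g^\sim$. Star-refining gives some $h'' \in H$ with $g^\sqcap \subseteq h''^\sqsupset$; in particular $g \in h''^\sqsupset$, i.e.\ $h'' \in g^\sqsubset = \{h\}$, forcing $h'' = h$. Then $g' \in g^\sqcap \subseteq h^\sqsupset$ too, and since $g' \neq g$ we obtain $|h^\sqsupset| \geq 2$.

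The only ``obstacle'' is the routine check that the newly produced vertex differs from the original, and in each case this is automatic: in the edge-witnessing case it follows from comparing the sets $g^\sqsubset$ and $g'^\sqsubset$, while in the star-refining case the adjacency relation in $G$ is irreflexive on distinct pairs. No calculation beyond this is required.
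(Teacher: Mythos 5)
Your proof is correct and follows essentially the same route as the paper's: in both cases one pairs the co-injectivity witness $g$ with $g^\sqsubset=\{h\}$ against a second vertex of $h^\sqsupset$ produced by the extra hypothesis, distinguishing the two via the size of their $\sqsubset$-images (edge-witnessing case) or by noting the star-refining vertex must be $h$ itself (star-refining case). No issues.
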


\begin{proof}
 Take an edge-witnessing ${\sqsupset}\in\mathbf{I}^H_G$. If $H$ has no isolated vertices, every $h\in H$ is adjacent to some $i\in H$. We then have $g\in G$ witnessing $h\mathrel{\sqcap}i$, i.e. $g^\sqsubset=\{h,i\}$. As $\sqsupset$ is co-injective, we must also have some other $f\in G$ with $f^\sqsubset=\{h\}$ and hence $\{f,g\}\subseteq h^\sqsupset$. As $h$ was arbitrary, this shows that $\sqsupset$ is anti-injective.
 
 Now take any star-refining ${\sqsupset}\in\mathbf{I}^H_G$. As $\sqsupset$ is co-injective, for every $h\in H$, we have $g\in G$ with $g^\sqsubset=\{h\}$. Since $\sqsupset$ is star-refining, there is $h'\in H$ with $g^\sqcap\subseteq h'^\sqsupset$. As $g\in g^\sqcap$, the only possibility is that $h'=h$. Thus $|h^\sqsupset|\geq|g^\sqcap|\geq2$, as long as $g$ is not isolated. If $G$ has no isolated vertices, this shows that $\sqsupset$ is anti-injective.
\end{proof}

Under certain conditions, we can also show surjectivity implies edge-surjectivity.

A \emph{cycle} is a connected graph in which very vertex has degree $2$. A graph $G$ is \emph{acyclic} if it contains no cycles, or, equivalently, if for all adjacent $a$ and $b$ we can partition $G$ into subsets $A\ni a$ and $B\ni b$ such that $(a,b)$ is the only edge between $A$ and $B$.

\begin{prp}\label{SurjectiveImpliesEdgeSurjective}
 If $G$ is connected and $H$ is acyclic then, for any ${\sqsupset}\in\mathbf{S}_G^H$,
 \[\text{$\sqsupset$ is surjective}\qquad\Rightarrow\qquad\text{$\sqsupset$ is edge-surjective}.\]
\end{prp}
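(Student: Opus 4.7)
The plan is to invoke the fourth equivalent characterization of acyclicity: since $h \sqcap h'$ in $H$, I can assume (handling the loop case $h = h'$ separately using just surjectivity and reflexivity of $\sqcap$) that $h \sim h'$, whereupon acyclicity yields a partition $H = H_0 \sqcup H_1$ with $h \in H_0$, $h' \in H_1$, and $(h,h')$ the unique edge between $H_0$ and $H_1$. I then pull this partition back through $\sqsupset$, set $G_i = H_i^{\sqsupset}$, and use co-surjectivity of $\sqsupset$ to conclude $G = G_0 \cup G_1$. The strategy is to show that if edge-surjectivity fails at $(h,h')$, then $\{G_0 \setminus G_1, \, G_1\}$ is a non-trivial discrete partition of $G$, contradicting connectedness.

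The key lemma to extract is that, assuming no $g \in h^\sqsupset$ and $g' \in h'^\sqsupset$ satisfy $g \sqcap g'$, \emph{no} pair with $g \in G_0$ and $g' \in G_1$ can be $\sqcap$-related. Indeed, if such a pair existed, I would pick witnesses $h_0 \in H_0$ with $h_0 \sqsupset g$ and $h_1 \in H_1$ with $h_1 \sqsupset g'$; edge-preservation forces $h_0 \sqcap h_1$, and since $(h,h')$ is the only edge between $H_0$ and $H_1$ and $H_0 \cap H_1 = \emptyset$, this pins $h_0 = h$ and $h_1 = h'$, directly contradicting the failure of edge-surjectivity.

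For non-emptiness of the partition blocks, I would use surjectivity of $\sqsupset$. Pick $g' \in h'^\sqsupset$; if $g' \in G_0$, then $h_0 \sqsupset g'$ for some $h_0 \in H_0$, and edge-preservation applied to $h' \sqsupset g' \sqcap g' \sqsubset h_0$ (reflexivity) yields $h' \sqcap h_0$, forcing $h_0 = h$; then $(g', g')$ itself witnesses edge-surjectivity, a contradiction. So $h'^\sqsupset \subseteq G_1 \setminus G_0$, making $G_1 \setminus G_0$ (hence $G_1$) non-empty. A symmetric argument shows $h^\sqsupset \subseteq G_0 \setminus G_1$, so the other block is non-empty too.

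The main obstacle is really just bookkeeping: one has to treat the loop case, the possibility $g = g'$, and overlaps like $g \in G_0 \cap G_1$ carefully, observing each time that the ``exceptional'' situation already yields the edge-surjectivity witness by reflexivity. Once all these degenerate sub-cases are seen to feed into the conclusion, the contradiction with connectedness of $G$ closes the argument.
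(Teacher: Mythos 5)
Your proposal is correct and follows essentially the same route as the paper's proof: partition $H$ via the acyclicity characterization, pull back through $\sqsupset$ to cover $G$, and use edge-preservation to pin any cross-edge (or common element) of the preimages down to a witness for $(h,h')$, with connectedness of $G$ supplying such a cross-edge. You phrase it contrapositively (failure of edge-surjectivity yields a discrete partition of $G$) where the paper argues directly, but the content is identical.
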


\begin{proof}
 Take any adjacent $a,b\in H$. As $H$ is acyclic, we can partition $H$ into subsets $A\ni a$ and $B\ni b$ such that $(a,b)$ is the only edge between them. If $\sqsupset$ is surjective then $A^\sqsupset$ and $B^\sqsupset$ are non-empty and cover $G$. As $G$ is connected, it follows that we have $f\in A^\sqsupset$ and $g\in B^\sqsupset$ with $f\mathrel{\sqcap}g$ (if $A^\sqsupset$ and $B^\sqsupset$ are disjoint then they partition $G$ so this follows immediately from connectedness, otherwise we have $j\in A^\sqsupset\cap B^\sqsupset$ and we can just set $f=g=j$). Pick $a'\in A$ and $b'\in B$ such that $f\sqsubset a'$ and $g\sqsubset b'$. Since $f\sqcap g$ and $\sqsupset$ is edge-preserving, we have $a'\mathrel{\sqcap}b'$. Since $(a,b)$ is the only edge between $A$ and $B$, it follows that $a'=a$ and $b'=b$. This shows that $\sqsupset$ is edge-surjective.
\end{proof}

A \emph{tree} is a connected acyclic graph.
Since we shall be concerned with categories of trees in our examples, let us summarize the special properties of morphisms between trees, as a corollary of Propositions~\ref{StarEdgeSurjectiveEdgeWitnessing}--\ref{SurjectiveImpliesEdgeSurjective}.
\begin{cor} \label{TreeMorphismsSummary}
    Let $G$ and $H$ be nondegenerate trees and let ${\sqsupset} \in \mathbf{B}^H_G$.
    Then ${\sqsupset}$ is edge-surjective and we have
    \[\text{$\sqsupset$ is star-refining}\quad\Rightarrow\quad\text{$\sqsupset$ is edge-witnessing}\quad\Rightarrow\quad\text{$\sqsupset$ is anti-injective}.\]
\end{cor}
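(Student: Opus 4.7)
The statement is essentially a bookkeeping corollary that assembles Propositions~\ref{SurjectiveImpliesEdgeSurjective}, \ref{StarEdgeSurjectiveEdgeWitnessing}, and \ref{StarEdgeAntiInjective} under the common hypotheses that $G$ and $H$ are nondegenerate trees (so connected, acyclic, triangle-free, and without isolated vertices) and that ${\sqsupset}$ is co-bijective. My plan is to verify each of the three conclusions by checking that the corresponding proposition's hypotheses are met, in the order edge-surjective, then star-refining $\Rightarrow$ edge-witnessing, then edge-witnessing $\Rightarrow$ anti-injective.

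For edge-surjectivity, I would first observe that ${\sqsupset} \in \mathbf{B} \subseteq \mathbf{I}$ is co-injective, and note the remark from \autoref{ss.relations} that every co-injective relation is surjective (indeed, for each $h \in H$ co-injectivity yields $g \in G$ with $g^{\sqsubset} = \{h\}$, whence $h \sqsupset g$). Since $G$ is a nondegenerate tree it is connected, and $H$ being a tree is acyclic, so all hypotheses of \autoref{SurjectiveImpliesEdgeSurjective} hold and edge-surjectivity of ${\sqsupset}$ follows.

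For the first implication, I would invoke \autoref{StarEdgeSurjectiveEdgeWitnessing}: its hypotheses require $H$ triangle-free, which is immediate as trees are acyclic and hence contain no triangles, and that ${\sqsupset}$ be both star-refining (by assumption) and edge-surjective (just established), so edge-witnessing follows. For the second implication, I would apply \autoref{StarEdgeAntiInjective}: its hypotheses require $H$ to have no isolated vertices, which holds because $H$ is a nondegenerate tree (every vertex in a connected graph on at least two vertices has a neighbour), and ${\sqsupset} \in \mathbf{I}^H_G$, which we have from ${\sqsupset} \in \mathbf{B}$, so anti-injectivity follows from edge-witnessing.

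There is no real obstacle: the only thing to be careful about is the interpretation of \emph{nondegenerate}, which I take to mean the underlying vertex set has size at least two, so that combined with connectedness we get the absence of isolated vertices needed in the third step. With this reading, the corollary is an immediate packaging of the three propositions and admits a proof of only a few lines.
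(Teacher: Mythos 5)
Your proposal is correct and follows exactly the route the paper intends: the corollary is stated there as an immediate consequence of Propositions~\ref{SurjectiveImpliesEdgeSurjective}, \ref{StarEdgeSurjectiveEdgeWitnessing} and \ref{StarEdgeAntiInjective}, and your verification of their hypotheses (connectedness and acyclicity of the trees, triangle-freeness, absence of isolated vertices via nondegeneracy, and surjectivity from co-injectivity) is precisely the bookkeeping the paper leaves implicit.
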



In \autoref{ss.arc} we will consider morphisms satisfying certain extra properties. First let us call ${\sqsupset}\in\mathbf{G}^H_G$ \emph{monotone} if, for all $C\subseteq H$,
\[\tag{Monotonicity}\label{def:monotone}C\text{ is connected}\qquad\Rightarrow\qquad C^\sqsupset\text{ is connected}.\]
If ${\sqsupset}\in\mathbf{G}^H_G$ is monotone, the preimage of each vertex $h^\sqsupset$ is connected. If $\sqsupset$ is edge-surjective, this is also sufficient for $\sqsupset$ to be monotone.

\begin{prp}\label{MonotoneEdgeSurjective}
 If ${\sqsupset}\in\mathbf{G}^H_G$ is surjective and monotone, then $\sqsupset$ is edge-surjective. Moreover, if ${\sqsupset}\in\mathbf{G}^H_G$ is edge-surjective, then
\[
\forall h\in H\ (h^\sqsupset\text{ is connected})\qquad\Rightarrow\qquad\sqsupset\,\text{is monotone}.
\]
\end{prp}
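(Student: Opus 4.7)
The plan is to prove the two implications by unpacking the definitions of connectedness (no nontrivial discrete partition) and exploiting that monotonicity automatically makes every singleton preimage $h^\sqsupset$ connected.

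For the first implication, suppose $\sqsupset$ is surjective and monotone, and take any $h,h'\in H$ with $h\sqcap h'$. Surjectivity gives $h^\sqsupset\neq\emptyset\neq h'^\sqsupset$. Since the singleton $\{h\}$ is connected, monotonicity says each $h^\sqsupset$ is connected, and since $\{h,h'\}$ is connected the union $h^\sqsupset\cup h'^\sqsupset=\{h,h'\}^\sqsupset$ is connected as well. If the two preimages meet, I pick any $g$ in their intersection, using the reflexive loop $g\sqcap g$ to witness edge-surjectivity. Otherwise the two nonempty connected sets $h^\sqsupset$ and $h'^\sqsupset$ would form a discrete partition of their union unless some edge $g\sqcap g'$ runs between them, and that edge supplies the required witness $h\sqsupset g\sqcap g'\sqsubset h'$.

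For the second (and more substantial) implication, assume $\sqsupset$ is edge-surjective and every $h^\sqsupset$ is connected. Fix any connected $C\subseteq H$ and suppose, aiming at contradiction, that $C^\sqsupset=A\sqcup B$ is a nontrivial discrete partition. The key observation is that since each $h^\sqsupset$ is connected and contained in $C^\sqsupset$, it must lie entirely in $A$ or entirely in $B$. Defining $C_A=\{h\in C:h^\sqsupset\subseteq A\}$ and $C_B$ analogously, the identity $C^\sqsupset=\bigcup_{h\in C}h^\sqsupset$ forces $C=C_A\cup C_B$, and $A\neq\emptyset\neq B$ forces both $C_A$ and $C_B$ to be nonempty. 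Connectedness of $C$ then yields an edge $h\sqcap h'$ with $h\in C_A$, $h'\in C_B$.

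At this point edge-surjectivity provides $g,g'\in G$ with $h\sqsupset g\sqcap g'\sqsubset h'$, so $g\in h^\sqsupset\subseteq A$ and $g'\in h'^\sqsupset\subseteq B$ are adjacent, contradicting the fact that $\{A,B\}$ is a discrete partition of $C^\sqsupset$. Hence $C^\sqsupset$ admits no nontrivial discrete partition and is therefore connected, proving monotonicity. The only subtlety I expect is the bookkeeping in the second part: one has to verify that $C_A$ and $C_B$ are both nonempty and that they exhaust $C$, which is what justifies the application of connectedness to produce the crucial cross-edge.
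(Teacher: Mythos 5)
Your proof is correct and takes essentially the same approach as the paper's: the first implication uses connectedness of $h^\sqsupset\cup h'^\sqsupset$ exactly as in the paper, and your second part merely reorganises the paper's two-case argument (a preimage meeting both $A$ and $B$ would be disconnected; otherwise the sets $C_A=A^\sqsubset\cap C$ and $C_B=B^\sqsubset\cap C$ discretely partition $C$ unless edge-surjectivity produces a cross-edge). The bookkeeping you flag is handled by noting that edge-surjectivity forces every $h^\sqsupset$ to be non-empty, so $C_A$ and $C_B$ are indeed disjoint and exhaust $C$.
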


\begin{proof}
If ${\sqsupset}\in\mathbf{G}^H_G$ is monotone then, for any $h,h'\in H$ with $h\mathrel{\sqcap}h'$, it follows that $h^\sqsupset\cup h'^\sqsupset$ is connected. If $\sqsupset$ is also surjective then $h^\sqsupset\neq\emptyset\neq h'^\sqsupset$ and so connectivity implies that we have $g\sqsubset h$ and $g'\sqsubset h'$ with $g\mathrel{\sqcap}g'$, showing $\sqsupset$ is edge-surjective.

Now assume ${\sqsupset}\in\mathbf{G}^H_G$ is edge-surjective and $h^\sqsupset$ is connected, for all $h\in H$. Take any connected $C\subseteq H$ and say we had a discrete partition $A,B\subseteq C^\sqsupset$, i.e. $C = A \cup B$ with no edge between $A$ and $B$.
If we had $c\in A^\sqsubset\cap B^\sqsubset\cap C$ then $A\cap c^\sqsupset$ and $B\cap c^\sqsupset$ would discretely partition $c^\sqsupset$, contradicting our assumption on $\sqsupset$. But if $A^\sqsubset\cap B^\sqsubset\cap C=\emptyset$ then $A^\sqsubset\cap C$ and $B^\sqsubset\cap C$ would discretely partition $C$, as $\sqsupset$ is edge-surjective, contradicting its connectedness. Thus no such partition exists, i.e. $C^\sqsupset$ is also connected, showing that $\sqsupset$ is monotone.
\end{proof}

Monotone edge-witnessing morphisms are often also co-edge-witnessing.

\begin{prp}\label{EdgevsCoEdgeWitnessing}
 If $G$ is acyclic then, for any monotone ${\sqsupset}\in\mathbf{S}_G^H$,
 \begin{equation*}
 \sqsupset\,\text{is edge-witnessing}\qquad\Rightarrow\qquad\sqsupset\,\text{is co-edge-witnessing}.
 \end{equation*}
\end{prp}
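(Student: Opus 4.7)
The plan is to argue by contradiction, using the bridge structure provided by acyclicity to show that a failure of co-edge-witnessing would force a violation of edge-witnessing.

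First I reduce to the case $f \sim g$: if $f = g$, then co-surjectivity of $\sqsupset \in \mathbf{S}^H_G$ already gives some $h \in f^\sqsubset = g^\sqsubset$, and we are done. So assume $f$ and $g$ are adjacent and distinct. Since $G$ is acyclic, I invoke characterisation (4) of acyclic graphs recalled just before \autoref{SurjectiveImpliesEdgeSurjective} to partition $G = F \sqcup G'$ with $f \in F$, $g \in G'$, and $(f,g)$ the only edge crossing the partition.

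The second step is the key geometric observation. For every $h \in H$, the singleton $\{h\}$ is connected, so by monotonicity the preimage $h^\sqsupset$ is a connected subgraph of $G$. I then claim that each $h^\sqsupset$ is contained in $F$, is contained in $G'$, or else contains both $f$ and $g$: indeed, if $h^\sqsupset$ met both $F$ and $G'$, any path in the induced connected subgraph $h^\sqsupset$ joining a vertex of $F$ to a vertex of $G'$ must traverse the unique bridge $(f,g)$, forcing $\{f,g\} \subseteq h^\sqsupset$.

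Now suppose toward a contradiction that $f^\sqsubset \cap g^\sqsubset = \emptyset$. Then no $h$ has $\{f,g\} \subseteq h^\sqsupset$, so for every $h \in H$ we have $h^\sqsupset \subseteq F$ or $h^\sqsupset \subseteq G'$. By co-surjectivity, pick $h_f \in f^\sqsubset$ and $h_g \in g^\sqsubset$; then $f \in h_f^\sqsupset$ forces $h_f^\sqsupset \subseteq F$, and symmetrically $h_g^\sqsupset \subseteq G'$. Since $f \sqcap g$ and $h_f \sqsupset f$, $h_g \sqsupset g$, edge-preservation of $\sqsupset$ gives $h_f \sqcap h_g$. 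Our assumption $f^\sqsubset \cap g^\sqsubset = \emptyset$ rules out $h_f = h_g$, so in fact $h_f \sim h_g$. But then edge-witnessing supplies some $g'' \in h_f^\sqsupset \cap h_g^\sqsupset \subseteq F \cap G' = \emptyset$, the desired contradiction.

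The main (small) obstacle is simply the bridge claim in the second paragraph — verifying that a connected induced subgraph meeting both sides of the bridge must contain both endpoints — but this is an immediate consequence of the uniqueness of the bridge. Everything else is bookkeeping with the definitions of edge-preservation, co-surjectivity, and edge-witnessing.
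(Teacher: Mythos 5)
Your proof is correct and follows essentially the same route as the paper's: reduce to the adjacent case, use acyclicity to split $G$ along the bridge $(f,g)$, use monotonicity to see that vertex preimages are connected and hence cannot straddle the bridge without containing both $f$ and $g$, and then apply edge-preservation plus edge-witnessing to the lifted edge. The only difference is that you phrase it as a contradiction (all preimages one-sided forces the edge-witness into $F\cap G'=\emptyset$) while the paper argues directly that one of the two lifted vertices already has a preimage meeting both sides; the content is identical.
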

\begin{proof}
 Take any $e,f\in G$ with $e\mathrel{\sqcap}f$. If $e=f$ then the co-surjectivity of $\sqsupset$ already yields $h\in e^\sqsubset=f^\sqsubset$. Otherwise, as $G$ is acyclic, we can partition $G$ into $E\ni e$ and $F\ni f$ with $E^\sqcap\cap F^\sqcap=\{e,f\}$. As $\sqsupset$ is co-surjective, we have $j\sqsupset e$ and $k\sqsupset f$, necessarily with $j\mathrel{\sqcap}k$, as $\sqsupset$ is also edge-preserving. If $\sqsupset$ is also edge-witnessing then we have $g\in j^\sqsupset\cap k^\sqsupset$ so $j\in g^\sqsubset\cap e^\sqsubset$ and $k\in g^\sqsubset\cap f^\sqsubset$. But $g$ lies in $E$ or $F$ and hence $j$ or $k$ lies in $E^\sqsubset\cap F^\sqsubset$. However, $\sqsupset$ is monotone so, for any $h\in E^\sqsubset\cap F^\sqsubset$, $h^\sqsupset$ must be a connected subset containing elements of $E$ and $F$, which can only happen if $e,f\in h^\sqsupset$. This shows that $\sqsupset$ is co-edge-witnessing.
\end{proof}

To show that the ideals of edge-witnessing and star-refining morphisms are wide in a given category, we often use the following result, which, for every graph $G$, yields a graph $H$ of a similar shape together with a suitable morphism from $H$ to $G$.
 \begin{lemma}[Edge splitting] \label{EdgeSplitting}
    For any graph $G$ we consider the graph $H$ where every edge of $G$ is split intro three edges by two new vertices, namely $H = G \cup \{s_{g, g'}: g \sim g' \in G\}$ with the edge relation $g \sqcap s_{g, g'} \sqcap s_{g', g} \sqcap g'$ for every adjacent $g, g' \in G$.
    Moreover we consider ${\sqsupset} \subseteq G \times H$ defined by $g^{\sqsubset} = \{g\}$ and $s_{g, g'}^\sqsubset = \{g, g'\}$.
    Then ${\sqsupset} \in \mathbf{B}^G_H$ and it is monotone, edge-witnessing, and star-refining.
\end{lemma}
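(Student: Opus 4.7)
The plan is to verify the four properties (membership in $\mathbf{B}$, monotonicity, edge-witnessing, star-refining) by direct finite case analysis, organized around the three types of vertices of $H$: original vertices $g \in G$, and the two kinds of subdivision vertices $s_{g,g'}$, $s_{g',g}$ attached to each $G$-edge. The key computation that feeds every step is the explicit description $a^\sqsupset = \{a\} \cup \{s_{a,c} : c \sim_G a\} \cup \{s_{c,a} : c \sim_G a\}$, obtained from $s_{g,g'}^\sqsubset = \{g,g'\}$ and $g^\sqsubset = \{g\}$; likewise the $H$-neighbourhoods are $g^\sqcap = \{g\} \cup \{s_{g,c} : c \sim_G g\}$ and $s_{g,g'}^\sqcap = \{s_{g,g'}, g, s_{g',g}\}$.

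First I would check that $\sqsupset \in \mathbf{B}^G_H$. Co-surjectivity is immediate from $g \sqsupset g$ and $g \sqsupset s_{g,g'}$. For co-injectivity, each $g \in G$ itself satisfies $g^\sqsubset = \{g\}$ by construction. Edge-preservation is a short case check on $a \sqsupset h \sqcap h' \sqsubset a'$: since the only $H$-edges involve pairs contained in $\{g, s_{g,g'}, s_{g',g}, g'\}$ for some edge $g \sim_G g'$, one gets $a, a' \in \{g, g'\}$ in every case, and $g \sqcap_G g'$ yields $a \sqcap_G a'$.

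For monotonicity, given connected $C \subseteq G$, I note that for each $a \in C$ the set $a^\sqsupset$ is connected in $H$ because $a - s_{a,c} - s_{c,a}$ is a path in $H$; and each $G$-adjacency $a \sim a'$ inside $C$ lifts to the path $a - s_{a,a'} - s_{a',a} - a'$ lying entirely in $C^\sqsupset$. So connectivity of $C$ in $G$ propagates to $C^\sqsupset$. For edge-witnessing, every non-loop edge of $H$ is of the form $(g, s_{g,g'})$ or $(s_{g,g'}, s_{g',g})$, and in both cases $a = g$ is a common element of the two $\sqsubset$-sets; loops are trivial from co-surjectivity. Finally for star-refining, at $h = g \in G$ the choice $a = g$ works because $g^\sqcap \subseteq g^\sqsupset$; at $h = s_{g,g'}$ the choice $a = g$ works because $g \in \{g,g'\} = s_{g,g'}^\sqsubset$ and $g \in \{g',g\} = s_{g',g}^\sqsubset$, so both off-loop neighbours of $s_{g,g'}$ lie in $g^\sqsupset$.

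There is no real obstacle here — the lemma is a clean combinatorial verification, and the only thing to be careful about is bookkeeping the asymmetric roles of $s_{g,g'}$ versus $s_{g',g}$ in the subdivision when computing $a^\sqsupset$ and $h^\sqcap$, so that the relevant inclusions in the star-refining and monotonicity arguments are not missed.
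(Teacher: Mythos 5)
Your verifications of co-bijectivity, edge-preservation, monotonicity and the star-refining property are correct and follow essentially the same computations as the paper's proof. The one cosmetic difference is monotonicity: you check connectedness of $C^\sqsupset$ directly for every connected $C\subseteq G$ by linking the sets $a^\sqsupset$ along lifted paths $a - s_{a,a'} - s_{a',a} - a'$, whereas the paper only checks that each vertex preimage $g^\sqsupset$ is connected and then invokes \autoref{MonotoneEdgeSurjective}. Both are fine.

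There is, however, a directional slip in your edge-witnessing step. With ${\sqsupset}\subseteq G\times H$ a morphism from $H$ to $G$, edge-witnessing in the sense of \autoref{defin:relationsgraphs} requires that every edge $g\sqcap g'$ of the \emph{codomain} $G$ be witnessed by a single vertex of the domain, i.e.\ some $h\in H$ with $g\sqsupset h\sqsubset g'$. What you actually verified --- that for every edge $h\sqcap h'$ of $H$ the sets $h^\sqsubset$ and $h'^\sqsubset$ share an element --- is co-edge-witnessing (edge-witnessing of the opposite relation $\sqsubset$), a genuinely different property in this paper's framework: it is, for instance, the property that every star-refining morphism automatically enjoys by \autoref{StarSurjectiveCoEdgeWitnessing}, so your check adds nothing beyond your star-refining argument. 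The claim you need is true and immediate, but for a different reason: for adjacent $g,g'\in G$ the new vertex $s_{g,g'}$ has $s_{g,g'}^\sqsubset=\{g,g'\}$, hence $g\sqsupset s_{g,g'}\sqsubset g'$, and a loop $g\sqcap g$ is witnessed by $g$ itself since $g^\sqsubset=\{g\}$. So the lemma stands, but your edge-witnessing paragraph should be replaced by this one-line verification about edges of $G$ rather than edges of $H$.
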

\begin{proof}
    It is clear from the definition that $\sqsupset$ is co-bijective and edge-witnessing.
    As $\{g, s_{g, g'}, s_{g', g}, g'\}^\sqsubset = \{g, g'\}$ for every adjacent $g, g' \in G$, we have that $\sqsupset$ is edge-preserving.
    For every $h \in H$ we have $h^\sqcap = \{g, s_{g, g'}: g' \sim_G g\}$ if $h = g \in G$ and $h^\sqcap = \{g, s_{g, g'}, s_{g', g}\}$ if $h = s_{g, g'}$.
    In both cases, $h^\sqcap \subseteq g^\sqsupset$, and so $\sqsupset$ is star-refining.
    Finally, for every $g \in G$ we have $g^\sqsupset = \{g, s_{g, g'}, s_{g', g}: g' \sim_G g\}$, which is connected, and so $\sqsupset$ is monotone by \autoref{MonotoneEdgeSurjective}.
\end{proof}

We conclude the section with a little observation we will need later on.
\begin{lemma}\label{RestrictionsOfMonotone}
Let $S$ and $T$ be trees. If ${\sqsupset}\in\mathbf S_S^T$ is surjective and monotone, $S'\subseteq S$ is connected and $T' \subseteq T$ then ${\sqni} = {\sqsupset}\cap(T' \times S')$ is monotone.
\end{lemma}
\begin{proof}
    For every connected $C \subseteq T'$ we have $C^{\sqni} = C^\sqsupset \cap S'$, which is connected as the intersection of connected subgraphs of a tree.
\end{proof}

\section{Compacta from sequences of graphs} \label{s.InducedPoset}

Our aim is to construct, in the spirit of \autoref{ss:spectrum}, spectra of $\omega$-posets coming from sequences in suitable graph categories.

Let $\mathbf{C}$ be a subcategory of the graph category $\mathbf{G}$.
A \emph{sequence} in $\mathbf{C}$ is a functor from the thin category $\omega$ to $\mathbf{C}$.  More explicitly, this is a collection of morphisms ${\sqsupset}_n^m \in \mathbf{C}^{G_m}_{G_n}$, for $m \leq n \in \omega$, such that each ${\sqsupset}_n^n$ is the identity morphism and 
 \[
 l \leq m \leq n\in\omega\qquad\Rightarrow\qquad{\sqsupset_m^l\circ\sqsupset_n^m} ={ \sqsupset_n^l}.
 \]
When writing ``let $(G_n,\sqsupset_n^m)$ be a sequence in $\mathbf C$'' we mean that the $G_n$'s are objects of $\mathbf C$, ${\sqsupset_n^m}\in\mathbf C_{G_n}^{G_m}$ for all $m\leq n$, and $(\sqsupset_n^m)$ is a sequence.

We say that such a sequence $(\sqsupset^m_n)$ lies in $I
\subseteq \mathbf{C}$ if ${\sqsupset}^m_n \in I$ whenever $m < n$.  Sequences will be named accordingly, e.g. we call $(\sqsupset_n^m)$ an ``anti-injective sequence'' if $\sqsupset_n^m$ is anti-injective, whenever $m<n$.  Note that for $(\sqsupset^m_n)$ to lie in an ideal $I$, it suffices that ${\sqsupset}^m_{m + 1} \in I$, for all $m$.  In particular, a co-injective/co-surjective/edge-surjective sequence $(\sqsupset^m_n)$ is anti-injective/star-refining/edge-witnessing if and only if the morphisms $\sqsupset^m_{m+1}$ are, thanks to \autoref{Ideals}.

Now we come to the key definition of the paper. Recall that $\mathbf S$ is the wide subcategory of $\mathbf{G}$ with co-surjective morphisms (see \autoref{S.GraphCategory}).
\begin{dfn}\label{dfn:inducedposet}
Let $(G_n,\sqsupset_n^m)$ be a sequence in $\mathbf{S}$. We define the \emph{induced poset} $(\PP, \leq)$ of $(G_n,\sqsupset_n^m)$ as the disjoint union $\bigcup_{n \in \omega} G_n \times \{n\}$ with $(p, n) \leq (q, m)$ if and only if $p \sqsubset^m_n q$.
For convenience, we may assume that the sets $G_n$ are pairwise disjoint so we can identify them with the sets $G_n \times \{n\}$. Then we have
 \[
 \tag{Induced Poset}\label{Induced Poset} \mathbb{P}=\bigcup_{n\in\omega}G_n\qquad\text{and}\qquad {\leq}\ = \bigcup_{m\leq n}\sqsubset_n^m.
 \]
\end{dfn}

\begin{prp} \label{prp:InducedPoset}
	Let $(G_n,\sqsupset_n^m)$ be a sequence in $\mathbf{S}$  with induced poset $\PP$.
	Then $\PP$ is an infinite graded $\omega$-poset whose $n$\textsuperscript{th} rank-set $\mathsf{r}^{-1}(\{n\})$ is the graph $G_n$, for every $n\in\omega$.
	Moreover, the following are equivalent.
	\begin{enumerate}
		\item\label{itm:pos_atomless} The poset $\PP$ is atomless.
		\item\label{itm:pos_levels} The $n$\textsuperscript{th} level coincides with the $n$\textsuperscript{th} rank-set, i.e. $\PP_n = G_n$, for every $n \in \omega$.
		\item\label{itm:pos_surjective} The sequence $(\sqsupset^m_n)$ is surjective.
	\end{enumerate}
\end{prp}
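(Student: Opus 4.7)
The plan is to first prove $\mathsf{r}(p)=n$ for every $p\in G_n$, and then derive the remaining structural properties and the equivalence cycle $(3)\Rightarrow(2)\Rightarrow(1)\Rightarrow(3)$. For the rank computation I would induct on $n$. The upper bound $\mathsf{r}(p)\leq n$ for $p\in G_n$ is automatic because any $q>p$ lies in $G_m$ with $m<n$ and so has rank at most $n-1$ by induction. The lower bound uses co-surjectivity of $\sqsupset_n^{n-1}$: it yields $q\in G_{n-1}$ with $p<q$, forcing $\mathsf{r}(p)\geq\mathsf{r}(q)+1=n$. With $\mathsf{r}^{-1}(\{n\})=G_n$ in hand, finiteness of each $G_n$ shows $\PP$ is an $\omega$-poset (Noetherianness follows because any strict $\leq$-chain has strictly decreasing levels, hence finite length), and infiniteness is immediate. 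For gradedness, given $q<p$ in $G_k$ and $G_m$ with $m<n<k$, I would factor $\sqsubset_k^m=\sqsubset_n^m\circ\sqsubset_k^n$ to interpolate some $p'\in G_n$ with $q<p'<p$.

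For the equivalences, I will repeatedly use that $G_n\subseteq\PP_n$ always: predecessors of $p\in G_n$ lie in $G_k$ for $k>n$ and hence have rank $>n$. For $(3)\Rightarrow(2)$, surjectivity of $\sqsupset_n^m$ means every $p\in G_m$ ($m<n$) has some $q\in G_n$ with $q<p$, so $p\notin\PP_n$; combined with $G_n\subseteq\PP_n$ this gives $\PP_n=G_n$. For $(2)\Rightarrow(1)$: given $p\in G_m$, from $p\notin G_{m+1}=\PP_{m+1}$ together with $\mathsf r(p)=m\leq m+1$, the level definition forces $q<p$ with $\mathsf r(q)\leq m+1$, so $q\in G_{m+1}$, showing $p$ is not minimal.

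The main obstacle is $(1)\Rightarrow(3)$, since co-surjectivity alone does not let us extend a single predecessor $q<p$ in $G_k$ to predecessors of $p$ at arbitrarily deeper levels. My plan is to iterate atomlessness from $p\in G_n$ to build a strictly $\leq$-descending chain $p>q_1>q_2>\cdots$ whose levels are strictly increasing (distinct comparable elements live at distinct levels, since $\sqsupset_n^n$ is the identity) and hence cofinal in $\omega$. For any target $l>n$ we may pick $q_i\in G_k$ with $k\geq l$, and then factor $\sqsupset_k^n=\sqsupset_l^n\circ\sqsupset_k^l$ to produce some $r\in G_l$ with $p\sqsupset_l^n r$, i.e.\ $r<p$. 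This establishes surjectivity of every $\sqsupset_l^n$ and closes the cycle.
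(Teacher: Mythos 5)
Your proposal is correct. The structural half (computing $\mathsf{r}^{-1}(\{n\})=G_n$ by induction using co-surjectivity, deducing the $\omega$-poset property from finiteness of the $G_n$, and getting gradedness by factoring ${\sqsupset^m_k}={\sqsupset^m_n}\circ{\sqsupset^n_k}$) is essentially the paper's argument. For the equivalences you run the cycle the other way round: the paper proves $(1)\Rightarrow(2)\Rightarrow(3)\Rightarrow(1)$, with the nontrivial step being $(1)\Rightarrow(2)$, closed by using gradedness to interpolate a predecessor $q'$ of $p$ with $\mathsf{r}(q')=\mathsf{r}(p)+1\leq n$ (so $p\notin\PP_n$), and with $(2)\Rightarrow(3)$ relying on the fact that each element of the level $\PP_n$ lies above an element of the level $\PP_{n+1}$. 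Your nontrivial step is instead $(1)\Rightarrow(3)$: you iterate atomlessness to build an infinite descending chain below $p$ whose ranks are strictly increasing (hence cofinal in $\omega$) and then factor $\sqsupset^n_k$ through $G_l$ to produce a predecessor of $p$ at any prescribed level $l$. This is a valid and slightly more self-contained route, using only the composition law of the sequence rather than the refinement structure of levels; the paper's route is marginally shorter because gradedness has already been established. Your $(3)\Rightarrow(2)$ and $(2)\Rightarrow(1)$ steps are also sound — the latter correctly exploits the disjointness $G_m\cap G_{m+1}=\emptyset$ built into the induced poset to conclude $p\notin\PP_{m+1}$ and extract a strict predecessor.
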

\begin{proof}
	As each $G_n$ is non-empty, $\mathbb P$ is infinite.
	Since each $\sqsupset_n^0$ is co-surjective, elements of $G_0$ correspond to maximal elements.
	Fix $p<q$ with $p\in G_n$ and $q\in G_l$. For every $m$ such that $l<m<n$ there is $r\in G_m$ with $p<r<q$ as ${\sqsupset_m^l}\circ{\sqsupset_n^m}={ \sqsupset_n^l}$.
	Induction and co-surjectivity of each $\sqsupset_n^m$ then imply that each element of $G_k$ has rank $k$, and the rank function maps intervals to intervals. This shows that $\mathbb P$ is graded and that $G_n$ is its $n^\mathrm{th}$ rank-set.
	As each $G_n$ is finite, then each principal upset $p^<$ is finite and $\PP$ is an $\omega$-poset.

    We are left to show that the statements \ref{itm:pos_atomless}--\ref{itm:pos_surjective} are equivalent. Suppose \ref{itm:pos_atomless} holds. In general we have $G_n = \mathsf{r}^{-1}(\{n\}) \subseteq \PP_n$, so to show \ref{itm:pos_levels} it is enough to take $p \in \PP$ with $\mathsf{r}(p) < n$ and show that $p \notin \PP_n$. As $\mathbb{P}$ is atomless, we have $q < p$. Since $\PP$ is graded, we have $q'$ with $q \leq q' < p$ and $\mathsf{r}(q') = \mathsf{r}(p) + 1 \leq n$.  This shows that $p$ is not minimal in $\mathbb{P}^n$ and hence $p \notin \PP_n$.
	
	Now suppose \ref{itm:pos_levels} holds.  For every $p \in G_n = \PP_n$, we have $q \leq p$ such that $q \in \PP_{n + 1} = G_{n + 1}$, and so $\sqsupset^n_{n + 1}$ is surjective.
	Since composing surjective relations give surjective relations, each $\sqsupset_n^m$ is surjective.
	
	Finally suppose \ref{itm:pos_surjective}.
	For every $p \in \PP$, we have $n$ with $p \in G_n$ and then we have $q \in G_{n + 1}$ with $p \sqsupset^n_{n + 1} q$ so $p > q$ and hence $\PP$ is atomless.
\end{proof}

On the other hand, given an infinite graded $\omega$-poset $\PP$ we may put
\[
\tag{Induced Sequence}\label{Induced Sequence} G_n = \mathsf{r}^{-1}(\{n\}) \quad\text{and}\quad {\sqsupset}^m_n\ =\ {\geq} \cap {(G_m \times G_n)},
\]
for every $m \leq n \in \omega$.
Since $\PP$ is infinite, every set $G_n$ is non-empty, and from the definition of the rank, the relations $\sqsupset^m_n$ are co-surjective.

Note that the definition of the induced poset and the previous proposition do not use the edge relations of the graphs $G_n$, and so they apply when we are concerned with co-surjective relations between sets.
Also, when sets are viewed as graphs with no edges, every relation is edge-preserving.
It is then easy to see that \eqref{Induced Poset} and \eqref{Induced Sequence} are mutually inverse constructions yielding a one-to-one correspondence between infinite graded $\omega$-posets and sequences of co-surjective relations between non-empty finite sets.

\subsection{Threads and limits}\label{ss.threads}

In this short digression, we relate our notion of spectrum to the standard concept of inverse limits, using threads. We also sketch a comparison with similar approaches as defined in \cite{IrwinSolecki2006} and~\cite{DebskiTymchatyn2018}.

By the \emph{spectrum} of a sequence $(G_n, \sqsupset^m_n)$ in $\mathbf S$ we mean the spectrum $\Spec{\PP}$ of its induced poset $\PP$, i.e. the family of all minimal selectors in $\PP$.
Recall that $\Spec{\PP}$ is a second-countable compact $\mathsf T_1$ space (\autoref{SpectrumCompactT1}) which we then view as a kind of topological limit of the sequence of graphs.

By a \emph{thread} we mean a sequence of points $(t_n) \in \prod_{n \in \omega} G_n$ such that $t_0 > t_1 > t_2 > \cdots$ in $\PP$, i.e. we have $t_m \sqsupset^m_n t_n$ for every $m \leq n$.
We shall also view a thread as a subset of $\PP$ whenever convenient.


Recall that the standard inverse limit of a sequence $(X_n)$ of compact Hausdorff spaces and continuous bonding maps $f_n\maps X_{n+1} \to X_{n}$ is the space of all threads $\{(x_n) \in \prod_n X_n: \forall n\ x_n = f_n(x_{n + 1})\}$, which is a closed subspace of the product.
In the spectrum, threads play an important role as well.

\begin{lemma} \label{Thread}
	Let $(G_n, \sqsupset^m_n)$ be a surjective sequence in $\mathbf{S}$ with induced poset $\PP$, and let $S$ be a minimal selector.
	Then there is a thread $(s_n)$ such that $S = (s_n)^\leq$.
\end{lemma}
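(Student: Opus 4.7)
The plan is to produce a thread $(s_n)$ lying entirely in $S$ and then use minimality of $S$ to conclude $S = (s_n)^\leq$.

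First I collect the setup. By \autoref{prp:InducedPoset}, the surjectivity of $(\sqsupset_n^m)$ guarantees that $\PP$ is a graded $\omega$-poset with $\PP_n = G_n$. By \autoref{SpectrumCompactT1}\,(2), $S$ is a filter of $\PP$, hence upward-closed under $\leq$. Each level $G_n = \PP_n$ is a cap (in fact a band) of $\PP$, so $S \cap G_n \neq \emptyset$ for every $n$; this is the main structural input.

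Second, I would construct a thread inside $S$. For each fixed $n$, pick some $t_n \in S \cap G_n$; using the factorisation ${\sqsupset_n^0} = {\sqsupset_1^0 \circ \cdots \circ \sqsupset_n^{n-1}}$, choose witnesses for each composition to obtain a finite thread $(r_0, \ldots, r_n)$ with $r_n = t_n$, $r_m \in G_m$, and $r_m \sqsupset_{m+1}^m r_{m+1}$ for each $m < n$. Since $r_n \in S$ and $r_m \geq r_n$, upward closure of $S$ forces $r_m \in S$ for every $m \leq n$. Thus finite threads in $S$ of arbitrary length exist. The collection of all such finite threads, ordered by prefix, is a tree whose $n$th level embeds into the finite set $\prod_{m \leq n} G_m$; König's lemma then yields an infinite branch $(s_n)$ with $s_n \in S$ for every $n$.

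Finally I compare $S$ and $(s_n)^\leq$. The inclusion $(s_n)^\leq \subseteq S$ is immediate from $s_n \in S$ and upward closure. For the reverse inclusion it suffices to show that $(s_n)^\leq$ is a selector, because then minimality of $S$ yields equality. Given any cap $C$ of $\PP$, co-initiality \eqref{CapsCoInitial} supplies $n$ with $G_n \leq C$; since $s_n \in G_n$, there is $c \in C$ with $s_n \leq c$, so $c \in (s_n)^\leq \cap C$, as required.

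The only mildly delicate point is the extraction of an infinite thread inside $S$, which is handled cleanly by König's lemma on the tree of finite threads in $S$. Everything else is straightforward once one observes that each level $G_n$ is itself a cap.
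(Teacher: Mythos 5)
Your proof is correct and follows essentially the same route as the paper's: build the tree of finite decreasing chains inside $S$ (using that $S$ is an upward-closed selector meeting every level), extract an infinite branch by K\H{o}nig's lemma, and conclude by minimality of $S$. You simply spell out two steps the paper leaves implicit, namely that upward closure puts the intermediate witnesses into $S$ and that $(s_n)^\leq$ is a selector via co-initiality of the levels.
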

\begin{proof}
By \cite[Proposition 2.13]{BartosBiceV.Compacta}, $S$ is a filter. Hence we can build $(t_k)$, a strictly decreasing sequence in $S$, such that $(t_k)^{\leq}=S$. We can thus fill the gaps between $t_{k+1}$ and $t_{k}$ with linearly ordered elements so that every level is intersected. The resulting sequence $(s_n)$ will be as required.
\end{proof}

Let us compare our approach to constructing spaces from sequences of graphs to a more standard approach using quotients of inverse limits, as used by Irwin and Solecki~\cite{IrwinSolecki2006} in the context of projective Fra\"{i}ss\'e theory and by D\polhk{e}bski and Tymchatyn~\cite{DebskiTymchatyn2018} in the context of constructing topologically complete spaces.
We shall also demonstrate a possible closer connection on a simple example.

In the inverse limit approach, we start with a sequence of graphs $(G_n)$ and bonding maps $f_n\maps G_n \leftarrow G_{n + 1}$ that are quotient homomorphisms (i.e. edge-preserving edge-surjective functions).
We consider the inverse limit graph
\[\textstyle G_\infty = \{(x_n) \in \prod_{n \in \omega} G_n: \forall n\ x_n = f_n(x_{n + 1})\}\]
with the edge relation $(x_n) \sqcap (y_n) \Leftrightarrow \forall n\ x_n \sqcap y_n$.
Moreover, we endow $G_\infty$ with the subspace topology of the product of the finite discrete spaces.
Hence, $G_\infty$ becomes a zero-dimensional metrizable compact space, typically the Cantor space, and $\sqcap$ becomes a closed relation.
Note that $\sqcap$ is always reflexive and symmetric. If it turns out to be also transitive, $G_\infty$ is called a \emph{pre-space}, and we obtain the desired compact space as the quotient $G_\infty/{\sqcap}$.

In our approach we allow the bonding maps to be relations.
Namely, we start with an edge-faithful (see \autoref{ss.faithful}) sequence $(G_n, \sqsupset^m_n)$ in $\mathbf B$.
Then we consider the induced poset $\PP$ and we obtain our space as the spectrum $\Spec{\PP}$.
Note $\PP$ is well-defined also for a sequence of functions and the definition of $G_\infty$ may be extended to cover sequences of relations as well, though it will not be a categorical limit any more.

The elements $p \in \PP$ correspond to basic clopen subsets of $G_\infty$, which become closed sets in $G_\infty/{\sqcap}$, but they also correspond to basic open subsets of $\Spec{\PP}$.
Elements of a single graph $G_n$ give a clopen partition of $G_\infty$, a ``touching'' closed cover of $G_\infty/{\sqcap}$, and an overlapping open cover of $\Spec{\PP}$. In both cases the structure of touching or overlapping is given by the edge relation of the graph.

\begin{xpl} \label{ex:arc}
    Let $G_n$ be a path of $2^n$ vertices, and let $f_n\maps G_n \leftarrow G_{n + 1}$ be a monotone edge-preserving surjection splitting every vertex of $G_n$ into two vertices of $G_{n + 1}$.
    Then the induced poset $\PP$ is the Cantor tree, the limit graph $G_\infty$ is the Cantor space with the edge relation being the equivalence gluing gaps in the middle-third interval representation of the Cantor space.
    In other words, we obtain a canonical construction of the arc as $G_\infty/{\sqcap}$.
    If we imagine the arc as the unit interval $[0, 1]$, the elements of $\PP$ correspond to closed subintervals of the form $[k/2^n, (k+1)/2^n]$.

    Of course, $\Spec{\PP}$ is just the Cantor space. The poset $\PP$ does not see the edges of the graphs $G_n$.
    However, we may consider a closely related sequence of graphs and bonding relations.
    For every $n \in \omega$ let $H_n$ be the graph of all maximal cliques in $G_n$, with the edge relation on $H_n$ being the overlapping of cliques.
    Hence, for $n \geq 1$, $H_n$ is the path of $2^n - 1$ consecutive edges of $G_n$, while $H_0$ is a singleton.
    We put $h \sqsubset^n_{n + 1} h'$ if $f\im{h} \subseteq h'$.
    This induces a sequence $(\sqsupset^m_n)$ and a poset $\QQ$.
    The induced poset $\QQ$ is (ignoring the first level coming from $H_0$) exactly the poset from \cite[Example~2.16]{BartosBiceV.Compacta} whose spectrum is again the arc.
    The sequence $(\sqsupset^m_n)$ is edge-preserving, co-bijective, and edge-witnessing.
    We can identify the elements of $H_n$ with open intervals $[0, 2/2^n), \ldots, (k/2^n, (k+2)/2^n), \ldots, (1 - 2/2^n, 1] \subseteq [0, 1]$, obtained by taking the interiors of unions of pairs of consecutive closed intervals corresponding to the elements of $G_n$.
\end{xpl}
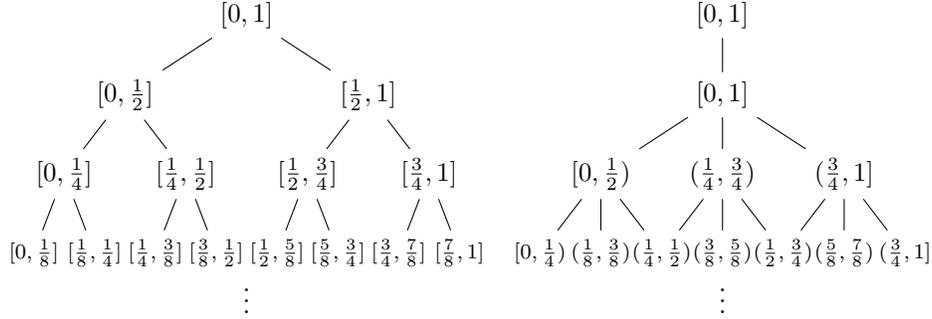
\begin{figure}[!ht]
    \centering
    \begin{tikzpicture}[x={(0, -3em)}, y={(4.6em, 0)}]
        \begin{scope}
                \node (nil) at (0, 0) {$[0, 1]$};
                \node (0) at (1, -1) {$[0, \frac{1}{2}]$};
                \node (1) at (1, 1) {$[\frac{1}{2}, 1]$};
                \node (00) at (2, -1.5) {$[0, \frac{1}{4}]$};
                \node (01) at (2, -0.5) {$[\frac{1}{4}, \frac{1}{2}]$};
                \node (10) at (2, 0.5) {$[\frac{1}{2}, \frac{3}{4}]$};
                \node (11) at (2, 1.5) {$[\frac{3}{4}, 1]$};
            \begin{scope}[font=\footnotesize]
                \node (000) at (3, -1.75) {$[0, \frac{1}{8}]$};
                \node (001) at (3, -1.25) {$[\frac{1}{8}, \frac{1}{4}]$};
                \node (010) at (3, -0.75) {$[\frac{1}{4}, \frac{3}{8}]$};
                \node (011) at (3, -0.25) {$[\frac{3}{8}, \frac{1}{2}]$};
                \node (100) at (3, 0.25) {$[\frac{1}{2}, \frac{5}{8}]$};
                \node (101) at (3, 0.75) {$[\frac{5}{8}, \frac{3}{4}]$};
                \node (110) at (3, 1.25) {$[\frac{3}{4}, \frac{7}{8}]$};
                \node (111) at (3, 1.75) {$[\frac{7}{8}, 1]$};
            \end{scope}
            \node at (3.5, 0) {$\vdots$};
            
            \graph{
                (nil) -- {(0), (1)},
                (0) -- {(00), (01)},
                (1) -- {(10), (11)},
                (00) -- {(000), (001)},
                (01) -- {(010), (011)},
                (10) -- {(100), (101)},
                (11) -- {(110), (111)},
            };
        \end{scope}

        \begin{scope}[xshift=18em]
                \node (prenil) at (0, 0) {$[0, 1]$};
                \node (nil) at (1, 0) {$[0, 1]$};
                \node (L) at (2, -1) {$[0, \frac12)$};
                \node (C) at (2, 0) {$(\frac14, \frac34)$};
                \node (R) at (2, 1) {$(\frac34, 1]$};
            \begin{scope}[font=\footnotesize]
                \node (LL) at (3, -1.5) {$[0, \frac14)$};
                \node (LC) at (3, -1) {$(\frac18, \frac38)$};
                \node (CL) at (3, -0.5) {$(\frac14, \frac12)$};
                \node (CC) at (3, 0) {$(\frac38, \frac58)$};
                \node (CR) at (3, 0.5) {$(\frac12, \frac34)$};
                \node (RC) at (3, 1) {$(\frac58, \frac78)$};
                \node (RR) at (3, 1.5) {$(\frac34, 1]$};
            \end{scope}
            \node at (3.5, 0) {$\vdots$};
            
            \graph{
                (prenil) -- (nil) -- {(L), (C), (R)},
                (L) -- {(LL), (LC), (CL)},
                (C) -- {(CL), (CC), (CR)},
                (R) -- {(CR), (RC), (RR)},
            };
        \end{scope}
    \end{tikzpicture}
    
    \caption{Obtaining the unit interval using the two approaches.}
    \label{fig:comparison}
\end{figure}

\subsection{Subsequences and restrictions}\label{ss.subsequences}

A \emph{subsequence} of a sequence $(G_n, \sqsupset^m_n)$ in $\mathbf G$ is a sequence of the form $(G_{\phi(n)}, \sqsupset^{\phi(m)}_{\phi(n)})$ for some strictly increasing  $\phi\maps \omega \to \omega$.

\begin{prp} \label{Subsequence}
	Let $(G_n,\sqsupset_n^m)$ be a surjective sequence in $\mathbf{S}$ with induced poset $\PP$, and let $(\sqsupset^{\phi(m)}_{\phi(n)})$ be a subsequence with induced poset $\QQ$.
	Then $\QQ$ is the subposet $\bigcup_n \PP_{\phi(n)} \subseteq \PP$, and $\Spec{\QQ}$ and $\Spec{\PP}$ are homeomorphic via 
	\[
		\Spec{\QQ} \ni S \mapsto S^{\leq_\PP} \qquad\text{and}\qquad \Spec{\PP} \ni T \mapsto T \cap \QQ.
	\]
\end{prp}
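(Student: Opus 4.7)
The plan is to first identify $\QQ$ as a concrete subposet of $\PP$, then use a ``lift-and-minimality'' argument to establish that both maps are well-defined mutual inverses, and finally verify continuity on a subbasis.

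For the identification, \autoref{prp:InducedPoset} applied to both $(\sqsupset^m_n)$ and its subsequence (both surjective in $\mathbf{S}$) gives $\PP_n = G_n$ and $\QQ_k = G_{\phi(k)}$ as graded levels. Since $\sqsupset^{\phi(m)}_{\phi(n)}$ is the same relation whether computed in the full sequence or inside the subsequence, $\leq_\QQ$ is exactly the restriction of $\leq_\PP$, so $\QQ = \bigcup_n \PP_{\phi(n)} \subseteq \PP$ as posets.

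The key observation is that each level $\QQ_k = \PP_{\phi(k)}$ is simultaneously a cap in both posets, and the subsequence $(\PP_{\phi(k)})$ is cofinal among caps of $\PP$ by \eqref{CapsCoInitial}. This immediately makes $T \cap \QQ$ a selector in $\QQ$ for every $T \in \Spec{\PP}$ and, dually, $S^{\leq_\PP}$ a selector in $\PP$ for every $S \in \Spec{\QQ}$. To upgrade to minimality, pick a minimal selector $T' \subseteq T \cap \QQ$ (Zorn); its $\leq_\PP$-upward closure $T'^{\leq_\PP}$ is a selector in $\PP$ contained in $T$, hence equal to $T$ by minimality of $T$, and intersecting back with $\QQ$ — using that $T'$, as a minimal selector, is a filter in $\QQ$ — yields $T \cap \QQ = T'^{\leq_\PP} \cap \QQ = T'$. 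A symmetric argument handles $S^{\leq_\PP}$: any minimal selector $F \subsetneq S^{\leq_\PP}$ would produce $F \cap \QQ \subsetneq S$ as a selector in $\QQ$, contradicting the minimality of $S$. Mutual inversion is then a short calculation: $S^{\leq_\PP} \cap \QQ = S$ because $S$ is upward closed in $\QQ$, and $(T \cap \QQ)^{\leq_\PP} = T$ using that a thread $(t_n)$ with $T = (t_n)^{\leq_\PP}$ supplied by \autoref{Thread} hits each $\PP_{\phi(k)} \subseteq \QQ$.

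Continuity is routine on the generating subbases: the map $T \mapsto T \cap \QQ$ pulls $q^{\in_\QQ}$ back to $q^{\in_\PP}$ for every $q \in \QQ$, and the map $S \mapsto S^{\leq_\PP}$ pulls $p^{\in_\PP}$ back to the open set $\bigcup\{s^{\in_\QQ} : s \in \QQ,\ s \leq_\PP p\}$ for every $p \in \PP$. The main obstacle is the preservation of minimality under these set-theoretic operations; the crucial technical move is the ``lift a minimal selector from the target, compare with the given minimal selector, and intersect back'' pattern, which I expect to recur in later results comparing spectra along restrictions or co-bijective modifications.
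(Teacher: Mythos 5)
Your argument is correct, but it takes a different route from the paper. The paper's proof is two lines: after the same identification of $\QQ$ with $\bigcup_n\PP_{\phi(n)}$ via \autoref{prp:InducedPoset}, it observes that $\{\PP_{\phi(n)}:n\in\omega\}$ is a co-initial family of bands in both $\PP$ and $\QQ$ and then simply cites Corollary~3.8 of the companion paper \cite{BartosBiceV.Compacta}, which packages exactly the statement that two posets sharing a co-initial family of bands have canonically homeomorphic spectra. You instead unfold that black box into a self-contained verification: selectorhood of $T\cap\QQ$ and $S^{\leq_\PP}$ from co-initiality of the shared levels plus the fact that minimal selectors in $\omega$-posets are filters (\autoref{SpectrumCompactT1}), minimality via the lift-compare-intersect pattern, and continuity on subbases. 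All the individual steps check out — in particular the upgrade $T\cap\QQ=T'^{\leq_\PP}\cap\QQ=T'$ does need upward closure of $T'$ in $\QQ$ together with $\leq_\QQ$ being the restriction of $\leq_\PP$, both of which you have. What the paper's approach buys is brevity and reuse of a general lemma that also covers other intertwining situations (it reappears implicitly in \autoref{LaxFraisseSpectra}); what yours buys is that the proof is readable without the companion paper, and, as you anticipate, your lift-and-minimality pattern is essentially the content of \autoref{UpwardsClosureHomeomorphism} and \autoref{UpwardsClosureHypothesis}, which the paper only develops later for dense restrictions where the subposet is \emph{not} upward closed and the argument genuinely requires more care.
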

\begin{proof}
	Since $(\sqsupset^m_n)$ is surjective, we have $\PP_n = G_n$ and $\QQ_n = G_{\phi(n)} = \PP_{\phi(n)}$ for every $n \in \omega$, and ${\leq}_\QQ = {\leq}_\PP \restriction \QQ \times \QQ$.
	Since $\{\PP_{\phi(n)}: n \in \omega\}$ is a co-initial family of bands both in $\PP$ and $\QQ$, that the above maps are homeomorphisms follows from \cite[Corollary~3.8]{BartosBiceV.Compacta}, as well as from \autoref{DenseRestrictionHomeomorphic}.
\end{proof}

It is useful to keep in mind the following lemma when discussing properties like ``$(\sqsupset^m_n)$ has a star-refining and anti-injective subsequence''.
\begin{lemma} \label{IdealSubsequences}
	Let $(G_n, \sqsupset^m_n)$ be a sequence in a category $\mathbf C \subseteq \mathbf G$ and let $I \subseteq \mathbf C$ be an ideal.
	The following are equivalent.
	\begin{enumerate}
		\item \label{itm:subsequence}
			$(\sqsupset^m_n)$ has a subsequence in $I$.
		\item \label{itm:wide}
			For every $m$ there is $n \geq m$ such that ${\sqsupset}^m_n \in I$.
		\item \label{itm:subsubsequence}
			Every subsequence $(\sqsupset^{\phi(m)}_{\phi(n)})$ of $(\sqsupset^m_n)$ has a subsequence in $I$.
	\end{enumerate}
	Hence, for another ideal $J \subseteq \mathbf C$, we have that $(\sqsupset^m_n)$ has a subsequence in $I$ and a subsequence in $J$ if and only if it has a subsequence in $I \cap J$.
\end{lemma}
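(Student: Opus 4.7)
The plan is to establish the equivalence by a short cycle $(3) \Rightarrow (1) \Rightarrow (2) \Rightarrow (3)$, with the key tool being that $I$ is a (two-sided) ideal, so $\sqsupset^m_n \in I$ implies $\sqsupset^m_{n'} = \sqsupset^m_n \circ \sqsupset^n_{n'} \in I$ for every $n' \geq n$ (and similarly $\sqsupset^k_n \in I$ for every $k \leq m$).

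The implication $(3) \Rightarrow (1)$ is immediate by applying (3) to the trivial subsequence (i.e.\ $\phi = \mathrm{id}$). For $(1) \Rightarrow (2)$, suppose $(\sqsupset^{\phi(m)}_{\phi(n)})$ is a subsequence in $I$; given any $m$, pick $k$ large enough that $\phi(k) \geq m$, so that $\sqsupset^{\phi(k)}_{\phi(k+1)} \in I$, and then $\sqsupset^m_{\phi(k+1)} = \sqsupset^m_{\phi(k)} \circ \sqsupset^{\phi(k)}_{\phi(k+1)} \in I$ by the ideal property, so $n := \phi(k+1)$ works.

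The main content is $(2) \Rightarrow (3)$, and this is where the ideal property is used essentially. Given any subsequence indexed by a strictly increasing $\phi$, I will build a strictly increasing $\sigma\maps \omega \to \omega$ such that $(\sqsupset^{\phi(\sigma(m))}_{\phi(\sigma(n))})$ is a subsequence of it lying in $I$. Set $\sigma(0) = 0$, and having defined $\sigma(k)$ apply (2) to $m := \phi(\sigma(k))$ to obtain $n \geq m$ with $\sqsupset^m_n \in I$; then pick any $\sigma(k+1) > \sigma(k)$ with $\phi(\sigma(k+1)) \geq n$, which exists because $\phi$ is strictly increasing, and observe that by the ideal property $\sqsupset^{\phi(\sigma(k))}_{\phi(\sigma(k+1))} = \sqsupset^m_n \circ \sqsupset^n_{\phi(\sigma(k+1))}$ belongs to $I$.

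For the final claim, $I \cap J$ is clearly again an ideal in $\mathbf{C}$, so by the equivalence just established it suffices to verify condition (2) for $I \cap J$. Given $m$, first use (2) for $I$ to obtain $n_1 \geq m$ with $\sqsupset^m_{n_1} \in I$, then use (2) for $J$ to obtain $n_2 \geq n_1$ with $\sqsupset^{n_1}_{n_2} \in J$. The composition $\sqsupset^m_{n_2} = \sqsupset^m_{n_1} \circ \sqsupset^{n_1}_{n_2}$ lies in $I$ (since the left factor does and $I$ is an ideal) and in $J$ (since the right factor does and $J$ is an ideal), hence in $I \cap J$. The only subtle point throughout is making sure that after passing to the required index we can still reach elements of the prescribed subsequence — which is exactly what the ideal property buys us.
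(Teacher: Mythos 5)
Your proof is correct and follows essentially the same route as the paper's: the trivial implication from (3), the ideal property to upgrade a single subsequence morphism to $\sqsupset^m_n \in I$, and the same inductive construction of $\sigma$ for $(2)\Rightarrow(3)$. The only (harmless) difference is in the final claim, where you verify condition (2) for $I\cap J$ directly by chaining $\sqsupset^m_{n_1}\in I$ with $\sqsupset^{n_1}_{n_2}\in J$, whereas the paper applies (3) to a subsequence already in $J$; both are valid one-line arguments.
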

\begin{proof}
	Suppose \ref{itm:subsequence} and fix $m\in\omega$. Since $(\sqsupset^{m}_{n})$ has a subsequence in $I$ we can find $k$ and $k'$ such that $m \leq k<k'$ and  ${\sqsupset}^{k}_{k'} \in I$. Since $I$ is an ideal then ${\sqsupset}^m_{k'} \in I$, showing \ref{itm:wide}.
	Now suppose \ref{itm:wide} and let $(\sqsupset^{\phi(m)}_{\phi(n)})$ be a subsequence.
	We inductively define an increasing map $\psi\maps \omega \to \omega$ such that the subsequence $(\sqsupset^{\phi(\psi(m))}_{\phi(\psi(n))})$ is in $I$.
    First we set $\psi(0) = 0$.
	Now suppose we have already defined $\psi(k)$.
	By \ref{itm:wide}, we have $n \geq \phi(\psi(k))$ such that ${\sqsupset}^{\phi(\psi(k))}_n \in I$ and then we have $\psi(k + 1) > \psi(k)$ such that $\phi(\psi(k + 1)) \geq n$.
	Then ${\sqsupset}^{\phi(\psi(k))}_{\phi(\psi(k + 1))} \in I$, since $I$ is an ideal.
	This concludes the construction and proves \ref{itm:subsubsequence}.
		The implication \ref{itm:subsubsequence}$\Rightarrow$\ref{itm:subsequence} is trivial, and to obtain a subsequence in $I \cap J$ we just apply \ref{itm:subsubsequence} to a subsequence in $J$.
\end{proof}

Next we focus on another form of a ``subsequence'', related to subspaces of the spectrum.
\begin{dfn}
	A \emph{restriction} of a sequence $(G_n, \sqsupset^m_n)$ in $\mathbf G$ is a sequence $(H_n, \sqni^m_n)$ such that every $H_n \subseteq G_n$ is an (induced) subgraph and ${\sqni}^m_n = {\sqsupset}^m_n \restriction (H_m \times H_n)$ for every $m \leq n$.
	An \emph{upper restriction} is a restriction such that $H_n{}^{\sqsubset^m_n} \subseteq H_m$ for every $m \leq n$.
\end{dfn}

\begin{rmk}
	Note that not every sequence of subgraphs $H_n \subseteq G_n$ induces a restriction by putting ${\sqni}^m_n = {\sqsupset}^m_n \restriction (H_m \times H_n)$ as $(H_n, \sqni^m_n)$ may not be a sequence, only a \emph{lax-sequence}, i.e. it satisfies ${\sqni}^l_m \circ {\sqni}^m_n \subseteq {\sqni}^l_n$ for $l \leq m \leq n$.
    (Consider, for example, every $\sqsupset^m_n$ being the identity on a two-point discrete graph and $H_n$ omitting one vertex if and only if $n$ is odd.)
	We shall call such $(H_n, \sqni^m_n)$ a \emph{lax-restriction} of $(G_n, \sqsupset^m_n)$.
	
	On the other hand, every sequence of subgraphs such that $H_n{}^{\sqsubset^m_n} \subseteq H_m$ for every $m \leq n$ induces an upper restriction $(H_n, \sqni^m_n)$ by putting ${\sqni}^m_n = {\sqsupset}^m_n \restriction (H_m \times H_n)$.
\end{rmk}

Upper restrictions are often induced by \emph{traces}.

\begin{dfn}
    Let $(G_n, \sqsupset^m_n)$ be a  sequence in $\mathbf{S}$ and let $\PP$ be the induced poset.
    For every $A \subseteq \Spec{\PP}$ and $n \in \omega$ let $[A]_n$ denote the \emph{trace} $\{g \in G_n: g^\in \cap A \neq \emptyset\}$.
    In particular, for $S \in \Spec{\PP}$ we have $[\{S\}]_n = G_n \cap S$, and so $[A]_n = G_n \cap \bigcup A$.
\end{dfn}

Clearly we have $[A]_n^{\sqsubset^m_n} \subseteq [A]_m$ for every $m \leq n$.
If the sequence $(\sqsupset^m_n)$ is surjective, then every trace $[A]_n$ is non-empty since the levels of $\PP$ correspond to the graphs $G_n$ by \autoref{prp:InducedPoset}.
Moreover, every family $\{g^\in: g \in [A]_n\}$ is an open cover of $A$ in $\Spec{\PP}$.

\begin{prp} \label{Subspaces}
    Let $(G_n, \sqsupset^m_n)$ be a surjective sequence in $\mathbf{S}$ with induced poset $\PP$.
    \begin{enumerate}
        \item \label{itm:sublimit}
            For $(H_n, \sqni^m_n)$ a surjective upper restriction of $(G_n, \sqsupset^m_n)$ in $\mathbf{S}$ with induced poset $\QQ$, we have that $\Spec{\QQ}$ is the closed subspace $\{S \in \Spec{\PP}: S \subseteq \QQ\} \subseteq \Spec{\PP}$ and that $[\Spec{\QQ}]_n \subseteq H_n$ for every $n$.
        \item \label{itm:traces}
            For a subspace $A \subseteq \Spec{\PP}$, we have $[A]_n^{\sqsubset^m_n} = [A]_m$ for every $m \leq n \in \omega$, i.e. $([A]_n)$ induces a surjective upper restriction of $(G_n, \sqsupset^m_n)$, and we have $\Spec{\QQ} = \overline{A}$, where $\QQ$ is the poset induced by $([A]_n)$.
    \end{enumerate}
\end{prp}
\begin{proof}
    We show \ref{itm:sublimit}.
    From the definition of an upper restriction we have that $\QQ$ is an upwards closed subset of $\PP$.
    Since the sequences $(\sqsupset^m_n)$ and $(\sqni^m_n)$ are surjective, the posets $\PP$ and $\QQ$ are atomless, and so $\PP_n = G_n$ and $\QQ_n = H_n = G_n \cap \QQ$.
    It follows that upwards closed selectors in $\QQ$ are precisely upwards closed selectors in $\PP$ that are contained in $\QQ$, and so the same is true for minimal selectors.
		Clearly, $\{S \in \Spec{\PP}: S \subseteq \QQ\}$ is closed.

    To show \ref{itm:traces} let $A \subseteq \Spec{\PP}$ and $m \leq n \in \omega$.
    It is easy to see that $[A]_n^{\sqsubset^m_n} \subseteq [A]_m$.
    On the other hand, for $g \in [A]_m$ there is $S \in  \Spec{\PP}$ with $g \in S$.
    Let $(s_k)$ be a thread such that $(s_k)^\leq = S$, obtained by \autoref{Thread}.
		Note that $s_k \in [A]_k$ for every $k$.
    There is $k \geq n$ such that $g > s_k$, i.e. $g \sqsupset^m_k s_k$.
    From the definition of a sequence there is $h \in G_n$ with $g \sqsupset^m_n h \sqsupset^n_k s_k$.
    Necessarily $h \in S$, and so $h \in [A]_n$.
    This shows $[A]_n^{\sqsubset^m_n} = [A]_m$.
    From \ref{itm:sublimit} it follows that $\Spec{\QQ} = \{S \in \Spec{\PP}: [\{S\}]_n \subseteq [A]_n\text{ for every $n$}\}$.
    Hence, $A \subseteq \Spec{\QQ}$ and also $\overline{A} \subseteq \Spec{\QQ}$ as $\Spec{\QQ}$ is closed.
    On the other hand, for if for $S \in \Spec{\PP}$ we have that $[\{S\}]_n \subseteq [A]_n$ for every $n \in \omega$, then for every $p \in S$ there is $S' \in A \cap p^\in$, and so $S \in \overline{A}$.
\end{proof}

\begin{rmk}
    The previous proposition gives a correspondence between closed subspaces of the spectrum $\Spec{\PP}$ and surjective upper restrictions of $(G_n, \sqsupset^m_n)$, analogous to \cite[\S 2.4]{BartosBiceV.Compacta}: every closed $A \subseteq \Spec{\PP}$ gives the restriction induced by $([A]_n)$, whose spectrum is again $A$.
    On the other hand, if we start with a surjective upper restriction $(H_n, \sqni^m_n)$ and take the corresponding closed subspace $A = \Spec{\QQ}$, we have only $[A]_n \subseteq H_n$ by \autoref{Subspaces}~\ref{itm:sublimit}.
    The equality would correspond to the property that every $h \in H_n$ is an element of a minimal selector contained in $\QQ$, i.e. when $\QQ \subseteq \PP$ is a \emph{prime subset} \cite[\S 2.4]{BartosBiceV.Compacta}, or in this case equivalently when $\QQ$ is a prime poset.

    Hence, by a \emph{prime restriction} of $(G_n, \sqsupset^m_n)$ we mean a surjective upper restriction $(H_n, \sqni^m_n)$ such that its induced poset $\QQ$ is prime, or equivalently such that it is induced by $([A]_n)$ for some $A \subseteq \Spec{\PP}$.
    Note that if an upper restriction is in $\mathbf{B}$, then it is prime as every predetermined $\omega$-poset is prime (\autoref{figure1}).
    Altogether, we have a one-to-one correspondence between closed subspaces of the spectrum and prime restrictions, analogously to \cite[Corollary~2.25]{BartosBiceV.Compacta}.
\end{rmk}

Even if our surjective restriction is not upper, it is sometimes still possible to obtain a canonical embedding between the respective spectra. 
\begin{dfn}
	Let $(G_n, \sqsupset^m_n)$ be a surjective sequence in $\mathbf{S}$.
	We call a subset $H_n \subseteq G_n$ \emph{dense} if ${H_n}^{\sqsupset^n_k} = G_k$ for some $k \geq n$.
    Equivalently, if $H_n$ is a cap contained in $G_n$.
    
    A restriction $(H_n,\sqni_n^m)$ of $(G_n, \sqsupset^m_n)$ is dense if each $H_n$ is dense.
\end{dfn}
Note that if $(g_n)$ is a thread and we have $m\in\omega$ with $g_n{}^{\sqsubset_n^m}=\{g_m\}$, for all $n\geq m$, then $g_m$ belongs to all dense subsets of $G_m$.
Also note that if $H_n \subseteq G_n$ is dense, then $H_n^\sqcap = G_n$.

\begin{prp} \label{DenseRestrictionHomeomorphic}
    Let $(G_n, \sqsupset^m_n)$ be a surjective sequence in $\mathbf{S}$ with induced poset $\PP$, and let $(H_n, \sqni^m_n)$ be a surjective dense restriction of $(G_n, \sqsupset^m_n)$ with induced poset $\QQ$.
    Then $S \mapsto S^{\leq_\PP}$ is a homeomorphism $\Spec{\QQ} \to \Spec{\PP}$ with inverse $T \mapsto T \cap \QQ$.
\end{prp}
\begin{proof}
    By \autoref{prp:InducedPoset}, $\PP$ and $\QQ$ are atomless and their levels correspond to the $G_n$ and $H_n$, respectively.
    We want to use Lemmas~\ref{UpwardsClosureHypothesis} and~\ref{UpwardsClosureHomeomorphism}.
    To do so we need to show that for every $S \in \Spec{\QQ}$ and $T \subseteq S^{\leq_\PP}$ a minimal selector in $\PP$ we have that $T \cap \QQ$ is infinite.
    In fact, suppose $T$ is any upwards closed selector in $\PP$.
    We show that $T \cap \QQ$ is infinite.

    Fix $m \in \omega$.
    Since $H_m$ is dense, there is $n \geq m$ such that $H_m{}^{\sqsupset^m_n} = G_n$.
    Since $T$ is a selector, there is $t \in T \cap G_n$.
    By the choice of $n$, there is $h \in t^{\sqsubset^m_n} \cap H_m$.
    Since $T$ is upwards closed, $h \in (T \cap \QQ) \cap G_m$.
    As $m$ was arbitrary, $T \cap \QQ$ is infinite.
\end{proof}

\subsection{Faithful representations} \label{ss.faithful}

Note that so far there was no connection between edge relations of the graphs $G_n$ and the induced poset $\PP$.
If the edge relations are to properly detect overlaps of the corresponding open sets, then they should agree with the common lower bound relation. 
Recall that in a poset $\PP$ the relation $\wedge$ denotes the existence of a lower bound i.e. $p\wedge q$ if and only if $p,q\geq r$, for some $r\in\mathbb{P}$.
We call a sequence $(G_n,\sqsupset^m_n)$ \emph{edge-faithful} if, for every $n \in \omega$ and every $g, h \in G_n$, we have $g \sqcap h$ if and only if $g \wedge h$ in $\PP$.
Note that if every $G_n$ is just a set then there is a unique way to define the edge relations so that $(\sqsupset^m_n)$ becomes edge-faithful, specifically by setting ${\sqcap_n}=(G_n\times G
_n)\cap{\wedge}$.

We summarize the connection between the edge relation and the actual overlaps, and we characterize edge-faithful sequences.

\begin{lemma} \label{Overlapping}
    Let $(G_n, \sqsupset^m_n)$ be a  sequence in $\mathbf{S}$ and let $\PP$ be the induced poset.
    For every $m \in \omega$ and $g, h \in G_m$ we have
    \[
        g^\in \cap h^\in \neq \emptyset \quad\Rightarrow\quad g \wedge h \text{ (in $\PP$)} \quad\Rightarrow\quad g \sqcap h \text{ (in $G_m$).}
    \]
    If $(\sqsupset^m_n)$ is co-bijective and edge-faithful, then the implications become equivalences.
\end{lemma}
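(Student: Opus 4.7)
The first implication $g^\in \cap h^\in \neq \emptyset \Rightarrow g \wedge h$ is immediate: pick $S \in g^\in \cap h^\in$. By \autoref{prp:InducedPoset} $\PP$ is an $\omega$-poset, so by \autoref{SpectrumCompactT1}\,(2) the minimal selector $S$ is a filter, giving some $r \in S$ with $r \leq g$ and $r \leq h$. For the second implication $g \wedge h \Rightarrow g \sqcap h$, take $r \in G_n$ with $r \leq g, h$ (so $n \geq m$ and $g, h \sqsupset^m_n r$); since the edge relation is reflexive we have $r \sqcap r$, and so edge-preservation of $\sqsupset^m_n$ yields $g \sqcap h$.

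For the converse directions under the additional hypotheses, edge-faithfulness immediately gives $g \sqcap h \Rightarrow g \wedge h$ by definition. The one nontrivial piece is therefore $g \wedge h \Rightarrow g^\in \cap h^\in \neq \emptyset$. Here I would fix some $r \in \PP$ with $r \leq g, h$ and show that $r^\in \neq \emptyset$, because then any $S \in r^\in$ is a filter containing $r$, hence contains both $g$ and $h$, witnessing $g^\in \cap h^\in \neq \emptyset$. By \autoref{figure1} and the primeness characterisation recalled after \autoref{SpectrumCompactT1}, it suffices to prove that $\PP$ is predetermined. Since co-bijective morphisms are co-injective and hence surjective, $\PP$ is atomless by \autoref{prp:InducedPoset}, so no element is an atom and the predetermined condition needs to be checked everywhere.

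The key verification is this: given $p \in G_m$, use co-injectivity of $\sqsupset^m_{m+1}$ to pick $q \in G_{m+1}$ with $q^{\sqsubset^m_{m+1}} = \{p\}$. Then $q < p$, and I claim $q^< = p^\leq$. For any $r > q$ with $r \in G_k$, $k \leq m$, the composition identity ${\sqsupset^k_{m+1}} = {\sqsupset^k_m \circ \sqsupset^m_{m+1}}$ produces $r' \in G_m$ with $r \sqsupset^k_m r' \sqsupset^m_{m+1} q$; the second relation forces $r' = p$, so $r \geq p$. Conversely, any $r \geq p$ satisfies $r \sqsupset^k_m p \sqsupset^m_{m+1} q$, hence $r > q$. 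Thus $\PP$ is predetermined, and the argument concludes as described above.

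The only mildly delicate step is the predetermined verification; everything else is a routine unpacking of the definitions and an appeal to the filter property of minimal selectors in $\omega$-posets. There is no real obstacle, but care is needed to track the two directions of the composition identity in the induced poset.
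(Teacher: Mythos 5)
Your proof is correct, and the first two implications plus the use of edge-faithfulness are exactly as in the paper. The one place where you genuinely diverge is the implication $g \wedge h \Rightarrow g^\in \cap h^\in \neq \emptyset$. The paper argues directly: starting from a common lower bound $p_n \leq g, h$, it uses co-injectivity to build a thread $(p_i)_{i \geq n}$ with $p_{i+1}{}^{\sqsubset^i_{i+1}} = \{p_i\}$, and observes that $(p_i)^\leq$ is then a \emph{minimal} selector lying in $g^\in \cap h^\in$. You instead route through the abstract poset machinery: co-injectivity makes $\PP$ predetermined (your verification of $q^< = p^\leq$ via the composition identity is sound and matches the argument later used in \autoref{prp:Summary1}), predetermined implies prime for $\omega$-posets (\autoref{figure1}), primeness gives $r^\in \neq \emptyset$ for the common lower bound $r$, and the filter property from \autoref{SpectrumCompactT1} upgrades any $S \in r^\in$ to a point of $g^\in \cap h^\in$. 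Both arguments are legitimate and ultimately rest on the same combinatorial fact — a thread with singleton strict preimages through $r$ — but the paper's version produces the witnessing minimal selector explicitly (a construction it reuses verbatim in \autoref{prp:Summary1} and \autoref{CoverSummary}), whereas yours is shorter given the recalled background at the cost of leaning on the predetermined/prime implications imported from the companion paper.
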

\begin{proof}
    Suppose $S \in g^\in \cap h^\in$, which means $g, h \in S$.
    As the minimal selector $S$ is a filter (\cite[Proposition 2.13]{BartosBiceV.Compacta}), there is $p \in S$ with $p \leq g, h$, and so $g \wedge h$.
    	
		Edge-faithfulness is by definition the equivalence $g \wedge h \Leftrightarrow g \sqcap h$.
    But one implication is true for any sequence in $\mathbf{S}$.
		Suppose $g \wedge h$.
		Then there is $n \geq m$ and $p_n \in G_n$ such that $g \sqsupset^m_n p_n \sqsubset^m_n h$, and hence $g \sqcap h$ since $\sqsupset^m_n$ is edge-preserving.
		Moreover, if $(\sqsupset^m_n)$ is co-bijective, there is $p_{n + 1} \in G_{n + 1}$ with $p_{n + 1}{}^{\sqsubset^n_{n + 1}} = \{p_n\}$, and by induction there is a thread $(p_i)$ such that $(p_i)^\leq \cap G_k = \{p_k\}$ for every $k \geq n$.
		Hence, $(p_k)^\leq$ is a minimal selector in $g^\in \cap h^\in$.
\end{proof}

\begin{prp} \label{OverlappingCoherence}
Let $G_n$ be a sequence of finite graphs, and let ${\sqsupset^m_n} \subseteq G_m\times G_n$, $m \leq n$, be a surjective co-surjective sequence (not necessarily edge-preserving). Let $\PP$ be the induced poset.
 The following are equivalent.
 \begin{enumerate}
 \item \label{OCdef} $(\sqsupset^m_n)$ is edge-faithful.
 \item \label{OCseq2} $(\sqsupset^m_n)$ is edge-preserving and for every $m \in \omega$ there is $n > m$ such that $\sqsupset^m_n$ is edge-witnessing.
 \item \label{OCseq}
 $(\sqsupset^m_n)$ is edge-preserving and edge-surjective and has an edge-witnessing subsequence.
 \end{enumerate}
\end{prp}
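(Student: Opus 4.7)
My plan is a cyclic proof \ref{OCdef}$\Rightarrow$\ref{OCseq2}$\Rightarrow$\ref{OCseq}$\Rightarrow$\ref{OCdef}, exploiting \autoref{Overlapping} for one direction of edge-faithfulness, and surjectivity/co-surjectivity of the sequence to propagate edge-witnessing between levels.

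For \ref{OCdef}$\Rightarrow$\ref{OCseq2}, I would first deduce edge-preservation: if $h \sqsupset^m_n g \sqcap g' \sqsubset^m_n h'$, then by edge-faithfulness at level $n$, $g \wedge g'$ in $\PP$, witnessed by some $p \leq g,g'$; then $p \leq h,h'$, so $h \wedge h'$, hence $h \sqcap h'$ by edge-faithfulness at level $m$. For the edge-witnessing part, fix $m$. For each adjacent pair $h \sqcap h'$ in the finite graph $G_m$, edge-faithfulness yields some $p_{h,h'} \in G_{n_{h,h'}}$ below both. Let $N = \max_{h\sqcap h'} n_{h,h'}$ and use surjectivity of each $\sqsupset^{n_{h,h'}}_N$ to push each $p_{h,h'}$ down to an element of $G_N$; this gives a common witness in $G_N$ for every adjacent pair, so $\sqsupset^m_N$ is edge-witnessing.

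For \ref{OCseq2}$\Rightarrow$\ref{OCseq}, I would construct a subsequence $\phi$ inductively so that $\sqsupset^{\phi(i)}_{\phi(j)}$ is edge-witnessing for all $i<j$. Given $\phi(0)<\cdots<\phi(k)$, apply \ref{OCseq2} to each $\phi(i)$, $i\leq k$, to obtain $n_i$ with $\sqsupset^{\phi(i)}_{n_i}$ edge-witnessing, and set $\phi(k+1) = \max_i n_i$. For any adjacent $h \sqcap h'$ in $G_{\phi(i)}$, edge-witnessing of $\sqsupset^{\phi(i)}_{n_i}$ gives $p \in G_{n_i}$ with $h \sqsupset p \sqsubset h'$, and surjectivity of $\sqsupset^{n_i}_{\phi(k+1)}$ gives $g \in G_{\phi(k+1)}$ with $p \sqsupset g$; then $h \sqsupset^{\phi(i)}_{\phi(k+1)} g \sqsubset^{\phi(i)}_{\phi(k+1)} h'$. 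A similar argument (or the observation that this implies edge-surjectivity of every $\sqsupset^m_n$) yields that the whole sequence is edge-surjective.

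For \ref{OCseq}$\Rightarrow$\ref{OCdef}, the direction $g \wedge h \Rightarrow g \sqcap h$ follows verbatim from the second half of \autoref{Overlapping} using edge-preservation. Conversely, suppose $g \sqcap h$ in $G_m$ and let $(\phi(k))$ index an edge-witnessing subsequence. Pick $k$ with $\phi(k) \geq m$. Edge-surjectivity of $\sqsupset^m_{\phi(k)}$ provides $g', h' \in G_{\phi(k)}$ with $g \sqsupset g' \sqcap h' \sqsubset h$; then edge-witnessing of $\sqsupset^{\phi(k)}_{\phi(k+1)}$ supplies $p \in G_{\phi(k+1)}$ with $g' \sqsupset p \sqsubset h'$, giving $p \leq g,h$ in $\PP$, i.e.\ $g \wedge h$. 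The main obstacle I anticipate is the subsequence construction in \ref{OCseq2}$\Rightarrow$\ref{OCseq}, since edge-witnessing is only known to be an ideal within $\mathbf E$ (by \autoref{Ideals}), so one cannot directly invoke \autoref{IdealSubsequences} until edge-surjectivity of the ambient sequence has been established; the explicit lifting argument via surjectivity sidesteps this circularity.
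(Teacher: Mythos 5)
Your proposal is correct and follows essentially the same route as the paper: the same cycle \ref{OCdef}$\Rightarrow$\ref{OCseq2}$\Rightarrow$\ref{OCseq}$\Rightarrow$\ref{OCdef}, with \autoref{Overlapping} supplying the direction $g\wedge h\Rightarrow g\sqcap h$, surjectivity used to push common lower bounds down to a single level for edge-witnessing, and the factorisation $\sqsupset^m_n={\sqsupset^m_k}\circ{\sqsupset^k_n}$ plus edge-preservation used to extract edge-surjectivity and the converse implication $g\sqcap h\Rightarrow g\wedge h$. Your explicit lifting argument in \ref{OCseq2}$\Rightarrow$\ref{OCseq} spells out a step the paper dismisses with ``clearly, we can inductively define an edge-witnessing subsequence'', and you correctly diagnose why \autoref{IdealSubsequences} cannot be invoked at that stage; the only cosmetic omission is the loop case $h=h'$ of edge-witnessing, which your appeal to surjectivity already covers.
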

\begin{proof}
 Suppose \ref{OCdef}.
 Let $h \sqsupset^m_n g \mathrel{\sqcap} g' \sqsubset^m_n h'$. We have $g \mathrel{\wedge} g'$, so there is $p \in \PP$ such that $h \geq g \geq p \leq g' \leq h'$, and hence $h \wedge h'$ and $h \sqcap h'$. We have shown that $\sqsupset^m_n$ is edge-preserving.
 Now let $m \in \omega$.
 For every edge $g \sqcap g'$ in $G_m$ there is $p_{g, g'} \leq g, g'$.
 Since $(\sqsupset^m_n)$ is surjective and so $\PP$ is atomless, we can suppose $p_{g, g'} \in G_{n_{g, g'}}$ for some $n_{g, g'} > m$ even when $g = g'$.
 Again by the surjectivity, for $n \geq \max\{n_{g, g'}: g \sqcap g' \in G_n\} > m$ we have that $\sqsupset^m_n$ is edge-witnessing, giving \ref{OCseq2}.

 Next suppose \ref{OCseq2}.
 Clearly, we can inductively define and edge-witnessing subsequence.
 Also for every $m \in \omega$ and $g \sqcap g'$ in $G_m$ there is $n > m$ with edge-witnessing ${\sqsupset}^m_n = {\sqsupset}^m_{m + 1} \circ {\sqsupset}^{m + 1}_n$, and so there are $h$, $h'$ and $p$ with $g \sqsupset^m_{m + 1} h \sqsupset^{m + 1}_n p \sqsubset^{m + 1}_n h' \sqsubset^m_{m + 1} g'$.
 As $(\sqsupset^m_n)$ is edge-preserving, we have that $h \sqcap h'$ and $(\sqsupset^m_n)$ is edge-surjective, which proves \ref{OCseq}.
 
 Finally suppose \ref{OCseq}.
 If $g \sqcap g'$ in $G_m$, then there are $n' > n \geq m$ such that $\sqsupset^n_{n'}$ is edge-witnessing.
 Since $\sqsupset^m_n$ is edge-surjective, there are $h, h' \in G_m$ with $g \sqsupset^m_n h\mathrel{\sqcap}h' \sqsubset^m_n g'$.
 Since $\sqsupset^n_{n'}$ is edge-witnessing, there is $p \in G_{n'}$ with $g \geq h \geq p \leq h' \leq g'$, and so $g \wedge g'$.
 The other half of edge-faithfulness follows from \autoref{Overlapping}.
\end{proof}

\begin{cor}
An edge-surjective sequence in $\mathbf B$ is edge-faithful if and only if it has an edge-witnessing subsequence.
    \qed
\end{cor}

We want to make the point that co-bijective edge-faithful sequences are the right notion to reflect the topological properties of the spectrum.
This is demonstrated by the following proposition about metrizability as well as in \autoref{ConnectedLimit} about connectedness.

\begin{prp} \label{HausdorffLimit}
	Let $(G_n, \sqsupset_n^m)$ be an edge-faithful sequence in $\mathbf{B}$ with induced poset $\PP$.
	The following are equivalent.
	\begin{enumerate}
		\item\label{Hauslimit1} The sequence $(\sqsupset_n^m)$ has a star-refining subsequence.
		\item\label{Hauslimit2} The poset $\PP$ is regular.
		\item The spectrum $\Spec{\PP}$ is Hausdorff.
		\item The spectrum $\Spec{\PP}$ is metrizable.
	\end{enumerate}
\end{prp}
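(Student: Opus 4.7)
The plan is to prove the chain (2)$\Leftrightarrow$(3)$\Leftrightarrow$(4) by soft topological means and then establish (1)$\Leftrightarrow$(2) via a careful translation between the morphism-level ``star-refining'' condition and the poset-level star-below relation. First I would verify that $\PP$ is a prime atomless graded $\omega$-poset with $\PP_n = G_n$: by \autoref{prp:InducedPoset} (noting that co-injective implies surjective, so our sequence in $\mathbf{B}$ is surjective) we get the structural part, and for primality I apply the equivalence $g^\in \neq \emptyset \Leftrightarrow g \sqcap g$ from \autoref{Overlapping} (valid under the co-bijective edge-faithful hypothesis) together with reflexivity of $\sqcap$. With $\PP$ prime, \autoref{RegularImpliesHausdorff} yields (2)$\Leftrightarrow$(3). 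Since $\Spec{\PP}$ is already compact second-countable $\mathsf{T}_1$ by \autoref{SpectrumCompactT1}, (3)$\Leftrightarrow$(4) follows from Urysohn's metrization theorem.

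For (1)$\Rightarrow$(2), fix a cap $C \in \mathsf{C}\PP$. By \eqref{CapsCoInitial} some level $G_m$ refines $C$, and by \autoref{IdealSubsequences} (with star-refinement being an ideal in $\mathbf{S}$ by \autoref{Ideals}) the assumed subsequence provides $n > m$ with $\sqsupset_n^m$ star-refining. The key translation, powered by edge-faithfulness, is that the star $G_n g = \{q \in G_n : q \wedge g\}$ coincides with $g^\sqcap \cap G_n$ via \autoref{Overlapping}. Hence for each $g \in G_n$ the star-refining witness $h \in G_m$ with $g^\sqcap \subseteq h^{\sqsupset_n^m}$ reads exactly as $G_n g \leq h$, i.e. $g \vartriangleleft_{G_n} h$. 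Choosing $c \in C$ with $h \leq c$ and applying Compatibility \eqref{Compatibility}, I obtain $g \vartriangleleft_{G_n} c$, so $G_n$ star-refines $C$.

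For the converse (2)$\Rightarrow$(1), by \autoref{IdealSubsequences} it is enough to show that for every $m$ there exists $k > m$ with $\sqsupset_k^m$ star-refining. Fix $m$. Regularity yields a cap $D$ with $D \vartriangleleft G_m$; for each $d \in D$ fix $p_d \in G_m$ and a witnessing cap $E_d$ with $d \vartriangleleft_{E_d} p_d$. Since $D$ is finite, \eqref{CapsCoInitial} lets me pick $k > m$ so large that $G_k$ refines $D$ and every $E_d$ simultaneously, and then \eqref{RefiningStars} upgrades each witness to $d \vartriangleleft_{G_k} p_d$. For $g \in G_k$ I choose $d_g \in D$ with $g \leq d_g$; the inclusion $G_k g \subseteq G_k d_g$, combined with $G_k d_g \leq p_{d_g}$, gives $g \vartriangleleft_{G_k} p_{d_g}$. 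Translating back through edge-faithfulness this reads $g^\sqcap \cap G_k \subseteq p_{d_g}^{\sqsupset_k^m}$, precisely the star-refining condition on $\sqsupset_k^m$.

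The main technical point throughout is the edge-faithful identification $g \wedge h \Leftrightarrow g \sqcap h$ on each level, which makes the combinatorial neighborhoods $g^\sqcap$ in $G_n$ agree with the order-theoretic stars $G_n g$ in $\PP$. Without edge-faithfulness neither direction of (1)$\Leftrightarrow$(2) would go through; with it in hand, the remaining work is a routine combination of Compatibility, \eqref{RefiningStars}, and the cofinality of levels among caps.
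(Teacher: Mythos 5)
Your proof is correct and follows essentially the same route as the paper: you reduce (2)$\Leftrightarrow$(3)$\Leftrightarrow$(4) to \autoref{RegularImpliesHausdorff} after checking that $\PP$ is prime, and you translate the star-refining condition on $\sqsupset^m_n$ into the level-wise star-below relation $G_n\vartriangleleft_{G_n}G_m$ via edge-faithfulness, shuttling between arbitrary caps and levels using \eqref{Compatibility}, \eqref{RefiningStars} and the co-initiality of levels, exactly as in the paper's argument. The one slip is in (2)$\Rightarrow$(1), where you assert that the cap $D$ with $D\vartriangleleft G_m$ is finite: caps need not be finite (they are only required to be refined by a finite band), so you should first replace $D$ by a band or a level $B\leq D$, noting that $B\vartriangleleft G_m$ still holds by \eqref{Compatibility}, before picking the finitely many witnessing caps $E_d$ and a common refining level $G_k$.
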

\begin{proof}
	First, notice that for second-countable compact spaces being Hausdorff and being metrizable coincide. Further, this is equivalent to $\mathbb P$ being regular (as $\mathbb P$ is a prime $\omega$-poset), by \autoref{RegularImpliesHausdorff}. We are left to show that \ref{Hauslimit1} and \ref{Hauslimit2} are equivalent. Notice that $\sqsupset_n^m$ is star-refining if and only if $G_n\vartriangleleft_{G_n} G_m$, where $\vartriangleleft$ is the star-below relation (see the end of \autoref{ss:thespectrum}).
	
	Suppose $\PP$ is regular and let $m\in\omega$. By regularity, we know that $G_m$ is $\vartriangleleft$-refined by another cap, and since levels are coinitial for refinement (\autoref{prp:coverslevels}) and by the compatibility properties of the relations $\vartriangleleft$ and $\leq$ (see \eqref{Compatibility}), there is $n$ such that $G_n\vartriangleleft G_m$. Since levels are finite and by \eqref{RefiningStars}, we can assume that there is $k>n$ such that $G_n\vartriangleleft_{G_k}G_m$. Using again \eqref{Compatibility}, we have that $G_k\vartriangleleft_{G_k}G_m$, hence $\sqsupset_k^m$ is star-refining. 
	
	Vice versa, suppose $(\sqsupset_n^m)$ has a subsequence of star-refining morphisms. As each cap is refined by a level, and $G_n\vartriangleleft_{G_n}G_m$ for all indexes belonging to the appropriate subsequence, \eqref{Compatibility} implies that every cap is $\vartriangleleft$-refined by a level, and therefore $\PP$ is regular.
\end{proof}

Before we continue, let us summarize the relationship between the properties of (sub)sequences of a sequence $(\sqsupset_n^m)$, the induced poset $\PP$, and its spectrum $\Spec{\PP}$.
Recall that a topological space is said to be \emph{perfect} if it has no isolated points.

\begin{prp} \label{prp:Summary1}
	Let $(G_n, \sqsupset_n^m)$ be a sequence in $\mathbf{S}$  with induced poset $\PP$.
	Then we have the following equivalences.
	\newcommand{\Iff}{$\, \Leftrightarrow\ $}
	\begin{enumerate} 
		\item\label{CaseAtomless} $({\sqsupset}_n^m)$ is surjective \Iff $\PP$ is atomless.
		\item\label{CaseCoinjective}
		$(\sqsupset_n^m)$ is co-injective \Iff $\PP$ is predetermined and atomless.
	\end{enumerate}
	For the following suppose that $(\sqsupset_n^m)$ is in $\mathbf{B}$. Then
	\begin{enumerate}[resume]
		\item\label{CaseAntiInj1} $(\sqsupset_n^m)$ has an anti-injective subsequence \Iff $\Spec{\PP}$ is perfect, and
		\item\label{CaseAntiInj2} $(\sqsupset_n^m)$ is anti-injective \Iff $\PP$ is branching \Iff $\PP$ is cap-determined.
	\end{enumerate}
	Let us additionally suppose that $(\sqsupset^m_n)$ is edge-faithful. Then
	\begin{enumerate}[resume]
		\item\label{CaseStarSurj} $(\sqsupset_n^m)$ has a star-refining subsequence \Iff $\PP$ is regular \Iff $\Spec{\PP}$ is Hausdorff \Iff $\Spec{\PP}$ is metrizable.
	\end{enumerate}
\end{prp}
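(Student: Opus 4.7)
The plan is to verify each equivalence separately, relying on \autoref{prp:InducedPoset}, \autoref{HausdorffLimit}, the ideal structure summarised in \autoref{Ideals} and \autoref{IdealSubsequences}, and the implications of \autoref{figure1}. Parts \ref{CaseAtomless} and \ref{CaseStarSurj} are already available: \ref{CaseAtomless} is a restatement of \autoref{prp:InducedPoset}~\ref{itm:pos_atomless}--\ref{itm:pos_surjective}, while \ref{CaseStarSurj} is \autoref{HausdorffLimit} (available because $(\sqsupset^m_n)$ is in $\mathbf{B}$ and edge-faithful), combined with the fact that Hausdorff and metrizable coincide for second-countable compacta. So the real work lies in parts \ref{CaseCoinjective}--\ref{CaseAntiInj2}.

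For \ref{CaseCoinjective}, I would translate co-injectivity of $\sqsupset^m_{m+1}$ into predetermination of $\PP$ at level $m$: co-injectivity gives, for each $p \in G_m$, an element $q \in G_{m+1}$ with $q^{\sqsubset^m_{m+1}} = \{p\}$, and grading upgrades this to $q^< = p^\leq$ (any $r > q$ at rank $\leq m$ can, by grading, be pushed to some $r' \in G_m$ with $r' > q$, forcing $r' = p$ and hence $r \geq p$). Conversely, predetermination plus atomlessness lets us realise the predetermining $q$ at level $m+1$ via grading, and the identity $q^< = p^\leq$ translates back to $q^{\sqsubset^m_{m+1}} = \{p\}$. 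Atomlessness comes for free since co-injective relations are surjective, and $\mathbf{I}$ being a wide subcategory propagates co-injectivity from the successive $\sqsupset^m_{m+1}$ to all $\sqsupset^m_n$.

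For \ref{CaseAntiInj1} and \ref{CaseAntiInj2}, assume the sequence is in $\mathbf{B}$. Anti-injectivity of $\sqsupset^m_n$ yields, for each $p \in G_m$, two successors $q_1, q_2 \in G_n$ below $p$, which \autoref{Thread} extends to two distinct minimal selectors in $p^\in$, excluding isolated points. Conversely, perfectness forces each $p^\in$ to contain at least two selectors whose threads must diverge at some later level; finiteness of $G_m$ then yields a single $n$ at which $\sqsupset^m_n$ is anti-injective, and \autoref{IdealSubsequences} (via the ideal of anti-injective co-injective morphisms in $\mathbf{I}$ from \autoref{Ideals}) assembles a full anti-injective subsequence. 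For \ref{CaseAntiInj2}, anti-injectivity and branching are equivalent through the antichain property of levels: a second successor in the same level as $p$ is automatically incomparable to $p$, and conversely grading pushes any element incomparable to $p$ (and below the same $q$) to the appropriate level. Branching and cap-determinedness then coincide via \autoref{figure1}, since \ref{CaseCoinjective} guarantees $\PP$ is predetermined.

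The main obstacle is the ``perfect $\Rightarrow$ anti-injective subsequence'' direction of \ref{CaseAntiInj1}: it requires packaging pointwise divergence of threads into a single uniform level of divergence using finiteness of $G_m$, and then invoking \autoref{IdealSubsequences} to convert this local existence statement into a genuine subsequence.
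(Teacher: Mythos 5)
Your proposal matches the paper's proof essentially step for step: parts \ref{CaseAtomless} and \ref{CaseStarSurj} are delegated to \autoref{prp:InducedPoset} and \autoref{HausdorffLimit}, part \ref{CaseCoinjective} translates co-injectivity into predetermination via grading, part \ref{CaseAntiInj1} builds divergent threads and uses finiteness of levels plus the ideal machinery of \autoref{IdealSubsequences}, and part \ref{CaseAntiInj2} reduces branching to same-level incomparability and invokes \autoref{figure1}. The one slip is citing \autoref{Thread} to produce two distinct minimal selectors in $p^\in$: that lemma goes the other way (from a minimal selector to a thread), and the upward closure of an arbitrary thread need not be minimal; the correct mechanism, which the paper invokes via the proof of \autoref{Overlapping}, is to use co-injectivity to extend $q_1,q_2$ to threads with singleton preimages at each step, whose upward closures are then automatically distinct minimal selectors.
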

\begin{proof}
	\ref{CaseAtomless} and \ref{CaseStarSurj} were already proved in Propositions \ref{prp:InducedPoset} and \ref{HausdorffLimit}, respectively.
	We now show \ref{CaseCoinjective}--\ref{CaseAntiInj2}. 

	\ref{CaseCoinjective}: If $\sqsupset_n^m$ is co-injective, it is in particular surjective, so $\mathbb P$ is atomless. Further, fix $p\in G_n$. As $\sqsupset_{n+1}^{n}$ is co-injective, there is $q\in G_{n+1}$ such that $q^{\leq}\cap G_n=\{p\}$. This witnesses that $\mathbb P$ is predetermined. Vice versa, notice that if each $\sqsupset^n_{n+1}$ is co-injective, then each $\sqsupset_n^m$ is such. Fix $p\in G_n$. Since $\mathbb P$ is atomless, $p$ is not minimal. Since $\mathbb P$ is predetermined, there is $q\in \mathbb P$ such that $q^{<}=p^{\leq}$.
	Since $\PP$ is graded, $q$ has rank $n+1$, and so $q\in G_{n+1}$ and $q^{\sqsubset^n_{n+1}}=\{p\}$. This shows that $\sqsupset^n_{n+1}$ is co-injective.
	
	\ref{CaseAntiInj1}:
	For $\Rightarrow$, let $p^\in$ for $p \in G_l$ be a basic open subset of $\Spec{\PP}$.
	Since $(\sqsupset^m_n)$ has an anti-injective subsequence, there are $n > m \geq l$ such that $\sqsupset^m_n$ is anti-injective, and so there are $q \neq r \in G_n$ with $q, r < p$.
	Since $(\sqsupset^m_n)$ is co-injective, as in the proof of \autoref{Overlapping}, there are threads $(q_i)$ and $(r_i)$ with $q_n = q$ and $r_n = r$ such that $(q_i) \cap G_k = \{q_k\}$ and $(r_i) \cap G_k = \{r_k\}$ for $k \geq n$, and so $(q_i)^\leq \neq (r_i)^\leq$ are minimal selectors in $p^\in$.
	
	For $\Leftarrow$, first note that to obtain an anti-injective subsequence it is enough that for all $p \in \PP$ there are $q, r < p$ with $q \nleq r$ and $r \nleq q$:
	If $q, r < p$ are incomparable with $p \in G_m$, $q \in G_i$, and $r \in G_j$ for $i, j > m$, then by co-injectivity for every $n \leq i, j$ there are $q', r' \in G_n$ such that $q'{}^{\sqsubset^i_n} = \{q\}$ and $r'{}^{\sqsubset^j_n} = \{r\}$, and so we can find $n > m$ that works for every $p \in G_m$.
	Now if $\Spec{\PP}$ is perfect, then every $p \in \PP$ has incomparable predecessors.
	Otherwise, the set of $\leq$-predecessors of $p$ is linearly ordered. Then $p^\leq\cup p^\geq$ is the only minimal selector containing $p$, and therefore $p^\in$ is a singleton.
	
	\ref{CaseAntiInj2}: Since \ref{CaseCoinjective} holds, $\mathbb P$ is predetermined. As $\mathbb P$ is an $\omega$-poset, being branching and being cap-determined coincide (see Figure~\ref{figure1}).
	For $\Rightarrow$, fix $p<q$, and let $n>m$ such that $p\in G_n$ and $q\in G_m$. Since $\sqsupset_n^m$ is anti-injective, we can find $r\in G_n\setminus\{p\}$ with $r<q$. As $p$ and $r$ are not comparable, $\mathbb P$ is branching.
	For $\Leftarrow$, let $q\in G_m$, and $p\in G_{m+1}$ with $p<q$ (such $p$ exists by \ref{CaseCoinjective}). Since $\mathbb P$ is branching, we can find $r<q$ which is incomparable with $p$. Let $n$ be such that $r\in G_n$. Since ${\sqsupset_n^m}={\sqsupset_{m+1}^m}\circ{\sqsupset_n^{m+1}}$, there is $r'\in G_{m+1}$ such that $r\leq r'\leq q$. As $r$ and $p$ are incomparable, $p\neq r'$. Since $m$ and $q$ are arbitrary, each $\sqsupset_{m+1}^{m}$ is anti-injective, and so each $\sqsupset_n^m$ is.
\end{proof}

Recall that every level of the induced poset induces an open cover of the spectrum.
In the following proposition we summarize properties of these covers. For two covers $C$ and $D$ of a set $X$, we say that $C$ \emph{consolidates} $D$ when $D$ refines $C$ and, moreover, every $c\in C$ is a union of elements of $D$, i.e.
\[
\forall c\in C\qquad (c=\bigcup\{d\in D : d\subseteq c\}).
\]

\begin{prp} \label{CoverSummary}
	Let $(G_n, \sqsupset^m_n)$ be a surjective sequence in $\mathbf{S}$, let $\PP$ be the induced poset, and let $C_n = \{g^\in: g \in G_n\}$ for every $n \in \omega$.
	Then we have the following.
	\begin{enumerate}
		\item Every $C_n$ is an open cover of $\Spec{\PP}$.
		\item Every open cover of $\Spec{\PP}$ is refined by some $C_n$.
		\item \label{itm:SequenceConsolidation} Every $C_n$ consolidates $C_{n + 1}$.
	\end{enumerate}
	For the following suppose that $(\sqsupset^m_n)$ is in $\mathbf{B}$.
	\begin{enumerate}[resume]
		\item Every $C_n$ is a minimal cover and $g \mapsto g^\in$ is a bijection $G_n \to C_n$.
        \item \label{itm:OrderFiathful} 
        We have $g \sqsupset^m_n h$ if and only if $g^\in \supseteq h^\in$ for every $m < n$, $g \in G_m$, $h \in G_n$.
		\item If $(\sqsupset^m_n)$ is edge-faithful, then $g \mapsto g^\in$ is an isomorphism of graphs when $C_n$ is endowed with the actual overlap relation.
		\item If $(\sqsupset^m_n)$ is anti-injective, then $p \mapsto p^\in$ is an isomorphism of the posets $(\PP, {\leq}) \to (\{p^\in: p \in \PP\}, \subseteq)$.
	\end{enumerate}
\end{prp}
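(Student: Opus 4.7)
I will go through the seven items in turn, applying the general poset machinery of \autoref{ss:spectrum} to the induced poset $\PP$ and using the identification $\PP_n = G_n$ from \autoref{prp:InducedPoset}, available throughout since the sequence is surjective.

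For items (1)--\ref{itm:SequenceConsolidation}, only surjectivity is needed. Each level $G_n = \PP_n$ is a band and therefore a cap, and caps correspond to open covers of $\Spec{\PP}$, giving (1); (2) then follows from the co-initiality \eqref{CapsCoInitial} after observing that any open cover of $\Spec{\PP}$ refines to a basic cover $\{p^\in : p \in C\}$ for some cap $C$. For the consolidation \ref{itm:SequenceConsolidation}, the refinement of $C_{n+1}$ by $C_n$ is immediate from co-surjectivity of $\sqsupset^n_{n+1}$. To establish $g^\in = \bigcup\{h^\in : h \in G_{n+1},\ h \leq g\}$ for $g \in G_n$, I will take $S \in g^\in$; being a filter (\autoref{SpectrumCompactT1}) that meets the cap $G_{n+1}$, $S$ contains some $t \in G_{n+1}$, and gradedness of $\PP$ together with a common lower bound of $t$ and $g$ in $S$ produces an $h \in G_{n+1}$ with $h \leq g$ and $h \in S$.

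Assuming now $(\sqsupset^m_n)$ is in $\mathbf{B}$, items (4), (6), and (7) will follow by invoking earlier results. By \autoref{prp:Summary1}\ref{CaseCoinjective}, $\PP$ is predetermined (hence prime, by \autoref{figure1}), so \autoref{prp:coverslevels}\ref{itm:MinCover} gives that each $C_n$ is a minimal cover; this minimality also forces $g \mapsto g^\in$ to be a bijection on each $G_n$, since a duplicate open set could otherwise be discarded. For (6), I combine (4) with \autoref{Overlapping}, which under $\mathbf{B}$ and edge-faithfulness identifies overlap with adjacency. For (7), \autoref{prp:Summary1}\ref{CaseAntiInj2} equates anti-injectivity with cap-determinedness, so the last item of \autoref{SpectrumCompactT1} yields the claimed poset isomorphism.

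The central case is \ref{itm:OrderFiathful}; one direction is immediate from the filter property of minimal selectors. For the converse, I suppose $g \in G_m$, $h \in G_n$ with $m < n$ and $h \not\leq g$, so by the rank structure $g$ and $h$ are incomparable. I will construct a thread $(s_k)$ with $s_n = h$ and $s_k \not\leq g$ for every $k$, so that the associated minimal selector via \autoref{Thread} witnesses $h^\in \not\subseteq g^\in$. Going upward, any $s_k \geq h$ satisfying $s_k \leq g$ would force $h \leq g$, so such $s_k$ are automatically excluded. Going downward, I pick $s_{k+1}$ using co-injectivity of $\sqsupset^k_{k+1}$ so that $s_{k+1}^{\sqsubset^k_{k+1}} = \{s_k\}$; any inequality $s_{k+1} \leq g$ then factors through $G_k$ and collapses to $s_k \leq g$, so descending induction produces the contradiction $h \leq g$. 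The main obstacle will be confirming that the filter $(s_k)^\leq$ generated by this tight thread is a genuine minimal selector containing $h$; this ultimately relies on the tight co-injective choices making the filter essentially unshrinkable, so that any minimal sub-selector still contains $h$ while avoiding $g$.
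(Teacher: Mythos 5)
Your proposal is correct and follows essentially the same route as the paper: items (1)--(4), (6), (7) are read off from \autoref{SpectrumCompactT1}, \autoref{prp:coverslevels}, \autoref{Overlapping} and \autoref{prp:Summary1} exactly as you do, and for \ref{itm:OrderFiathful} the paper builds precisely your ``tight'' thread $(s_k)_{k\ge n}$ with $s_{k+1}{}^{\sqsubset^k_{k+1}}=\{s_k\}$, noting that its upward closure is a minimal selector meeting $G_m$ only in $h^{\sqsubset^m_n}$ -- the direct reading of your contrapositive. The only cosmetic differences are that the paper derives the consolidation \ref{itm:SequenceConsolidation} from the trace identity of \autoref{Subspaces} rather than from the filter property, and that the injectivity of $g\mapsto g^\in$ in (4) really rests on $G_n$ being a minimal \emph{cap} (discarding a duplicated index would leave a proper subcap that still covers) rather than on minimality of $C_n$ as a family of sets.
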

\begin{proof}
	Most items are just a translation of \autoref{SpectrumCompactT1} and \autoref{prp:coverslevels} to the language of sequences of graphs.
	By \autoref{prp:InducedPoset}, the levels $\PP_n$ correspond to the graphs $G_n$.
	By \autoref{prp:Summary1}, if $(\sqsupset^m_n)$ is co-bijective (and anti-injective), then $\PP$ is predetermined (and cap-determined).
	
	To show \ref{itm:SequenceConsolidation} let $p \in G_n$ and $S \in p^\in$.
	By \autoref{Subspaces}, we have $[\{S\}]_n = [\{S\}]_{n + 1}{}^{\sqsubset^n_{n + 1}}$, and so there is $q \in G_{n + 1}$ with $q \leq p$ and $S \in q^\in$.
	Hence, $p^\in = \bigcup\{q^\in: p \geq q \in G_{n + 1}\}$.

    To show \ref{itm:OrderFiathful}, let $h \in G_n$.
    By co-injectivity there is a thread $(h_k)_{k\geq n}$ such that $h_n = h$ and $h_{k + 1}{}^{\sqsubset^k_{k + 1}} = \{h_k\}$ for every $k \geq n$. Then $S = (h_k)_{k \geq n} \cup h^\leq$ is a minimal selector, and for every $m \leq n$ and $g \in G_m$ such that $g^\in \supseteq h^\in$ we have $g \in S$, and so $g \sqsupset^m_n h$.
    The other implication is trivial.
\end{proof}

Let us conclude the subsection by showing that suitable compact spaces admit faithful representations.
\begin{prp} \label{prp:FaithfulRepresentation}
	Every non-empty second-countable compact $\mathsf{T}_1$ space $X$ is the spectrum of an edge-witnessing sequence in $\mathbf{B}$.
	
	Moreover, if $X$ is perfect or Hausdorff, we can obtain a sequence that is additionally anti-injective or star-surjective, respectively.
\end{prp}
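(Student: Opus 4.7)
The plan is to extract the desired sequence from a cap-basis of $X$. By \autoref{prp:allspaces}, choose a cap-basis $\PP$ of $X$ which is a graded predetermined $\omega$-poset. For each $n$, let $G_n$ denote the level $\PP_n$, endowed with the overlap graph structure (equivalently, $g \sqcap h$ iff $g \wedge h$ in $\PP$, which holds since $\PP$ is a basis of open sets, cf.\ \autoref{Overlapping}), and let ${\sqsupset}^m_n$ be the restriction of the poset relation $\geq$ to $G_m \times G_n$.

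First I verify that $(G_n, \sqsupset^m_n)$ lies in $\mathbf{B}$ and is edge-faithful. Consecutive levels of $\PP$ consolidate each other, giving co-surjectivity. Predetermination together with gradedness gives co-injectivity: for non-atom $p \in \PP_n$ pick $q < p$ with $q^< = p^\leq$ and note that gradedness puts such a $q$ in $\PP_{n+1}$, while atoms recur at every sufficiently deep level and satisfy co-injectivity trivially. Edge-preservation follows from transitivity of $\leq$, and edge-faithfulness from the equivalence $g \sqcap h \Leftrightarrow g \wedge h$ coming from the basis property. Since $\PP$ is a cap-basis of $X$, the map $p \mapsto p^\in$ induces a homeomorphism $X \cong \Spec{\PP}$ via \autoref{CoverSummary}.

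To promote the sequence to an edge-witnessing one, I argue that for each $m$ and each overlapping pair $g \sqcap h$ in $G_m$, one picks $x \in g \cap h$ and, by the basis property together with gradedness, finds some $p \in \PP_n$ with $p \subseteq g \cap h$ for a suitably large $n > m$. As $G_m$ is finite, a common $n$ works for all such pairs simultaneously, making ${\sqsupset}^m_n$ edge-witnessing. Edge-witnessing morphisms form an ideal in $\mathbf{E}$ by \autoref{Ideals}, so \autoref{IdealSubsequences} yields a subsequence whose every bonding relation is edge-witnessing; by \autoref{Subsequence}, this subsequence still has spectrum homeomorphic to $X$.

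For the moreover part I invoke \autoref{prp:Summary1}. If $X$ is Hausdorff, the original sequence has a star-refining subsequence (as $\PP$ is regular by \autoref{RegularImpliesHausdorff}); if $X$ is perfect, it has an anti-injective subsequence. In each case the property in question defines an ideal in an ambient subcategory (star-refining in $\mathbf{S}$, and anti-injective co-injective in $\mathbf{I}$, by \autoref{Ideals}), so \autoref{IdealSubsequences} applied to the intersection with the edge-witnessing ideal produces a subsequence with both properties at once. The main technical subtlety is the treatment of atoms of $\PP$ stemming from isolated points of $X$; this is handled by working with levels $\PP_n$ rather than rank-sets, so that atoms recur at every sufficiently deep level and the resulting sequence is genuinely co-surjective throughout.
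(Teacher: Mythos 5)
Your construction is, up to isomorphism, the same as the paper's: taking $G_n=\PP_n$ (levels rather than rank-sets) and forming the induced poset $\bigcup_n\PP_n\times\{n\}$ automatically replaces each atom $p$ of $\PP$ by the infinite descending chain of its copies, which is exactly the modification $\QQ\supseteq\PP$ that the paper performs explicitly before passing to rank-sets. Your extraction of edge-witnessing, anti-injective and star-refining subsequences at the end likewise matches the paper's use of \autoref{OverlappingCoherence}, \autoref{prp:Summary1} and \autoref{Subsequence}.

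There is, however, a genuine gap at precisely the point you flag as the main subtlety. When $X$ has isolated points, the induced poset $\QQ$ of your sequence is \emph{not} $\PP$: by \autoref{dfn:inducedposet} it is the disjoint union of the levels, so each atom of $\PP$ becomes an infinite strictly decreasing chain of copies $(p,n_0)>(p,n_0+1)>\cdots$. You establish $X\cong\Spec{\PP}$ via the cap-basis property, but the proposition requires $X\cong\Spec{\QQ}$, and you never bridge the two; handling co-surjectivity of the level-relations does not do this. The bridge is true but needs the paper's observation: a minimal selector of $\QQ$ is either induced by a minimal selector of $\PP$ avoiding atoms, or is of the form $p^\leq\cup\{(p,n):n\geq n_0\}$ for an atom $p$, and this correspondence is a homeomorphism $\Spec{\QQ}\to\Spec{\PP}$. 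A second, smaller omission of the same origin: you should check that the level-relations compose on the nose, i.e. ${\sqsupset^l_m}\circ{\sqsupset^m_n}={\sqsupset^l_n}$, since the correspondence \eqref{Induced Sequence} is stated for rank-sets of an infinite graded $\omega$-poset (where surjectivity forces levels to equal rank-sets), not for levels of a poset with atoms; this does follow from gradedness together with the fact that levels of a cap-basis consolidate one another, but it is not automatic and the atoms are exactly where it could conceivably fail.
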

\begin{proof}
	By \autoref{prp:allspaces}, $X$ is the spectrum of a non-empty predetermined cap-determined graded $\omega$-poset $\PP$.
	Let $\QQ \supseteq \PP$ be the modification of $\PP$ where below every atom $p \in \PP$ we add a decreasing sequence of new elements: $p = q_{p, 0} > q_{p, 1} > q_{p, 2} > \cdots$.
	Then $\QQ$ is still a predetermined graded $\omega$-poset, which is additionally infinite and atomless, and so it corresponds to a sequence $(G_n, \sqsupset^m_n)$ of finite sets and surjective co-surjective relations by \eqref{Induced Sequence}.
	If we put $g \sqcap h \Leftrightarrow g \wedge h$ for every $g, h \in G_n$ and $n \in \omega$, $(G_n, \sqsupset^m_n)$ becomes an edge-faithful surjective sequence in $\mathbf{S}$.
	
	The spectrum of $(G_n, \sqsupset^m_n)$ is homeomorphic to $X$ since its induced poset is $\QQ$ and the spectra of $\QQ$ and $\PP$ are canonically homeomorphic: a minimal selector in $\QQ$ is either contained in $\PP$, or it is of the form $p^\leq \cup \{q_{p, i}: i \in \omega\}$ for an atom $p \in \PP$.
	
	Finally, we use Propositions~\ref{OverlappingCoherence} and \ref{prp:Summary1}.
	Since $\QQ$ is predetermined, $(\sqsupset^m_n)$ is in $\mathbf{B}$.
	Since $(\sqsupset^m_n)$ is edge-faithful, it has an edge-witnessing subsequence.
	If $X$ is perfect, $(\sqsupset^m_n)$ has an anti-injective subsequence, and if $X$ is Hausdorff, $(\sqsupset^m_n)$ has a star-refining subsequence.
	By \autoref{Subsequence}, passing to a subsequence does not change the spectrum.
\end{proof}

In the case of metric compacta we describe a particular construction of a faithful representation that will be useful when we want to have control over the shapes of the graphs $G_n$.
\begin{prp} \label{SequenceForSpace}
    Let $X$ be a non-empty compact metric space and let $(G_n)$ be a sequence of minimal open covers of $X$ such that
    \begin{enumerate}
        \item\label{itm:consolidates} $G_n$ consolidates $G_{n + 1}$ for every $n \in \omega$,
        \item\label{itm:atomless} $G_n \cap G_{n + 1} = \emptyset$ for every $n \in \omega$,
        \item\label{itm:small} $\lim_{n \to \infty} \max\{\diam(g): g \in G_n\} = 0$, or equivalently every open cover of $X$ is refined by some $G_n$.
    \end{enumerate}
    Endow every $G_n$ with the edge relation $g \sqcap g' \Leftrightarrow g \cap g' \neq \emptyset$, and let, for $m\leq n$, ${\sqsupset}^m_{n} \subseteq G_m \times G_{n}$  be the restriction of the inclusion relation on $\mathsf{P}(X)$. Let $\PP$ denote the poset $\bigcup_{n \in \omega} G_n$ with inclusion as the order relation.

    Then $(\sqsupset^m_n)$ is an edge-faithful sequence in $\mathbf{B}$ with induced poset $\PP$, $\PP$ is a cap-basis of $X$, and hence $X$ is homeomorphic to $\Spec{\PP}$.
    Moreover, the sequence $(\sqsupset^m_n)$ is anti-injective and has an edge-witnessing and star-refining subsequence.
\end{prp}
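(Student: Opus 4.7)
The plan is to verify each of the asserted properties, relying on the constructions in the earlier sections. I begin with the key observation that the graphs $G_n$ are pairwise disjoint, which identifies $\PP = \bigcup_{n\in\omega} G_n$ with the induced poset of $(\sqsupset^m_n)$ in the sense of \autoref{dfn:inducedposet}. Indeed, if $g \in G_n \cap G_m$ with $n < m$, then \cite[Proposition~1.4]{BartosBiceV.Compacta} applied to $G_n$ (minimal), $G_{n+1}$, and $G_m$ produces $b \in G_{n+1}$ with $g \subseteq b \subseteq g$, forcing $g = b \in G_n \cap G_{n+1}$ and contradicting~\ref{itm:atomless}.

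Next I verify $(\sqsupset^m_n) \in \mathbf{B}$. Edge-preservation is \autoref{InclusionBetweenOverlapGraphs}. Co-surjectivity is immediate from consolidation. For co-injectivity: given $g \in G_m$, minimality yields $x \in g \setminus \bigcup(G_m \setminus \{g\})$, so $\varepsilon := d(x, X \setminus g) > 0$; picking $n$ large enough that $\max\{\diam h : h \in G_n\} < \varepsilon$ produces $h \in G_n$ with $x \in h \subseteq g$ and $h \not\subseteq g'$ for $g' \in G_m \setminus \{g\}$. The composition law $\sqsupset^\ell_m \circ \sqsupset^m_n = \sqsupset^\ell_n$ again follows from \cite[Proposition~1.4]{BartosBiceV.Compacta}. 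Edge-faithfulness reduces via \autoref{Overlapping} to showing $g \sqcap h \Rightarrow g \wedge h$ in $G_m$, which for any $x \in g \cap h$ is witnessed by any $f \in G_n$ through $x$ of sufficiently small diameter so that $f \subseteq g \cap h$. \autoref{OverlappingCoherence} then automatically supplies an edge-witnessing subsequence.

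For anti-injectivity I use the branching criterion of \autoref{prp:Summary1}\ref{CaseAntiInj2}. Given $p < q$ in $\PP$, disjointness yields $p \subsetneq q$; pick $x \in p$ and $y \in q \setminus p$, and take $r \in G_k$ through $y$ with $k$ so large that $\max \diam G_k < \min\{d(x,y), d(y, X \setminus q)\}$. Then $r \subseteq q$ and $x \notin r$, so $r < q$ is incomparable with $p$. The star-refining subsequence comes from the Lebesgue-number lemma: given $m$, let $\delta$ be a Lebesgue number of the open cover $G_m$ and pick $n$ with $\max \diam G_n < \delta/3$. For $h, h' \in G_n$ with $h' \sqcap h$ and any $x \in h$, every $y \in h'$ satisfies $d(x,y) < 2\delta/3$, so $h' \subseteq B(x,\delta)$ lies in some element of $G_m$.

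Finally, condition~\ref{itm:small} makes $\PP$ a basis of $X$, and the same Lebesgue-number argument shows any $\PP$-cover of $X$ is refined by some level $G_n$ (a band of $\PP$), so $\PP$ is a cap-basis. The identification $X \cong \Spec{\PP}$ is then the standard correspondence between a compactum and the spectrum of any of its cap-bases: the map $x \mapsto \{p \in \PP: x \in p\}$ sends $x$ to a minimal selector (the threads of \autoref{Thread} pin a single point because $\diam G_n \to 0$), and it is a homeomorphism because \autoref{CoverSummary} matches the cover $\{p : p \in G_n\}$ of $X$ with the cover $\{p^\in : p \in G_n\}$ of $\Spec{\PP}$. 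The most delicate point of the whole argument is the opening disjointness claim; everything else is a straightforward use of minimality and the diameter hypothesis.
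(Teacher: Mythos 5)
Your proof is correct, but it takes a genuinely more self-contained route than the paper's. The paper's proof is essentially a chain of citations: it invokes \cite[Lemma~1.32]{BartosBiceV.Compacta} to get that $\PP$ is a predetermined graded $\omega$-poset with $\PP_n = G_n$ and a cap-basis, \cite[Proposition~2.9]{BartosBiceV.Compacta} for $X \cong \Spec{\PP}$, and then runs \autoref{prp:Summary1} and \autoref{OverlappingCoherence} \emph{backwards} -- from the order-theoretic facts that $\PP$ is predetermined, branching and (via Hausdorffness of $X$) regular, it reads off co-bijectivity, anti-injectivity and the star-refining subsequence. You instead verify the combinatorial properties of the relations directly from minimality, consolidation and the diameter hypothesis: co-injectivity via an uncovered point of $g$ and a small set through it, edge-faithfulness via a small set through a point of $g \cap h$, branching via a small set through a point of $q \setminus p$, and star-refinement via the Lebesgue number lemma. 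Your opening observation that \emph{all} levels are pairwise disjoint (not just consecutive ones, which is all that hypothesis~\ref{itm:atomless} states) is a point the paper handles only implicitly through the identification $G_n = \mathsf{r}^{-1}(\{n\})$, and your derivation of it from \cite[Proposition~1.4]{BartosBiceV.Compacta} is clean. Two small spots to tighten: your metric argument produces a co-injectivity witness for $\sqsupset^m_n$ only for $n$ large (depending on $G_m$), so you should add the one-line observation that consolidation pushes such a witness up to level $m+1$ (if $h \in G_n$ has $h^{\sqsubset^m_n} = \{g\}$, any $b \in G_{m+1}$ with $h \subseteq b \subseteq g$ satisfies $b^{\sqsubset^m_{m+1}} = \{g\}$), which is what is needed for the sequence to be in $\mathbf{B}$; and the Lebesgue-number bookkeeping should be stated with a fixed convention, since the star of $h$ sits in a ball of radius $2\delta/3$ about a point of $h$ and hence has diameter up to $4\delta/3$, so you want $\delta$ chosen so that every ball of radius $\delta$ (rather than every set of diameter $\delta$) lies in a member of $G_m$. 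Neither affects the substance: your argument buys independence from \cite[Lemma~1.32]{BartosBiceV.Compacta} at the cost of redoing, in metric language, work that the companion paper packaged order-theoretically.
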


\begin{proof}
    By \ref{itm:small}, $\PP$ is a basis of $X$.
    By \cite[Lemma~1.32]{BartosBiceV.Compacta}, $\PP$ is a predetermined graded $\omega$-poset with $\PP_n = G_n$ for every $n$, and it is a cap-basis of $X$.
    By \cite[Proposition~2.9]{BartosBiceV.Compacta}, $X$ is homeomorphic to $\Spec{\PP}$.
    By \ref{itm:atomless}, we have even $\PP_n = \mathsf r^{-1}(\{n\}) = G_n$, and so (also using that $\PP$ is graded) $(\sqsupset^m_n)$ is a well-defined sequence of co-surjective relations, and $\PP$ is its induced poset.  
    Since the levels of the graded $\omega$-poset $\PP$ are disjoint and consolidating, $\PP$ is atomless and branching.
    Also note that the sequence $(\sqsupset^m_n)$ is by definition edge-faithful.
    Hence, by \autoref{prp:Summary1}, the sequence $(\sqsupset^m_n)$ is co-bijective, anti-injective, and has a star-refining subsequence, and by \autoref{OverlappingCoherence}, it is edge-preserving, edge-surjective, and has an edge-witnessing subsequence.
\end{proof}

\begin{rmk}
    If $X$ is perfect then, even if \ref{itm:atomless} fails, we can always revert to a subsequence if necessary to make \ref{itm:atomless} hold.  Indeed, in this case \ref{itm:small} implies that, for every $m$, we have $n \geq m$ such that $\diam(g) < \diam(h)$ for every $g \in G_n$ and $h \in G_m$ and hence $G_m \cap G_n = \emptyset$.
\end{rmk}

\subsection{Connectedness} \label{ss.connected}

Next we focus on characterizing when the spectrum or its subsets are connected.

\begin{prp} \label{ConnectedLimit}
  Let $(G_n,\sqsupset_n^m)$ be an edge-faithful sequence in $\mathbf{B}$, and let $\PP$ be the induced poset.
  Then $\Spec{\PP}$ is connected if and only if every graph $G_n$ is connected.
\end{prp}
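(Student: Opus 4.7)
The plan is to prove both implications by contraposition, exploiting the fact (from \autoref{Overlapping} and \autoref{CoverSummary}) that, for an edge-faithful sequence in $\mathbf{B}$, the map $g \mapsto g^\in$ sends each $G_n$ isomorphically (as a graph) onto the minimal open cover $C_n = \{g^\in : g \in G_n\}$ of $\Spec{\PP}$, with actual overlap playing the role of the edge relation $\sqcap$.

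For the easier direction, suppose some $G_n$ is disconnected, so we have a nontrivial discrete partition $G_n = A \sqcup B$. Setting $A^\in = \bigcup_{a \in A} a^\in$ and $B^\in = \bigcup_{b \in B} b^\in$, both sets are nonempty open subsets of $\Spec{\PP}$ (note $a^\in \neq \emptyset$ because the induced poset is prime, being predetermined by \autoref{prp:Summary1}). Together they cover $\Spec{\PP}$ since $C_n$ is a cover. They are disjoint because any $S \in a^\in \cap b^\in$ would force $a \wedge b$ in $\PP$ (as $S$ is a filter), which by edge-faithfulness gives $a \sqcap b$, contradicting our discrete partition. Hence $\Spec{\PP}$ is disconnected.

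For the converse, assume every $G_n$ is connected and suppose toward a contradiction that $\Spec{\PP} = U \sqcup V$ with $U,V$ disjoint nonempty open sets. Since $\{U,V\}$ is an open cover, by \autoref{CoverSummary} there is some $n$ for which $C_n$ refines $\{U,V\}$: every $g^\in$ with $g \in G_n$ is contained in $U$ or in $V$, and exactly one of the two since $U \cap V = \emptyset$. Writing $A = \{g \in G_n : g^\in \subseteq U\}$ and $B = \{g \in G_n : g^\in \subseteq V\}$, we obtain a partition $G_n = A \sqcup B$, and both parts are nonempty (take any $S \in U$ and any $g \in S \cap G_n$, which exists because $G_n$ is a cap in $\PP$, to see $A \neq \emptyset$, and symmetrically for $B$).

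It remains only to see that this partition is discrete, contradicting connectedness of $G_n$. If we had $g \in A$ and $h \in B$ with $g \sqcap h$, then edge-faithfulness together with \autoref{Overlapping} would yield $g^\in \cap h^\in \neq \emptyset$; but by construction $g^\in \subseteq U$ and $h^\in \subseteq V$, contradicting $U \cap V = \emptyset$. Thus no such edge exists, $G_n$ admits a nontrivial discrete partition, and this is the desired contradiction. The only subtle point in the argument is confirming that $C_n$ actually refines $\{U, V\}$ for some $n$, but this is exactly what \autoref{CoverSummary} (equivalently \autoref{prp:coverslevels}) provides.
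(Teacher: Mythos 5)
Your proof is correct and follows essentially the same route as the paper's: both directions reduce to the observation that each level $G_n$ maps onto a coinitial minimal open cover $C_n$ on which actual overlap of $g^\in, h^\in$ coincides (via $\wedge$ and edge-faithfulness) with the edge relation $\sqcap$, so disconnections of $\Spec{\PP}$ correspond exactly to discrete partitions of some $G_n$. The paper states this more tersely by citing \autoref{prp:coverslevels}, but the content is the same.
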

\begin{proof}
    By \autoref{prp:coverslevels}, every open cover of $\Spec{\PP}$ is refined by $\{p^\in: p \in G_n\}$ for some $n$, and for every $p, q \in G_n$ we have $p \wedge q$ if and only if $p^\in \cap q^\in \neq \emptyset$.
    In particular, $p^\in \neq \emptyset$ for every $p \in G_n$.
    It follows that $\Spec{\PP}$ is connected if and only if no $G_n$ admits a partition $G_n = A \cup B$ such that $(\bigcup_{a \in A} a^\in) \cap (\bigcup_{b \in B} b^\in) = \emptyset$, i.e. if and only if  every $G_n$ with $\wedge\restriction G_n \times G_n$ as the edge relation is a connected graph.
    The conclusion follows from edge-faithfulness.
\end{proof}

It turns out that for acyclic graphs, edge-faithfulness does not have to be assumed a priori.
\begin{prp} \label{ConnectedTreeLimit}
   Let $(G_n,\sqsupset_n^m)$ be a sequence in $\mathbf{B}$ and suppose each $G_n$ is acyclic. Let $\PP$ be the induced poset.
    Then $\Spec{\PP}$ is connected if and only if every graph $G_n$ is connected and $(\sqsupset^m_n)$ has an edge-witnessing subsequence.
\end{prp}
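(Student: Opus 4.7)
The plan is to reduce this to \autoref{ConnectedLimit} (which gives connectedness of the spectrum for an edge-faithful sequence with connected graphs) and to \autoref{OverlappingCoherence} (which characterizes edge-faithfulness in terms of an edge-witnessing subsequence).

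For the backward direction, I would assume every $G_n$ is connected and that $(\sqsupset^m_n)$ has an edge-witnessing subsequence. Since $\mathbf{B} \subseteq \mathbf{I}$, each $\sqsupset^m_n$ is co-injective and therefore surjective; since $G_n$ is connected and $G_m$ is acyclic, \autoref{SurjectiveImpliesEdgeSurjective} gives that $\sqsupset^m_n$ is edge-surjective. Combined with edge-preservation (automatic in $\mathbf{G}$) and the edge-witnessing subsequence, \autoref{OverlappingCoherence}~\ref{OCseq} yields edge-faithfulness, and then \autoref{ConnectedLimit} delivers connectedness of $\Spec{\PP}$.

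For the forward direction, suppose $\Spec{\PP}$ is connected. I will analyze each $G_n$ via the ``overlap graph'' on the vertex set $G_n$ whose edges are the pairs $\{p,q\}$ with $p \wedge q$ in $\PP$. Since $(\sqsupset^m_n)$ is in $\mathbf{B}$, the poset $\PP$ is predetermined, hence prime (\autoref{prp:Summary1} and \autoref{figure1}), so $p^\in \neq \emptyset$ for every $p$. If the overlap graph on some $G_n$ admitted a disconnection $G_n = A \sqcup B$, then by \autoref{prp:coverslevels}~\ref{itm:Overlap} the open sets $\bigcup_{a\in A} a^\in$ and $\bigcup_{b\in B} b^\in$ would be disjoint, nonempty, and cover $\Spec{\PP}$, contradicting connectedness. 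Hence every overlap graph is connected. By edge-preservation, the overlap graph on $G_n$ is a spanning subgraph of $(G_n, \sqcap)$, so $(G_n, \sqcap)$ is connected too; combined with acyclicity, this makes each $G_n$ a tree.

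The crucial observation (and what I expect to be the main conceptual step) is that in a tree every edge is a bridge, so any connected spanning subgraph must contain every edge. Therefore the overlap graph on $G_n$ coincides with $(G_n, \sqcap)$, i.e.\ $g \sqcap h \Leftrightarrow g \wedge h$ for all $g, h \in G_n$. This is precisely edge-faithfulness, and \autoref{OverlappingCoherence}~\ref{OCseq} then supplies the required edge-witnessing subsequence, completing the proof.
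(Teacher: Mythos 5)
Your proof is correct and follows essentially the same route as the paper's: the backward direction is identical, and the forward direction rests on the same key fact that every edge of a tree is a bridge, which you package as ``a connected spanning subgraph of a tree contains every edge'' while the paper argues contrapositively, removing an edge whose endpoints have no common lower bound and observing that the resulting two components carry no $\wedge$-edges between them. Both versions then invoke \autoref{OverlappingCoherence} to convert the resulting edge-faithfulness into an edge-witnessing subsequence.
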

\begin{proof}
    Suppose $\Spec{\PP}$ is connected.
    Then the graphs $G_n$ are connected by \autoref{Overlapping} and the proof \autoref{ConnectedLimit}, or alternatively by \autoref{ConnectedSubsets}.
    If for every $n$ and every adjacent $g, h \in G_n$ we have $g \wedge h$, then we can construct an edge-witnessing subsequence as in the proof of \autoref{OverlappingCoherence}.
    Otherwise, there are adjacent $g, h \in G_n$ such that $g^\geq \cap h^\geq = \emptyset$.
    As $G_n$ is a tree, we may consider the connected components $G \ni g$ and $H \ni h$ of $G_n$ with the edge $(g, h)$ removed.
    Then $G^\geq \cap H^\geq = \emptyset$, and the the graph $(G_n, \wedge)$ is disconnected, and so $\Spec{\PP}$ is disconnected as in the proof of \autoref{ConnectedLimit}.

    Suppose that the graphs $G_n$ are connected and that $(\sqsupset^m_n)$ has an edge-witnessing subsequence.
    The sequence $(\sqsupset^m_n)$ is edge-surjective by \autoref{SurjectiveImpliesEdgeSurjective}, edge-faithful by \autoref{OverlappingCoherence}, and so $\Spec{\PP}$ is connected by \autoref{ConnectedLimit}.
\end{proof}

\begin{prp} \label{ConnectedSubsets}
    Let $(G_n, \sqsupset^m_n)$ be a sequence in $\mathbf{S}$ with induced poset $\PP$, and let $A \subseteq \Spec{\PP}$.
    We consider the following conditions:
    \begin{enumerate}
        \item \label{itm:connected} $A$ is connected,
        \item \label{itm:connected_traces} $[A]_n$ is connected for every $n \in \omega$,
        \item \label{itm:connected_fat_traces} $[A]_n^\sqcap$ is connected for every $n \in \omega$.
    \end{enumerate}
    We have \ref{itm:connected} $\Rightarrow$ \ref{itm:connected_traces} $\Rightarrow$ \ref{itm:connected_fat_traces}.
    If $(\sqsupset^m_n)$ is co-bijective and edge-faithful, $\Spec{\PP}$ is Hausdorff, and $A \subseteq \Spec{\PP}$ is closed, then \ref{itm:connected} $\Leftrightarrow$ \ref{itm:connected_traces} $\Leftrightarrow$ \ref{itm:connected_fat_traces}. 
\end{prp}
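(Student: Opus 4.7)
$\textbf{Proof proposal.}$ My plan is to establish the general implications (1) $\Rightarrow$ (2) $\Rightarrow$ (3) first, and then, under the additional hypotheses, close the cycle by proving (2) $\Rightarrow$ (1) and (3) $\Rightarrow$ (2).

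For (1) $\Rightarrow$ (2) I argue contrapositively: if $[A]_m$ admits a discrete partition $P \sqcup Q$ into non-empty subsets then, by the always-valid direction of \autoref{Overlapping}, the absence of edges between $P$ and $Q$ forces $p^\in \cap q^\in = \emptyset$ for every $p \in P, q \in Q$. Hence $U = \bigcup_{p \in P} p^\in$ and $V = \bigcup_{q \in Q} q^\in$ are disjoint open subsets of $\Spec\PP$, each meets $A$ by the definition of $[A]_m$, and together they cover $A$, witnessing that $A$ is disconnected. The implication (2) $\Rightarrow$ (3) is immediate: every vertex of $[A]_n^\sqcap \setminus [A]_n$ is by definition adjacent to some vertex of $[A]_n$, so attaching these to a connected $[A]_n$ preserves connectedness.

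Now assume the extra hypotheses. For (2) $\Rightarrow$ (1), suppose $A = A_0 \sqcup A_1$ is a proper partition into non-empty relatively closed subsets, which are closed in $\Spec\PP$ since $A$ is; compact Hausdorffness yields disjoint open neighbourhoods $U_i \supseteq A_i$. By \autoref{prp:coverslevels}\ref{itm:CoInitial}, some level $G_m$ refines $\{U_0, U_1, \Spec\PP \setminus A\}$, giving a decomposition $[A]_m = M_0 \cup M_1$ where $M_i = \{p \in [A]_m : p^\in \subseteq U_i\}$; both are non-empty (a thread through any point of $A_i$ supplies an element of $M_i$) and disjoint. An edge $p_0 \sqcap p_1$ between $M_0$ and $M_1$ would, via the full equivalence in \autoref{Overlapping} available under co-bijectivity and edge-faithfulness, force $p_0^\in \cap p_1^\in \neq \emptyset$, contradicting $U_0 \cap U_1 = \emptyset$. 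Hence $[A]_m$ is disconnected.

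For (3) $\Rightarrow$ (2), suppose $[A]_m = P \sqcup Q$ is a discrete partition with $P, Q$ non-empty. By \autoref{HausdorffLimit} together with \autoref{IdealSubsequences}, Hausdorffness lets us pick $n \geq m$ such that $\sqsupset^m_n$ is star-refining. For any $r \in [A]_n^\sqcap$, fix an adjacent $s \in [A]_n$ and apply star-refining at $s$ to obtain $p \in G_m$ with $s^\sqcap \sqsubset^m_n p$; then $r \leq p$, and $s \leq p$ together with $s \in [A]_n$ forces $p \in [A]_m = P \cup Q$. Set $R_0 = \{r \in [A]_n^\sqcap : r \leq p \text{ for some } p \in P\}$ and define $R_1$ analogously with $Q$. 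The previous step shows $R_0 \cup R_1 = [A]_n^\sqcap$; a common lower bound $r \leq p_P \in P$, $r \leq p_Q \in Q$ would give $p_P \wedge p_Q$, hence $p_P \sqcap p_Q$ by edge-faithfulness, contradicting the discreteness of the partition $P \sqcup Q$, so $R_0 \cap R_1 = \emptyset$. Both $R_i$ are non-empty by \autoref{Subspaces}\ref{itm:traces}, as each element of $P$ or $Q$ lifts to some $r \in [A]_n$. Finally, an edge $r_0 \sqcap r_1$ with $r_i \in R_i$ and $r_i \leq p_i$, where $p_0 \in P$ and $p_1 \in Q$, would propagate via edge-preservation of $\sqsupset^m_n$ to an edge $p_0 \sqcap p_1$ between $P$ and $Q$, again a contradiction. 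Thus $[A]_n^\sqcap$ is disconnected, contradicting (3).

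The principal obstacle is the step (3) $\Rightarrow$ (2): condition (3) is visibly weaker than (2), since the extra neighbours in $[A]_n^\sqcap \setminus [A]_n$ could in principle bridge would-be components of $[A]_n$ through higher-level vertices. Star-refining is the decisive tool, as it guarantees that the entire $\sqcap$-neighbourhood of every sufficiently fine vertex is subordinate to a single ancestor at the coarser level, so that a discrete partition of $[A]_m$ lifts unambiguously to a discrete partition of $[A]_n^\sqcap$.
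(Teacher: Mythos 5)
Your proposal is correct, and the forward implications \ref{itm:connected} $\Rightarrow$ \ref{itm:connected_traces} $\Rightarrow$ \ref{itm:connected_fat_traces} coincide with the paper's. For the converse, however, you take a genuinely different route. The paper proves $\neg$\ref{itm:connected} $\Rightarrow$ $\neg$\ref{itm:connected_fat_traces} in a single step: from a disconnection $A = B \cup C$ it builds a four-element open cover $\{U_0, U_1, U_2, U_3\}$ forming a chain, refines it by a level, and shows directly that $[B]_n^\sqcap \cap [C]_n^\sqcap = \emptyset$ (an overlap would force some $g^\in$ to meet both ends of the chain). You instead split the converse into $\neg$\ref{itm:connected} $\Rightarrow$ $\neg$\ref{itm:connected_traces}, done with a simpler two-open-set separation, and $\neg$\ref{itm:connected_traces} $\Rightarrow$ $\neg$\ref{itm:connected_fat_traces}, done by a purely combinatorial lifting: a star-refining $\sqsupset^m_n$ (extracted from Hausdorffness via \autoref{HausdorffLimit} and \autoref{IdealSubsequences}) places each $r \in [A]_n^\sqcap$ below a single vertex of $[A]_m$, so a discrete partition $P \sqcup Q$ of $[A]_m$ pulls back to a discrete partition $R_0 \sqcup R_1$ of $[A]_n^\sqcap$; edge-faithfulness gives disjointness and edge-preservation rules out crossing edges. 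The paper's argument is shorter and avoids the star-refinement machinery; yours buys a slightly sharper statement, since your step \ref{itm:connected_fat_traces} $\Rightarrow$ \ref{itm:connected_traces} nowhere uses that $A$ is closed, so the equivalence \ref{itm:connected_traces} $\Leftrightarrow$ \ref{itm:connected_fat_traces} holds for arbitrary subspaces under the co-bijectivity, edge-faithfulness and Hausdorffness hypotheses, with closedness needed only to recover \ref{itm:connected}. All the individual steps check out (in particular, non-emptiness of the $R_i$ via $[A]_n^{\sqsubset^m_n} = [A]_m$ from \autoref{Subspaces}, and the use of the full equivalence in \autoref{Overlapping} exactly where co-bijectivity and edge-faithfulness are available).
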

\begin{proof}
    \ref{itm:connected} $\Rightarrow$ \ref{itm:connected_traces}:
    Suppose $[A]_n$ is disconnected, i.e. $[A]_n = H_1 \cup H_2$ for some non-empty subgraphs $H_1, H_2 \subseteq G_n$ with no edge between them.
    Let $U_i = \bigcup\{h^\in: h \in H_i\}$ be the induced open subsets of $\Spec{\PP}$.
    We have $A \subseteq U_1 \cup U_2$.
    As $H_i \neq \emptyset$, we have $A \cap U_i \neq \emptyset$.
    As there is no edge between $H_1$ and $H_2$, we have $U_1 \cap U_2 = \emptyset$ by \autoref{Overlapping}.
    This shows that $A$ is disconnected.
    \ref{itm:connected_traces} $\Rightarrow$ \ref{itm:connected_fat_traces} is clear as $C^\sqcap$ is connected for every connected subgraph $C \subseteq G_n$.

    Next suppose that $(\sqsupset^m_n)$ is co-bijective and edge-faithful, so we have the equivalences in \autoref{Overlapping}, that $\Spec{\PP}$ is Hausdorff and so can be endowed with a compatible metric, and that $A$ is closed.
    To show \ref{itm:connected_fat_traces} $\Rightarrow$ \ref{itm:connected} we suppose that $A$ is disconnected, i.e. there are non-empty disjoint closed subsets $B, C \subseteq \Spec{\PP}$ such that $A = B \cup C$.
    As $\Spec{\PP}$ is a Hausdorff compactum, there are open sets $U_0 \supseteq B$ and $U_3 \supseteq C$ with disjoint closures.
    Let us put $U_1 = \Spec{\PP} \setminus (B \cup \overline{U_3})$ and $U_2 = \Spec{\PP} \setminus (\overline{U_0} \cup C)$.
    Then, $\{U_i: i < 4\}$ is an open cover of $\Spec{\PP}$ that is a chain, i.e. $U_i \cap U_j \neq \emptyset$ if and only if $|i - j| \leq 1$.
    Let $n \in \omega$ be such that the cover $\{g^\in: g \in G_n\}$ refines $\{U_i: i < 4\}$.
    Then $[B]_n^\sqcap \cap [C]_n^\sqcap = \emptyset$.
    Otherwise, there are $[B]_n \ni b \sqcap g \sqcap c \in [C]_n$ in $G_n$, and so     $b^\in \subseteq U_0$ and $c^\in \subseteq U_3$, but also $b^\in \cap g^\in \neq \emptyset \neq c^\in \cap g^\in$ by \autoref{Overlapping}.
    Then $g^\in$ cannot refine any member of the chain $\{U_i: i < 4\}$.
    Together with the facts that $[A]_n^\sqcap = [B]_n^\sqcap \cup [C]_n^\sqcap$ and that $[B]_n \neq \emptyset \neq [C]_n$, we have that $[A]_n^\sqcap$ is disconnected.
\end{proof}

Recall that by a \emph{continuum} we mean a non-empty compact connected metrizable space.
A continuum is \emph{locally connected} or \emph{Peano} if it has an open basis consisting of connected sets, or equivalently if it is a continuous image of the unit interval, by the Hahn--Mazurkiwicz theorem (see \cite[VIII]{Nadler1992}).
A continuum $X$ is \emph{tree-like} if every open cover of $X$ can be refined by a finite open cover whose overlap graph is a tree.
A continuum $X$ is \emph{hereditarily unicoherent} if for every two subcontinua $A, B \subseteq X$ we have that $A \cap B$ is connected.
Every tree-like continuum is hereditarily unicoherent (see \cite[Theorem~12.2 and page 232]{Nadler1992}) and clearly a circle cannot be embedded into a hereditarily unicoherent continuum.
For Peano continua, these conditions are equivalent and define the notion of a \emph{dendrite} (see \cite[X]{Nadler1992}).



\begin{prp} \label{LocallyConnected}
    Let $(G_n, \sqsupset^m_n)$ be an edge-faithful sequence in $\mathbf{B}$ with induced poset $\PP$ and suppose that $\Spec{\PP}$ is Hausdorff.
    If the relations $\sqsupset^m_n$ are monotone, then $p^\in$ is connected for every $p \in \PP$, and so $\Spec{\PP}$ is locally connected.
\end{prp}
\begin{proof}
    Let $m \in \omega$ be such that $p \in G_m$, and let $A = p^\in$.
    For every $n \geq m$ let $C_n = p^{\sqsupset^m_n}$, which is a connected subset of $G_n$ since $\sqsupset^m_n$ is monotone.
    We will show that $C_n^\sqcap = [A]_n$ for every $n \geq m$.
    It will follow that $p^\in = A$ is connected by \autoref{ConnectedSubsets}, and so $\Spec{\PP}$ is locally connected as sets $p^\in$ form a basis.

    Let $g \in G_n$.
    We have $g \in [A]_n$ if and only if $g^\in \cap p^\in \neq \emptyset$.
    By \autoref{CoverSummary} we have $p^\in = \bigcup\{c^\in: c \in C_n\}$, and so $g \in [A]_n$ if and only if for some $c \in C_n$ we have $g^\in \cap c^\in \neq \emptyset$.
    The latter is equivalent to $g \mathrel{\sqcap} c$ by \autoref{Overlapping}.
    Hence, $[A]_n = C_n^\sqcap$.
\end{proof}

By combining the previous observations we obtain the following corollary as an application.

\begin{cor} \label{ContinuumTreeLimit}
    Let $(G_n, \sqsupset^m_n)$ be a sequence in $\mathbf B$ such that every $G_n$ is a tree, and let $\PP$ be the induced poset.
    The following are equivalent.
    \begin{enumerate}
        \item $\Spec{\PP}$ is connected and Hausdorff.
        \item $(\sqsupset^m_n)$ has an edge-witnessing and a star-refining subsequence.
    \end{enumerate}
    If the conditions hold, then $(\sqsupset^m_n)$ is edge-faithful and $\Spec{\PP}$ is a tree-like continuum.
    If moreover $(\sqsupset^m_n)$ is monotone, then $\Spec{\PP}$ is a dendrite.
\end{cor}
\begin{proof}
    By \autoref{SurjectiveImpliesEdgeSurjective}, the sequence $(\sqsupset^m_n)$ is edge-surjective.
    By \autoref{ConnectedTreeLimit}, if $\Spec{\PP}$ is connected, then $(\sqsupset^m_n)$ has an edge-witnessing subsequence.
    Hence, $(\sqsupset^m_n)$ is edge-faithful on both sides of the equivalence, and the equivalence follows from Propositions~\ref{ConnectedLimit} and \ref{HausdorffLimit}.
    
    Assuming the equivalent conditions, $\Spec{\PP}$ is clearly a continuum, and it is tree-like as every open cover is refined by $\{p^\in: p \in G_n\}$ for some $n$.
    If $(\sqsupset^m_n)$ is monotone, then, by \autoref{LocallyConnected}, $\Spec{\PP}$ is also locally connected and so a dendrite.
\end{proof}

\subsection{Fra\"iss\'e sequences}\label{ss.FraisseSequences}

We are particularly interested in sequences which are `as complicated as possible', in a sense capturing a whole given subcategory of graphs.
In particular, such sequences absorb all morphisms in the following sense.

\begin{dfn}
A sequence $(G_n, \sqsupset_n^m)$ in a subcategory $\mathbf{C}$ of $\mathbf{G}$ is \emph{\textup{(}lax-\textup{)}absorbing} if, whenever ${\dashv}\in\mathbf{C}^{G_m}_G$, there are $n\geq m$ and ${\Dashv} \in\mathbf{C}^G_{G_n}$ with ${\dashv\circ\Dashv} ={\sqsupset^m_n}$ (${\dashv\circ\Dashv} \subseteq{\sqsupset^m_n}$).
More symbolically, these notions are defined as follows.
\begin{align*}
 \tag{Absorbing}{\dashv}\in\mathbf{C}^{G_m}_G\qquad&\Rightarrow\qquad\text{ exists }{\Dashv}\in\mathbf{C}^G_{G_n}\ ({\dashv\circ\Dashv} ={\sqsupset^m_n}),\\
 \tag{lax-Absorbing}\dashv\ \in\mathbf{C}^{G_m}_G\qquad&\Rightarrow\qquad\text{ exists }{\Dashv}\in\mathbf{C}^G_{G_n}\ ({\dashv\circ\Dashv}\subseteq{\sqsupset^m_n}).
\end{align*}

A \emph{\textup{(}lax-\textup{)}Fra\"{i}ss\'e} sequence is a (lax-)absorbing sequence that is also \emph{cofinal}, meaning that for every object $G$ in $\mathbf{C}$ there is a morphism (in $\mathbf{C}$) onto $G$ from a member of the sequence, i.e.
\[ \tag{Cofinal} G \in \mathbf{C} \qquad\Rightarrow\qquad \text{exists }{\dashv} \in \mathbf{C}^G_{G_n}.
\]
\end{dfn}

Fra\"{i}ss\'e sequences play a key role in construction of universal homogeneous objects, see \cite{Kubis2014b}.
Usually, the cofinality does not have to be verified as it is automatic if the category $\mathbf{C}$ is \emph{directed} in that all objects $E,F\in\mathbf{C}$ are the codomains of morphisms with a common domain.

The proof of the following is immediate and left as an exercise.
\begin{prp} \label{DirectedCofinal}
 In a directed subcategory $\mathbf{C} \subseteq \mathbf{G}$, every lax-absorbing sequence ${\sqsupset_n^m}\in\mathbf{C}^{G_m}_{G_n}$ is cofinal and so lax-Fra\"{i}ss\'e.\qed
\end{prp}

Of course, not all directed subcategories $\mathbf{C}$ of $\mathbf{G}$ have (lax-)Fra\"iss\'e sequences. The key property for $\mathbf{C}$ to have here is \emph{amalgamation}, which means that, for any ${\sqsupset}\in\mathbf{C}_H^G$ and ${\sqni}\in\mathbf{C}_I^G$, we have ${\dashv}\in\mathbf{C}_J^H$ and ${\Dashv}\in\mathbf{C}_J^I$ such that ${\sqsupset\circ\dashv}={ \sqni\circ\Dashv}$, i.e.
\[\tag{Amalgamation}{\sqsupset}\in\mathbf{C}_H^G, {\sqni}\in\mathbf{C}_I^G\ \Rightarrow\ \text{exist }{\dashv}\in\mathbf{C}_J^H, {\Dashv}\in\mathbf{C}_J^I\ ({\sqsupset\circ\dashv}= {\sqni\circ\Dashv}).\]
Note that if $\mathbf{C}$ has amalgamation as well as a \emph{weakly terminal object} $T$ (i.e. such that $\mathbf{C}^T_G\neq\emptyset$, for all $G\in\mathbf{C}$) then it is automatically directed. 

In what follows we will use the definition of wide subcategories and ideals, as given in \autoref{dfn:idealswide}.

\begin{prp} \label{FraisseExists}
A subcategory $\mathbf{C}$ of $\mathbf{G}$ has a Fra\"iss\'e sequence whenever $\mathbf{C}$ has amalgamation and is directed.
\end{prp}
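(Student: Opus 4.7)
The plan is to carry out the standard Fraïssé-style construction by induction on stages, using amalgamation to enlarge the sequence while successively absorbing every morphism in $\mathbf{C}$.

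Two preliminary observations drive the argument. First, each object of $\mathbf{C}$ is a non-empty finite graph, so up to isomorphism there are only countably many. Second, for any pair $G, H$ of finite graphs, the hom-set $\mathbf{C}^H_G$ is a subset of the finite power set of $H \times G$ and is therefore finite. Consequently, the class $\mathsf{mor}(\mathbf{C})$ is countable up to isomorphism of source and target.

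First I would pick $G_0 \in \mathbf{C}$ arbitrarily and maintain a queue of \emph{absorption tasks}, a task being a pair $(m, {\dashv})$ with $m \in \omega$ and ${\dashv} \in \mathbf{C}^{G_m}_H$ for some $H \in \mathbf{C}$, with $H$ taken up to isomorphism. The queue is initialised with one representative from each isomorphism class of morphism into $G_0$, and is enlarged at every later stage with morphisms into the newly constructed object. Inductively, suppose $G_0, \dots, G_n$ together with all $\sqsupset^i_j$ for $i \leq j \leq n$ have been built, and pop a task $(m, {\dashv})$ with $m \leq n$; say ${\dashv} \maps H \to G_m$ in $\mathbf{C}$. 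Applying amalgamation to the pair of morphisms ${\dashv} \in \mathbf{C}^{G_m}_H$ and ${\sqsupset^m_n} \in \mathbf{C}^{G_m}_{G_n}$ (both having codomain $G_m$) yields an object $J \in \mathbf{C}$ together with ${\Dashv} \in \mathbf{C}^H_J$ and ${\ddashv} \in \mathbf{C}^{G_n}_J$ satisfying ${\dashv} \circ {\Dashv} = {\sqsupset^m_n} \circ {\ddashv}$. Setting $G_{n+1} = J$ and ${\sqsupset^n_{n+1}} = {\ddashv}$, and extending the remaining morphisms by composition, one immediately obtains
\[
	{\sqsupset^m_{n+1}} = {\sqsupset^m_n} \circ {\sqsupset^n_{n+1}} = {\dashv} \circ {\Dashv},
\]
so ${\Dashv}$ is exactly the witness required for the absorption of ${\dashv}$ at stage $n+1$. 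Finally, enqueue all (isomorphism classes of) morphisms into $G_{n+1}$ and iterate.

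Organising the queue by a standard diagonal enumeration of $\omega \times \omega$ guarantees that every task ever enqueued is eventually popped and processed, so the resulting sequence is absorbing. Cofinality, and hence the full Fraïssé property, then follows at once from \autoref{DirectedCofinal}, since $\mathbf{C}$ is directed and every absorbing sequence is in particular lax-absorbing. The only genuine obstacle is the bookkeeping—arranging the queue fairly so that every isomorphism class of morphism in $\mathbf{C}$ reaches the head of the queue in finitely many steps—but this is exactly the routine of classical Fraïssé-type constructions and poses no conceptual difficulty.
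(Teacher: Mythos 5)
Your construction is correct, and it is a genuinely different route from the paper's proof, which is a one-line appeal to \cite[Theorem~3.7]{Kubis2014b} after observing that $\mathbf{G}$, and hence $\mathbf{C}$, is essentially countable and therefore admits a countable dominating family. You instead unfold the explicit queue-based construction that underlies that theorem. The core step is right: amalgamating a task ${\dashv}\in\mathbf{C}^{G_m}_H$ against ${\sqsupset^m_n}$ and declaring the new leg to be ${\sqsupset^n_{n+1}}$ gives ${\sqsupset^m_{n+1}}={\sqsupset^m_n}\circ{\sqsupset^n_{n+1}}={\dashv}\circ{\Dashv}$, which is on-the-nose absorption of ${\dashv}$, and cofinality does come for free from directedness via \autoref{DirectedCofinal}. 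What you gain is a self-contained argument; what the citation gains is that Kubi\'s's dominating-family machinery already packages the one point you pass over a little quickly.

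That point is the reduction to ``one representative from each isomorphism class of morphism''. Since $\mathbf{G}$ has a proper class of objects, some such reduction is indispensable, but absorbing a representative ${\dashv}\maps H\to G_m$ only entails absorbing another morphism ${\dashv}'\maps H'\to G_m$ in the same class if ${\dashv}'={\dashv}\circ\phi$ for an isomorphism $\phi$ such that both $\phi$ and $\phi^{-1}$ are morphisms of $\mathbf{C}$; one then takes ${\Dashv}'=\phi^{-1}\circ{\Dashv}$. For a completely arbitrary subcategory of $\mathbf{G}$ this can fail, since a subcategory need not contain any non-identity isomorphisms. The clean general fix is exactly the dominating-family condition: to absorb ${\dashv}'$ it suffices to absorb ${\dashv}'\circ h$ for some $h\in\mathbf{C}^{H'}_{H}$ with $H$ drawn from a fixed countable set of objects, and then the witness ${\Dashv}$ for ${\dashv}'\circ h$ yields the witness $h\circ{\Dashv}$ for ${\dashv}'$. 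For the replete subcategories actually used in this paper (all defined by isomorphism-invariant conditions on objects and morphisms) your reduction is fine as stated, so this is a bookkeeping caveat rather than a gap --- but it is the same caveat that the paper's phrase ``essentially wide once we include compositions with isomorphisms'' is there to address, and your closing remark that the bookkeeping ``poses no conceptual difficulty'' slightly undersells it.
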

\begin{proof}
This follows from \cite[Theorem 3.7]{Kubis2014b} as soon as we note that $\mathbf{G}$ and hence $\mathbf{C}$ is an essentially countable category -- the full subcategory of graphs whose vertices lie in some fixed countable set like $\omega$ forms a countable essentially wide (i.e. wide once we include compositions with isomorphisms) subcategory of $\mathbf{G}$. In particular, $\mathbf{C}$ has a countable dominating family.
\end{proof}

It is easy to see that a subsequence of a (lax-)Fra\"{i}ss\'e sequence is still (lax-)Fra\"{i}ss\'e.
Moreover, we can construct subsequences in wide ideals. 
\begin{prp} \label{IdealFraisseSubsequence}
Let $\mathbf{C} \subseteq \mathbf{G}$ be a subcategory and let $I \subseteq \mathbf{C}$ be an ideal.
\begin{enumerate}
    \item If $I$ is wide, then every Fra\"{i}ss\'e sequence has a subsequence in $I$.
    \item If $I$ is wide and lax-closed, then every lax-Fra\"{i}ss\'e sequence has a subsequence in $I$.
    \item If a single cofinal sequence has a subsequence in $I$, then $I$ is wide.
\end{enumerate}
\end{prp}
\begin{proof}
    Take any (lax-)Fra\"{i}ss\'e sequence $(G_n, \sqsupset_n^m)$ in $\mathbf{C}$.
    Let $k \in \omega$ be arbitrary.
    As $I$ is wide, we have ${\dashv}\in I\cap\mathbf{C}^{G_k}_H$, for some $H\in\mathbf{C}$.
    As $(\sqsupset_n^m)$ is (lax-)Fra\"{i}ss\'e, we have ${\Dashv}\in\mathbf{C}^H_{G_l}$, for some $l > k$, such that ${\sqsupset^k_l} = {\dashv} \circ {\Dashv}$ (or ${\sqsupset^k_l} \supseteq {\dashv} \circ {\Dashv}$).
    As $I$ is a (lax-closed) ideal, ${\sqsupset^k_l} \in I$.
    Finally we obtain a subsequence in $I$ by \autoref{IdealSubsequences}.

    On the other hand, if a cofinal sequence has a subsequence $(H_n, \sqni^m_n)$ in $I$, the subsequence is cofinal as well.
    Then for every $K \in \mathbf{C}$ there is ${\dashv} \in \mathbf{C}^K_{H_m}$ for some $m$, and ${\dashv} \circ {\sqni^m_{m + 1}} \in I$.
\end{proof}

We have seen in \autoref{ss.faithful} that edge-faithful sequences in $\mathbf{B}$ nicely represent their spectra.
For this reason and also in order to avoid trivialities, we shall consider categories of co-bijective morphisms.
While a subcategory $\mathbf{C}$ of $\mathbf{B}$ may have many different (lax-)Fra\"{i}ss\'e sequences, the key point for our work is that they all have homeomorphic spectra. 

\begin{prp}\label{LaxFraisseSpectra}
Assume $\mathbf{C}$ is a subcategory of $\mathbf{B}$, and let $(G_n,\sqsupset_n^m)$ and $(H_n,{\sqni_n^m})$ be sequences in $\mathbf C$, with induced posets $\PP$ and $\QQ$, respectively.
Suppose one of the following.
\begin{enumerate}
    \item The sequences $(\sqsupset^m_n)$ and $(\sqni^m_n)$ are lax-Fra\"{i}ss\'e, and they have edge-witnessing star-refining subsequences.
    \item The sequences $(\sqsupset^m_n)$ and $(\sqni^m_n)$ are Fra\"{i}ss\'e.
\end{enumerate}
Then the spectra $\Spec{\PP}$ and $\Spec{\QQ}$ are homeomorphic.
\end{prp}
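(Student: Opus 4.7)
The approach is a standard back-and-forth interleaving. I build, inductively on $k$, strictly increasing $\phi,\psi\maps\omega\to\omega$ and morphisms $\dashv_k\in\mathbf{C}^{G_{\phi(k)}}_{H_{\psi(k)}}$ and $\Dashv_k\in\mathbf{C}^{H_{\psi(k)}}_{G_{\phi(k+1)}}$ satisfying
\[\dashv_k\circ\Dashv_k\,\star\,\sqsupset^{\phi(k)}_{\phi(k+1)},\qquad\Dashv_k\circ\dashv_{k+1}\,\star\,\sqni^{\psi(k)}_{\psi(k+1)},\]
where $\star$ stands for $=$ in case (2) and $\subseteq$ in case (1). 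Starting from $\phi(0)=0$, cofinality of $(\sqni^m_n)$ produces $\psi(0)$ together with $\dashv_0$; alternating applications of (lax-)absorption to $(\sqsupset^m_n)$ and $(\sqni^m_n)$ then yield $\Dashv_k$ from $\dashv_k$ and $\dashv_{k+1}$ from $\Dashv_k$ at each stage.

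In the strict Fra\"iss\'e case (2), the interleaved family $G_{\phi(0)},H_{\psi(0)},G_{\phi(1)},H_{\psi(1)},\ldots$ with bondings $\dashv_0,\Dashv_0,\dashv_1,\Dashv_1,\ldots$ is a genuine sequence in $\mathbf{C}$: compositions of consecutive bondings reproduce $\sqsupset^{\phi(k)}_{\phi(k+1)}$ or $\sqni^{\psi(k)}_{\psi(k+1)}$ exactly. This alternating sequence therefore contains both $(G_{\phi(k)},\sqsupset^{\phi(m)}_{\phi(n)})$ and $(H_{\psi(k)},\sqni^{\psi(m)}_{\psi(n)})$ as further subsequences, so two applications of \autoref{Subsequence} (combined with \autoref{Subsequence} again to pass between $\PP$ and its $\phi$-subsequence, and similarly for $\QQ$) give the homeomorphism $\Spec{\PP}\cong\Spec{\QQ}$.

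The lax case (1) is the main technical obstacle: the interleaved family is only a lax-sequence, and its even-indexed restriction carries a poset with strictly fewer order relations than the corresponding subsequence of $\PP$. I handle this by first invoking the hypothesis that both sequences have edge-witnessing and star-refining subsequences. Since edge-witnessing and star-refining morphisms form lax-closed wide ideals (\autoref{Ideals}, \autoref{LaxClosed}), \autoref{IdealSubsequences} allows me to replace both sequences by subsequences lying in the intersection of these ideals while retaining the lax-Fra\"iss\'e property. The back-and-forth can then be arranged so that all the morphisms $\dashv_k,\Dashv_k$ are themselves edge-witnessing and star-refining, making the interleaved lax-sequence edge-faithful (\autoref{OverlappingCoherence}) with regular induced poset $\RR$ whose spectrum is a Hausdorff metric compactum (\autoref{HausdorffLimit}).

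The final and most delicate step is to identify $\Spec{\RR}$ with $\Spec{\PP}$ (and symmetrically with $\Spec{\QQ}$) despite the laxness of the compositions. My plan is to show that the natural map $S\mapsto S^{\leq_\PP}$ from minimal selectors of the even-level subposet of $\RR$ to minimal selectors of $\PP$ (or rather of its $\phi$-subsequence) is a homeomorphism: the star-refining property of $\dashv_k$ ensures that each $p\in G_{\phi(k)}$ has its entire neighbourhood in $G_{\phi(k)}$ absorbed by some $h\in H_{\psi(k)}$, so the order relations present in $\sqsupset^{\phi(k)}_{\phi(k+1)}$ but absent from $\dashv_k\circ\Dashv_k$ are redundant — they are witnessed by descending first through the intermediate $H$-level and then applying $\Dashv_k$. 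Verifying this redundancy carefully, most likely via the cap-basis/overlap-cover characterisation of $\Spec{\RR}$ from \autoref{CoverSummary}, is where the bulk of the work in case (1) sits.
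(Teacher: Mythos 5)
Your case (2) is correct and is exactly the paper's argument: the interleaved family with equalities ${\dashv_k}\circ{\Dashv_k}={\sqsupset^{\phi(k)}_{\phi(k+1)}}$ is a genuine sequence, and repeated applications of \autoref{Subsequence} finish the job. Your back-and-forth setup for case (1) is also the right skeleton. The problem is that the final step of case (1) --- identifying the spectrum of the lax-interleaved data with $\Spec{\PP}$ --- is precisely the hard part, and you have left it as a plan rather than a proof. This is not a formality: the paper's own example at the end of \autoref{s.cobijective_modification} shows that forcefully composing a lax-sequence into a sequence produces a poset with fewer relations whose spectrum can genuinely change (a limit point can become isolated, and $S\mapsto S^\leq$ can fail to be continuous or onto). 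So the claim that the dropped relations are ``redundant'' is exactly what must be established, and it really does require the extra hypotheses of (1); it is not a routine verification via \autoref{CoverSummary}. There is also a secondary unjustified step: lax-absorption gives you \emph{some} $\Dashv_k$ with ${\dashv_k}\circ{\Dashv_k}\subseteq{\sqsupset^{\phi(k)}_{\phi(k+1)}}$, but nothing forces $\dashv_k$ or $\Dashv_k$ individually to be edge-witnessing or star-refining (this is repairable by post-composing with bonding morphisms lying in those ideals, but you would need to say so); and \autoref{dfn:inducedposet} and \autoref{prp:InducedPoset} use exact composition to get gradedness, so ``the induced poset of the interleaved lax-sequence'' is not an object the paper's machinery hands you for free.

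The paper closes the gap differently and more cheaply: it does not form any intermediate poset at all. After passing to edge-witnessing star-refining subsequences, edge-surjectivity gives (via \autoref{OverlappingCoherence}) that $\wedge$ and $\sqcap$ coincide on the levels of $\PP$ and $\QQ$, so the edge-preserving morphisms $\dashv_k$, $\Dashv_k$ --- and hence their compositions, and any subrelations thereof --- are $\wedge$-preserving; star-refining subsequences make $\PP$ and $\QQ$ regular by \autoref{prp:Summary1}. These are exactly the hypotheses of the refiner result \cite[Proposition~3.11]{BartosBiceV.Compacta}, which directly yields the homeomorphism $\Spec{\PP}\cong\Spec{\QQ}$ from the intertwined family of $\wedge$-preserving relations sitting below the bonding relations. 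If you want to avoid citing that proposition, you would essentially have to reprove it, which is where the regularity hypothesis does the work your ``redundancy'' heuristic gestures at. As written, your case (1) is incomplete.
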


\begin{proof}
Let us prove the lax-Fra\"{i}ss\'e case.
First, we can assume that the two lax-Fra\"iss\'e sequences of interest are edge-witnessing and star-refining, as the spectrum given a sequence and the one given by a subsequence are homeomorphic (\autoref{Subsequence}).

As $(\sqni^m_n)$ is cofinal, we can take any $m_0\in\omega$ and find ${\dashv_0}\in\mathbf{C}^{G_{m_0}}_{H_{n_0}}$.
As $(\sqsupset_n^m)$ is lax-absorbing, we have ${\Dashv_0}\in\mathbf{C}^{H_{n_0}}_{G_{m_1}}$ with ${\dashv_0\circ\Dashv_0}\subseteq{\sqsupset^{m_0}_{m_1}}$.
As $(\sqni_n^m)$ is lax-absorbing, we have ${\dashv_1} \in\mathbf{C}^{G_{m_1}}_{H_{n_1}}$ with ${\Dashv_0\circ\dashv_1}\subseteq{\sqni^{n_0}_{n_1}}$.
Continuing in this way, we obtain ${\dashv_k}\in\mathbf{C}^{G_{m_k}}_{H_{n_k}}$ and ${\Dashv_k}\in\mathbf{C}^{H_{n_k}}_{G_{m_{k+1}}}$ such that ${\dashv_k\circ\Dashv_k}\subseteq{\sqsupset^{m_k}_{m_{k+1}}}$ and ${\Dashv_k\circ\dashv_{k+1}}\subseteq{\sqni^{n_k}_{n_{k+1}}}$, for all $k\in\omega$.
As the morphisms $\sqsupset_n^m$ and $\sqni_n^m$ are edge-witnessing and therefore edge-surjective (see \autoref{fig:morphisms}), $\wedge$ and $\sqcap$ agree on levels of $\PP$ and $\QQ$ respectively, by \autoref{OverlappingCoherence}.
Hence the morphisms $\dashv_k$ and $\Dashv_k$ are $\wedge$-preserving.
Since a subrelation of a $\wedge$-preserving relation is again $\wedge$-preserving, each $\dashv_k\circ\Dashv_k$ is $\wedge$-preserving. Likewise, each $\Dashv_k\circ\dashv_{k+1}$ is $\wedge$-preserving.
As the morphisms $\sqsupset_n^m$ and $\sqni_n^m$ are star-refining, the induced posets are regular (see \autoref{prp:Summary1}).
The result now follows immediately from \cite[Proposition 3.11]{BartosBiceV.Compacta}.

To prove the Fra\"{i}ss\'e case, we construct the morphisms $\dashv_k$ and $\Dashv_k$ analogously, now satisfying ${\dashv_k\circ\Dashv_k} = {\sqsupset^{m_k}_{m_{k+1}}}$ and ${\Dashv_k\circ\dashv_{k+1}} = {\sqni^{n_k}_{n_{k+1}}}$, for all $k\in\omega$.
The result follows from \autoref{Subsequence} applied multiple times: $(\dashv_k\circ\Dashv_k)$ is both a subsequence of $(\sqsupset^m_n)$ and of the intertwining sequence $(\dashv_0, \Dashv_0, \dashv_1, \ldots)$, and analogously for $(\Dashv_k \circ \dashv_{k + 1})$.
\end{proof} 

\begin{lemma} \label{SubcategoryOfE}
    Let $\mathbf{C} \subseteq \mathbf{B}$ be a subcategory with amalgamation such that edge-witnessing morphisms are wide.
    Then all morphisms in $\mathbf{C}$ are edge-surjective.
\end{lemma}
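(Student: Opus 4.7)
The plan is as follows. Take any ${\sqsupset}\in\mathbf{C}^H_G$ and fix adjacent $h,h'\in H$; the goal is to produce $g,g'\in G$ with $h\sqsupset g\sqcap g'\sqsubset h'$. The strategy is to amalgamate $\sqsupset$ against a carefully chosen auxiliary morphism into $H$ and then extract the desired edge from the amalgam via edge-preservation of the resulting morphism on the $G$-side.

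Concretely, since edge-witnessing morphisms are wide in $\mathbf{C}$, I choose some edge-witnessing ${\dashv}\in\mathbf{C}^H_{G'}$. Applying amalgamation to $\sqsupset$ and $\dashv$ yields ${\sqni}\in\mathbf{C}^G_F$ and ${\Dashv}\in\mathbf{C}^{G'}_F$ with ${\sqsupset}\circ{\sqni} = {\dashv}\circ{\Dashv}$. The edge-witnessing property of $\dashv$ applied to $h\sim h'$ gives some $k\in G'$ with $h\dashv k\sqin h'$; co-surjectivity of $\Dashv$ (which holds as $\mathbf{C}\subseteq\mathbf{B}$) then produces $f\in F$ with $k\Dashv f$. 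Composing, we obtain $h,h'\,{\dashv\circ\Dashv}\, f$, and so $h,h'\,{\sqsupset\circ\sqni}\, f$ by the amalgamation equality.

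Unpacking the last relation, there exist $g,g'\in G$ with $h\sqsupset g\sqni f$ and $h'\sqsupset g'\sqni f$. Reflexivity of the edge relation on $F$ gives $f\sqcap f$, whence edge-preservation of ${\sqni}\in\mathbf{G}^G_F$ applied to $g\sqni f\sqcap f\sqin g'$ yields $g\sqcap g'$. Thus $h\sqsupset g\sqcap g'\sqsubset h'$, which is exactly edge-surjectivity of $\sqsupset$.

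There is no real obstacle here: the lemma is essentially a direct consequence of amalgamation together with the observation that edges in $H$ can be pulled back across an edge-witnessing morphism and then pushed across the amalgam. The only thing worth flagging is the use of reflexivity of the graph edge relation (so that $f\sqcap f$) to extract an adjacency in $G$ out of two elements both related to the same element of $F$, which is precisely the edge-preservation clause in \autoref{defin:relationsgraphs} applied with a loop in $F$.
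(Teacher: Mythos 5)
Your argument is correct and is essentially identical to the paper's own proof: both amalgamate $\sqsupset$ against a wide edge-witnessing morphism into its codomain, use co-surjectivity (from $\mathbf{C}\subseteq\mathbf{B}$) to find a point of the amalgam below the edge-witness, and then apply edge-preservation of the amalgamating morphism at a loop to extract the required adjacent pair in the domain. Only the variable names differ, so there is nothing to add.
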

\begin{proof}
    Let ${\sqsupset} \in \mathbf{C}^G_H$ and let $g \sqcap g' \in G$.
    There is an edge-witnessing morphism ${\sqni} \in \mathbf{C}^G_I$ and a vertex $i \in I$ with $g, g' \sqni i$.
    By the amalgamation property, there are ${\dashv} \in \mathbf{C}^H_J$ and ${\Dashv} \in \mathbf{C}^I_J$ with ${\sqsupset}\circ{\dashv} = {\sqni}\circ{\Dashv}$.
    Hence, there are vertices $j \in J$ and $h,  h' \in H$ such that $i \Dashv j$ and $g \sqsupset h \dashv j$ and $g' \sqsupset h' \dashv j$.
    We have $h \sqcap h'$ since $\dashv$ is edge-preserving, and we are done.
\end{proof}

The following theorem summarizes a typical situation. (Recall, the category $\mathbf{E}$ is that of edge-surjective morphisms.)
\begin{thm} \label{cor:FraisseLimit}
    Let $\mathbf{C} \subseteq \mathbf{B}$ be a directed subcategory with amalgamation such that edge-witnessing morphisms and star-refining morphisms are wide. Then:
    \begin{enumerate}
        \item\label{itm:exists} There is a Fra\"{i}ss\'e sequence in $\mathbf{C}$.
        \item\label{itm:edge-surjective} We have $\mathbf{C} \subseteq \mathbf{E}$, and edge-witnessing and star-refining morphisms form wide ideals.
        \item\label{itm:all_nice} Every lax-Fra\"{i}ss\'e sequence in $\mathbf{C}$ is edge-faithful and has an edge-witnessing and a star-refining subsequence.
        \item\label{itm:all_same} All lax-Fra\"{i}ss\'e sequences in $\mathbf{C}$ have homeomorphic spectra.
        \item\label{itm:Flim_Hausdroff} The spectrum is metrizable.
    \end{enumerate}
\end{thm}
\begin{proof}
    \ref{itm:edge-surjective} follows from \autoref{SubcategoryOfE} and \autoref{Ideals}.
    Every lax-Fra\"{i}ss\'e sequence has an edge-witnessing and star-refining subsequence by \autoref{LaxClosed} and \autoref{IdealFraisseSubsequence}.
    It follows from \autoref{OverlappingCoherence} that it is also edge-faithful.
    That proves~\ref{itm:all_nice}.
    Then~\ref{itm:all_same} follows from \autoref{LaxFraisseSpectra}, and \ref{itm:Flim_Hausdroff} follows from \autoref{HausdorffLimit}.
\end{proof}

The above theorem calls for the following abuse of terminology:
\begin{dfn}\label{dfn:FraisseLimit}
     Let $\mathbf{C} \subseteq \mathbf{B}$ be a directed subcategory with amalgamation such that edge-witnessing morphisms and star-refining morphisms are wide (and so form wide ideals as also $\mathbf{C} \subseteq \mathbf{E})$.
     The unique spectrum of a lax-Fra\"{i}ss\'e sequence is called the \emph{Fra\"{i}ss\'e limit} of $\mathbf{C}$.
\end{dfn}

\subsection{The clique functor}\label{ss.clique}

To prove that a subcategory $\mathbf{C}$ of $\mathbf{G}$ has amalgamation it will often suffice to consider morphisms that are functions. Indeed, this will be the case whenever $\mathbf{C}$ is closed under the clique functor which we now describe. First let us denote the cliques (i.e. non-empty complete subgraphs) of a graph $(G,\sqcap)$ by
\[\mathsf{X}G=\{C\in\mathsf{P}(G)\setminus\{\emptyset\}:C\times C\subseteq\sqcap\}.
\]
In particular, cliques of triangle-free graphs are really just edges, including loops.

The \emph{clique graph} has vertices $\mathsf{X}G$ and edges given by containment, i.e. cliques are related in $\mathsf{X}G$ if and only if one is contained in the other.

Let $\mathbf{F}$ denote the wide subcategory $\mathbf{G}$ consisting of all morphisms which are functions. Recall that $\mathbf{S}$ is the wide subcategory of $\mathbf{G}$ with co-surjective morphisms.

\begin{prp}\label{CliqueFunctor}
We have a functor $\mathsf{X}:\mathbf{S}\rightarrow\mathbf{F}$ taking a graph $G$ to its clique graph $\mathsf{X}G$ and any ${\sqsupset}\in\mathbf{S}_G^H$ to the function $\mathsf{X}^\sqsupset\in\mathbf{F}_{\mathsf{X}G}^{\mathsf{X}H}$ given by
\[\mathsf{X}^\sqsupset(C)=C^\sqsubset.\]
Moreover, if $H$ is triangle-free and ${\sqsupset}\in\mathbf{B}_G^H\cap\mathbf{E}_G^H$ then $\mathsf{X}^\sqsupset\in\mathbf{B}_{\mathsf{X}G}^{\mathsf{X}H}$ too.
\end{prp}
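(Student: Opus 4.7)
The plan is to verify four things in turn: well-definedness, edge-preservation, functoriality, and the moreover clause.

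First, I will check that $\mathsf{X}^\sqsupset$ genuinely lands in $\mathsf{X}H$. For $C \in \mathsf{X}G$, fix any $c \in C$; co-surjectivity of $\sqsupset \in \mathbf{S}$ yields $h \in H$ with $h \sqsupset c$, so $C^\sqsubset$ is non-empty. For any $h, h' \in C^\sqsubset$, pick $c, c' \in C$ with $h \sqsupset c$ and $h' \sqsupset c'$; then $c \sqcap c'$ since $C$ is a clique, so $h \sqcap h'$ by edge-preservation of $\sqsupset$. Hence $C^\sqsubset$ is a (non-empty) clique in $H$, i.e.\ $C^\sqsubset \in \mathsf{X}H$.

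Next, since the edge relation on each clique graph is comparability under $\subseteq$, and $C \subseteq C'$ manifestly implies $C^\sqsubset \subseteq C'^\sqsubset$, the function $C \mapsto C^\sqsubset$ preserves edges, placing $\mathsf{X}^\sqsupset$ in $\mathbf{F}_{\mathsf{X}G}^{\mathsf{X}H}$. Functoriality is then immediate: $\mathsf{X}^{\mathrm{id}_G}(C) = C$, and for composable ${\sqsupset} \in \mathbf{S}_G^H$ and ${\sqni} \in \mathbf{S}_F^G$, unpacking the definition of relational composition gives
\[
    \mathsf{X}^{\sqsupset\circ\sqni}(C) = \{h \in H : \exists c \in C,\ \exists g \in G\ (g \sqni c\text{ and }h \sqsupset g)\} = \mathsf{X}^\sqsupset(\mathsf{X}^\sqni(C)).
\]

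For the moreover clause, assume $H$ is triangle-free and ${\sqsupset} \in \mathbf{B}_G^H \cap \mathbf{E}_G^H$. Since $\mathsf{X}^\sqsupset$ is a function, membership in $\mathbf{B}$ reduces to surjectivity (functions are automatically co-surjective, and co-injectivity of a function is equivalent to surjectivity). Triangle-freeness forces every $D \in \mathsf{X}H$ to have at most two elements. When $D = \{h\}$, co-injectivity of $\sqsupset$ yields $g \in G$ with $g^\sqsubset = \{h\}$, and $C := \{g\} \in \mathsf{X}G$ maps to $D$. When $D = \{h, h'\}$ with $h \sim h'$, edge-surjectivity of $\sqsupset$ produces $g \sqsubset h$ and $g' \sqsubset h'$ with $g \sqcap g'$, so $C := \{g, g'\} \in \mathsf{X}G$; the clique $C^\sqsubset$ of $H$ contains both $h$ and $h'$, and triangle-freeness caps its size at two, forcing $C^\sqsubset = D$. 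The main obstacle lies precisely in this last step: without triangle-freeness there is no a priori bound preventing $C^\sqsubset$ from acquiring spurious vertices beyond $h$ and $h'$, and it is exactly the interplay of edge-surjectivity with the shape of $H$ that rules this out.
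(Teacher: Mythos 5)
Your proof is correct and follows essentially the same route as the paper's: co-surjectivity plus edge-preservation to see that $C^\sqsubset$ is a clique, monotonicity of $C\mapsto C^\sqsubset$ for edge-preservation in the clique graphs, direct unpacking of relational composition for functoriality, and the same singleton/edge case split (using co-injectivity, then edge-surjectivity plus triangle-freeness) for surjectivity of $\mathsf{X}^\sqsupset$. The only cosmetic difference is that you explicitly reduce $\mathbf{B}$-membership of a function to surjectivity before the case analysis, whereas the paper just verifies surjectivity and concludes directly.
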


\begin{proof}
If ${\sqsupset}\in\mathbf{S}_G^H$ and $C$ is a clique of $G$ then $\mathsf{X}^\sqsupset(C)=C^\sqsubset$ is a clique of $H$, as $\sqsupset$ is co-surjective and preserves the edge relation. Moreover, if $D$ is another clique of $G$ then $C\subseteq D$ certainly implies $C^\sqsubset\subseteq D^\sqsubset$, hence if $C$ and $D$ are adjacent so are $\mathsf{X}^\sqsupset(C)$ and $\mathsf{X}^\sqsupset(D)$, i.e. $\mathsf{X}^\sqsupset$ is in $\mathbf{F}_{\mathsf{X}G}^{\mathsf{X}H}$. For any other morphism ${\sqni}\in\mathbf{S}^F_H$, we immediately see that 
\[\mathsf{X}^{{\sqni\circ\sqsupset}}(C)=C^{{\sqsubset\circ\sqin}}=C^{\sqsubset{\sqin}}=\mathsf{X}^{\sqni}(\mathsf{X}^\sqsupset(C))=\mathsf{X}^{\sqni}\circ\mathsf{X}^\sqsupset(C).\]
Thus $\mathsf{X}^{{\sqni\circ\sqsupset}}=\mathsf{X}^{\sqni}\circ\mathsf{X}^\sqsupset$, showing that $\mathsf{X}$ is a functor.

For the last statement, take any ${\sqsupset}\in\mathbf{B}_G^H\cap\mathbf{E}_G^H$ and any clique $D\in\mathsf{X}H$. If $D=\{h\}$ then the co-injectivity of $\sqsupset$ yields $g\in G$ with $g^\sqsubset=\{h\}$ and hence $\mathsf{X}^\sqsupset(\{g\})=\{h\}=D$. On the other hand, if $D=\{h,h'\}$, for distinct $h,h'\in H$, then we can take $g,g'\in G$ with $h\sqsupset g\sqcap g'\sqsubset h'$, as $\sqsupset$ is edge-surjective, and hence $\mathsf{X}^\sqsupset(\{g,g'\})\supseteq\{h,h'\}=D$. If $D$ is triangle-free then this implies $\mathsf{X}^\sqsupset(\{g,g'\})=D$, showing that $\mathsf{X}^\sqsupset$ is surjective and hence $\mathsf{X}^\sqsupset\in\mathbf{B}^{\mathsf{X}H}_{\mathsf{X}G}$.
\end{proof}

\begin{prp}\label{prop:inmorphismclique}
The restriction $\in_G$ of the relation $\in$ to $G\times\mathsf{X}G$, for each $G\in\mathbf{G}$, yields a natural transformation\footnote{Recall that a natural transformation between functors $F, G\maps \mathbf{C} \to \mathbf{C'}$ is a family of morphisms $(\eta_C\maps F(C) \to G(C))_{C \in \mathbf{C}}$ such that $G(f) \circ \eta_C = \eta_D \circ F(f)$ for every $\mathbf{C}$-morphism $f\maps C \to D$.}
from the clique functor $\mathsf{X}$ to the identity functor $\mathsf{I}$.
\end{prp}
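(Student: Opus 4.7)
The plan is to verify the two requirements for $(\in_G)_{G\in\mathbf{G}}$ to define a natural transformation from $\mathsf{X}$ to the identity: first, that each $\in_G$ lies in $\mathbf{G}^G_{\mathsf{X}G}$; second, that the naturality square commutes over every morphism in $\mathbf{S}$. Both functors can be viewed as landing in the common ambient category $\mathbf{G}$, which is where $\in_G$ naturally sits.

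For the first point, I would note that the edge relation on $\mathsf{X}G$ is comparability under $\subseteq$, so whenever $C \sqcap C'$ in $\mathsf{X}G$ we have $C \subseteq C'$ or $C' \subseteq C$, and hence $C \cup C' \in \mathsf{X}G$ is again a clique of $G$. Given $g \in C$ and $g' \in C'$, this places $g$ and $g'$ in a common clique, which forces $g \sqcap g'$ in $G$. Thus $\in_G$ is edge-preserving, i.e.\ a morphism $\in_G \in \mathbf{G}^G_{\mathsf{X}G}$, which in the relation convention is exactly a morphism from $\mathsf{X}G$ to $G = \mathsf{I}G$ as required.

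For naturality, given any ${\sqsupset}\in\mathbf{S}^H_G$, I would unpack both sides of the target equality
\[ {\sqsupset}\circ{\in_G}\ =\ {\in_H}\circ{\mathsf{X}^\sqsupset} \]
as subsets of $H\times\mathsf{X}G$. For $h\in H$ and $C\in\mathsf{X}G$, the left side asserts the existence of some $g\in C$ with $h\sqsupset g$, which is exactly $h\in C^\sqsubset$. The right side asserts $h\in\mathsf{X}^\sqsupset(C)$, and by the preceding part of the proposition $\mathsf{X}^\sqsupset(C)=C^\sqsubset$, yielding the same condition. Hence the two relations coincide and the square commutes.

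I do not anticipate any serious obstacle; the verification is essentially a bookkeeping exercise once the edge-preservation of $\in_G$ is observed. The only point requiring care is the directional convention (that a relation ${\sqsupset}\subseteq H\times G$ is viewed as a morphism with domain $G$ and codomain $H$), as otherwise one can easily mis-state the naturality square.
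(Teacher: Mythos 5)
Your proof is correct and follows essentially the same route as the paper's: edge-preservation of $\in_G$ via the observation that comparable cliques place their elements in a common clique, and naturality by unwinding both composites at a pair $(h,C)$ to the single condition $h\in C^\sqsubset=\mathsf{X}^\sqsupset(C)$. (Your statement of the naturality square $\sqsupset\circ\in_G=\in_H\circ\mathsf{X}^\sqsupset$ is in fact the cleaner reading of what the paper verifies.)
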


\begin{proof}
Say we have adjacent $C,D\in\mathsf{X}G$ and let $c\in C$ and $d\in D$. If $C\subseteq D$ then $c\in D$, while if $D\subseteq C$ then $d\in C$. In either case, $c\sqcap d$, as $C$ and $D$ are cliques, showing that $\in_G$ preserves the edge relation of $\mathsf{X}G$ and is thus a morphism in $\mathbf{S}^G_{\mathsf{X}G}$. Now just note that ${\sqsupset\circ\in_H}={\in_G\circ\ \mathsf{X}^\sqsupset}$, for any ${\sqsupset}\in\mathbf{S}_G^H$, as
\[d \mathrel{(\sqsupset\circ\in_H)} C\ \Leftrightarrow\ \exists c\ (d\sqsupset c\in C)\ \Leftrightarrow \ d\in C^\sqsubset = \mathsf{X}^\sqsupset(C)\ \Leftrightarrow\ d \mathrel{({\in_G} \circ {\mathsf{X}^\sqsupset})} C.\]
Thus $(\in_G)_{G\in\mathbf{G}}$ is a natural transformation from $\mathsf{X}$ to $\mathsf{I}$.
\end{proof}

\begin{dfn}\label{defin:cliqueclosed}
We call a subcategory $\mathbf{C}$ of $\mathbf{S}$ \emph{clique-closed} if ${\in_G}\in\mathbf{C}^G_{\mathsf{X}G}$ (in particular $\mathsf{X}G\in\mathbf{C}$), for all $G\in\mathbf{C}$, and $\mathsf{X}^\sqsupset\in\mathbf{C}_{\mathsf{X}G}^{\mathsf{X}H}$, whenever ${\sqsupset}\in\mathbf{C}_G^H$.
\end{dfn}

In other words, $\mathbf{C}$ is clique-closed if $\mathsf{X}$ restricts to a functor on $\mathbf{C}$ and $(\ni_G)$ restricts to a natural transformation between the restricted endofunctors $\mathsf{X}$ and $\mathsf{I}$.

Given categories $\mathbf{C}$ and $\mathbf{C'}$,  we let $\mathbf{C}\cap\mathbf{C'}$ be the category of all objects and morphisms that are both in $\mathbf{C}$ and $\mathbf{C'}$.

\begin{cor}\label{cor:cliqueclosed}
If $\mathbf{C}$ is clique-closed and $\mathbf{C}\cap\mathbf{F}$ has amalgamation, so does $\mathbf{C}$.
\end{cor}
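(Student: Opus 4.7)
The plan is to reduce amalgamation in $\mathbf{C}$ to amalgamation in $\mathbf{C}\cap\mathbf{F}$ by applying the clique functor $\mathsf{X}$, and then lift the resulting commutative square back to $\mathbf{C}$ via the natural transformation $\in$ from $\mathsf{X}$ to the identity (\autoref{prop:inmorphismclique}). Both ingredients are available precisely because $\mathbf{C}$ is clique-closed.

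Concretely, suppose one wants to amalgamate ${\sqsupset}\in\mathbf{C}_H^G$ and ${\sqni}\in\mathbf{C}_I^G$. Since $\mathbf{C}$ is clique-closed we obtain $\mathsf{X}^\sqsupset\in\mathbf{C}_{\mathsf{X}H}^{\mathsf{X}G}$ and $\mathsf{X}^\sqni\in\mathbf{C}_{\mathsf{X}I}^{\mathsf{X}G}$, and by \autoref{CliqueFunctor} these are functions, hence members of $\mathbf{C}\cap\mathbf{F}$. Applying the amalgamation hypothesis in $\mathbf{C}\cap\mathbf{F}$ to this cospan yields an object $J$ together with morphisms $f\in(\mathbf{C}\cap\mathbf{F})_J^{\mathsf{X}H}$ and $g\in(\mathbf{C}\cap\mathbf{F})_J^{\mathsf{X}I}$ satisfying $\mathsf{X}^\sqsupset\circ f = \mathsf{X}^\sqni\circ g$.

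The proposal is then to set ${\dashv}={\in_H}\circ f$ and ${\Dashv}={\in_I}\circ g$; these are morphisms from $J$ to $H$ and from $J$ to $I$ respectively, and both lie in $\mathbf{C}$ since $\in_H$ and $\in_I$ do by clique-closedness. Verifying the amalgamation identity is then a diagram chase: combining the naturality squares $\sqsupset\circ\in_H = \in_G\circ\mathsf{X}^\sqsupset$ and $\sqni\circ\in_I = \in_G\circ\mathsf{X}^\sqni$ (\autoref{prop:inmorphismclique}) with $\mathsf{X}^\sqsupset\circ f = \mathsf{X}^\sqni\circ g$ gives
\[
{\sqsupset}\circ{\dashv} \;=\; {\in_G}\circ\mathsf{X}^\sqsupset\circ f \;=\; {\in_G}\circ\mathsf{X}^\sqni\circ g \;=\; {\sqni}\circ{\Dashv}.
\]

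I do not anticipate any real obstacle, as the argument is purely formal: the definition of clique-closedness has been crafted so that $\mathsf{X}$ restricts to a functor whose outputs lie in $\mathbf{C}\cap\mathbf{F}$ and so that $\in$ restricts to a natural transformation within $\mathbf{C}$. Once these two facts are unpacked the proof is immediate. One small subtlety worth flagging is that the amalgamating object $J$ supplied by $\mathbf{C}\cap\mathbf{F}$ need not be of the form $\mathsf{X}K$ for any $K\in\mathbf{C}$; however, the construction above is indifferent to this, as $\in_H\circ f$ and $\in_I\circ g$ make sense for any $J$ whatsoever.
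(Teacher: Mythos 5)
Your proof is correct and is essentially identical to the paper's: both apply the clique functor to reduce to an amalgamation of the functions $\mathsf{X}^\sqsupset$ and $\mathsf{X}^\sqni$ in $\mathbf{C}\cap\mathbf{F}$, and then compose the resulting pair with ${\in_H}$ and ${\in_I}$, using the naturality of $(\in_G)$ from \autoref{prop:inmorphismclique} for the diagram chase. Nothing to add.
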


\begin{proof}
Take ${\sqsupset}\in\mathbf{C}^G_H$ and ${\sqni}\in\mathbf{C}_I^G$. If $\mathbf{C}$ is clique-closed and $\mathbf{C}\cap\mathbf{F}$ has amalgamation then, in particular, we can amalgamate $\mathsf{X}^\sqsupset$ and $\mathsf{X}^{\sqni}$, i.e. we have functions $\phi\in\mathbf{C}_J^{\mathsf{X}H}$ and $\pi\in\mathbf{C}_J^{\mathsf{X}I}$ with ${\mathsf{X}^\sqsupset\circ\phi}={\mathsf{X}^{\sqni}\circ\pi}$ so
\[
{\sqsupset} \circ {\in_H} \circ \phi
\ \ =\ \ {\in_G} \circ \mathsf{X}^\sqsupset \circ \phi
\ \ =\ \ {\in_G} \circ \mathsf{X}^{\sqni} \circ \pi
\ \ =\ \ \sqni\circ\in_I\circ\ \pi.
\]
Thus ${\in_H} \circ \phi$ and ${\in_I} \circ\ \pi$ amalgamate $\sqsupset$ and $\sqni$, i.e. $\mathbf{C}$ has amalgamation.
\end{proof}

Before moving on, let us note some more basic properties of $\in_G$, for any $G\in\mathbf{G}$.

\begin{prp}\label{CliqueTransformation}
$\in_G$ is always co-bijective, edge-witnessing and monotone.
\end{prp}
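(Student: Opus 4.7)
The plan is to verify each of the three properties essentially by unpacking definitions, since they all reduce to elementary observations about cliques and their $\subseteq$-comparability.

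First, I would dispose of co-bijectivity. Co-surjectivity of $\in_G$ is immediate from the fact that every clique $C\in\mathsf{X}G$ is non-empty, so any $g\in C$ witnesses $g\in_G C$. For co-injectivity, note that for each $g\in G$ the singleton $\{g\}$ is a clique (using reflexivity of the edge relation of $G$), so $\{g\}\in\mathsf{X}G$, and its $\in_G$-fibre is exactly $\{g\}$, giving the required witness. Edge-witnessing is equally quick: for any $g\sqcap g'$ in $G$, the set $\{g,g'\}$ is a clique and contains both vertices, so $\{g,g'\}\in\mathsf{X}G$ is the needed witness.

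The only property with actual content is monotonicity. Given a connected subset $D\subseteq G$, I need to show that
\[
D^{\in_G}=\{C\in\mathsf{X}G:C\cap D\neq\emptyset\}
\]
is connected in $\mathsf{X}G$. My plan is to fix arbitrary $C_1,C_2\in D^{\in_G}$, pick $g_i\in C_i\cap D$, and use connectedness of $D$ to obtain a walk $g_1=h_0,h_1,\ldots,h_n=g_2$ in $D$ with consecutive vertices adjacent in $G$. I then convert this to a walk in $\mathsf{X}G$ by interleaving singletons and edge-cliques:
\[
C_1,\ \{h_0\},\ \{h_0,h_1\},\ \{h_1\},\ \{h_1,h_2\},\ \ldots,\ \{h_{n-1},h_n\},\ \{h_n\},\ C_2.
\]
Each singleton $\{h_i\}$ is contained in its flanking neighbours in the list, while $\{h_0\}\subseteq C_1$ and $\{h_n\}\subseteq C_2$, so consecutive cliques are $\subseteq$-comparable and hence adjacent in $\mathsf{X}G$. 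Every clique appearing in the list also meets $D$: each $h_i$ lies in $D$, and $C_1,C_2$ meet $D$ by the initial choice of $g_1,g_2$. Hence the walk stays inside $D^{\in_G}$, which is therefore connected.

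I do not anticipate any real obstacle here: monotonicity of $\in_G$ is essentially a direct translation between walks in $G$ and walks between singletons and two-element cliques in $\mathsf{X}G$, and the other two properties are built into the definition of $\mathsf{X}G$.
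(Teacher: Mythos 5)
Your proof is correct. The verifications of co-bijectivity and edge-witnessing coincide with the paper's (singleton cliques witness co-injectivity, non-emptiness gives co-surjectivity, and $\{g,h\}$ witnesses each edge). For monotonicity you take a genuinely different route: the paper first notes that $\in_G$ is edge-surjective and then invokes \autoref{MonotoneEdgeSurjective}, which reduces monotonicity to checking that the preimage of each single vertex $g$ is connected -- and that preimage is a ``star'' around the singleton clique $\{g\}$, since every clique containing $g$ is $\subseteq$-comparable to $\{g\}$. You instead prove monotonicity directly, lifting a walk $h_0,\dots,h_n$ inside a connected $D\subseteq G$ to the interleaved walk $C_1,\{h_0\},\{h_0,h_1\},\{h_1\},\dots,\{h_n\},C_2$ in $\mathsf{X}G$, all of whose members meet $D$. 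Both arguments are sound; the paper's is shorter because it leans on the general reduction lemma, while yours is self-contained and makes the combinatorial mechanism in the clique graph explicit without needing edge-surjectivity as an intermediate step.
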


\begin{proof}
Co-surjectivity of $\in_G$ follows from the fact that cliques are, by definition, non-empty, while co-injectivity follows from the fact that every vertex $g$ is the unique element of the singleton clique $\{g\}$. For any $g,h\in G$ with $g\sqcap h$, we see that $g,h\in_G\{g,h\}\in\mathsf{X}G$, showing that $\in_G$ is also edge-witnessing. In particular, $\in_G$ is edge-surjective so to verify monotonicity it suffices to consider single vertices $g\in G$ (\autoref{MonotoneEdgeSurjective}). But any clique $C\in \mathsf{X}G$ containing $g$ is related to the singleton clique $\{g\}$, so preimages of single vertices are indeed connected and hence $\in_G$ is monotone.
\end{proof}

\begin{cor}\label{XGConnected}
 If a graph $G$ is connected then so is its clique graph $\mathsf{X}G$.
\end{cor}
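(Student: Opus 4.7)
The plan is to extract this corollary as a one-line consequence of the monotonicity half of \autoref{CliqueTransformation}. Recall that $\in_G$ lies in $\mathbf{S}^G_{\mathsf{X}G}$, i.e.\ it is the relation ${\in_G}\subseteq G\times\mathsf{X}G$ with $g\in_G C$ meaning $g\in C$. The monotonicity of $\in_G$ (as a morphism of $\mathbf{G}^G_{\mathsf{X}G}$) says that for every connected subset $C\subseteq G$ of the codomain, the preimage
\[ C^{\in_G}=\{K\in\mathsf{X}G:\exists c\in C\ (c\in K)\} \]
is a connected subgraph of $\mathsf{X}G$.

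The key step is simply to apply this with $C=G$. Since $G$ is connected by hypothesis, it suffices to identify $G^{\in_G}$. But every clique $K\in\mathsf{X}G$ is non-empty by the definition of $\mathsf{X}G$, so it contains some vertex $g\in K\subseteq G$, which witnesses $K\in G^{\in_G}$. Hence $G^{\in_G}=\mathsf{X}G$, and monotonicity gives that $\mathsf{X}G$ is connected.

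There is essentially no obstacle here, as all the work has already been done in \autoref{CliqueTransformation}: the edge-surjectivity of $\in_G$ reduced monotonicity to the observation that preimages of singletons $\{g\}^{\in_G}$ are stars around $\{g\}$ in $\mathsf{X}G$, and taking a single connected union at the level of subsets of $G$ just patches these stars together. The only thing to be careful about is the bookkeeping of which side of the relation is the domain and which is the codomain, but once this is set up correctly the result is immediate.
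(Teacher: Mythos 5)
Your proof is correct and follows exactly the paper's argument: the paper likewise notes that co-surjectivity of $\in_G$ (i.e.\ non-emptiness of cliques) gives $G^{\in_G}=\mathsf{X}G$, and then invokes the monotonicity of $\in_G$ from \autoref{CliqueTransformation} applied to the connected set $G$. Nothing is missing.
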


\begin{proof}
 As $\in_G$ is co-surjective, $G^{\in_G}=\mathsf{X}G$. If $G$ is connected, this implies $\mathsf{X}G$ is also connected, as $\in_G$ is also monotone.
\end{proof}

By showing that our category is clique-closed, besides reducing amalgamation to functions, we get that edge-witnessing morphisms are wide for free.
\begin{cor} \label{CliqueClosedEdgeWitnessing}
    Let $\mathbf{C}$ be a clique-closed category.
    Then edge-witnessing morphisms are wide in $\mathbf{C}$.
\end{cor}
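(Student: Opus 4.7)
The plan is to observe that the proof is essentially immediate from the two facts already established just above the statement. Specifically, to show that edge-witnessing morphisms are wide in $\mathbf{C}$, I need to produce, for any object $H \in \mathbf{C}$, some object $G \in \mathbf{C}$ together with an edge-witnessing morphism in $\mathbf{C}^H_G$. The natural candidate is $G = \mathsf{X}H$ with the morphism being the membership relation $\in_H$.

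First I would invoke the clique-closedness hypothesis: since $H \in \mathbf{C}$, we have $\mathsf{X}H \in \mathbf{C}$ and ${\in_H} \in \mathbf{C}^H_{\mathsf{X}H}$. This places $\in_H$ in the appropriate hom-set of $\mathbf{C}$. Then I would cite \autoref{CliqueTransformation}, which guarantees that $\in_G$ is edge-witnessing for every graph $G$, applied with $G = H$. Combining these two facts directly verifies \autoref{dfn:idealswide}: for every $H \in \mathbf{C}$, the object $\mathsf{X}H \in \mathbf{C}$ and the morphism ${\in_H} \in \mathbf{C}^H_{\mathsf{X}H}$ witness that the class of edge-witnessing morphisms has non-empty intersection with a hom-set into $H$.

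There is no real obstacle here; the work was already done in \autoref{CliqueTransformation} and packaged into the definition of clique-closedness. The only thing to be a bit careful about is matching the direction of the morphism against the convention in \autoref{dfn:idealswide} (morphisms \emph{from} some object \emph{to} $H$, in the sense that $H$ is the codomain), which is exactly the direction in which $\in_H$ is oriented.
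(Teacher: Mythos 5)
Your proof is correct and is essentially identical to the paper's: both take $G=\mathsf{X}H$ and use the fact that ${\in_H}\in\mathbf{C}^H_{\mathsf{X}H}$ (by clique-closedness) is edge-witnessing (by \autoref{CliqueTransformation}). Nothing is missing.
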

\begin{proof}
    For every $G\in\mathbf{C}$, ${\in_G}\in\mathbf{C}^G_{\mathsf{X}G}$ is edge-witnessing, by \autoref{CliqueTransformation}.
\end{proof}

Let us conclude with an example of how to obtain a clique-closed category. Recall, a subcategory $\mathbf C$ of a category $\mathbf C'$ is full if $\mathbf C$ contains some objects of $\mathbf C'$, but all morphisms between them. 
\begin{cor} \label{TreesCliqueClosed}
    Let $\mathbf{T} \subseteq \mathbf{B}$ be the full subcategory of all trees.
    $\mathbf{T}$ as well as any full $\mathbf{C} \subseteq \mathbf{T}$ such that $\mathsf{X}G \in \mathbf{C}$ for every $G \in \mathbf{C}$ is clique-closed.
\end{cor}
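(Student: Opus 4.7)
The plan is to verify the three defining conditions of clique-closedness one at a time: that $\mathsf{X}G$ lies in the subcategory, that $\in_G$ is a morphism of it, and that $\mathsf{X}^\sqsupset$ is a morphism of it whenever $\sqsupset$ is. The only real content will lie in showing that $\mathsf{X}G$ is a tree whenever $G$ is a tree; the remaining conditions are formal consequences of earlier results in the excerpt.

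I would first dispose of the morphism conditions. Transitivity of fullness along $\mathbf{C}\subseteq\mathbf{T}\subseteq\mathbf{B}$ makes $\mathbf{C}$ full in $\mathbf{B}$, so for each morphism condition it suffices to check membership in $\mathbf{B}$ once the relevant graphs lie in $\mathbf{C}$. That $\in_G\in\mathbf{B}^G_{\mathsf{X}G}$ is part of the content of \autoref{CliqueTransformation} (co-bijectivity), so $\in_G\in\mathbf{C}^G_{\mathsf{X}G}$ holds as soon as $\mathsf{X}G\in\mathbf{C}$. For $\sqsupset\in\mathbf{C}^H_G$: since $G,H$ are trees, $H$ is triangle-free, and as $\sqsupset$ goes from a connected graph to an acyclic graph, \autoref{SurjectiveImpliesEdgeSurjective} gives $\sqsupset\in\mathbf{E}$; the last clause of \autoref{CliqueFunctor} then yields $\mathsf{X}^\sqsupset\in\mathbf{B}^{\mathsf{X}H}_{\mathsf{X}G}$, which becomes a $\mathbf{C}$-morphism once $\mathsf{X}G,\mathsf{X}H\in\mathbf{C}$.

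The remaining condition, $\mathsf{X}G\in\mathbf{C}$ for every $G\in\mathbf{C}$, is precisely the hypothesis in the general case of a full $\mathbf{C}\subseteq\mathbf{T}$ closed under $\mathsf{X}$, so the substantive case is $\mathbf{C}=\mathbf{T}$. The main obstacle is therefore to show that $\mathsf{X}G$ is a tree whenever $G$ is a tree. Connectedness is immediate from \autoref{XGConnected}, so only acyclicity needs argument. My plan is to exploit triangle-freeness of $G$: the cliques of $G$ are exactly the singletons $\{g\}$ and the edges $\{g,h\}$ with $g\sim h$, and two distinct edge-cliques are never comparable, so the non-loop edges of $\mathsf{X}G$ are precisely the pairs $\{g\}\sim\{g,h\}$. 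A count then shows that $\mathsf{X}G$ has $2|G|-1$ vertices and exactly $2(|G|-1)=|\mathsf{X}G|-1$ non-loop edges, which together with connectedness forces $\mathsf{X}G$ to be a tree. As a direct alternative, every edge-clique has degree exactly two in $\mathsf{X}G$, so any hypothetical cycle in $\mathsf{X}G$ would alternate between singletons and edge-cliques and thus project to a nontrivial cycle in $G$, contradicting acyclicity of $G$.
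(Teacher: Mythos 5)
Your proof is correct and follows essentially the same route as the paper: identify the cliques of a tree as singletons and edges, observe that $\mathsf{X}G$ is the edge-subdivided tree, and then reduce the morphism conditions to \autoref{SurjectiveImpliesEdgeSurjective}, the last part of \autoref{CliqueFunctor}, \autoref{CliqueTransformation}, and fullness. The only difference is that you spell out (via the edge count $2(|G|-1)=|\mathsf{X}G|-1$, or the cycle-projection argument) the acyclicity of $\mathsf{X}G$, which the paper dismisses as "easy to see" by comparison with \autoref{EdgeSplitting}.
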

\begin{proof}
    For every tree $T$, the cliques of $T$ are exactly the singletons and edges, i.e. $\mathsf{X}T = \{\{g\}: g \in T\} \cup \{\{g, h\}: g \sim h \in T\}$.
    It is easy to see that $\mathsf{X}T$ is the tree constructed from $T$ by replacing every edge $g \sim h$ by $\{g\} \sim \{g, h\} \sim \{h\}$, similarly to \autoref{EdgeSplitting}.
    Hence, $\mathbf{T}$ itself satisfy the assumptions on $\mathbf{C}$.

    By \autoref{SurjectiveImpliesEdgeSurjective} we have $\mathbf{C} \subseteq \mathbf{E}$, and so by the last part of \autoref{CliqueFunctor}, $\mathsf{X}^{\sqsupset} \in \mathbf{B}^{\mathsf{X}T}_{\mathsf{X}S} = \mathbf{C}^{\mathsf{X}T}_{\mathsf{X}S}$ for every ${\sqsupset} \in \mathbf{C}^T_S$.
    By \autoref{CliqueTransformation}, ${\in_T} \in \mathbf{C}^T_{\mathsf{X}T}$ for every $T \in \mathbf{C}$.
    Altogether, $\mathbf{C}$ is clique-closed.
\end{proof}

\section{Examples}\label{s.Examples}

The general theory we have developed so far leads to the following procedure for constructing compacta from the desired overlap graphs of their finite open covers:
\begin{enumerate}[itemsep=0.5ex]
    \item We define a category $\mathbf{C}$ of graphs of desired shape and suitable relational morphisms.
    As argued in the previous section, it is natural to consider subcategories of $\mathbf{B} \cap \mathbf{E}$, though sometimes this is not assumed a priori and follows from other assumptions (e.g. by \autoref{SubcategoryOfE}).
    
    \item We show that $\mathbf{C}$ is suitable for our Fra\"{i}ss\'e theory, i.e. is directed, has amalgamation and wide families of edge-witnessing and star-refining morphisms.
    Then \autoref{cor:FraisseLimit} applies.
    
    Every category $\mathbf{C}$ we consider will have a terminal object, making directedness an immediate consequence of amalgamation.
    We will often show that $\mathbf{C}$ is clique-closed, and so it is enough to show amalgamation for functions (\autoref{cor:cliqueclosed}).
    That edge-witnessing and star-surjective morphisms are wide often follows from the edge-splitting construction (\autoref{EdgeSplitting}) or from being clique-closed (\autoref{CliqueClosedEdgeWitnessing}).

    \item If possible, we characterize lax-Fra\"{i}ss\'e and Fra\"{i}ss\'e sequences in $\mathbf{C}$.

    By \autoref{cor:FraisseLimit}, every lax-Fra\"{i}ss\'e sequence has an edge-witnessing and a star-refining subsequence.
    Sometimes this is enough, but sometimes we find other (lax-closed) wide ideals $I$ in $\mathbf{C}$, so every (lax-)Fra\"{i}ss\'e sequence has also a subsequence in $I$ by \autoref{IdealFraisseSubsequence}.
    Also note that it is enough to prove lax-absorption by \autoref{DirectedCofinal} -- cofinality follows from directedness.

    \item We characterize the Fra\"{i}ss\'e limit $X$ (\autoref{dfn:FraisseLimit}), and possibly show that every sequence in $\mathbf{C}$ with spectrum $X$ is lax-Fra\"{i}ss\'e.

    To do so we translate between properties of a sequence in $\mathbf{C}$ and topological properties of its spectrum (e.g. we use \autoref{ContinuumTreeLimit}).
    Either known properties of Fra\"{i}ss\'e sequence will translate to a topological characterization of a particular space, or we will a priori have a particular space $X$ in mind and show that every corresponding sequence is lax-Fra\"{i}ss\'e.
    In the latter case it is then enough to find any sequence in $\mathbf{C}$ with spectrum $X$, e.g. with help of \autoref{SequenceForSpace}.
\end{enumerate}

Below, we apply this reasoning to various quite simple categories of graphs and we obtain their Fra\"{i}ss\'e limits, namely we obtain the Cantor space, the arc, the Cantor fan, and the Lelek fan.

\subsection{The Cantor space}\label{ss.Cantor}

Here we start with the simplest possible example, the full subcategory of $\mathbf{B}$ whose objects are \emph{discrete} graphs $D$, where the edge relation is just equality on $D$. We denote this subcategory by $\mathbf{D}$.

Note that any ${\sqsupset}\in\mathbf{S}_G^D$ is automatically a function on $G$ when $D\in\mathbf{D}$ -- then $c,d\sqsupset g$ implies $c\sqcap d$, by edge-preservation, and hence $c=d$, by discreteness. A function is co-injective precisely when it is surjective, so morphisms of $\mathbf{D}$ are precisely surjective functions. 

Recall that a \emph{Stone space} is a $\mathsf{T}_0$ compactum with a clopen basis (which is then $\mathsf{T}_2$).

\begin{prp}\label{Stone}
Let $(G_n,{\sqsupset}_n^m)$ be a sequence in $\mathbf D$  with induced poset $\PP$. Then $\mathsf{S}\mathbb{P}$ is a Stone space.
\end{prp}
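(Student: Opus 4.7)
The plan is to verify that $\Spec{\PP}$ is T${}_1$ and compact (which is immediate from \autoref{SpectrumCompactT1}) and then exhibit a clopen basis. The key observation is that although morphisms in $\mathbf{D}$ are defined as edge-preserving co-surjective relations, discreteness of the codomains forces them to be functions: if $c, d \sqsupset g$ with $c \neq d$ in a discrete graph, edge-preservation would give $c \sqcap d$, contradicting discreteness. So every $\sqsupset^m_n$ is in fact a surjective function, and hence $(\sqsupset^m_n)$ is a surjective sequence in $\mathbf{S}$.

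Next I would check that the sequence is edge-faithful. Since the composition of functions is a function, for any $m \leq n$ and any $p \in G_n$, there is a unique element of $G_m$ above $p$. Consequently, two distinct elements $g \neq h$ in the same level $G_m$ cannot have a common lower bound in $\PP$, so $g \wedge h$ fails whenever $g \neq h$ in $G_m$. In a discrete graph $g \sqcap h \Leftrightarrow g = h$, which matches $g \wedge h$ exactly, confirming edge-faithfulness.

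The heart of the argument then uses \autoref{Overlapping}: since $g \wedge h$ fails for distinct $g, h \in G_n$, the basic open sets $g^\in$ and $h^\in$ must be disjoint. By \autoref{prp:InducedPoset} the $n$\textsuperscript{th} level of $\PP$ is $G_n$, which is a (finite) cap, so $\{g^\in : g \in G_n\}$ is a finite open cover of $\Spec{\PP}$ consisting of pairwise disjoint sets. Each such $g^\in$ is therefore the complement of the finite union of the remaining $h^\in$, hence clopen. Since every basic open subset of $\Spec{\PP}$ has the form $p^\in$ for some $p$ lying in some level $G_n$, we conclude that $\Spec{\PP}$ admits a basis of clopen sets.

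Finally, I would note Hausdorffness: given distinct $S, T \in \Spec{\PP}$, they must disagree on some level $G_n$, and the corresponding disjoint clopen sets $p^\in, q^\in$ separate them. Combined with compactness, this makes $\Spec{\PP}$ a Stone space. I do not anticipate a real obstacle here; the only subtle point is recognizing that edge-preservation plus discreteness collapses relations to functions, after which everything falls out of \autoref{Overlapping} and \autoref{prp:coverslevels}.
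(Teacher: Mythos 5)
Your proof is correct and follows essentially the same route as the paper's: discreteness plus edge-preservation forces the bonding relations to be functions, whence distinct vertices on a level have no common lower bound, the level covers $\{g^\in : g \in G_n\}$ are pairwise disjoint, and each $g^\in$ is clopen. One small imprecision: morphisms of $\mathbf{D}$ are co-\emph{bijective} (not merely co-surjective), and it is co-injectivity that makes the resulting functions surjective, which you implicitly use when invoking \autoref{prp:InducedPoset} to identify $G_n$ with the $n$\textsuperscript{th} level.
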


\begin{proof}
As noted above, $(G_n)\subseteq\mathbf{D}$ implies that each $\sqsupset^m_n$ is a function and hence that $p^\leq\cap G_m$ is a singleton, for each $p\in G_n$. It follows that each filter and, in particular, each minimal selector in $\mathbb{P}$ contains at most one vertex from $G_m$, and so is a thread. The corresponding basic open sets $(p^\in)_{p\in G_m}$ are thus disjoint and hence form a clopen cover of $\mathsf{S}\mathbb{P}$. As $m$ was arbitrary, it follows that $(p^\in)_{p\in\mathbb{P}}$ is a clopen basis of $\mathsf{S}\mathbb{P}$ and hence $\mathsf{S}\mathbb{P}$ is a Stone space.
\end{proof}

Note that the singleton graph is a terminal object in $\mathbf{D}$, and that $\mathbf{D}$ has pullbacks given by the usual fibre-product, so $\mathbf{D}$ has amalgamation.
$\mathbf{D}$ is also trivially clique-closed as the clique functor is isomorphic to the identity functor.
Finally, note that every function from a discrete graph is trivially star-refining, while every function onto a discrete graph is trivially edge-witnessing. 
Hence, we have a Fra\"{i}ss\'e limit by \autoref{cor:FraisseLimit}.
The key extra ingredient for a sequence to be Fra\"iss\'e is anti-injectivity.

\begin{thm}\label{DFraisseEquivalents}
Let $(G_n,{\sqsupset}_n^m)$ be a sequence in $\mathbf D$  with induced poset $\PP$. The following are equivalent.
\begin{enumerate}
 \item\label{DFraisse} $(\sqsupset^m_n)$ is a Fra\"iss\'e sequence in $\mathbf{D}$;
 \item\label{DLaxFraisse} $(\sqsupset^m_n)$ is a lax-Fra\"iss\'e sequence in $\mathbf{D}$;
 \item\label{DAntiInjective} $(\sqsupset^m_n)$ has an anti-injective subsequence;
 \item\label{DCantor} $\mathsf{S}\mathbb{P}$ is homeomorphic to the Cantor space.
\end{enumerate}
\end{thm}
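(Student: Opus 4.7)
The plan is to establish the cycle through the direct equivalence (1)$\Leftrightarrow$(2) and the chain (2)$\Rightarrow$(3)$\Leftrightarrow$(4)$\Rightarrow$(3)$\Rightarrow$(1), all relying on the fact that morphisms in $\mathbf{D}$ are particularly simple: edge-preservation into a discrete graph forces single-valuedness, and for functions between finite sets co-injectivity reduces to surjectivity, so $\mathbf{D}\subseteq\mathbf{B}\cap\mathbf{I}$ consists of surjective functions. In particular, any relation containment $\sqsupset\subseteq\sqni$ between two $\mathbf{D}$-morphisms with the same domain is an equality. This gives (1)$\Leftrightarrow$(2) at once: if ${\dashv\circ\Dashv}\subseteq{\sqsupset^m_n}$ with both sides being functions on $G_n$, they must coincide.

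For (2)$\Rightarrow$(3), I would verify that the collection of anti-injective morphisms in $\mathbf{D}$ is a wide lax-closed ideal, then invoke \autoref{IdealFraisseSubsequence}. The ideal property follows from \autoref{Ideals}(1) applied inside $\mathbf{D}\subseteq\mathbf{I}$; wideness is witnessed by the projections $H\times\{0,1\}\to H$; and lax-closedness is automatic by the observation above. The equivalence (3)$\Leftrightarrow$(4) is topological: by \autoref{Stone}, $\Spec{\PP}$ is a non-empty second-countable compact Hausdorff zero-dimensional space, i.e.\ a metrisable Stone space. Since $\mathbf{D}\subseteq\mathbf{B}$, \autoref{prp:Summary1}(\ref{CaseAntiInj1}) tells us that $\Spec{\PP}$ is perfect precisely when $(\sqsupset^m_n)$ has an anti-injective subsequence; Brouwer's theorem that a non-empty metrisable perfect Stone space is the Cantor space closes the loop.

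The main substance is (3)$\Rightarrow$(1). Since $\mathbf{D}$ is directed (via fibre products of surjective functions), by \autoref{DirectedCofinal} it suffices to verify that the sequence is absorbing. Fix ${\dashv}\in\mathbf{D}^{G_m}_H$, viewed as a surjection $H\twoheadrightarrow G_m$, and set $N=\max_{g\in G_m}|\dashv^{-1}(g)|$. The key combinatorial fact is that composing anti-injective surjective functions multiplies fibre sizes: if $\sqsupset^m_n$ and $\sqsupset^n_k$ are anti-injective, then $|g^{\sqsupset^m_k}|=\sum_{g'\in g^{\sqsupset^m_n}}|g'^{\sqsupset^n_k}|\geq 2|g^{\sqsupset^m_n}|$. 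Iterating along the anti-injective subsequence, I choose $n\geq m$ so that $|g^{\sqsupset^m_n}|\geq N$ for every $g\in G_m$. For each $g$ I pick a surjection $\phi_g\colon g^{\sqsupset^m_n}\twoheadrightarrow\dashv^{-1}(g)$ and glue them along the partition $\{g^{\sqsupset^m_n}:g\in G_m\}$ of $G_n$ into a surjective function $\Dashv\colon G_n\to H$ satisfying $\dashv\circ\Dashv=\sqsupset^m_n$ by construction.

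No step presents a real obstacle; the main care needed is the multiplicative fibre-growth observation in Step~4 and the bookkeeping around widenesss and lax-closedness of the ideal in Step~2. The topological content in (3)$\Leftrightarrow$(4) is entirely absorbed into the previously established \autoref{Stone} and \autoref{prp:Summary1}, together with Brouwer's classical characterisation of the Cantor space.
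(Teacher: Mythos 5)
Your proposal is correct and follows essentially the same route as the paper: (1)$\Leftrightarrow$(2) via the observation that containment of surjective functions forces equality, (3)$\Leftrightarrow$(4) via \autoref{Stone}, \autoref{prp:Summary1} and Brouwer's characterisation, wideness of the anti-injective ideal via the two-fold projection, and the factorisation for (3)$\Rightarrow$(1) by growing fibres along the anti-injective subsequence until they dominate the fibres of the given surjection and then gluing fibrewise surjections. The only cosmetic differences are that you make the fibre-growth estimate and the appeal to \autoref{DirectedCofinal} for cofinality explicit where the paper leaves them implicit.
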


\begin{proof}
\ref{DFraisse}$\Leftrightarrow$\ref{DLaxFraisse}: Since all morphisms in the category of interest are total surjective functions if ${\sqsupset\circ\sqni}\subseteq{\dashv}$ for ${\sqsupset},{\sqni},{\dashv}\in\mathbf D$, then we have equality. This shows that every lax-Fra\"iss\'e sequence is Fra\"iss\'e.
 
\ref{DFraisse}$\Rightarrow$\ref{DAntiInjective}: For every $G \in \mathbf{D}$ we may consider the projection $\pi$ of  $H = G \times \{0, 1\}$, considered as a discrete graph, onto $G$.
 This is immediately seen to be anti-injective.
 Hence, the anti-injective ideal is wide in $\mathbf{D}$, and every Fra\"{i}ss\'e sequence has an anti-injective subsequence by \autoref{IdealFraisseSubsequence}.

\ref{DAntiInjective}$\Rightarrow$\ref{DFraisse}: Take any $\pi\in\mathbf{D}^{G_j}_E$. If $(\sqsupset^m_n)$ has an anti-injective subsequence then, in particular, we can find $k>j$ such that $\sqsupset^j_k$-preimages always have more vertices than than $\pi$-preimages, i.e. $|d^{\sqsupset^j_k}|\geq|d^\pi|$, for all $d\in G_j$. For each $d\in G_j$, we thus have a map $\phi_d$ from $d^{\sqsupset^j_k}$ onto $d^\pi$. Putting these together, we obtain a map $\phi=\bigcup_{d\in D}\phi_d$ from $G_k$ onto $E$ with ${\sqsupset^j_k}={\pi\circ\phi}$. This shows that $(\sqsupset^j_k)$ is indeed a Fra\"iss\'e sequence in $\mathbf{D}$.

\ref{DAntiInjective}$\Leftrightarrow$\ref{DCantor}: We already know that $\mathsf{S}\mathbb{P}$ is a Stone space, by \autoref{Stone}. As $\mathsf{S}\mathbb{P}$ is also second countable/metrizable, it will be homeomorphic to the Cantor space precisely when it is perfect. This is equivalent to $(\sqsupset^m_n)$ having an anti-injective subsequence, by \autoref{prp:Summary1}\ref{CaseAntiInj1}.
\end{proof}

\subsection{The Arc}\label{ss.arc}

Let $\mathbf{P}$ be the full subcategory of $\mathbf{B}$ whose objects are paths, i.e.
\[
\mathbf{P}=\{P\in\mathbf{G}:P\text{ is a path}\}.
\]
The first thing to observe is that this subcategory is clique-closed. Recall that $\mathsf EG$ is the set of ends (i.e. vertices of degree at most $1$) of a graph $G$.

\begin{prp}\label{PCliqueClosed}
$\mathbf{P}$ is clique-closed.
\end{prp}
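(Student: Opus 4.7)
The plan is to apply \autoref{TreesCliqueClosed}, which says that any full subcategory $\mathbf{C} \subseteq \mathbf{T}$ that is closed under the clique functor $\mathsf{X}$ is clique-closed. Since every path is a tree and $\mathbf{P}$ is by definition a full subcategory of $\mathbf{B}$, it is also a full subcategory of $\mathbf{T}$. Hence it will suffice to verify that $\mathsf{X}P$ is a path whenever $P$ is a path.

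To this end I would first check that paths are triangle-free. Indeed, a triangle $\{a,b,c\} \subseteq P$ forces each of $a,b,c$ to have degree at least $2$ just within the triangle, so by the degree bound $\leq 2$ in a path the only neighbours of $a,b,c$ in $P$ are among themselves. Thus $\{a,b,c\}$ would be a connected component of $P$ containing no end, contradicting connectedness of $P$ together with the existence of an end. It follows that every clique of $P$ has size $1$ or $2$, so the vertex set of $\mathsf{X}P$ consists precisely of the singletons $\{v\}$ for $v \in P$ together with the edge-cliques $\{v,w\}$ for adjacent $v \sim w$ in $P$.

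Next I would read off the edge relation on $\mathsf{X}P$ from comparability under $\subseteq$. Two distinct singletons (respectively two distinct edge-cliques) are always incomparable, while a singleton $\{v\}$ is comparable to an edge-clique $\{a,b\}$ exactly when $v \in \{a,b\}$. Enumerating the vertices of $P$ as $v_1, v_2, \ldots, v_n$ with consecutive vertices adjacent, this description yields the linear arrangement
\[
\{v_1\} - \{v_1,v_2\} - \{v_2\} - \{v_2,v_3\} - \cdots - \{v_{n-1},v_n\} - \{v_n\},
\]
which is a path on $2n-1$ vertices (the case $n=1$ degenerating to the singleton path $\{\{v_1\}\}$). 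In particular $\mathsf{X}P \in \mathbf{P}$, and \autoref{TreesCliqueClosed} then delivers the conclusion.

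I do not foresee any serious obstacle; the only content is the triangle-freeness of paths, after which the clique functor is essentially realising the first barycentric subdivision of $P$ and so visibly preserves the class of paths.
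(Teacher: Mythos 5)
Your proposal is correct and follows essentially the same route as the paper: both reduce the claim to showing $\mathsf{X}P\in\mathbf{P}$ and then invoke \autoref{TreesCliqueClosed}. The only cosmetic difference is that you verify $\mathsf{X}P$ is a path by explicitly enumerating it as the subdivision $\{v_1\}-\{v_1,v_2\}-\cdots-\{v_n\}$ on $2n-1$ vertices, whereas the paper checks that $\mathsf{X}P$ has degree at most $2$ and an end and cites \autoref{XGConnected} for connectedness.
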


\begin{proof}
Any path $P$ is of the form $\{p_0 \sim p_1 \sim \cdots \sim p_n\}$, and therefore
\[
\mathsf XP=\{\{p_0\}\sim \{p_0,p_1\}\sim \{p_1\}\sim\{p_1,p_2\}\sim\cdots\sim \{p_{n-1},p_n\}\sim\{p_n\}\}.
\]
This show that $\mathbf P$ is a full subcategory of the category of trees.  \autoref{TreesCliqueClosed} gives the thesis.
 \end{proof}

Some notation is needed. Every pair of vertices $q$ and $r$ in a path $P$ defines a \emph{subpath} $[q,r]$ defined to be the smallest connected subset of $P$ containing both $q$ and $r$. Equivalently, $[q,r]$ is the unique connected subset of $P$ with $\mathsf{E}[q,r]=\{q,r\}$. Similarly, we let $(q,r)$ denote the smallest subset connecting $\{q,r\}$, i.e. $(q,r)=[q,r]\setminus\{q,r\}$. For a path $P$ and $Q\subseteq P$, we say that $b\in P$ separates $Q\subseteq P$ precisely when $b\in[q,r]\setminus Q$, for some $q,r\in Q$.

\begin{lemma}\label{LemmaMonotoneStrictPreimages}
Let ${\sqsupset}\in\mathbf{P}^Q_P$ be monotone. Then $q_\sqsupset$ is connected for every $q\in Q$.
\end{lemma}

\begin{proof}
 By monotonicity, we know that the non-strict preimage $q^\sqsupset$ is connected. If $q_\sqsupset$ were separated by some $p$, necessarily in $q^\sqsupset\setminus q_\sqsupset$, then we would have $q'\neq q$ with $p^\sqsubset=\{q,q'\}$. As $\sqsupset$ is co-injective, we would then also have $p'\in P$ with $p'^{\sqsubset}=\{q'\}$, which must therefore be separated from $p$ by some vertices of $q_\sqsupset$. But then $q'^\sqsupset$ would not be connected, contradicting the monotonicity of $\sqsupset$.
\end{proof}

\begin{rmk}
Note that both the fact that $\sqsupset$ is co-injective and that we are dealing with paths (and not just trees) are necessary for the above lemma, as the following two examples show.

Let $Q=\{a,b\}$,  $P=\{x\sim y\sim z\}$. Consider ${\sqsupset}\in \mathbf S_{P}^Q$ be the surjective monotone morphism obtained by $x,z\sqsubset a$ and $y\sqsubset a,b$. Then $a_{\sqsupset}=\{x,z\}$ is not connected. When one wants to fix co-bijectivity, one adds a vertex $w$ to $P$ by connecting it only to $y$ (obtaining therefore the claw graph, or the letter `T'). Extending $\sqsupset$ to $\sqni$  by setting $w\sqni b$, one obtains a co-bijective monotone morphism between trees such that $a_{\sqni}$ is the disconnected set $\{x,z\}$.
\end{rmk}

Here we focus on the subcategory $\mathbf{A}$ of monotone  morphisms, for the whole category $\mathbf{P}$, see \autoref{ss.pseudoarc}. Our goal here is to prove that $\mathbf{A}$ has amalgamation and hence Fra\"iss\'e sequences, which can be characterised in an analogous manner to \autoref{DFraisseEquivalents}.  

Next proposition formalizes \autoref{fig:typical}, which describes a typical morphism in $\mathbf A$. 
\begin{figure}[!ht]
    \centering
    \begin{tikzpicture}[
            y={(0, 0.5cm)},
            mylabel/.style = {label={[labelstyle]#1}},
            labelstyle/.style = {
                text height = 1.5ex,
                text depth = 0.25ex,
            }
        ]
		\node (q0) at (-4,0) [mylabel=$q_0$]{$\bullet$};
		\node (q1) at (-2,0) [mylabel=$q_1$]{$\bullet$};
		\node (q2) at (0,0) [mylabel=$q_2$]{$\bullet$};
		\node (qx) at (2,0) { $\cdots$ };
		\node (qn) at (4,0) [mylabel=$q_n$]{$\bullet$};
		
		\node (dots) at (2,-2) {$\cdots$};
		\node (p0) at (-5,-4) [mylabel=below:{$a_0$}] {$\bullet$};
		\node (p1) at (-4,-4) [mylabel=below:{$b_0$}] {$\bullet$};
		\node (p2) at (-3,-4) {$\bullet$};
		\node (p3) at (-2,-4) {$\bullet$};
		\node (p4) at (-1,-4) [mylabel=below:{$a_1=b_1$}] {$\bullet$};
		\node (p5) at (0,-4) [mylabel=below:{$a_2$}] {$\bullet$};
		\node (p6) at (1,-4) [mylabel=below:{$b_2$}] {$\bullet$};
		\node (p7) at (3,-4) [mylabel=below:{$a_n$}] {$\bullet$};
		\node (p8) at (4,-4) {$\bullet$};
		\node (p9) at (5,-4) [mylabel=below:{$b_n$}] {$\bullet$};
		\node (px) at (2,-4) {$\cdots$};
		
		\draw (q0)--(q1)--(q2)--(qx)--(qn);
		\draw (p0)--(p1)--(p2)--(p3)--(p4)--(p5)--(p6)--(px)--(p7)--(p8)--(p9);
		
		\draw (p0)--(q0);
		\draw (p1)--(q0);
		\draw (p2)--(q0);
		\draw (p2)--(q1);
		\draw (p3)--(q1);
		\draw (p3)--(q0);
		\draw (p4)--(q1);
		\draw (p5)--(q2);
		\draw (p6)--(q2);
		\draw (p7)--(qn);
		\draw (p8)--(qn);
		\draw (p9)--(qn);
		\draw (p7)--(qn);
    \end{tikzpicture}
    				
    \caption{A typical morphism in $\mathbf A$.}
    \label{fig:typical}
\end{figure}

\begin{prp}\label{prop:MonotoneStructure}
Let $Q=\{q_0\sim q_1\sim\cdots\sim q_n\}$ be a path, and let ${\sqsupset}\in\mathbf A_P^Q$. Then there exists a unique sequence $a_0,b_0, \ldots,a_n,b_n\in P$ such that 
\begin{enumerate}
\item $[a_i,b_i]=(q_i)_{\sqsupset}\neq\emptyset$,
\item $\mathsf EP=\{a_0,b_n\}$, and
\item if $i<j<k$ then $a_j\in [a_i,a_k]$, or equivalently that for every $i$ with $0<i<n$ we have $[a_i,b_i]\subseteq (b_{i-1},a_{i+1})$.
\end{enumerate}
As a consequence for every $i<n$ we have that $(b_i,a_{i+1})=\{q_i,q_{i+1}\}_{\sqsupset}$ (which might be empty).
\end{prp}
\begin{proof}
We will define the points $a_i$ and $b_i$ and then show that they satisfy the thesis. By \autoref{LemmaMonotoneStrictPreimages}, each $(q_i)_{\sqsupset}$ is nonempty and connected, hence it is a subpath $[a_i,b_i]$. 

Let us show that one of $a_0$ and $b_0$ must be an end. Since $Q\setminus \{q_0\}$ is connected, so is $P\setminus (q_0)_\sqsupset=(Q\setminus\{q_0\})^\sqsupset$ by monotonicity. Hence $P$ is the disjoint union of the two subpaths $[a_0,b_0]$ and $P\setminus (q_0)_\sqsupset$, so $[a_0,b_0]$ must contain an end of $P$. We can assume without loss of generality that $a_0$ is an end, and the same proof shows that we can assume that $b_n$ is an end too and that $\{a_0,b_n\}=\mathsf EP$.

Generalising the above reasoning, one can, for every $i$, write $P$ as the disjoint union of the possibly empty subpaths $P=[q_0,q_i)^{\sqsupset}\cup(q_i)_{\sqsupset}\cup(q_i,q_n]^{\sqsupset}$. (Note that by edge-preservation $[q_0,q_i)^{\sqsupset}$ and $(q_i,q_n]^{\sqsupset}$ are disjoint, each contain an end, and therefore $(q_i)_{\sqsupset}=[a_i,b_i]$ is truly in between). If $0<i<n$ then $a_{i-1}\in [q_0,q_i)^{\sqsupset}$ and $b_{i+1}\in (q_i,q_n]^{\sqsupset}$, which implies that $[a_i,b_i]\subseteq (b_{i-1},a_{i+1})$.

For the last statement, we swap $a_i$ and $b_i$ if $b_i\in (a_0,a_i)$. (In this way, the points $a_0,b_0,\ldots,a_n,b_n$ are truly ordered in the path, matching \autoref{fig:typical}.) Fix now $c\in (b_i,a_{i+1})$. If $c\sqsubset q_j$ for some $j<i$, then $b_i$ separates the connected $q_j^\sqsupset \ni a_j,c$, which is a contradiction. Likewise, we cannot have that $i+1<j$. Hence $c^{\sqsubset}\subseteq\{q_i,q_{i+1}\}$, but since $c\notin [a_i,b_i]\cup [a_{i+1},b_{i+1}]$, we must have that $c^{\sqsubset}=\{q_i,q_{i+1}\}$.
\end{proof}
It follows from \autoref{prop:MonotoneStructure} that  ${\sqsupset}\in\mathbf{A}_P^Q$ satisfies that for every $e\in\mathsf EQ$ then  $\mathsf EP\cap e_\sqsupset\neq\emptyset$. In particular $\sqsupset$ is
\begin{align*}
 \tag{End-Preserving}e\in\mathsf{E}P\qquad&\Rightarrow\qquad e^\sqsubset\subseteq\mathsf{E}Q,\text{ and }\\
 \tag{End-Surjective}f\in\mathsf{E}Q\qquad&\Rightarrow\qquad f^\sqsupset\cap\mathsf{E}P\neq\emptyset.
\end{align*}

\begin{obs}\label{MorphismsToSubpaths}
From the structure described in \autoref{prop:MonotoneStructure}, it follows that for ${\sqsupset}\in\mathbf{A}^Q_P$ and $q\sim r\in Q$ there exists uniquely determined $q_r$ and $r_q$ such that $q_r^{\sqsubset}=\{q\}$, $r_q^\sqsubset=\{r\}$ and for every $p\in (q_r,r_q)$ we have $p^\sqsubset=\{q,r\}$. (In the case $q=q_i$ and $r=q_{i+1}$ in a given enumeration, then $q_r=b_i$ and $r_q=a_{i+1}$).
\end{obs}

\begin{prp}\label{ACliqueClosed}
 $\mathbf{A}$ is clique-closed.
\end{prp}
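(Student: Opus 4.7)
My plan is to bootstrap from the fact that $\mathbf{P}$ is clique-closed (\autoref{PCliqueClosed}) and only verify the extra condition defining $\mathbf{A}$ inside $\mathbf{P}$, namely monotonicity of the relevant morphisms. For every $P \in \mathbf{A}$ we will already have $\mathsf{X}P \in \mathbf{P}$ and ${\in_P} \in \mathbf{P}^P_{\mathsf{X}P}$, and for every ${\sqsupset} \in \mathbf{A}^Q_P$ also $\mathsf{X}^\sqsupset \in \mathbf{P}^{\mathsf{X}Q}_{\mathsf{X}P}$, so what remains is only to check that $\in_P$ and $\mathsf{X}^\sqsupset$ are monotone. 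Monotonicity of $\in_P$ is immediate from \autoref{CliqueTransformation}.

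The nontrivial part is showing that $\mathsf{X}^\sqsupset$ is monotone. First I will observe that $\sqsupset$ lies in $\mathbf{E}$ by \autoref{SurjectiveImpliesEdgeSurjective} (using that $P$ is connected and $Q$ acyclic), so $\mathsf{X}^\sqsupset \in \mathbf{B}^{\mathsf{X}Q}_{\mathsf{X}P}$ by the second half of \autoref{CliqueFunctor}. Since $\mathsf{X}P$ is connected (\autoref{XGConnected}) and $\mathsf{X}Q$ is a path hence acyclic, a second application of \autoref{SurjectiveImpliesEdgeSurjective} gives that $\mathsf{X}^\sqsupset$ is edge-surjective. By \autoref{MonotoneEdgeSurjective}, monotonicity then reduces to showing that each preimage $\{D\}^{\mathsf{X}^\sqsupset} = \{C \in \mathsf{X}P : C^\sqsubset = D\}$ is connected in $\mathsf{X}P$, for $D \in \mathsf{X}Q$.

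I will handle this in the two possible cases, $D = \{q\}$ and $D = \{q, q'\}$ (with $q \sim q'$), using \autoref{MorphismsToSubpaths} and the fact that strict preimages are connected subpaths of $P$ (by \autoref{EdgeReflectiveImpliesMonotone}, since $\sqsupset$ is monotone and hence edge-reflective). For $D = \{q\}$, the condition $C^\sqsubset = \{q\}$ will simplify to $\emptyset \neq C \subseteq q_\sqsupset$, so the preimage is $\mathsf{X}(q_\sqsupset)$, a subpath of $\mathsf{X}P$. For $D = \{q, q'\}$, \eqref{qsimr} identifies $\{q,q'\}^\bullet_\sqsupset$ with the subpath $[q_{q'}, q'_q]$ of $P$, and a direct check of which cliques satisfy $C^\sqsubset = \{q, q'\}$ will show the preimage consists of all nonempty cliques in $[q_{q'}, q'_q]$ other than the singletons $\{q_{q'}\}$ and $\{q'_q\}$, i.e., $\mathsf{X}[q_{q'}, q'_q]$ with its two end vertices removed, which is a nonempty subpath of $\mathsf{X}P$. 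The one delicate point will be carrying out this second case carefully; everything else is formal.
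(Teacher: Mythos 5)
Your proposal is correct and follows essentially the same route as the paper's proof: reduce to monotonicity of $\mathsf{X}^\sqsupset$ via \autoref{PCliqueClosed}, \autoref{CliqueTransformation} and \autoref{MonotoneEdgeSurjective}, then check connectedness of vertex preimages in the two cases $D=\{q\}$ (giving $\mathsf{X}q_\sqsupset$) and $D=\{q,q'\}$ (giving $\mathsf{X}[q_{q'},q'_q]$ minus its two singleton ends), using \autoref{EdgeReflectiveImpliesMonotone} and \autoref{MorphismsToSubpaths}. The only difference is that you make the edge-surjectivity of $\mathsf{X}^\sqsupset$ explicit via \autoref{SurjectiveImpliesEdgeSurjective}, which the paper leaves implicit.
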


\begin{proof}
We need to check ~\autoref{defin:cliqueclosed}. By Propositions~\ref{PCliqueClosed} and \ref{CliqueTransformation}, it only remains to show that $\mathsf X^{\sqsupset}\in\mathbf A$ whenever ${\sqsupset}\in\mathbf{A}^Q_P$. Since $\mathsf X^\sqsupset$ is a surjective morphism between two connected acyclic graphs, by \autoref{SurjectiveImpliesEdgeSurjective}, it is edge-surjective, therefore to check monotonicity, by \autoref{MonotoneEdgeSurjective}, it is enough to check that $\mathsf{X}^\sqsupset$-preimages of vertices in $\mathsf{X}Q$ are connected.

Take $D\in\mathsf{X}Q$. If $D=\{q\}$ then, $q_\sqsupset$ is connected (\autoref{LemmaMonotoneStrictPreimages}) and hence so is $\mathsf{X}^{\sqsupset-1}\{D\}=\mathsf{X}q_\sqsupset$. If $D=\{q,r\}$, for distinct $q,r\in Q$, then \autoref{MorphismsToSubpaths} yields $q_r\in q_\sqsupset$ and $r_q\in r_\sqsupset$ such that $\{q,r\}_\sqsupset=(q_r,r_q)$. The definition of $\mathsf X$ readily gives that  $\mathsf{X}^{\sqsupset-1}\{q,r\}=\mathsf{X}((q^r,r^q))$, which is connected. Thus $\mathsf{X}^\sqsupset\in\mathbf{A}^{\mathsf{X}Q}_{\mathsf{X}P}$.
\end{proof}

To prove $\mathbf{A}$ has amalgamation, it now suffices to consider functions in $\mathbf{A}$, by \autoref{cor:cliqueclosed}. A more detailed analysis of the morphisms in terms of their `types' will also allow us to prove amalgamation directly, even for relations.

Fix paths $P$ and $Q$. For any ${\sqsupset}\subseteq Q\times P$, we define $\mathsf{t}_\sqsupset:\mathsf{X}Q\rightarrow\omega$ by
\[
\mathsf{t}_\sqsupset(C)=|C_\sqsupset|.
\]
Certain properties of $\sqsupset$ can already be extracted from $\mathsf{t}_\sqsupset$, e.g.
 \begin{itemize}
 \item $\sqsupset$ is co-injective precisely when $\mathsf{t}_\sqsupset(\{q\})\geq1$, for all $q\in Q$.
 \item $\sqsupset$ is strictly anti-injective precisely when $\mathsf{t}_\sqsupset(\{q\})\geq2$, for all $q\in Q$.
 \item $\sqsupset$ is edge-witnessing precisely when $\mathsf{t}_{\sqsupset}(\{q,r\})\geq1$ for all adjacent $q,r\in Q$.
 \item $\sqsupset$ is a partial function precisely when $\mathsf{t}_\sqsupset(C)\neq0$ implies $C=\{q\}$, for some $q\in Q$.
 \end{itemize}
If ${\sqsupset}\in\mathbf{A}^Q_P$ then, moreover,
 \begin{itemize}
 \item $\sqsupset$ is star-refining precisely when $\mathsf{t}_{\sqsupset}(\{q,r\})\geq2$ for all adjacent $q,r\in Q$.
 \end{itemize} 
It could still make sense to consider a function $\mathsf t_{\sqsupset}$ associated to a morphism ${\sqsupset}\subseteq Q\times P$ even outside the category of paths. In this case, some of the above properties are true in general, while others need some additional assumptions (for example, the characterisation of edge-witnessing holds only when $P$ is triangle-free).

As our focus is on co-injective morphisms, we define types as follows.

\begin{dfn}
 We call $t:\mathsf{X}Q\rightarrow\omega$ a \emph{type} if $t(\{q\})\geq1$, for all $q\in Q$.
\end{dfn}

\begin{prp}\label{TypeExistence}
 For each path $Q$ and each type $t:\mathsf{X}Q\rightarrow\omega$, there are a path $P$ and ${\sqsupset}\in\mathbf{A}^Q_P$ with $\mathsf{t}_\sqsupset=t$.
\end{prp}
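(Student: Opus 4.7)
The approach is purely constructive: given the path $Q$ and the type $t$, I will build $P$ as a concatenation of disjoint blocks, one per vertex and one per edge of $Q$, with prescribed sizes, and let $\sqsupset$ be the obvious ``support'' relation.

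Concretely, enumerate the vertices of $Q$ in order as $q_1,\ldots,q_n$ with $q_i\sim q_{i+1}$. For $1\le i\le n$ pick a block $B_i$ of size $t(\{q_i\})\ge 1$, and for $1\le i<n$ pick a (possibly empty) ``bridge'' $E_i$ of size $t(\{q_i,q_{i+1}\})$; assume all these sets are pairwise disjoint. Let $P$ be the path whose vertex set is $B_1\cup E_1\cup B_2\cup E_2\cup\cdots\cup E_{n-1}\cup B_n$ with the linear order obtained by concatenating in this order (omitting empty $E_i$), and the path-edge relation from that order. Define
\[ q_i^\sqsubset = B_i \cup E_{i-1}\cup E_i, \]
with the convention $E_0=E_n=\emptyset$. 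If $n=1$ (so $Q$ is a singleton), the construction degenerates to $P=B_1$ with $p\sqsubset q_1$ for all $p\in P$, which is trivially in $\mathbf{A}^Q_P$.

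Then the verification is routine. By construction $q_{i\,\sqsupset}=B_i$ and $\{q_i,q_{i+1}\}_\sqsupset=E_i$, so $\mathsf{t}_\sqsupset$ agrees with $t$ on all singletons and all $2$-element cliques of $Q$; since $Q$ is a path, these are all cliques, so $\mathsf{t}_\sqsupset=t$. Co-surjectivity of $\sqsupset$ holds because every $p\in P$ lies in some $B_i$ or $E_i$, and co-injectivity because each $B_i$ is nonempty (using $t(\{q_i\})\ge 1$). Edge-preservation is immediate: adjacent vertices of $P$ lie in the same block, in two successive blocks flanking an empty bridge, or in a block and an adjacent bridge, and in each case their images in $Q$ form a clique $\{q_i\}$ or $\{q_i,q_{i+1}\}$.

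Finally, $\sqsupset$ is monotone: a connected $C\subseteq Q$ is an interval $\{q_i,q_{i+1},\ldots,q_j\}$, and its preimage
\[ C^\sqsupset \;=\; B_i\cup E_i\cup B_{i+1}\cup\cdots\cup E_{j-1}\cup B_j \]
is, by construction, a consecutive block of vertices of the path $P$, hence connected. Thus $\sqsupset\in\mathbf{A}^Q_P$ with $\mathsf{t}_\sqsupset=t$ as required. There is no real obstacle here beyond bookkeeping of the degenerate cases where some $E_i$ is empty (handled uniformly by the convention that $E_0=E_n=\emptyset$) and where $n=1$; edge-reflectivity need not be verified separately since monotone morphisms in $\mathbf{P}$ are automatically edge-reflective by the equivalence established just above.
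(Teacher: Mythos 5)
Your construction is correct and is essentially the paper's own proof: both build $P$ by concatenating a block of size $t(\{q\})$ for each vertex $q$ of $Q$ with a bridge of size $t(\{q,r\})$ for each edge, and let the relation send block points to their vertex and bridge points to their edge. Only two cosmetic slips: in the paper's conventions the preimage of $q_i$ is written $q_i^\sqsupset$ rather than $q_i^\sqsubset$, and your displayed formula for $C^\sqsupset$ omits the flanking bridges $E_{i-1}$ and $E_j$ --- harmless, since the correct set is still a contiguous subpath of $P$ and hence connected.
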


\begin{proof}
 Take disjoint $P_q\in\mathbf{P}$ with $|P_q|=t(\{q\})$, for all $q\in Q$. Further choose $q_r\in\mathsf{E}P_q$, for all adjacent $q,r\in Q$, such that $P_q=[q_r,q_s]$ when $q \sim r\neq s \sim q$. Then take $P_{q,r}\in\mathbf{P}$ with $\mathsf{E}P=\{q_r,r_q\}$, for all adjacent $q,r\in Q$, further ensuring that $P'_{q,r}=P_{q,r}\setminus\{q_r,r_q\}$ are disjoint, both from each other and the $(P_q)_{q\in Q}$, and that $|P'_{q,r}|=t(\{q,r\})$, for all adjacent $q,r\in Q$. Thus $P=\bigcup_{C\in\mathsf{X}Q}P_C$ is a path on which we can define ${\sqsupset}\in\mathbf{A}^Q_P$ by $p^\sqsubset=\{q\}$, for all $p\in P_q$, and $p^\sqsubset=\{q,r\}$, whenever $p\in P'_{q,r}$. Then $|q_\sqsupset|=|P_q|=t(\{q\})$, for all $q\in Q$, and $|\{q,r\}_\sqsupset|=|P'_{q,r}|=t(\{q,r\})$, for all adjacent $q,r\in Q$, showing that $\mathsf{t}_\sqsupset=t$.
\end{proof}

The natural order on types, given by pointwise domination, corresponds to the factorisation order on morphisms.

\begin{prp}\label{TypeFactorisation}
 If ${\sqsupset}\in\mathbf{A}^R_P$ and ${\sqni}\in\mathbf{A}^R_Q$ then
 \[\mathsf{t}_\sqsupset\leq\mathsf{t}_{\sqni}\qquad\Leftrightarrow\qquad\text{ there is }\,{\dashv}\in\mathbf{A}^P_Q\ ({\sqsupset}\circ{\dashv}={\sqni}).\]
Moreover, if $\mathsf{t}_\sqsupset(\{r, s\}) = 0$ implies $\mathsf{t}_{\sqni}(\{r, s\}) = 0$ for every adjacent $r, s \in R$, then we can choose $\dashv$ to be a function.
\end{prp}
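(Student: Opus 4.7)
For the $(\Leftarrow)$ direction, assume $\dashv \in \mathbf{A}^P_Q$ satisfies ${\sqsupset}\circ{\dashv} = {\sqni}$. Fix $C \in \mathsf{X}R$ and $p \in C_\sqsupset$. Co-injectivity of $\dashv$ supplies some $q_p \in Q$ which is $\dashv$-related to $p$ alone in $P$. For any $r \in R$ we have $r \sqni q_p$ iff there exists $p' \in P$ with $r \sqsupset p' \dashv q_p$, which forces $p' = p$; hence $q_p^\sqin = p^\sqsubset = C$, i.e.\ $q_p \in C_\sqni$. Distinct $p$'s yield distinct $q_p$'s, so $\mathsf{t}_\sqsupset(C) \leq \mathsf{t}_\sqni(C)$.

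For the $(\Rightarrow)$ direction, the plan is to exploit the subpath decompositions of $P$ and $Q$ supplied by \autoref{MorphismsToSubpaths}. Fix an end $r_0 \in \mathsf{E}R$, which orders the cliques of $R$ as $\{r_0\}, \{r_0, r_1\}, \{r_1\}, \ldots, \{r_n\}$, giving decompositions $P = \bigsqcup_C P_C$ and $Q = \bigsqcup_C Q_C$, with $P_C = C_\sqsupset$ and $Q_C = C_\sqni$ ranging over $C \in \mathsf{X}R$. When an edge-clique subpath $P_{\{r, s\}}$ is empty, the boundary vertices $r_s \in P_{\{r\}}$ and $s_r \in P_{\{s\}}$ from \autoref{MorphismsToSubpaths} are adjacent in $P$; likewise for $Q$. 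The hypothesis $\mathsf{t}_\sqsupset \leq \mathsf{t}_\sqni$ reads $|P_C| \leq |Q_C|$ for every clique $C$.

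Construct $\dashv$ piece by piece. For each clique $C$ with $P_C \neq \emptyset$, pick a monotone surjective function $\phi_C \maps Q_C \to P_C$ sending the ends of $Q_C$ to the corresponding ends of $P_C$ (by partitioning $Q_C$ into $|P_C|$ consecutive non-empty blocks and collapsing each block onto its target vertex). For each edge-clique $C = \{r, s\}$ with $P_C = \emptyset$ but $Q_C \neq \emptyset$, relate every $q \in Q_C$ to both $r_s$ and $s_r$ (which are adjacent in $P$). Let $\dashv \subseteq P \times Q$ be the union of these partial assignments. Under the extra hypothesis $\mathsf{t}_\sqsupset(\{r, s\}) = 0 \Rightarrow \mathsf{t}_\sqni(\{r, s\}) = 0$, the second case is never invoked, so $\dashv$ is a function.

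It remains to verify $\dashv \in \mathbf{A}^P_Q$ and ${\sqsupset}\circ{\dashv} = {\sqni}$: co-surjectivity and co-injectivity are immediate from the surjectivity of each $\phi_C$; edge-preservation and monotonicity reduce to a case analysis at boundaries between consecutive subpaths; the composition identity follows because, for $q \in Q_C$, the $\dashv$-related elements of $P$ lie in $\bigcup_{C' \subseteq C} P_{C'}$ and their $\sqsupset$-images cover exactly $C$, giving $q^\sqin = C$. \textbf{The main obstacle} is this boundary analysis: matching the ends of the $\phi_C$ across clique boundaries so that each $p^\dashv$ remains a connected subpath of $Q$, and in the relational case ensuring that the collapse of a non-empty $Q_{\{r, s\}}$ onto the $P$-edge $\{r_s, s_r\}$ stays compatible with the adjacent $\phi_{\{r\}}$ and $\phi_{\{s\}}$, so that $\dashv$ remains edge-reflective at every transition.
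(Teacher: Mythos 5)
Your argument is essentially the paper's own proof. The backward direction is exactly the paper's observation that $\mathsf{t}_{\sqsupset\circ\dashv}\geq\mathsf{t}_\sqsupset$ for any co-injective $\dashv$, which you spell out correctly, and the forward direction uses the same piecewise construction over the cliques of $R$: monotone surjections $Q_C\to P_C$ matching boundary vertices when $P_C\neq\emptyset$, and the collapse of $Q_{\{r,s\}}$ onto the $P$-edge $\{r_s,s_r\}$ when $P_{\{r,s\}}=\emptyset$, with the ``moreover'' clause falling out because that second case is exactly what produces a non-function. The boundary analysis you flag as the main obstacle is genuinely routine and is how the paper closes the argument: it defines the edge-piece on the \emph{closed} subpath $[r'_s,s'_r]$ with the two endpoint images pinned to $r_s$ and $s_r$ (agreeing with the adjacent vertex-pieces there), so consecutive pieces are morphisms of $\mathbf{A}$ between subpaths overlapping coherently in single vertices, and connectedness of each $p^\dashv$ and edge-preservation follow at once. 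In your open-subpath bookkeeping the only transition needing a separate word is $Q_{\{r,s\}}=\emptyset$, where the adjacent vertices $r'_s\sim s'_r$ are sent to $r_s$ and $s_r$ respectively: but then $\mathsf{t}_\sqsupset\leq\mathsf{t}_{\sqni}$ forces $P_{\{r,s\}}=\emptyset$ as well, so $r_s\sim s_r$ by \autoref{MorphismsToSubpaths} and edge-preservation holds there too. So there is no gap, only an unexecuted but entirely mechanical verification.
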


\begin{proof}
 The $\Leftarrow$ part is immediate from the fact that $\mathsf{t}_{\sqsupset\circ\dashv}\geq\mathsf{t}_\sqsupset$, for any ${\dashv}\in\mathbf{I}^P_Q$. Conversely, assume $\mathsf{t}_\sqsupset\leq\mathsf{t}_{\sqni}$. In particular, for each $r\in R$, this means $|r_\sqsupset|\leq|r_{\sqni}|$ so we have a function ${\dashv_r}\in\mathbf{A}^{r_\sqsupset}_{r_{\sqni}}$ with $r_s\dashv_rr'_s$, for all $s\in r^\sim$, where $r_s$ and $r'_s$ are the vertices defined from $\sqsupset$ and $\sqni$ respectively as in \autoref{MorphismsToSubpaths}. For all adjacent $r,s\in R$, this also means $|\{r,s\}_\sqsupset|\leq|\{r,s\}_{\sqni}|$ so we also have ${\dashv_{r,s}}\in\mathbf{A}^{[r_s,s_r]}_{[r'_s,s'_r]}$ again with $r'^{\vdash_{r,s}}_s=\{r_s\}$ and $s'^{\vdash_{r,s}}_r=\{s_r\}$, but this time with $q^{\vdash_{r,s}\sqsubset}=\{r,s\}$, for all $q\in(r'_s,s'_r)$. Indeed, if $(r_s,s_r)=\emptyset$ then $r_s\sim s_r$ and so we may set $q^{\vdash_{r,s}}=\{r_s,s_r\}$, for all $q\in(r'_s,s'_r)$. On the other hand, if $(r_s,s_r)\neq\emptyset$ then we can take $\dashv_{r,s}$ to be a function from $(r'_s,s'_r)$ onto $(r_s,s_r)$. Now ${\dashv}=\bigcup_{C\in\mathsf{X}R}\dashv_C$ satisfies ${\sqsupset\circ\dashv}={\sqni}$.
 
 For the moreover part, when $\mathsf{t}_\sqsupset(\{r, s\}) = 0$ implies $\mathsf{t}_{\sqni}(\{r, s\}) = 0$ for every adjacent $r, s \in R$, the second part of the proof does not apply, and therefore the union of the functions $\dashv_r$, for $r\in R$, does the job.
\end{proof}

The existence of sub-factorisations ${\sqsupset\circ\dashv}\subseteq{\sqni}$ could also be characterised in terms of $\mathsf{t}_\sqsupset$ and $\mathsf{t}_{\sqni}$. For our purposes, the following sufficient condition will suffice.

\begin{prp}\label{TypeSubFactorisation}
 If ${\sqsupset}\in\mathbf{A}^R_P$, ${\sqni}\in\mathbf{A}^R_Q$, $|P|\leq|Q|$ and $|P|\leq|\{r,s\}_{\sqni}|$, for all adjacent $r,s\in R$, then we have $\phi\in(\mathbf{A}\cap\mathbf{F})^P_Q$ with ${\sqsupset}\circ\phi\subseteq{\sqni}$.
\end{prp}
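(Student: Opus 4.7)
The plan is to reduce to \autoref{TypeFactorisation} by constructing an auxiliary morphism $\sqni'\in\mathbf{A}^R_Q$ satisfying $\sqni'\subseteq\sqni$ and $\mathsf{t}_\sqsupset\leq\mathsf{t}_{\sqni'}$. Given such an $\sqni'$, \autoref{TypeFactorisation} furnishes $\phi\in\mathbf{A}^P_Q$ with $\sqsupset\circ\phi=\sqni'\subseteq\sqni$. Moreover, since $\sqsupset$ is edge-witnessing (being in $\mathbf{A}\subseteq\mathbf{E}$), one has $\mathsf{t}_\sqsupset(\{r,s\})\geq 1$ for every adjacent $r,s\in R$, so the zero-condition in the \emph{moreover} clause of \autoref{TypeFactorisation} is vacuously satisfied and $\phi$ can be chosen to be a function; that is, $\phi\in(\mathbf{A}\cap\mathbf{F})^P_Q$, as required.

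To build $\sqni'$, enumerate $R$ in path order as $r_1,\ldots,r_k$. The case $k=1$ is immediate: $\mathsf{t}_\sqsupset(\{r_1\})=|P|\leq|Q|=\mathsf{t}_\sqni(\{r_1\})$ and one takes $\sqni'=\sqni$. Assume $k\geq 2$. By \autoref{MorphismsToSubpaths} together with co-injectivity and edge-witnessing, one may orient $P$ so that the label sequence $(p^\sqsubset)_{p\in P}$ along $\sqsupset$ has the form $\{r_1\}^{a_1}\{r_1,r_2\}^{b_1}\{r_2\}^{a_2}\cdots\{r_k\}^{a_k}$ with every $a_i,b_i\geq 1$, and analogously $Q$ decomposes along $\sqni$ into contiguous blocks with sizes $a'_i\geq 1$ and $b'_i\geq|P|$ (the bound on $b'_i$ being the hypothesis); after possibly reversing the orientation of $Q$, assume both sequences traverse $R$ in the same direction. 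Now define $\sqni'$ by boundary transfers from the edge blocks of $\sqni$ into the neighboring singleton blocks: for each $i=1,\ldots,k-1$, relabel the first $L_i:=\max(0,a_i-a'_i)$ vertices of $\{r_i,r_{i+1}\}_\sqni$ as having $\sqni'$-label $\{r_i\}$; in addition, within the last edge block $\{r_{k-1},r_k\}_\sqni$, relabel the final $R:=\max(0,a_k-a'_k)$ vertices as having $\sqni'$-label $\{r_k\}$.

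Since the transfers extend each singleton block contiguously and leave each shrunken edge block contiguous in $Q$, the strict preimages of $\sqni'$ are connected subpaths of $Q$; hence $\sqni'\in\mathbf{A}^R_Q$ by \autoref{Cor:monotonetrees}, and by construction $\sqni'\subseteq\sqni$ and $\mathsf{t}_{\sqni'}(\{r_i\})\geq a_i$ for all $i$. For the edge blocks, $\mathsf{t}_{\sqni'}(\{r_i,r_{i+1}\})$ equals $b'_i-L_i$ when $i<k-1$ and $b'_{k-1}-L_{k-1}-R$ when $i=k-1$; in either case the hypothesis $b'_i\geq|P|\geq a_i+b_i+a_{i+1}$ combined with $L_i\leq a_i$ and $R\leq a_k$ yields $\mathsf{t}_{\sqni'}(\{r_i,r_{i+1}\})\geq b_i$. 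Thus $\mathsf{t}_\sqsupset\leq\mathsf{t}_{\sqni'}$ and the plan is complete. The main obstacle is precisely this balancing: one has to move enough label mass into the singleton blocks (to dominate the $a_i$) while keeping each edge block large enough (to dominate $b_i$), and this is exactly what the strong hypothesis $|P|\leq|\{r,s\}_\sqni|$ makes possible — every edge block of $Q$ carries enough slack to absorb both transfers at once.
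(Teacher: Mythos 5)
Your overall strategy --- shrinking $\sqni$ to an auxiliary $\sqni'\subseteq\sqni$ with $\mathsf{t}_\sqsupset\leq\mathsf{t}_{\sqni'}$ and then invoking \autoref{TypeFactorisation} --- is genuinely different from the paper's proof, which builds $\phi$ directly by collapsing each block $r_\sqni$ to a single chosen vertex $r'\in r_\sqsupset$ and extending monotonically over each $\{r,s\}^\bullet_\sqni$ onto $[r',s']$. The reduction is a reasonable idea and your bookkeeping for the type inequality is essentially correct. However, there is a genuine gap at the point where you conclude that $\phi$ is a \emph{function}. You assert that $\sqsupset$ is edge-witnessing ``being in $\mathbf{A}\subseteq\mathbf{E}$'', hence $\mathsf{t}_\sqsupset(\{r,s\})\geq1$ for all adjacent $r,s$, so that the hypothesis of the \emph{moreover} clause of \autoref{TypeFactorisation} holds vacuously. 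This is false: $\mathbf{E}$ consists of edge-\emph{surjective} morphisms, which is strictly weaker than edge-witnessing, and morphisms in $\mathbf{A}$ need not be edge-witnessing --- the identity on a path with at least two vertices lies in $\mathbf{A}$ and has $\mathsf{t}(\{r,s\})=0$ for every adjacent $r\neq s$. The same error underlies your claim that all $b_i\geq1$.

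The gap matters. If $b_i=\mathsf{t}_\sqsupset(\{r_i,r_{i+1}\})=0$ while $\sqni'$ retains a nonempty edge block there (which it always does: you remove strictly fewer than $|P|\leq b'_i$ vertices from $\{r_i,r_{i+1}\}_\sqni$), then the construction inside \autoref{TypeFactorisation} must send every vertex of the residual edge block to \emph{both} witnessing vertices $r_s$ and $s_r$, yielding a relation rather than a function; the moreover clause was stated precisely to exclude this situation. The repair is local: whenever $b_i=0$, absorb the \emph{entire} block $\{r_i,r_{i+1}\}_\sqni$ into the two neighbouring singleton blocks (a left part labelled $\{r_i\}$ and a right part labelled $\{r_{i+1}\}$), so that $\mathsf{t}_{\sqni'}(\{r_i,r_{i+1}\})=0=b_i$; connectivity of the strict preimages and the inequality $\mathsf{t}_\sqsupset\leq\mathsf{t}_{\sqni'}$ survive, and the moreover clause then genuinely applies. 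With that modification your argument goes through, but as written the key step fails.
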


\begin{proof}
 If $|R|=1$ then, as $|P|\leq|Q|$, we have monotone $\phi$ mapping $Q$ onto $P$. Otherwise, for each $r\in R$, pick $r'\in r_\sqsupset$, the only proviso being that $r'\in\mathsf{E}P$ whenever $r\in\mathsf{E}R$. Set $\phi(q)=r'$ whenever $q\in r_{\sqni}$. As $|(r',s')|\leq|P|\leq|\{r,s\}_{\sqni}|$, we can extend $\phi$ to a monotone function mapping the path $[r_s,s_r]$ (where $r_s$ and $s_r$ are chosen by \autoref{MorphismsToSubpaths} for $r$, $s$, and $\sqni$) onto $[r',s']$, for all adjacent $r,s\in R$. The resulting function $\phi\in(\mathbf{A}\cap\mathbf{F})^P_Q$ then satisfies ${\sqsupset}\circ\phi\subseteq{\sqni}$.
\end{proof}

Amalgamation now follows from the fact that types are directed.

\begin{prp}\label{prp:amalgarc}
 $\mathbf{A}$ and $\mathbf{A} \cap \mathbf{F}$ have amalgamation.
\end{prp}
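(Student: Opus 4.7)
The plan is to reduce amalgamation in $\mathbf{A}$ to the combinatorics of the type function on $\mathsf{X}G$, leveraging \autoref{TypeExistence} and \autoref{TypeFactorisation}. Given morphisms ${\sqsupset}\in\mathbf{A}^G_H$ and ${\sqni}\in\mathbf{A}^G_I$, the idea is to produce a single common refinement ${\triangleleft}\in\mathbf{A}^G_J$ whose type pointwise dominates both $\mathsf{t}_\sqsupset$ and $\mathsf{t}_\sqni$, and then factor $\triangleleft$ through each of $\sqsupset$ and $\sqni$.

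Concretely, I would first define $t\maps\mathsf{X}G\to\omega$ by $t(C)=\max(\mathsf{t}_\sqsupset(C),\mathsf{t}_\sqni(C))$. Since $\mathsf{t}_\sqsupset$ and $\mathsf{t}_\sqni$ are types (in particular they are at least $1$ on singleton cliques), so is $t$. By \autoref{TypeExistence} there is a path $J$ and a morphism ${\triangleleft}\in\mathbf{A}^G_J$ realising $t$, i.e.\ with $\mathsf{t}_\triangleleft=t$. As $\mathsf{t}_\sqsupset\leq\mathsf{t}_\triangleleft$ and $\mathsf{t}_\sqni\leq\mathsf{t}_\triangleleft$, \autoref{TypeFactorisation} yields ${\dashv}\in\mathbf{A}^H_J$ and ${\Dashv}\in\mathbf{A}^I_J$ with ${\sqsupset}\circ{\dashv}={\triangleleft}={\sqni}\circ{\Dashv}$, which is the desired amalgamation in $\mathbf{A}$.

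For $\mathbf{A}\cap\mathbf{F}$, I would start with ${\sqsupset}$ and ${\sqni}$ being functions, so both $\mathsf{t}_\sqsupset$ and $\mathsf{t}_\sqni$ vanish on every two-element clique $\{g,g'\}\in\mathsf{X}G$. Hence I can take the type $t$ to vanish on adjacent pairs while still dominating both $\mathsf{t}_\sqsupset$ and $\mathsf{t}_\sqni$. The resulting ${\triangleleft}$ satisfies $\mathsf{t}_\triangleleft(\{g,g'\})=0$ whenever $\mathsf{t}_\sqsupset(\{g,g'\})=0$ (vacuously, since both are $0$), and likewise for $\sqni$. The ``moreover'' clause of \autoref{TypeFactorisation} then lets me choose both $\dashv$ and $\Dashv$ to be functions, finishing the amalgamation in $\mathbf{A}\cap\mathbf{F}$.

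There is no real obstacle to this plan; the heavy lifting is already done in the preceding two propositions, and only the trivial observation that a pointwise maximum of types is a type is needed to splice them together.
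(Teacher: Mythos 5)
Your proposal is correct and is essentially the paper's own argument: form the pointwise maximum type, realise it via \autoref{TypeExistence}, and factor through both given morphisms via \autoref{TypeFactorisation}, invoking its ``moreover'' clause for the function case. The only difference is that you spell out why the hypothesis of that clause holds for functions (both types vanish on two-element cliques, hence so does their maximum), which the paper leaves implicit.
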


\begin{proof}
 Given any ${\sqsupset}\in\mathbf{A}^P_Q$ and ${\sqni}\in\mathbf{A}^P_R$, we have a type $t\geq\mathsf{t}_\sqsupset,\mathsf{t}_{\sqni}$ (e.g. take $t=\max\{\mathsf{t}_\sqsupset,\mathsf{t}_{\sqni}\}$). By \autoref{TypeExistence}, we have ${\sqnii}\in\mathbf{A}^P_S$ with $\mathsf{t}_{\sqnii}=t$. By \autoref{TypeFactorisation}, we have ${\dashv}\in\mathbf{A}^Q_S$ and ${\Dashv}\in\mathbf{A}^Q_R$ with ${\sqsupset\circ\dashv}={\sqnii}={\sqni\circ\Dashv}$. Again by \autoref{TypeFactorisation}, if ${\sqsupset}$ and ${\sqni}$ were functions, ${\dashv}$ and ${\Dashv}$ can be chosen to be functions too.
\end{proof}

The singleton graph is a terminal object in $\mathbf{A}$, and so amalgamation implies directedness. By \autoref{EdgeSplitting}, edge-witnessing and star-refining morphisms are wide. Altogether, \autoref{cor:FraisseLimit} applies and we have a Fra\"{i}ss\'e limit. Now we shall completely characterise (lax-)Fra\"{i}ss\'e sequences in $\mathbf{A}$.

A sequence $(G_n,{\sqsupset}_n^m)$ in $\mathbf{A}$ is said to be \emph{nontrivial} if $|G_n|\neq1$, for some $n\in\omega$. By co-injectivity, if $|G|>1$ and ${\sqsupset}\in\mathbf P_{H}^{G}$ is edge-witnessing (or star-refining) then $|H|>|G|$.\footnote{This is true more generally if $G$ is not discrete and we have a co-bijective edge-witnessing morphism, or if $H$ is not discrete and we have a co-bijective star-refining morphism.} Hence any nontrivial sequence of edge-witnessing (or star-refining) morphisms gives that $\lim_n |G_n|=\infty$.

\begin{lemma} \label{AMorphisms}
    In $\mathbf{A}$, every star-refining morphism is edge-witnessing, and every composition of two edge-witnessing morphisms is star-refining.
\end{lemma}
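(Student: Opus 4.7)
My plan is to derive both assertions as direct consequences of the lemmas already proven in Section~\ref{S.GraphCategory}, since the hypotheses on $\mathbf{A}$ match up nicely with those lemmas once one unwinds the definitions.

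For the first claim, I would start with an arbitrary star-refining $\sqsupset \in \mathbf{A}^Q_P$ and observe that both $P$ and $Q$ are paths, so $P$ is connected and $Q$ is acyclic (and in particular triangle-free). Since $\mathbf{A} \subseteq \mathbf{P} \subseteq \mathbf{B} \subseteq \mathbf{S}$, the morphism $\sqsupset$ is co-bijective and in particular surjective, so \autoref{SurjectiveImpliesEdgeSurjective} ensures that $\sqsupset$ is edge-surjective. Because the codomain $Q$ is triangle-free, \autoref{StarEdgeSurjectiveEdgeWitnessing} then upgrades the star-refining and edge-surjective conditions to edge-witnessing, as required.

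For the second claim, I would take two composable edge-witnessing morphisms $\sqsupset \in \mathbf{A}^Q_P$ and $\sqni \in \mathbf{A}^R_Q$. Both are monotone (by the definition of $\mathbf{A}$) and lie in $\mathbf{S}$, and the respective domains $P$ and $Q$ are paths, hence acyclic. Under these hypotheses \autoref{EdgevsCoEdgeWitnessing} promotes edge-witnessing to co-edge-witnessing for each of $\sqsupset$ and $\sqni$. Finally, the domain $P$ of the composition is a path so satisfies $\deg(P)\le 2$, which is exactly the hypothesis needed to apply the second part of \autoref{StarSurjectiveCoEdgeWitnessing}, yielding that ${\sqni} \circ {\sqsupset}$ is star-refining.

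I do not expect any serious obstacle: the argument is essentially a careful citation, and the only thing that needs attention is tracking the direction conventions—making sure that the acyclicity, connectedness, and degree hypotheses are attributed to the right graph (domain vs.\ codomain) at each invocation, and that the composition convention of \autoref{StarSurjectiveCoEdgeWitnessing} is matched correctly to the composition ${\sqni} \circ {\sqsupset}$ at hand. Once that bookkeeping is done, the two-line structure above completes the proof.
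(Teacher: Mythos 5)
Your proof is correct and follows essentially the same route as the paper: the paper cites \autoref{TreeMorphismsSummary} for the first assertion, which is itself just the packaging of \autoref{SurjectiveImpliesEdgeSurjective} and \autoref{StarEdgeSurjectiveEdgeWitnessing} that you apply directly, and it cites \autoref{EdgevsCoEdgeWitnessing} together with \autoref{StarSurjectiveCoEdgeWitnessing} for the second, exactly as you do. Your domain/codomain bookkeeping (domain connected and of degree at most $2$, codomain acyclic and triangle-free) is attributed correctly throughout.
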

\begin{proof}
    The first part follows from \autoref{TreeMorphismsSummary} and the second part from Propositions~\ref{StarSurjectiveCoEdgeWitnessing} and \ref{EdgevsCoEdgeWitnessing}.
\end{proof}

\begin{thm}\label{AFraisseEquivalents}
Let $(G_n,\sqsupset_n^m)$ be a sequence in $\mathbf A$ with induced poset $\PP$. The following are equivalent.
\begin{enumerate}
 \item\label{ALaxFraisse} $(\sqsupset^m_n)$ is a lax-Fra\"iss\'e sequence in $\mathbf{A}$;
 \item\label{AEdgeWitnessing} $(\sqsupset^m_n)$ is nontrivial, and it has an edge-witnessing subsequence;
 \item\label{AStarSurjective} $(\sqsupset^m_n)$ is nontrivial, and it has a star-refining subsequence;
 \item\label{AArc} $\mathsf{S}\mathbb{P}$ is homeomorphic to the arc.
\end{enumerate}
A lax-Fra\"iss\'e sequence is Fra\"iss\'e precisely when it also has a strictly anti-injective subsequence.
\end{thm}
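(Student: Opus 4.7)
The plan is to establish a cyclic chain $(1)\Rightarrow(2)\Leftrightarrow(3)\Rightarrow(1)$ together with $(1)\Leftrightarrow(4)$, treating the Fraïssé upgrade separately at the end. First I verify that \autoref{cor:FraisseLimit} applies to $\mathbf{A}$: it sits inside $\mathbf{B}$ by definition, has amalgamation by \autoref{prp:amalgarc}, is directed via the singleton terminal object, and the edge-splitting construction of \autoref{EdgeSplitting} (whose output is manifestly monotone and so lies in $\mathbf{A}$) witnesses that edge-witnessing and star-refining morphisms are wide. This immediately gives $(1)\Rightarrow(3)$; coupled with \autoref{AMorphisms} (star-refining implies edge-witnessing, and the composition of two edge-witnessing morphisms in $\mathbf{A}$ is star-refining) it yields $(2)\Leftrightarrow(3)$. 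Lax-Fraïssé sequences are automatically edge-faithful by \autoref{cor:FraisseLimit}.

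The central implication is $(2)\Rightarrow(1)$. By directedness and \autoref{DirectedCofinal}, it suffices to verify lax-absorption. Given ${\dashv}\in\mathbf{A}^{G_m}_G$, my plan is to apply \autoref{TypeSubFactorisation} with ${\sqsupset}=\dashv$ and ${\sqni}=\sqsupset^m_n$, which demands $|G|\le|G_n|$ and $|G|\le|\{r,s\}_{\sqsupset^m_n}|$ for every edge $\{r,s\}\subseteq G_m$. The growth $|G_n|\to\infty$ is routine: nontriviality plus any co-injective edge-witnessing stage strictly enlarges sizes. The main obstacle I anticipate is the growth $|\{r,s\}_{\sqsupset^m_n}|\to\infty$ along a star-refining subsequence (guaranteed by $(2)\Leftrightarrow(3)$). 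I intend to settle this by a concrete count: invoking \autoref{MorphismsToSubpaths} to identify $\{q,r\}^\bullet_\sqsupset$ with the subpath $[q_r,r_q]$, and then showing that composing with a star-refining ${\sqni}\in\mathbf{A}$ contributes at least one preimage vertex per vertex of this subpath and at least two per internal edge, giving roughly $|\{q,r\}_{\sqsupset\circ\sqni}|\ge 3|\{q,r\}_\sqsupset|+2$, which tends to infinity under iteration.

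For the topological equivalences: $(4)\Rightarrow(2)$ is straightforward — the arc has more than one point (nontriviality) and is connected, so since every $G_n$ is a tree, \autoref{ConnectedTreeLimit} yields an edge-witnessing subsequence. For $(1)\Rightarrow(4)$, I appeal to \autoref{LaxFraisseSpectra}: all lax-Fraïssé sequences in $\mathbf{A}$ have homeomorphic spectra. The concrete sequence of path clique-graphs $(H_n,{\sqsupset^m_n})$ described in \autoref{ex:arc} lies in $\mathbf{A}$, is nontrivial, and edge-witnessing, hence lax-Fraïssé by $(2)\Rightarrow(1)$ proved above; its spectrum is the arc, so every lax-Fraïssé spectrum is the arc.

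For the Fraïssé characterization, strict anti-injectivity is an ideal in $\mathbf{I}$ by \autoref{Ideals}(2) and hence in $\mathbf{A}$, and it is wide in $\mathbf{A}$ via the standard length-doubling refinement of any path; so \autoref{IdealFraisseSubsequence} produces a strictly anti-injective subsequence for any Fraïssé sequence. Conversely, combining strict anti-injectivity with the star-refining subsequence (guaranteed by lax-Fraïsséness) via \autoref{IdealSubsequences} forces both $\mathsf{t}_{\sqsupset^m_n}(\{q\})$ and $\mathsf{t}_{\sqsupset^m_n}(\{q,r\})$ to tend to infinity, so $\mathsf{t}_{\sqsupset^m_n}\ge\mathsf{t}_\dashv$ pointwise for $n$ large. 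Then \autoref{TypeFactorisation} upgrades the lax factorization produced in the previous argument to an exact equality $\dashv\circ\Dashv=\sqsupset^m_n$, giving the Fraïssé property.
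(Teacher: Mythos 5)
Your proposal is correct and follows essentially the same route as the paper: the same cycle of implications, the same appeals to \autoref{cor:FraisseLimit}, \autoref{AMorphisms}, \autoref{TypeSubFactorisation}, \autoref{TypeFactorisation}, \autoref{ConnectedTreeLimit} and \autoref{LaxFraisseSpectra}, and the same concrete dyadic-interval sequence to realise the arc. The only (harmless) variation is in how you force $|\{r,s\}_{\sqsupset^m_n}|\to\infty$: you use a multiplicative count along star-refining stages (note only the \emph{interior} vertices of $[q_r,r_q]$ contribute to the strict edge-preimage, though your final bound $3|\{q,r\}_\sqsupset|+2$ is the right one), whereas the paper simply observes that a composition of $k$ edge-witnessing morphisms in $\mathbf{A}$ has strict edge-preimages of size at least $k$.
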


\begin{proof}
    Clearly every lax-Fra\"{i}ss\'e sequence in $\mathbf{A}$ is nontrivial and by \autoref{cor:FraisseLimit} has an edge-witnessing and a star-refining subsequence.
    Similarly, it is easy to see that strictly anti-injective morphisms are wide and so form a wide ideal in $\mathbf{A}$ by \autoref{Ideals}.
    Hence, every Fra\"{i}ss\'e sequence has a strictly anti-injective subsequence by \autoref{IdealFraisseSubsequence}.
    Together with a use of \autoref{AMorphisms} we have \ref{ALaxFraisse}$\Rightarrow$\ref{AEdgeWitnessing}$\Leftrightarrow$\ref{AStarSurjective}.

    To show \ref{AEdgeWitnessing}$\Rightarrow$\ref{ALaxFraisse} take ${\sqsupset}\in\mathbf{A}^{G_m}_P$. If $(\sqsupset^m_n)$ has an edge-witnessing subsequence then we have $n>m$ such that $|P|\leq|G_n|$ (since the sequence is nontrivial) and $\sqsupset^m_n$ can be written as a composition of $|P|$ edge-witnessing morphisms so $|P|\leq|\{g,h\}_{\sqsupset^m_n}|$, for all adjacent $g,h\in G_m$. These are precisely the hypotheses of \autoref{TypeSubFactorisation}, which then applies, giving $\phi\in(\mathbf{A}\cap\mathbf{F})^P_Q$ with ${\sqsupset}\circ\phi\subseteq{\sqsupset^m_n}$. Hence $(\sqsupset^m_n)$ is lax-Fra\"iss\'e.
    
    Similarly, if $(\sqsupset_n^m)$ has both nontrivial edge-witnessing and strictly anti-injective subsequences, then for any ${\sqsupset}\in\mathbf{A}^{G_m}_P$ there is $n > m$ such that $\mathsf{t}_{\sqsupset^m_n}\geq\mathsf{t}_\sqsupset$.
    By \autoref{TypeFactorisation}, we then have ${\dashv}\in\mathbf{A}^P_Q$ with ${\sqsupset}\circ{\dashv}={\sqni}$, showing $(\sqsupset_n^m)$ is Fra\"iss\'e.

    The implication \ref{AArc}$\Rightarrow$\ref{AEdgeWitnessing} (and \ref{AStarSurjective}) follows directly from \autoref{ContinuumTreeLimit}.
    On the other hand, as all lax-Fra\"{i}ss\'e sequences have homeomorphic spectra by \autoref{cor:FraisseLimit}, it suffices to find a single lax-Fra\"{i}ss\'e sequence whose spectrum is the arc.
    By \autoref{SequenceForSpace} it is enough to find a suitable sequence of minimal open covers of $[0, 1]$.
    Such sequence was described in \cite[Example 2.16]{BartosBiceV.Compacta}: consider 
     \[
     C_n = \{\mathrm{int}([(k - 1)/2^{n+1}, (k + 1)/2^{n + 1}]): 1 \leq k \leq 2^{n + 1} - 1\}
     \]
    (see also \autoref{fig:comparison}). Then each $C_n$ with the overlap relation is a path, and the inclusion relation restricted to $C_n\times C_{n+1}$ is monotone and edge-witnessing.
    Hence the induced sequence in $\mathbf{A}$ is lax-Fra\"{i}ss\'e by \ref{AEdgeWitnessing}$\Rightarrow$\ref{ALaxFraisse}.
\end{proof}

\begin{xpl}
Let us give a concrete example of a lax-Fra\"isse sequence (therefore leading to the arc) which is not Fra\"iss\'e. Let $G_0$ be a two-point path, and for every $n \in \omega$, let $G_{n + 1}$ be $\mathsf{X}{G_n}$ and let $\sqsupset^n_{n+1}$ be the natural map $\ni_{G_n}$.
 By \autoref{ACliqueClosed}, $(\sqsupset^m_n)$ is a sequence in $\mathbf{A}$.
 It is easy to see that $\mathsf{t}_{\ni_{G_n}}$ is a constant function taking the value $1$, and so $(\sqsupset^m_n)$ is edge-witnessing and anti-injective, but not star-refining.
 Also, $g_{\sqsupset^m_n}$ is a singleton for every $g \in G_m$ and $n \geq m$, so $(\sqsupset^m_n)$ does not have a strictly anti-injective subsequence.
 Hence, $(\sqsupset^m_n)$ is lax-Fra\"{i}ss\'e in $\mathbf{A}$ but not Fra\"{i}ss\'e.
\end{xpl}

\subsubsection{More sequences leading to the arc} \label{s.MoreArcs}

As the following example shows, we do not have a Fra\"{i}ss\'e sequence in the category of paths and monotone surjective relations in $\mathbf{S}$, as the amalgamation property fails.
However, we show sequences in this category admin a \emph{co-bijective modification}, and often have the arc as the spectrum of the induced poset.
This will be used later when analyzing certain categories of fans.

\begin{xpl}
   Let $F = \{f, f'\}$, $G = \{g\}$, and $H = \{h, h', h''\}$ be paths.
   Let ${\sqsupset} \in \mathbf{S}^F_G$ be the only surjective morphism, and let ${\sqni} \in \mathbf{S}^F_H$ be the function satisfying $f \sqni h, h'$ and $f' \sqni h''$.
   Note that $\sqsupset$ and $\sqni$ are monotone and surjective.
   There is no path $P$ with surjective morphisms ${\dashv} \in \mathbf{S}^G_P$, ${\Dashv} \in \mathbf{S}^H_P$ such that ${\sqsupset} \circ {\dashv} = {\sqni} \circ {\Dashv}$.
   Otherwise, there is $p \in P$ such that $p \vDash h$, and hence $p^\vDash \subseteq \{h, h'\}$ and $p^{\vDash\sqin} = \{f\}$.
   On the other hand, $p^{\vdash\sqsubset} = \{f, f'\}$ for every $p \in P$.
\end{xpl}

By the following lemma, monotone surjective morphisms between paths are automatically co-bijective at non-ends.

\begin{lemma} \label{OnlyEndpointRedundantLemma}
    Let $F$ and $G$ be paths and let ${\sqsupset} \in \mathbf{S}^G_F$ be monotone and surjective.
    Then for every $g \in G \setminus \mathsf{E} G$ there is $f \in F \setminus \mathsf{E} F$ such that $f^\sqsubset = \{g\}$.
\end{lemma}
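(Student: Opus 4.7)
The plan is as follows. Let $g_-, g_+ \in G$ be the two neighbors of $g$ (distinct since $g$ is a non-end of the path $G$), and let $G_- \ni g_-$, $G_+ \ni g_+$ denote the two connected components of $G \setminus \{g\}$. The starting observation is structural: since $\sqsupset$ is edge-preserving and $G$ is a triangle-free path, for every $f \in F$ the set $f^\sqsubset$ (nonempty by co-surjectivity) is a clique in $G$ of size one or two — namely, a vertex or an edge. This yields the disjoint decomposition
\[
    g^\sqsupset \;=\; g_\sqsupset \;\sqcup\; \{g_-, g\}_\sqsupset \;\sqcup\; \{g, g_+\}_\sqsupset
\]
inside the subpath $g^\sqsupset$, which is connected by monotonicity.

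Next I would describe how these three blocks arrange themselves within $g^\sqsupset$. No vertex of $\{g_-, g\}_\sqsupset$ can be adjacent in $F$ to a vertex of $\{g, g_+\}_\sqsupset$, since edge preservation applied to such an edge would force $g_- \sim g_+$, contradicting that $g_-, g_+$ are nonadjacent in the path $G$. Hence in the linear order of the subpath $g^\sqsupset$, the blocks $\{g_-, g\}_\sqsupset$ and $\{g, g_+\}_\sqsupset$ sit at opposite ends with $g_\sqsupset$ sandwiched in between (up to reversal, and allowing empty blocks).

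I would then show $g_\sqsupset \neq \emptyset$. Otherwise the arrangement above forces $g^\sqsupset$ to coincide with a single block, say $g^\sqsupset = \{g_-, g\}_\sqsupset$. But $g_+^\sqsupset$ is nonempty by surjectivity and necessarily disjoint from $g^\sqsupset$ (any common vertex would again force $g_- \sim g_+$), while by monotonicity applied to the connected set $\{g, g_+\}$ the union $g^\sqsupset \cup g_+^\sqsupset$ is connected. The two disjoint subpaths must therefore touch via an edge of $F$, and edge preservation on that edge again forces the contradiction $g_- \sim g_+$.

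Finally I would locate a non-end of $F$ inside $g_\sqsupset$. If $|g_\sqsupset| \geq 3$, any vertex interior to the block $g_\sqsupset$ within $g^\sqsupset$ automatically has two neighbors in $F$. The delicate case is $|g_\sqsupset| \in \{1, 2\}$: for each vertex $f \in g_\sqsupset$ adjacent to a side-block $\{g_-, g\}_\sqsupset$ or $\{g, g_+\}_\sqsupset$ in the linear arrangement, I would produce a neighbor of $f$ in $F$ either from that side-block directly if it is nonempty, or else from $g_\pm^\sqsupset$ itself, which must touch $g^\sqsupset$ at a vertex of $g_\sqsupset$ by the same monotonicity-plus-edge-preservation trick. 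When $|g_\sqsupset| = 2$, assuming both of its vertices are ends of $F$ forces $F = g_\sqsupset$ (two adjacent ends can only occur in a two-vertex path), and then $g_\pm^\sqsupset \subseteq F = g^\sqsupset$ combined with $g_\pm^\sqsupset \cap g^\sqsupset = \{g, g_\pm\}_\sqsupset = \emptyset$ contradicts surjectivity. I expect this final case analysis to be the main obstacle, though each subcase reduces to the same device of ruling out $g_- \sim g_+$.
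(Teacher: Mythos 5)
There is a genuine gap, and it sits exactly where you predicted the difficulty would be. The structural claim underpinning your case analysis --- that inside the subpath $g^\sqsupset$ the two side blocks $\{g_-,g\}_\sqsupset$ and $\{g,g_+\}_\sqsupset$ occupy opposite ends with $g_\sqsupset$ forming a single contiguous stretch between them --- is false. Each side block is indeed an interval (it equals $g_-^\sqsupset\cap g^\sqsupset$, resp.\ $g_+^\sqsupset\cap g^\sqsupset$, an intersection of two subpaths), but nothing pushes the side blocks to the outside, and $g_\sqsupset$ need not be connected. Concretely: let $G$ be the path $g_-\sim g\sim g_+$ and $F$ the path $f_1\sim f_2\sim f_3\sim f_4\sim f_5$, with $f_1^\sqsubset=f_3^\sqsubset=f_5^\sqsubset=\{g\}$, $f_2^\sqsubset=\{g_-,g\}$ and $f_4^\sqsubset=\{g,g_+\}$. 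This relation is edge-preserving, co-surjective, surjective and monotone (every connected subset of $G$ has connected preimage), yet the blocks appear along $F=g^\sqsupset$ in the order $g_\sqsupset,\ \{g_-,g\}_\sqsupset,\ g_\sqsupset,\ \{g,g_+\}_\sqsupset,\ g_\sqsupset$: the strict preimage $g_\sqsupset=\{f_1,f_3,f_5\}$ has three components and the side blocks lie in the interior. Your case $|g_\sqsupset|=2$ uses the false picture essentially, since the step ``two adjacent ends can only occur in a two-vertex path'' presupposes that the two vertices of $g_\sqsupset$ are adjacent, which they need not be. The case $|g_\sqsupset|\geq3$ happens to survive for a reason you do not invoke ($F$ has at most two ends, so one of three vertices is a non-end), and the case $|g_\sqsupset|=1$ can be pushed through, but each configuration then needs its own connectivity-plus-edge-preservation argument, so as written the proposal does not close.

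The paper's proof avoids strict preimages altogether. It takes the two components $[e_-,g_-]$ and $[g_+,e_+]$ of $G\setminus\{g\}$ and their preimages $X_-$ and $X_+$ in $F$: these are non-empty by surjectivity, connected by monotonicity, and disjoint with no edge between them by edge-preservation (any common vertex or adjacency would force a vertex of $[e_-,g_-]$ to be $\sqcap$-related to a vertex of $[g_+,e_+]$, impossible since $g$ separates them). Two such subpaths of the path $F$ leave at least one vertex $f$ strictly between them; such an $f$ has a neighbour on each side, hence is not an end, and $f^\sqsubset$ misses $G\setminus\{g\}$, so $f^\sqsubset=\{g\}$ by co-surjectivity. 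If you want to keep your decomposition of $g^\sqsupset$, you would in effect have to reprove this anyway in order to control where the pieces of $g_\sqsupset$ lie.
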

\begin{proof}
    Let $g_-$ and $g_+$ be the two neighbours of $g$ and pick $e_-,e_+\in\mathsf EG$ such that $g_-\in [e_-,g]$ and $g_+ \in [g, e_+]$.
    Let $X^- = [e_-,g_-]^{\sqsupset}$ and $X^+ = [g_+, e_+]^{\sqsupset}$.
    Since $\sqsupset$ is monotone, $X_-$ and $X_+$ are connected, and hence they are (non-empty) subpaths of $G$.
    Note that no vertex of $[e_-,g_-]$ is adjacent to a vertex of $[g_+,e_+]$, and so no vertex of $X_-$ is adjacent to a vertex of $X_+$.
    Hence, there is a vertex $f \in G$ strictly between the paths $X_-$ and $X_+$.
    Clearly, $f \notin \mathsf{E} F$, and the only possibility is that $f^{\sqsubset} = \{g\}$.
\end{proof}

For $e$ an end of a path $P$, let $e^{\op}$ denote the opposite end.
\begin{prp} \label{PathCoBijectiveModification}
    Let $(G_n, \sqsupset^m_n)$ be a monotone surjective sequence of paths in $\mathbf S$. Suppose that there is a thread of ends $(e_n)$ of $(G_n,\sqsupset_n^m)$ such that $e_{n+1}{}^{\sqsubset_{n+1}^n}=\{e_n\}$ for every $n$.
    Then there is the largest co-bijective restriction $(H_n,\sqni_n^m)$ of $(G_n,\sqsupset_n^m)$, satisfying
    \begin{enumerate}
    \item\label{lemmacobijarc1} $H_n=[e_n,f_n]$ for some $f_n$ with $f_n\sqcap e_n^{\op}$,
    \item\label{lemmacobijarc2} for every $m$ there is $n\geq m$ such that $f_m\sqsupset_n^m e_n^{\op}$,
    \item\label{lemmacobijarc3} $(\sqni^m_n)$ is a sequence in $\mathbf{A}$, and 
    \item\label{lemmacobijarc4} $(G_n,\sqsupset_n^m)$ and $(H_n,\sqni_n^m)$ have canonically homeomorphic spectra.
    \end{enumerate}
\end{prp}
\begin{proof} 
We construct the sets $H_n$, and then prove that $(H_n,\sqni_n^m)$, where ${\sqni_n^m}={\sqsupset_n^m}\restriction H_n\times H_m$, satisfies the required properties. 

This construction is done by removing `redundant' points from $G_n$. First, declare  $g\in G_n$ to be `red' if $\sqsupset_{n+1}^n$ is not co-bijective at $g$, i.e. there is no $h\in G_{n+1}$ such that $h^{\sqsubset_{n+1}^n}=\{g\}$. These points have to be removed to ensure co-bijectivity. Removing red points may not be enough. In general, if $h\in G_{n+1}$ needs to removed and  $g\in G_n$ is such that co-bijectivity at $g$ is witnessed only by $h$ (that is, the strict preimage $g_{\sqsupset}$ equals $\{h\}$), then $g$ must be removed too. We apply this reasoning inductively: we say that $g\in G_n$ is a `blue' point if the only point in $G_{n+1}$ witnessing co-bijectivity of $\sqsupset_{n+1}^n$ at $g$ is either blue or red. In particular, every blue point $g\in G_m$ is blue because of a unique red point belonging to some later $G_n$.

By \autoref{OnlyEndpointRedundantLemma} and our hypotheses, if $g$ is red then $g=e_n^{\op}$. Furthermore by the same reasons, if $g\in G_{n}$ is blue because of $h\in G_{n+1}$ and $h=e_{n+1}^{\op}$, then $g=e_n^{\op}$, as co-bijectivity of non-ends is witnessed at non-ends and no original end $e_n$ is red or blue.

Let $H_n=[e_n,f_n]$ where $f_n$ is the last point which neither red nor blue. By the above, $f_n\sqcap e_n^{\op}$. We are now going to show that $(H_n,\sqni_n^m)$ is a co-bijective sequence which satisfies \ref{lemmacobijarc2}--\ref{lemmacobijarc4} (\ref{lemmacobijarc1} follows from the definition of $H_n$). 

To show that $\sqni_{n+1}^n$ is co-bijective, pick $h\in H_n$. If $h\neq e_n^{\op}$, then co-bijectivity of $\sqsupset_{n+1}^n$ at $h$ is witnessed by some $g\in G_{n+1}\setminus \{e_{n+1}^{\op}\}\subseteq H_{n+1}$ (\autoref{OnlyEndpointRedundantLemma}). If $h=e_{n}^{\op}$ then, since $e_n^{\op}$ is not red, there is $g\in G_{n+1}$ such that $g^{\sqsubset_{n+1}^n}=\{h\}$. Since $e_n^{\op}$ is not blue, such $g$ can be chosen to be neither blue nor red, i.e. in $H_{n+1}$. Notice that $\sqni_{n+1}^n$ is a co-bijective morphism between paths that is monotone by \autoref{RestrictionsOfMonotone}, i.e. it belongs to $\mathbf A$, and therefore it is end-preserving. This entails that $f_{n+1}{}^{\sqin_{n+1}^n}=\{f_n\}$, and in particular $(f_n)$ that is a thread.

Let us now show \ref{lemmacobijarc2}. Fix $m$. If $f_m=e_m^{\op}$, there is nothing to prove. Else, let $n\geq m$ be minimal such that $e_n^{\op}$ is red. Hence $e_{n+1}^{\op}\sqsubset_{n+1}^n f_n\sqsubset_n^m f_m$. 

For \ref{lemmacobijarc3}, we now show that $(H_n,\sqni_n^m)$ is indeed a sequence. Otherwise (by induction), there are $m,n$ with $\ell = n - m \geq 2$, $g\in H_m$ and $g'\in H_n$ such that $g\sqni_n^m g'$ yet there is no $k$ with $m<k<n$ and $h\in H_k$ such that $g\sqni^m_k h \sqni^k_n g'$. Since $(G_n,\sqsupset_n^m)$ is a sequence and $g\sqsupset^m_n g'$, there are $g_0,\ldots,g_\ell$ such that 
\[
g=g_0\sqsupset_{m+1}^m g_1\sqsupset_{m+2}^{m+1}\cdots\sqsupset_{n-1}^{n-2} g_{\ell-1} \sqsupset_{n}^{n-1} g_\ell=g'.
\]
By our choice of $m$ and $n$, we have that $g_i\notin H_{m+i}$ for any positive $i<\ell$, and hence $g_i=e_{m+i}^{\op}$ for each such $i$. We aim to show $g=f_m$ and $f_{n-1}\sqsupset_{n}^{n-1}g'$. If this is the case, the sequence $g=f_m,f_{m+1},\ldots,f_{n-1},g'$ witnesses the property of $(H_n,\sqni_n^m)$ being is a sequence is not broken by $g$ and $g'$.
 
Suppose $g\neq f_m$. Note that ${\sqni}_{m+1}^m\in\mathbf A$ and $f_{m + 1}{}^{\sqsubset^m_{m+1}} = \{f_m\}$, and therefore there is $h\in [e_{m+1},f_{m+1})$ with $h^{\sqin^m_{m+1}} = \{g\}$. Hence, the set $g^{\sqsupset_{m+1}^m}$ contains both $h$ and $e_{m+1}^{\op}$, but does not contain $f_{m+1}\in (h,e_{m+1}^{\op})$, and so is disconnected. This contradicts the monotonicity of $\sqsupset_{m+1}^m$.

Next, since $e_{n-1}^{\op} \sqsupset_{n}^{n-1} g'$, and $e_n^{\op}$ was either blue or red, the image of $g'$ cannot be just the point $e_{n-1}^{\op}$, and so $g'{}^{\sqsubset^{n-1}_n} = \{e_{n-1}^\op, h\}$ for some $h \sim e_{n-1}^\op$. Clearly, $h = f_{n-1}$. This concludes the proof that $(H_n,\sqni_n^m)$ forms a sequence in $\mathbf{A}$.

We are left to show \ref{lemmacobijarc4}. This will follow from \autoref{DenseRestrictionHomeomorphic} once we show that each $H_n$ is dense. If $H_n=G_n$ there is nothing to prove. Else, if $e_n^{\op}$ is red, then $\sqsupset_{n+1}^n$ is not co-bijective at $e_n^{\op}$, meaning that $H_n{}^{\sqsupset_{n+1}^n}=G_{n+1}$. If $e_n^{\op}$ is blue as witnessed by the red point $e_{k}^{\op}\in G_k$ for some $k>n$, then $G_{k+1}=H_k{}^{\sqsupset_{k+1}^k}\subseteq H_n{}^{\sqsupset_{k+1}^n}$ as $H_k \subseteq H_{n}{}^{\sqsupset_k^n}$.
\end{proof}

We call the restriction $(H_n, \sqni^m_n)$ obtained in the previous proposition the \emph{co-bijective modification} of $(G_n, \sqsupset^m_n)$.

\begin{rmk}
    Note that co-bijective modification can be almost uniquely defined for every monotone surjective sequence of paths $([a_n, b_n], \sqsupset^m_n)$ in $\mathbf{S}$.    
    If $|[a_0, b_0]| \geq 3$, there is a thread $(g_n)$ with $g_{n + 1}{}^{\sqsubset^n_{n+1}} = \{g_n\}$ such that $([a_n, g_n])$ and $([g_n, b_n])$ induce upper restrictions of $([a_n, b_n], \sqsupset^m_n)$, for which \autoref{PathCoBijectiveModification} can be applied separately, and the resulting modifications can be combined.

    If an initial segment or the whole sequence consists of short paths, we use the following observation: for a path $G$ with $\leq 2$ points and ${\sqsupset} \in \mathbf{S}^G_H$ for some $H$, the relation $\sqsupset$ is co-bijective or there is $g \in G$ with $g^{\sqsupset} \supseteq H$.
\end{rmk}

\begin{cor} \label{MoreArcs}
    Let $(G_n, \sqsupset^m_n)$ be a monotone surjective sequence of paths in $\mathbf{S}$ and let $\PP$ be the induced poset.
    If $(\sqsupset^m_n)$ has a star-refining subsequence, then $\Spec{\PP}$ is an arc or a point.
\end{cor}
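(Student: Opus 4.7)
The strategy is to reduce to the analysis of the category $\mathbf{A}$ (monotone morphisms between paths) already carried out in \autoref{AFraisseEquivalents}. First, I would apply \autoref{PathCobijectiveModification} to obtain a co-bijective modification $(H_n, \sqni^m_n)$ of $(G_n, \sqsupset^m_n)$, which is a sequence in $\mathbf{A}$. Its induced poset $\QQ$ satisfies $\Spec{\QQ} \cong \Spec{\PP}$ by \autoref{CobijectiveModification}, so it suffices to show that $\Spec{\QQ}$ is either a single point or an arc.

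Next I would split into cases on whether the modification is trivial. If $|H_n| = 1$ for all sufficiently large $n$, then by co-bijectivity $\QQ$ has, from some level onward, a unique element at each level, and a straightforward application of \autoref{Thread} shows $\Spec{\QQ}$ is a single point. Otherwise, as $|H_n|$ cannot decrease along co-injective morphisms, we have $|H_n| \geq 2$ for all large $n$. In this non-trivial case the plan is to apply \autoref{AFraisseEquivalents} to $(\sqni^m_n) \subseteq \mathbf{A}$: it suffices to exhibit an edge-witnessing subsequence of $(\sqni^m_n)$, which then forces $\Spec{\QQ}$ to be an arc.

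The remaining work, and the main obstacle, is to transfer the star-refining subsequence of $(\sqsupset^m_n)$ into an edge-witnessing subsequence of $(\sqni^m_n)$. On the original side, monotonicity plus surjectivity gives edge-surjectivity by \autoref{MonotoneEdgeSurjective}, and since the codomains $G_m$ are paths (triangle-free), \autoref{StarEdgeSurjectiveEdgeWitnessing} upgrades star-refining to edge-witnessing for those morphisms in the star-refining subsequence. Then for adjacent $q, r \in H_m$, edge-witnessing of $\sqsupset^m_n$ supplies a vertex $p$ in the intersection $q^{\sqsupset^m_n} \cap r^{\sqsupset^m_n}$, which by monotonicity is a non-empty subpath of $G_n$. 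The catch is that $p$ might fail to lie in $H_n$, but by \autoref{OnlyEndpointRedundant} the only vertices the modification removes are ends of $G_n$, so the only obstruction is that the intersection subpath consists solely of an end. I would resolve this by composing successive members of the star-refining subsequence: along a deep enough segment, the images of the preimage subpaths under further morphisms expand, so that the intersection subpath acquires an interior vertex of $G_n$, which must then belong to $H_n$. This produces the required edge-witnessing subsequence of $(\sqni^m_n)$, and the appeal to \autoref{AFraisseEquivalents} closes the argument.
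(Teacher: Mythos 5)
Your overall architecture matches the paper's: pass to the co-bijective modification $(H_n,\sqni^m_n)$ in $\mathbf{A}$ via \autoref{PathCobijectiveModification} and \autoref{CobijectiveModification}, split on triviality, and invoke \autoref{AFraisseEquivalents}. The divergence, and the gap, is in the transfer step. The paper shows directly that each star-refining $\sqsupset^m_n$ restricts to a star-refining $\sqni^m_n$ (if the star of $h\in H_n$ maps into a removed end $g$ of $G_m$, then every $x\in h^\sqcap\cap H_n$ has $x^{\sqsubset^m_n}\subseteq\{g,g_+\}$ and hence $x^{\sqin^m_n}=\{g_+\}$), and then uses the star-refining clause of \autoref{AFraisseEquivalents}. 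You instead aim for an edge-witnessing subsequence of $(\sqni^m_n)$, and you correctly identify the obstruction: the connected set $I=q^{\sqsupset^m_n}\cap r^{\sqsupset^m_n}$ might consist only of an end of $G_n$ deleted by the modification. But your proposed resolution --- ``composing successive members \dots\ the images of the preimage subpaths expand, so that the intersection acquires an interior vertex'' --- is asserted, not proved. The individual preimages $q^{\sqsupset^m_k}$ do grow with $k$, but nothing you say forces their \emph{intersection} to grow: a priori $I$ could remain a single deleted end at every level, and no lemma you cite rules this out. As written, the one step where the real work happens is a gesture rather than an argument.

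The step can in fact be closed, but by a different observation. If $I=\{e\}$ with $e$ an end of $G_n$, then $q^{\sqsupset^m_n}$ and $r^{\sqsupset^m_n}$ are two connected subsets of the path $G_n$ that both contain the end $e$ and meet only in $e$; since a connected subset of a path containing an end $v_1$ is an initial segment $[v_1,v_i]$, the intersection of two such segments is again one of them truncated, so one of the two preimages must equal $\{e\}$. If moreover $e\notin H_n$, this contradicts co-surjectivity of $\sqni^m_n$ (that vertex of $H_m$ would have no predecessor in $H_n$). So the bad case never occurs and no ``going deeper'' is needed --- but that argument does not appear in your write-up, and the expansion mechanism you invoke instead does not substitute for it. Either supply this co-surjectivity argument, or follow the paper and transfer the star-refining property directly.
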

\begin{proof}
    Let $(H_n, \sqni^m_n)$ be a co-bijective modification of $(G_n, \sqsupset^m_n)$, which exists and is in $\mathbf{A}$ by \autoref{PathCoBijectiveModification} and by the previous remark, and let $\QQ$ be the induced poset, whose spectrum is canonically homeomorphic to $\Spec{\PP}$.
    
    Let $m \leq n$ and suppose $\sqsupset^m_n$ is star-refining.
    We show that $\sqni^m_n$ is also star-refining, and so $(\sqni^m_n)$ has a star-refining subsequence as well.
    For every $h \in H_n$ there is $g \in G_m$ such that $h^\sqcap \sqsubset^m_n g$.
    If $g \in H_m$, we are done.
    Otherwise, $g$ is an end of $G_m$ and $h^\sqcap \cap H_n \sqin^m_n g_+$ where $g_+$ is the unique vertex adjacent to $g$.
    This is because $x^{\sqsubset^m_n} \subseteq \{g, g_+\}$ and so $x^{\sqin^m_n} = \{g_+\}$ for every $x \in h^\sqcap \cap H_n$.
    
    If $(H_n,\sqni^m_n)$ is trivial, its spectrum is a point. Otherwise, by \autoref{AFraisseEquivalents}, $\Spec{\QQ}$ is the arc.
\end{proof}

\subsection{Fans}\label{ss.Fans}

In this section, we work with both graph theoretic and topological fans. Even though the terminology tends to converge, there are some slight differences.

\subsubsection{Topological and graph theoretic fans}
Most of our terminology is taken from \cite{Chara.Fans}.
Recall that a \emph{dendroid} is a continuum $X$ that is \emph{arcwise connected}, i.e. for every $x \neq y \in X$ there is an arc $A \subseteq X$ with endpoints $x, y$, and hereditarily unicoherent (the intersection of every two subcontinua is connected).
It follows that for every two points $x \neq y$ in a dendroid $X$ there is a unique arc in $X$ with endpoints $x, y$, sometimes denoted by $[x, y]$.
Note that a dendrite is exactly a locally connected dendroid (as every Peano continuum is arcwise connected).

A point $x \in X$ of a dendroid is called an \emph{endpoint} if for every arc $A \subseteq X$ such that $x \in A$ we have that $x$ is an endpoint of $A$.
Let $\mathsf{E}X$ denote the set of endpoints of $X$.
A point $x \in X$ of a dendroid is called a \emph{ramification point} if there is a simple triod $T \subseteq X$ such that $x$ is the ramification point of $T$, i.e. there are three arcs $A_i \subseteq X$, $i < 3$, such that $A_i \cap A_j = \{x\}$ for every $i \neq j < 3$.

A \emph{(topological) fan} is a dendroid $X$ that has exactly one ramification point $0$, called the \emph{root}.
The arcs $[0, e]$, $e \in \mathsf{E}X$, are called \emph{spokes}, and they form the unique family of arcs $\mathcal{A}$ such that $\bigcup\mathcal{A} = X$ and $A \cap A' = \{0\}$ for every $A, A' \in \mathcal{A}$ -- the \emph{spoke decomposition} of a (topological) fan.
A dendroid with a spoke decomposition is already a fan.

There are two fans we will mainly focus our attention on:
\begin{itemize}
\item The \emph{Cantor fan} is the space $X = (C\times[0,1])/{\sim}$ where $(x,t)\sim(y,s)$ if and only if the pairs are equal or $s=t=0$, where $C$ is the Cantor space. This can be viewed as a subset of $\mathbb R^2$ obtained by drawing $C$ as a subset of $[0,1]\times\{1\}$ and adding the segment between $(1/2,0)$ and each point of $C$. 

\item A \emph{Lelek fan} is a nondegenerate subcontinuum of the Cantor fan whose endpoints are dense. All Lelek fans are homeomorphic, and it makes therefore sense to speak of \emph{the} Lelek fan (\cite{BulaOver.CantorFans} or \cite{Chara.Lelek}).
\end{itemize}
Let's now focus on graphs.
A (nontrivial) \emph{fan} $F$ is a tree with exactly one node of degree $\geq 3$, the \emph{root} of the fan, denoted by $0_F$.
A finite fan can be obtained by gluing together a certain amount of finite paths by one of their ends.
The corresponding subpaths of $F$, i.e. the subpaths whose ends are the root and an end of $F$, are called \emph{spokes}.
We denote the set of all spokes of $F$ by $*F$.

Note that the singleton graph is not a fan, but the \emph{claw} (i.e. the graph $\{0, a,b,c\}$ where $0$ is connected to $a$, $b$, and $c$) is the weakly terminal object in our category. 

\subsubsection{The fan categories}
We focus on the category of nontrivial fans where morphisms are co-bijective edge-preserving morphisms, i.e. elements of the category $\mathbf B$. By \autoref{SurjectiveImpliesEdgeSurjective}, such morphisms are edge-surjective, i.e. are in $\mathbf E$.

For fans $F$ and $G$, we call ${\sqsupset}\subseteq G\times F$ \emph{spoke-monotone} if it is root-preserving (i.e. $0_F^{\sqsubset}=\{0_G\}$), and the restriction to any spoke $S\subseteq F$ is a monotone relation between $S$ and another spoke $T\subseteq G$ i.e.
\[
S\in\ast F\quad\Rightarrow\quad\exists T\in*G\ (S^\sqsubset\subseteq T\text{ and }({\sqsupset}\restriction T\times S)\text{ is monotone}).
\]
The spoke $T$ does not have to be unique ($S$ can be collapsed to the root).

Let $\mathbf L$
 be the category\footnote{The name $\mathbf L$ comes from that, as we will see below, the Fra\"iss\'e limit of $\mathbf L$ is the Lelek fan.} whose objects are nontrivial fans and whose morphisms are spoke-monotone morphisms in $\mathbf B$.  Further, we let $\mathbf X$ be the subcategory of $\mathbf L$ with the same objects but where the morphisms are \emph{end-preserving}, meaning the image of an end consists only of ends. In formulas, ${\sqsupset}\in\mathbf G_F^G$ is end-preserving if for every $e\in\mathsf EF$ we have that $e^{\sqsubset}\subseteq\mathsf EG$. 
In formulas, we have
\begin{gather*}
    \mathsf{ob}(\mathbf{L}) = \mathsf{ob}(\mathbf{X})=\{\text{nontrivial fans}\}, \\
    \mathsf{mor}(\mathbf{L}) = \{{\sqsupset} \in \mathsf{mor}(\mathbf B) : {\sqsupset}\text{ is spoke-monotone}\}, \\
    \mathsf{mor}(\mathbf X) = \{{\sqsupset}\in\mathsf{mor}(\mathbf{L}): {\sqsupset}\text{ is end-preserving}\}.
\end{gather*}
In particular, if $F$ and $G$ are fans and ${\sqsupset}\in\mathbf X_F^G$, then for every pair of spokes $S\in\ast F$ and $T\in\ast G$ such that $S^{\sqsubset} \subseteq T$, we have that ${\sqsupset}\restriction T\times S$ is a monotone surjective morphism between the paths $S$ and $T$ which sends ends to ends. Since co-injectivity for monotone surjective maps between paths can only fail at ends (\autoref{OnlyEndpointRedundantLemma}) and ends get sent to ends, ${\sqsupset}\restriction T\times S$ is co-injective and therefore belongs to $\mathbf A$. This also shows that $\mathbf L\subseteq\mathbf E$, i.e., every morphism is edge-surjective.

The difference between $\mathbf L$ and $\mathbf X$ is that for a morphism ${\sqsupset}\in\mathbf L_F^G$ we do not require that the image of each spoke of $F$ is a spoke, yet it may be a proper subset of one. Nevertheless, co-bijectivity of $\sqsupset$ ensures the following.

\begin{lemma} \label{lem:locally_cobijective}
    Let $G$ and $H$ be fans and let ${\sqsupset} \in \mathbf{L}_G^H$.
    For every $T \in *H$ there is $S \in *G$ and $e \in \mathsf{E}G \cap S$ such that $e^\sqsubset = \mathsf{E}H \cap T$.
    Hence, ${\sqsupset}\restriction T \times S$ belongs to $\mathbf{A}$, and $\sqsupset$ is end-surjective.
\end{lemma}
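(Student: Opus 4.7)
Let $h$ denote the unique non-root end of $T$, so $\mathsf E H \cap T = \{h\}$. The plan is to identify $S$ as the spoke of $G$ containing a strict preimage of $h$, and then argue that its non-root end must be the desired witness $e$.

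First, by co-injectivity of $\sqsupset$, we obtain $g \in G$ with $g^\sqsubset = \{h\}$; root-preservation forces $0_G^\sqsubset = \{0_H\}$, so $g \neq 0_G$ and $g$ lies in a unique spoke $S \in *G$. Spoke-monotonicity yields a spoke $T' \in *H$ with $S^\sqsubset \subseteq T'$, and since $h \in S^\sqsubset$ lies in the unique non-root spoke $T$, we get $T' = T$. The restriction $\sqsupset_0 := \sqsupset \restriction T \times S$ is monotone by construction. A short induction on $|S|$, using the fact that $T$ is triangle-free (so edge-preservation forces $s^\sqsubset \cup s'^\sqsubset$ to be a subpath of $T$ of length at most $1$ for each $s \sim s'$ in $S$), shows that $S^\sqsubset$ is a connected subpath of $T$. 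As it contains both $0_H$ and $h$, we deduce $S^\sqsubset = T$, so $\sqsupset_0$ is a monotone surjection between paths.

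The heart of the proof is to show $e_S^\sqsubset = \{h\}$. Label $T = \{t_0 = 0_H, t_1, \ldots, t_k = h\}$ and set $\ell(s) := \min\{i : t_i \in s^\sqsubset\}$ for $s \in S$. By the clique argument above, $|\ell(s) - \ell(s')| \leq 1$ whenever $s \sim s'$. Since $\ell(0_G) = 0$ and $\ell(g) = k$, the value of $\ell$ along the subpath $[0_G, g] \subseteq S$ passes through every integer in $\{0, 1, \ldots, k\}$; in particular there is $s^* \in [0_G, g]$ with $t_{k-1} \in s^{*\sqsubset}$. If some vertex $s$ strictly beyond $g$ (in the direction of $e_S$) had $t_{k-1} \in s^\sqsubset$, then the preimage of $\{t_{k-1}\}$ under $\sqsupset_0$ would contain both $s^*$ and $s$ but miss $g$ itself (whose $\sqsubset$-image is $\{h\}$), contradicting the monotonicity of $\sqsupset_0$. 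An easy induction using edge-preservation then shows that every successor of $g$ in $S$ has $\sqsubset$-image exactly $\{h\}$. In particular, setting $e := e_S$ yields $e \in \mathsf E G \cap S$ with $e^\sqsubset = \{h\} = \mathsf E H \cap T$.

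For the ``hence'' part, observe that $\sqsupset_0$ is co-surjective (inherited from $\sqsupset$) and monotone between paths; co-injectivity now follows by combining \autoref{OnlyEndpointRedundantLemma} (which provides a strict preimage of each interior $t_i$ in $S \setminus \mathsf E S$) with the strict preimages $0_G$ and $e$ of the two ends of $T$. Hence $\sqsupset_0 \in \mathbf A^T_S$. End-surjectivity of $\sqsupset$ is then immediate: every $h \in \mathsf E H$ is the non-root end of some spoke $T \in *H$, and the corresponding $e \in \mathsf E G$ witnesses $h^\sqsupset \cap \mathsf E G \neq \emptyset$. The main technical obstacle is the no-return argument in the third paragraph, which relies essentially on the triangle-freeness of $T$ and on the monotonicity-driven connectedness of preimages under $\sqsupset_0$.
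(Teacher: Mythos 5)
Your proof is correct and follows essentially the same route as the paper's: both locate a strict preimage $g$ of the end $h$ via co-injectivity, use monotonicity to confine the preimage of the penultimate vertex $t_{k-1}$ to the initial segment $[0_G,g]$, and then use edge-preservation together with triangle-freeness to force every vertex beyond $g$ to map exactly to $\{h\}$, so that the end of the spoke works. The only difference is cosmetic: the paper phrases this as a maximality argument on the farthest such strict preimage, while you run a direct induction along the tail of the spoke, with your level function $\ell$ and discrete intermediate-value step replacing the paper's observation that $[0_G,g]^\sqsubset$ is already all of $T$.
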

\begin{proof}
    Let $\{f\} = \mathsf{E}H \cap T$.
    Since $\sqsupset$ is co-bijective, there is $e \in G$ with $e^\sqsubset = \{f\}$.
    Since $f \neq 0_H$, we have $e \neq 0_H$, and there is a unique spoke $S \in *G$ with $e \in S$.
    Without loss of generality, $e$ is the vertex of $S$ farthest from $0_G$ such that $e^\sqsubset = \{f\}$.
    The restriction ${\sqsupset} \restriction T \times [0_G, e]$ is surjective since $[0_G, e]^\sqsubset$ is connected and contains $f$, is monotone by \autoref{RestrictionsOfMonotone}, and is co-bijective by \autoref{OnlyEndpointRedundantLemma}, and so is in $\mathbf{A}$.
    We want to show that $e$ is an end of $S$.
    Otherwise, there is a vertex $e_+ \in S \setminus [0_G, e]$ adjacent to $e$.
    But $e_+^\sqsubset \ni f_- \sim f$ would contradict the connectedness of $f_-^\sqsupset$, and so $e_+^\sqsubset = \{f\}$, which contradicts the maximality of $[0_G, e]$.
\end{proof}

As every ${\sqsupset} \in \mathbf{X}^G_F$ is end-preserving, $\mathsf{E}(\sqsupset) = {\sqsupset}\restriction \mathsf{E}F^\sqsubset \times \mathsf{E}F$ is in $\mathbf{S}^{\mathsf{E}G}_{\mathsf{E}F}$.
As $\sqsupset$ is end-surjective by the previous lemma, $\mathsf{E}(\sqsupset)$ is surjective.
Since $\mathsf{E}G$ is a discrete graph, $\mathsf{E}(\sqsupset)$ is a (surjective) function.
Altogether, we obtain the \emph{end functor} $\mathsf{E}\maps \mathbf{X} \to \mathbf{D}$.
Hence, to every sequence in $\mathbf{X}$ there is an associated sequence in $\mathbf{D}$, i.e. an inverse sequences of surjections between finite sets, which corresponds to a finitely branching $\omega$-tree.
The situation in $\mathbf{L}$ is more delicate, but still can be captured.

For $(G_n,\sqsupset_n^m)$ a sequence in $\mathbf L$, we write $0_n$ (instead of $0_n)$ for the root of $G_n$.
\begin{dfn}\label{Def:treeofspokes}
    The \emph{tree of spokes} of a sequence $(G_n, \sqsupset^m_n)$ in $\mathbf{L}$ is the following structure consisting of $\omega$-many levels: the $n$\textsuperscript{th} level is the finite set of spokes $*G_n$.
    Moreover, we declare a spoke $S \in *G_{n + 1}$ to be an immediate successor of a spoke $T \in *G_n$ if $\{0_n\} \subsetneq S^{\sqsubset^n_{n + 1}} \subseteq T$.
    As it may happen that $S^{\sqsubset^n_{n + 1}} = \{0_n\}$, there may be nodes at level $n + 1$ without a predecessor.

    By a \emph{branch of spokes} we mean a sequence $\bar{S} = (S_n)$ such that there exists $n_0$ such that for every $n \geq n_0$ we have $S_n \in *G_n$ and $S_{n + 1}$ is an immediate successor of $S_n$, while $S_m = \{0_m\} = S_{n_0}{}^{\sqsubset^m_{n_0}}$ for every $m \leq n_0$.
    We identify $\bar{S}$ with $(S_n, {\sqsupset_n^m}\restriction S_m\times S_n)$, which is a sequence in $\mathbf{S}$ and an upper restriction of $(G_n, \sqsupset^m_n)$.
\end{dfn}

Note that $n_0$ in the definition above is unique for $\bar{S}$, and that if for two branches of spokes $\bar{S}$ and $\bar{S}'$ we have $S_n = S'_n$, then also $S_m = S'_m$ for every $m \leq n$.
Also note that if the sequence $(\sqsupset^m_n)$ is in $\mathbf{X}$, then the tree of spokes is just the $\omega$-tree corresponding to $\mathsf{E}(\sqsupset^m_n)$, and branches of spokes are all sequences of spokes $(S_n)$ such that $S_n{}^{\sqsubset^m_n}= S_m$ for every $m \leq n$.

Given a sequence in $\mathbf{L}$, we would like to analyze the subspace of its spectrum associated with a branch of spokes.
Since a branch of spokes may not be surjective, we need to pass to its \emph{surjective core}.

\begin{obs} \label{SurjectiveCore}
    Let $(G_n, \sqsupset^m_n)$ be any sequence in $\mathbf{S}$.
    By putting $H_n = \bigcap_{k \geq n} G_k{}^{\sqsubset^n_k}$ and ${\sqni}^m_n = {\sqsupset}^m_n \restriction H_m \times H_n$ for every $m \leq n$ we obtain the largest surjective upper restriction $(H_n, \sqni^m_n)$ of $(G_n, \sqsupset^m_n)$, which we call the \emph{surjective core}.

    There is $k_0 \geq n$ such that $H_n = G_k{}^{\sqsubset^n_k}$ for every $k \geq k_0$.
\end{obs}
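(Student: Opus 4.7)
The plan is to establish the stabilization claim first and then derive the structural properties of the surjective core from it. For stabilization, the key observation is that $(G_k^{\sqsubset^n_k})_{k \geq n}$ forms a decreasing chain in the finite set $G_n$: given $a \in G_{k+1}^{\sqsubset^n_{k+1}}$ witnessed by some $b \in G_{k+1}$, the decomposition $\sqsupset^n_{k+1} = \sqsupset^n_k \circ \sqsupset^k_{k+1}$ supplies an intermediate $c \in G_k$ with $a \sqsupset^n_k c$, so $a \in G_k^{\sqsubset^n_k}$. Finiteness of $G_n$ forces the chain to stabilize at some $k_0 \geq n$, yielding the second sentence of the statement.

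With that in hand, I would first verify the upper restriction property $H_n^{\sqsubset^m_n} \subseteq H_m$: given $g \sqsupset^m_n h$ with $h \in H_n$, for indices $m \leq k \leq n$ the sequence property factors $\sqsupset^m_n$ through $G_k$ to produce a successor of $g$ in $G_k$, while for $k \geq n$ the membership $h \in G_k^{\sqsubset^n_k}$ yields $p \in G_k$ with $h \sqsupset^n_k p$, hence $g \sqsupset^m_k p$. So $g \in H_m$. Next, that $(H_n, \sqni^m_n)$ is a genuine sequence (not merely a lax-sequence) follows from this same property: if $a \sqsupset^l_n c$ with $a \in H_l$ and $c \in H_n$, then any intermediate $b \in G_m$ obtained from $\sqsupset^l_n = \sqsupset^l_m \circ \sqsupset^m_n$ lies in $H_n^{\sqsubset^m_n} \subseteq H_m$. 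Co-surjectivity of each $\sqni^m_n$ is then inherited from that of $\sqsupset^m_n$ via the upper restriction property once again.

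The delicate step, and where stabilization becomes indispensable, will be surjectivity of each $\sqni^m_n$: for $g \in H_m$ I would pick $k_0 \geq n$ large enough that both $H_m = G_{k_0}^{\sqsubset^m_{k_0}}$ and $H_n = G_{k_0}^{\sqsubset^n_{k_0}}$, take $p \in G_{k_0}$ with $g \sqsupset^m_{k_0} p$, and factor through $h \in G_n$ to obtain $g \sqsupset^m_n h \sqsupset^n_{k_0} p$, which forces $h \in G_{k_0}^{\sqsubset^n_{k_0}} = H_n$. Maximality is then routine: any surjective upper restriction $(H'_n, \sqni'^m_n)$ forces each $a \in H'_n$ to have a successor in $H'_k \subseteq G_k$ for every $k \geq n$, so $a \in \bigcap_{k \geq n} G_k^{\sqsubset^n_k} = H_n$. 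The only real obstacle is that without the finiteness-driven stabilization one could only produce successors of $g$ inside each $G_k$ separately, with no a priori way to locate a common intermediate successor in $H_n$; the eventual constancy of the image chain is precisely what makes this coherent choice possible.
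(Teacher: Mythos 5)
Your proof is correct and follows essentially the same route as the paper's: stabilisation of the decreasing chain $(G_k{}^{\sqsubset^n_k})_{k\ge n}$ by finiteness, maximality via $H'_n = H'_k{}^{\sqsubset^n_k}\subseteq G_k{}^{\sqsubset^n_k}$, and the identity $H_n{}^{\sqsubset^m_n}=H_m$, which the paper compresses into a single equality chain where you prove the two inclusions separately. Your explicit choice of a common stabilisation index $k_0$ for levels $m$ and $n$ in the surjectivity step is precisely where the paper's middle equality $(\bigcap_{k\ge n}G_k{}^{\sqsubset^n_k})^{\sqsubset^m_n}=\bigcap_{k\ge n}G_k{}^{\sqsubset^m_k}$ hides the same finiteness argument, so the two write-ups differ only in level of detail.
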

\begin{proof}
    The last part follows from the fact that $(G_k{}^{\sqsubset^n_k})_{k \geq n}$ is a decreasing sequence of finite sets, and so it stabilises.
    For every surjective restriction $(H'_n, \sqnii^m_n)$ of $(G_n, \sqsupset^m_n)$ we have $H'_n = H'_k{}^{\sqiin^n_k} \subseteq G_k{}^{\sqsubset^n_k}$ for every $n \leq k$, and so $H'_n \subseteq H_n$.
    The rest follows from the equalities \[\textstyle
        H_n{}^{\sqsubset^m_n} = (\bigcap_{k \geq n} G_k{}^{\sqsubset^n_k}) ^{\sqsubset^m_n} = \bigcap_{k \geq n} G_k{}^{\sqsubset^m_k} = H_m.
        \qedhere
    \]
\end{proof}

We say that a branch of spokes is \emph{nondegenerate} if the spectrum of its surjective core is not a singleton, equivalently if the co-bijective modification of the surjective core is nontrivial.
Note that a nondegenerate branch of spokes $\bar{S}$ is uniquely determined by its surjective core $\bar{T} = (T_n, \sqni^m_n)$ and also by the co-bijective modification $\bar{T}'$ of $\bar{T}$: as $\bar{T}'$ is nontrivial, we have $|T'_n| \geq 2$ for every $n \geq n_0$, and so $T'_n$ and $T_n$ are contained in a unique spoke $S_n \in *G_n$, and $\bar{S}$ is uniquely determined by its tail.

\begin{lemma} \label{EverySpoke}
    Let $(G_n, \sqsupset^m_n)$ be a sequence in $\mathbf{L}$.
    For every $k \in \omega$ and every spoke $S \in *G_k$ there is a nondegenerate branch of spokes $(S_n, \sqni^m_n)$ with $S_k = S$ such that $(\sqni^m_n)_{n \geq m \geq k}$ is in $\mathbf{A}$.
\end{lemma}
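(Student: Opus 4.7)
The plan is to construct the branch of spokes $(S_n)_{n \in \omega}$ with $S_k = S$ by extending in both directions along the sequence, and then to verify the required properties.

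For the upward extension, I proceed inductively: given $S_n \in *G_n$ for some $n \geq k$, I apply \autoref{lem:locally_cobijective} to the morphism $\sqsupset^n_{n+1}$ and the spoke $S_n \in *G_n$ of the codomain, obtaining a spoke $S_{n+1} \in *G_{n+1}$ of the domain such that the restriction ${\sqni^n_{n+1}} := {\sqsupset^n_{n+1}} \restriction S_n \times S_{n+1}$ lies in $\mathbf{A}$. The immediate-successor condition $\{0_{G_n}\} \subsetneq S_{n+1}^{\sqsubset^n_{n+1}} \subseteq S_n$ then follows: spoke-monotonicity places $S_{n+1}^{\sqsubset^n_{n+1}}$ inside some spoke $T$ of $G_n$, and since morphisms in $\mathbf{A} \subseteq \mathbf{B} \subseteq \mathbf{I}$ are surjective (co-injective relations are automatically surjective), $T$ must share a non-root vertex with $S_n$, forcing $T = S_n$ and a non-trivial image.

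For the downward extension, I iteratively project along the sequence: for $m < k$ taken in decreasing order, if $S_{m+1}^{\sqsubset^m_{m+1}}$ contains a non-root vertex, spoke-monotonicity yields a unique spoke of $G_m$ containing this image, which I take as $S_m$. If instead $S_{m+1}^{\sqsubset^m_{m+1}} = \{0_{G_m}\}$, I set $n_0 = m + 1$, define $S_\ell = \{0_{G_\ell}\}$ for all $\ell \leq m$, and terminate (if the process never terminates, set $n_0 = 0$). Iterated root-preservation gives $S_{n_0}^{\sqsubset^\ell_{n_0}} = \{0_{G_\ell}\}$ for $\ell < n_0$, matching the definition of a branch of spokes, while the immediate-successor conditions for $n_0 \leq m < k$ are automatic from the construction.

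To confirm that $(\sqni^m_n)_{n \geq m \geq k}$ is a sequence in $\mathbf{A}$, the essential point is the composition law $\sqni^\ell_m \circ \sqni^m_n = \sqni^\ell_n$ for $k \leq \ell \leq m \leq n$. Given $(h, g) \in \sqni^\ell_n$, any intermediate $f \in G_m$ witnessing $h \sqsupset^\ell_m f \sqsupset^m_n g$ lies in $g^{\sqsubset^m_n} \subseteq S_n^{\sqsubset^m_n} \subseteq S_m$, where the final inclusion uses spoke-monotonicity together with the uniqueness of the spoke of $G_m$ containing any non-root vertex of $S_n^{\sqsubset^m_n}$; hence $f \in S_m$ is a valid witness within the branch. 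Nondegeneracy follows because the root thread $(0_{G_n})_{n \geq k}$ and the end thread $(e_n)_{n \geq k}$ produced by successive applications of \autoref{lem:locally_cobijective} (with $e_n \in \mathsf{E}G_n \cap S_n$) give two distinct minimal selectors in the spectrum of the surjective core.

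I expect the main technical hurdle to be the composition law, which relies on carefully applying spoke-monotonicity to confine intermediate witnesses to the chosen spokes; everything else follows from \autoref{lem:locally_cobijective} and the properties of $\mathbf{A}$ established in \S\ref{ss.arc}.
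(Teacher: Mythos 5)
Your construction is exactly the paper's: upward extension via \autoref{lem:locally_cobijective}, downward projection through the unique spoke containing a nondegenerate image, degenerating to roots once the image collapses. The paper's own proof is far terser and leaves implicit the points you spell out (the composition law making the restriction a genuine sequence rather than a lax one, membership of the non-consecutive restrictions in $\mathbf{A}$, and nondegeneracy via the root and end threads), but your verifications of these are correct, so this is essentially the same argument with the routine checks written out.
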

\begin{proof}
We put $S_k = S$. We recursively choose $S_n$ for $n > k$ by an application of \autoref{lem:locally_cobijective}. For $n<k$,  we recursively define $S_n$ to be the unique spoke $T \in *G_n$ such that $\{0_n\} \subsetneq S_{n + 1}{}^{\sqsubset^n_{n + 1}} \subseteq T$ if the image is nondegenerate, and  $S_n = \{0_n\}$ otherwise.
\end{proof}

If $(G_n,\sqsupset_n^m)$ is a sequence in $\mathbf L$ with induced poset $\PP$, we denote by $\bar 0\in\Spec{\PP}$ the thread of ends $(0_n)$, viewed also as a minimal selector.
Moreover, for a branch of spokes $\bar{S}$, let $\QQ_{\bar{S}}$ denote the poset induced by the surjective core of $\bar{S}$,
and let $(e_{\bar{S}, n})$ denote the thread of ends of the co-bijective modification of the surjective core of $\bar{S}$ different from $\bar{0}$. 

Putting it all together, we describe the spoke decomposition of the resulting spectrum, which turns out to be a (topological) fan, coming from nondegenerate branches of spokes of the starting sequence.

\begin{prp} \label{FanCorrespondence}
    Let $(G_n, \sqsupset^m_n)$ be a sequence in $\mathbf L$  with spectrum $X$, and suppose that $X$ is connected and Hausdorff. Then:
    \begin{enumerate}
        \item \label{itm:fan} $X$ is a fan with root $\bar 0$.
        \item\label{itm:spokes} The map $\bar{S} \mapsto \Spec{\QQ_{\bar{S}}}\subseteq X$ is a one-to-one correspondence between nondegenerate branches of spokes in $(\sqsupset^m_n)$ and spokes in $X$,
        with $(e_{\bar{S}, n})^\leq \in \mathsf{E}X \cap \Spec{\QQ_{\bar{S}}}$ being the corresponding endpoint.
    \end{enumerate}
    If in addition the sequence $(\sqsupset^m_n)$ belongs to $\mathbf X$, then
    \begin{enumerate}[resume]
        \item\label{itm:Xends} 
        $\mathsf{E}X$ is the closed subspace $\Spec{\RR} \subseteq X$ where $\RR$ is the poset induced by the sequence $\mathsf{E}(\sqsupset^m_n)$ in $\mathbf{D}$.
    \end{enumerate}
\end{prp}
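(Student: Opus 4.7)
The plan is to write $X$ as the wedge at the root $\bar 0$ of the arcs arising from nondegenerate branches of spokes. Preliminaries are direct: by \autoref{ContinuumTreeLimit}, $X$ is a hereditarily unicoherent tree-like continuum and $(\sqsupset^m_n)$ is edge-faithful with edge-witnessing and star-refining subsequences, while root-preservation $0_{G_n}{}^{\sqsubset^m_n} = \{0_{G_m}\}$ combined with the thread construction in the proof of \autoref{Overlapping} makes $\bar 0 := (0_{G_n})^\leq$ a genuine minimal selector in $\PP$. Part~\ref{itm:spokes} then falls out of \autoref{lem:BranchOfSpokes}: for every nondegenerate branch $\bar S$ the surjective core inherits a star-refining subsequence via \autoref{lem:starrefiningsubsets}, so the lemma presents $A_{\bar S} := \Spec{\QQ_{\bar S}}$ as a closed arc of $X$ with endpoints $\bar 0$ and $e_{\bar S} := (e_{\bar S, n})^\leq$.

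The main content of part~\ref{itm:fan} is to show $X = \bar 0 \cup \bigcup_{\bar S} A_{\bar S}$ with the arcs meeting only at $\bar 0$. Given $S \in X \setminus \{\bar 0\}$, minimality of $\bar 0$ and the filter property of $S$ produce a smallest level past which no $0_{G_n}$ lies in $S$; beyond that level the trace $[\{S\}]_n$ is by \autoref{ConnectedSubsets} a nonempty connected subgraph of the tree $G_n$ missing the root, hence contained in a unique spoke $S_n \in {*G_n}$. Spoke-monotonicity of $\sqsupset^n_{n+1}$ forces $S_{n+1}{}^{\sqsubset^n_{n+1}}$ into the unique spoke of $G_n$ meeting $[\{S\}]_n$, namely $S_n$, so $S_{n+1}$ is an immediate successor of $S_n$. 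The delicate step is securing the base condition $S_{n_0}{}^{\sqsubset^m_{n_0}} = \{0_{G_m}\}$ for $m < n_0$, which is not obviously automatic: here the plan is to observe that $S_k{}^{\sqsubset^m_k}$ is decreasing in $k$ and to argue that its stable value must collapse to $\{0_{G_m}\}$, since a non-root limit vertex $h$ would — via compactness of $X$ and \autoref{Thread} — produce a minimal selector in $h^\in$ whose trace meets cofinally many $S_k$, giving a second arc of $X$ through a non-root vertex already on the arc $[\bar 0, S]$ and violating hereditary unicoherence. Taking $n_0$ large enough, \autoref{lem:BranchOfSpokes} then yields $S \in A_{\bar S}$. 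Disjointness of distinct branches is much easier: if $\bar S \ne \bar S'$ then some $S_n \ne S'_n$, and the trace of any common point is forced into $S_k \cap S'_k = \{0_{G_k}\}$ for all large $k$, so the common point is $\bar 0$. The resulting wedge-of-arcs structure makes $X$ arcwise connected; combined with hereditary unicoherence, $X$ is a dendroid and the decomposition is its spoke decomposition, so $X$ is a fan with root $\bar 0$.

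For part~\ref{itm:Xends}, end-preservation together with the end-surjectivity supplied by \autoref{lem:locally_cobijective} makes $\mathsf E(\sqsupset^m_n)$ a surjective sequence of functions between finite discrete graphs, i.e.\ a sequence in $\mathbf D$, and makes $\RR = \bigcup_n \mathsf E G_n$ upward-closed in $\PP$, so $\Spec{\RR}$ sits closedly in $X$ by \autoref{Subspaces}. In the $\mathbf X$-setting every branch of spokes is in fact a sequence in $\mathbf A$ between paths (the restricted co-injectivity following from \autoref{OnlyEndpointRedundantLemma} together with end-preservation), and the thread of non-root ends of the $S_n$ satisfies the hypothesis of \autoref{UniqueCobijectiveModification}; hence each $e_{\bar S, n}$ is genuinely an end of $G_n$ and $e_{\bar S} \in \Spec{\RR}$. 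Conversely, any $T \in \Spec{\RR}$ has by \autoref{Thread} a thread of ends, and the spokes containing these ends form, via the construction of the previous paragraph, a nondegenerate branch $\bar S$ with $T = e_{\bar S}$; since the fan structure identifies $\mathsf E X = \{e_{\bar S}\}$, this gives $\Spec{\RR} = \mathsf E X$.

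By some margin the hardest step is the $n_0$-refinement in the middle paragraph: no purely combinatorial property of $(G_n, \sqsupset^m_n)$ seems to force the collapse $S_{n_0}{}^{\sqsubset^m_{n_0}} = \{0_{G_m}\}$, and a genuine topological input — hereditary unicoherence, itself coming from \autoref{ContinuumTreeLimit} — appears indispensable.
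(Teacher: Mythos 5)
Your overall architecture is the paper's: the arcs $A_{\bar S}=\Spec{\QQ_{\bar S}}$ supplied by \autoref{lem:BranchOfSpokes}, pairwise intersection $\{\bar 0\}$ read off from traces, covering by showing the trace of a non-root point eventually avoids the root and sits in a unique spoke, and part~\ref{itm:Xends} via \autoref{Subspaces} and threads of ends. However, the step you flag as the hardest one rests on a false claim, and the hereditary-unicoherence argument you propose for it cannot be repaired. With $n_0$ taken to be the first level past which $0_{G_n}\notin S$, the stable value of $S_k{}^{\sqsubset^m_k}$ is by definition the surjective core $T_m$ of the branch (\autoref{SurjectiveCore}), and $T_m\supseteq[\{S\}]_m$ for the very point $S$ you started with, since $[\{S\}]_m=[\{S\}]_k{}^{\sqsubset^m_k}\subseteq S_k{}^{\sqsubset^m_k}$ for all large $k$ by \autoref{Subspaces}. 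For $m<n_0$ the trace $[\{S\}]_m$ contains $0_{G_m}$ together with, in general, a nondegenerate initial piece of a spoke (take $S$ a non-root point close to $\bar 0$ in the sequence of \autoref{lem:Cantorfansequence}), so $T_m\neq\{0_{G_m}\}$. In the $\mathbf X$ case the failure is total: images of spokes are entire spokes, so $S_{n_0}{}^{\sqsubset^m_{n_0}}$ is never $\{0_{G_m}\}$ for a nondegenerate branch. A non-root vertex $h\in T_m$ only says that the arc $A_{\bar S}$ meets $h^\in$; it does not produce a second arc, so no contradiction with hereditary unicoherence is available.

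The resolution is purely combinatorial and needs no topological input. The transition level of the branch is not the level where the root leaves the trace; it is produced by the downward recursion of \autoref{EverySpoke}: below the levels where the trace avoids the root, let $S_m$ be the unique spoke containing $S_{m+1}{}^{\sqsubset^m_{m+1}}$ as long as this image is nondegenerate, and $S_m=\{0_{G_m}\}$ once it degenerates (possibly never, in which case the base clause is vacuous, as it must be for sequences in $\mathbf X$). Once $S_{n_0+1}{}^{\sqsubset^{n_0}_{n_0+1}}=\{0_{G_{n_0}}\}$, root-preservation -- part of spoke-monotonicity -- immediately gives $S_{n_0}{}^{\sqsubset^m_{n_0}}=\{0_{G_m}\}$ for all $m\leq n_0$. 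The inclusion $[\{S\}]_m\subseteq T_m$ holds at every level by the same computation you use above $n_0$, so $S\in A_{\bar S}$. With this correction (and a remark that $G_0$ has at least three spokes, so that the resulting decomposition really witnesses $\bar 0$ as a ramification point), your argument coincides with the paper's proof.
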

\begin{proof}
    By \autoref{ContinuumTreeLimit}, $X$ is a hereditarily unicoherent continuum, and $(\sqsupset_n^m)$ is edge-faithful and has a star-refining subsequence. 
    We will show \ref{itm:fan} and \ref{itm:spokes} at once.
    To show that $X$ is a fan, we need to show that it is arcwise connected and that $\bar{0}$ is its only ramification point.   

    For every nondegenerate branch of spokes $\bar{S}$ let $A_{\bar{S}}$ denote the spectrum $\Spec{\QQ_{\bar{S}}}$. 
    We show that $A_{\bar{S}} \subseteq X$ is an arc with endpoints $\bar{0}$ and $(e_{\bar{S}, n})^\leq$.
    Let $\bar{T}$ be the surjective core of $\bar{S}$ and let $\bar{T'}$ be the co-bijective modification of $\bar{T}$ obtained by \autoref{PathCoBijectiveModification}, with induced poset $\QQ'_{\bar{S}}$.
    We have that $S \mapsto S^\leq$ is an embedding $\Spec{\QQ'_{\bar{S}}} \to X$ onto $A_{\bar{S}}$ by \autoref{Subspaces} and \autoref{DenseRestrictionHomeomorphic}.
    Since $(\sqsupset^m_n)$ has a star-refining subsequence, so does $\bar{T}$ 
    as for every star-refining ${\sqsupset} \in \mathbf{S}^H_G$ and $T \subseteq G$, the restriction ${\sqsupset}\restriction T^\sqsubset \times T$ is star-refining as well.
    Hence, $A_{\bar{S}}$ is a singleton or an arc by \autoref{MoreArcs}.
    Since $\bar{S}$ is nondegenerate, $\Spec{\QQ'_{\bar{S}}}$ is an arc with endpoints $\bar{0}$ and $(e_{\bar{S}, n})$, and so the claim follows.
    
    For nondegenerate branches of spokes $\bar{S} \neq \bar{S}'$ there is $n \in \omega$ such that $S_n \neq S'_n$.
    Then for every $x \in A_{\bar{S}} \cap A_{\bar{S}'}$ we have $[\{x\}]_n \subseteq [A_{\bar{S}}]_n \cap [A_{\bar{S}'}]_n \subseteq S_n \cap S'_n = \{0_n\}$.
    Hence, $x = \bar 0$ and $A_{\bar{S}} \cap A_{\bar{S}'} = \{\bar 0\}$.
    
    For every $x \in X$ there is a nondegenerate branch of spokes $\bar{S}$ such that $x \in A_{\bar{S}}$.
    If $x = \bar{0}$, this is clear.
    If for some $n$ (and so for almost every $n$) the trace $[\{x\}]_n$ intersects two spokes of $G_n$, then the trace contains $0_n$ as it is connected by \autoref{ConnectedSubsets}. Hence, $\bar 0 \subseteq x$, and so $x = \bar 0$.
    Otherwise, $[\{x\}]_n \subseteq T_n \subseteq S_n$ for a branch of spokes $\bar{S}$ and its surjective core $\bar{T}$. If the co-bijective modification of $\bar{T}$ is nontrivial, then $\bar{S}$ is nondegenerate with $x \in A_{\bar{S}}$.
    Otherwise, again $x = \bar 0$.
    Altogether, $\Spec{\PP}$ is arcwise connected, and so is a dendroid.
    
    Since, by \autoref{EverySpoke}, every spoke of $G_0$ can be extended to a nondegenerate branch of spokes and since $G_0$ is a nontrivial fan with at least three spokes, $\bar{0}$ is indeed a ramification point.
    Altogether, $\{A_{\bar{S}}: \bar{S}\text{ is a nondegenerate branch of spokes}\}$
    is a spoke decomposition of $X$, and so $X$ is a fan with ramification point $\bar 0$, and $(e_{\bar{S}, n})^\leq$ are the endpoints of $X$.
    This proves \ref{itm:fan} and \ref{itm:spokes}.

    To prove \ref{itm:Xends} suppose that the sequence $(\sqsupset^m_n)$ is end-preserving.
    Then all sequences $\bar{S} = (S_n)$ of spokes with $S_n{}^{\sqsubset^m_n} \subseteq S_m$ for all $m \leq n$ induce nondegenerate branches of spokes, which are already sequences in $\mathbf{A}$.
    Their threads of ends $(e_{\bar{S}, n}) = (e_{\bar{S}, n})^\leq$, which form the endpoints of $X$, are exactly all threads of $\mathsf{E}(\sqsupset^m_n)$, which form the spectrum $\Spec{\RR}$.
    The spectrum $\Spec{\RR}$ is a closed subspace of $X$ by \autoref{Subspaces} since $\mathsf{E}(\sqsupset^m_n)$ is a sequence in $\mathbf{D}$ and an upper restriction of $(\sqsupset^m_n)$.
\end{proof}

Before moving on to study $\mathbf X$ and $\mathbf L$ individually we  show that these two categories are Fra\"iss\'e.

\begin{lemma}\label{lem:starreffans}
    Star-refining morphisms are wide in $\mathbf X$, and therefore in $\mathbf L$.
\end{lemma}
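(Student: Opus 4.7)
The plan is to witness wideness by the edge-splitting construction of \autoref{EdgeSplitting}. Given any nontrivial fan $H \in \mathbf{X}$, apply edge splitting to obtain the graph $G = H \cup \{s_{h,h'} : h \sim h' \in H\}$ and the associated morphism ${\sqsupset} \in \mathbf{B}^H_G$, which \autoref{EdgeSplitting} already guarantees is monotone, edge-witnessing, and star-refining. So the content of the proof is to check two things: that $G$ is again a nontrivial fan, and that $\sqsupset$ satisfies the extra structural conditions built into $\mathbf{X}$, namely root-preservation, spoke-monotonicity, and end-preservation.

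The first check is immediate from the shape of edge splitting: each new vertex $s_{h,h'}$ has degree exactly $2$ in $G$, while every vertex of $H$ keeps its original degree. Hence $G$ is a tree obtained by subdividing every edge of $H$ into three, the unique vertex of degree $\geq 3$ is the old root $0_H = 0_G$, and $\mathsf{E}G = \mathsf{E}H$. In particular, $G$ is a nontrivial fan whose spokes are exactly the subdivisions of the spokes of $H$.

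For the second check, root-preservation and end-preservation follow directly from the defining rule $g^\sqsubset = \{g\}$ for $g \in H$: we have $0_G^\sqsubset = \{0_H\}$, and for any $e \in \mathsf{E}G = \mathsf{E}H$ we have $e^\sqsubset = \{e\} \subseteq \mathsf{E}H$. Spoke-monotonicity is also straightforward: every spoke $S$ of $G$ is the subdivision of a unique spoke $T$ of $H$, so $S^\sqsubset = T$, and the restriction ${\sqsupset}\restriction T \times S$ is monotone by \autoref{RestrictionsOfMonotone} applied to the surjective monotone morphism $\sqsupset$ between the trees $H$ and $G$, since $S$ is connected. Thus ${\sqsupset} \in \mathbf{X}^H_G$ is star-refining, proving wideness in $\mathbf{X}$. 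Since $\mathbf{X}$ and $\mathbf{L}$ have the same objects and every morphism of $\mathbf{X}$ is a morphism of $\mathbf{L}$, star-refining morphisms are also wide in $\mathbf{L}$.

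There is no real obstacle here: edge splitting was designed precisely to supply star-refining morphisms, and the only subtlety is noting that it introduces only degree-$2$ vertices along edges, which automatically preserves the fan structure together with both its root and its ends.
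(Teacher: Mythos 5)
Your proof is correct and is essentially the paper's argument in a slightly different packaging: the paper constructs the witness spoke-by-spoke (a star-refining morphism in $\mathbf{A}$ onto each spoke, glued at the root), whereas you apply the edge-splitting construction of \autoref{EdgeSplitting} to the whole fan at once and then verify that the result is again a fan and that the morphism is root-preserving, spoke-monotone and end-preserving. The two constructions produce the same morphism, and your version has the minor advantage of inheriting the star-refining property directly from \autoref{EdgeSplitting} rather than relying on the gluing step.
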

\begin{proof}
Pick a fan $F$. For every spoke $S\in\ast F$, we can find a path graph $S'$ and a star-refining morphism in $\mathbf A_{S'}^S$.  Since gluing together star-refining morphisms at the root results in a star-refining morphism between the corresponding fans, we have that star-refining morphisms are wide in $\mathbf X$. As $\mathbf L$ has the same objects as $\mathbf X$, the thesis follows.
\end{proof}

\begin{prp}\label{prop:fanscliqueclosed}
Both $\mathbf{L}$ and $\mathbf{X}$ are clique-closed.
\end{prp}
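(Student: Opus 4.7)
My plan is to verify the two defining conditions of clique-closedness for each of $\mathbf{L}$ and $\mathbf{X}$: for every fan $G$, both $\mathsf{X}G$ and $\in_G$ should lie in the category, and for every morphism $\sqsupset$ in the category, so should $\mathsf{X}^\sqsupset$. Since $\mathbf{X}$ and $\mathbf{L}$ share the same objects, most of the work is for $\mathbf{L}$, and the $\mathbf{X}$ case reduces to two additional end-preservation checks.

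For the objects, since a fan $G$ is triangle-free its cliques are exactly the singletons and edges, and a direct degree calculation (parallel to \autoref{TreesCliqueClosed}) then gives that $\mathsf{X}G$ is a nontrivial fan with root $\{0_G\}$, spokes $\mathsf{X}S$ for $S\in *G$, and ends $\{e\}$ for $e\in\mathsf{E}G$. To verify $\in_G\in\mathbf{L}^G_{\mathsf{X}G}$, by \autoref{CliqueTransformation} the relation $\in_G$ is already co-bijective, edge-witnessing, and monotone. Root-preservation $\{0_G\}^\sqsubset=\{0_G\}$ is immediate, and spoke-monotonicity holds because $(\mathsf{X}S)^\sqsubset=\bigcup_{C\in\mathsf{X}S}C=S$, while \autoref{RestrictionsOfMonotone} applied to the tree $\mathsf{X}G$ and its connected subgraph $\mathsf{X}S$ gives monotonicity of $\in_G\restriction S\times\mathsf{X}S$. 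End-preservation for $\mathbf{X}$ is immediate from $\{e\}^\sqsubset=\{e\}\subseteq\mathsf{E}G$.

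For the morphism condition, fix ${\sqsupset}\in\mathbf{L}^H_G$. Since $H$ is triangle-free and $\sqsupset\in\mathbf{B}\cap\mathbf{E}$ (the latter via \autoref{SurjectiveImpliesEdgeSurjective}), \autoref{CliqueFunctor} gives that $\mathsf{X}^\sqsupset$ is a co-bijective edge-preserving function. Root-preservation $\mathsf{X}^\sqsupset(\{0_G\})=\{0_H\}$ is immediate, and when $\sqsupset\in\mathbf{X}$, for any $e\in\mathsf{E}G$ the image $\mathsf{X}^\sqsupset(\{e\})=e^\sqsubset$ is a non-empty clique inside $\mathsf{E}H$, hence a singleton, as distinct ends of a fan lie in distinct spokes and so are non-adjacent. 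For spoke-monotonicity, spoke-monotonicity of $\sqsupset$ yields, for each spoke $\mathsf{X}S$ of $\mathsf{X}G$, a spoke $T\in *H$ with $S^\sqsubset\subseteq T$; then $\mathsf{X}^\sqsupset(\mathsf{X}S)\subseteq\mathsf{X}T$, and the restriction $\mathsf{X}^\sqsupset\restriction\mathsf{X}T\times\mathsf{X}S$ coincides with $\mathsf{X}^{\sqsupset\restriction T\times S}$.

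The core technical step, and the main obstacle, is monotonicity of this last clique map, since $\sqsupset\restriction T\times S$ is monotone between paths but not necessarily co-bijective, so \autoref{ACliqueClosed} does not apply directly. I would dispatch it by noting that $S^\sqsubset$ is a connected subpath of $T$ containing $0_H$, and splitting into two cases. If $S^\sqsubset=\{0_H\}$ then $\mathsf{X}^{\sqsupset\restriction T\times S}$ is constant at $\{0_H\}$ and trivially monotone. Otherwise $\sqsupset\restriction S^\sqsubset\times S$ is a monotone surjective morphism between nondegenerate paths, which by \autoref{PathCobijectiveModification} admits a unique co-bijective modification $\sqsupset\restriction S^\sqsubset\times S^\circ\in\mathbf{A}$, where $S^\circ\subseteq S$ contains $0_G$ and differs from $S$ by at most the end opposite $0_G$. \autoref{ACliqueClosed} then supplies monotonicity of the clique map over $S^\circ$, and a short bookkeeping step using \autoref{EndpointSituations} extends this to all of $\mathsf{X}S$, since the at-most-one extra end of $S$ is simply folded back via $\mathsf{X}^\sqsupset$ onto a neighbouring vertex already present in $S^\circ$.
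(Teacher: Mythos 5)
Your treatment of the objects, of $\in_G$, and of the entire $\mathbf{X}$ case is sound: for $\mathbf{X}$ every spoke restriction ${\sqsupset}\restriction T\times S$ already lies in $\mathbf{A}$, so \autoref{ACliqueClosed} applies directly, and your end-preservation checks are correct. You have also put your finger on exactly the right difficulty for $\mathbf{L}$: for a domain spoke $S$ other than the one supplied by \autoref{lem:locally_cobijective}, the restriction ${\sqsupset}\restriction T\times S$ need not be co-injective, so \autoref{ACliqueClosed} cannot be quoted (the paper's own proof simply asserts at this point that the induced morphism is monotone). The problem is that your repair does not work. The co-bijective modification machinery achieves co-injectivity of ${\sqsupset}^m_n\restriction H_m\times H_n$ by shrinking the \emph{earlier} level $H_m$, i.e.\ the codomain, to a minimal dense set (\autoref{MinimalRange}, \autoref{RestrictionCobijective}); it is a statement about infinite sequences, and in any case deleting vertices from the domain path $S$ can never create a vertex $s$ with $s^\sqsubset=\{h\}$ where none existed. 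So there is in general no subpath $S^\circ\subseteq S$ with ${\sqsupset}\restriction S^\sqsubset\times S^\circ\in\mathbf{A}$, and the concluding bookkeeping step has nothing to start from.

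Worse, the claim this step is meant to establish is actually false, so no patch along these lines can succeed. Let $S$ be a spoke $0_G\sim a\sim b\sim c$ of $G$ and $T$ a spoke $0_H\sim h_1$ of $H$; set $0_G^\sqsubset=b^\sqsubset=c^\sqsubset=\{0_H\}$ and $a^\sqsubset=\{0_H,h_1\}$; let a second spoke $S'=0_G\sim a'\sim b'$ map by $a'^\sqsubset=\{0_H,h_1\}$, $b'^\sqsubset=\{h_1\}$; and let the remaining spokes of $G$ cover the remaining spokes of $H$ co-bijectively and monotonically. This ${\sqsupset}$ is edge-preserving, co-bijective, root-preserving and spoke-monotone (on $S$ the preimages of $\{0_H\}$, $\{h_1\}$ and $T$ are $S$, $\{a\}$ and $S$), hence a morphism of $\mathbf{L}$. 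But the strict preimage of $0_H$ meets $S$ in $\{0_G,b,c\}$, which is disconnected, so in the path $\mathsf{X}S=\{0_G\},\{0_G,a\},\{a\},\{a,b\},\{b\},\{b,c\},\{c\}$ the $\mathsf{X}^\sqsupset$-preimage of the single vertex $\{0_H\}\in\mathsf{X}T$ is $\{\{0_G\},\{b\},\{b,c\},\{c\}\}$, which is disconnected; since $\mathsf{X}T$ is the only spoke of $\mathsf{X}H$ containing the image of $\mathsf{X}S$, the function $\mathsf{X}^\sqsupset$ is not spoke-monotone. The underlying point is that monotonicity of a spoke restriction does not force its \emph{strict} preimages to be connected -- that is precisely what co-injectivity buys in \autoref{EdgeReflectiveImpliesMonotone} -- and it is connectivity of strict preimages that governs $\mathsf{X}^\sqsupset$ on singleton cliques. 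So for $\mathbf{L}$ both your argument and the assertion it is meant to justify break down; a correct treatment must either restrict the morphisms (as in $\mathbf{X}$) or avoid routing the $\mathbf{L}$ amalgamation through the clique functor.
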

\begin{proof}
Let $G$ be a fan. As there are no triangles in fans, cliques are either singletons or pairs of adjacent vertices. The graph $\mathsf XG$ has a root given by $C=\{0_G\}$. Further, since every point in $G\setminus\{0_G\}$ is related to at most two elements, every element of $\mathsf XG\setminus\{C\}$ is only related to two elements. Hence $\mathsf XG$ is a fan. Note that $\mathsf XG$ has exactly as many spokes as $G$ does (yet they are longer). 

Let ${\in_G} \subseteq G\times\mathsf XG$ be the morphism associating to each clique $C\in\mathsf XG$ its elements. $\in_G$ maps the root to the root. Furthermore, if $S\in\ast G$, then ${\in_G}\restriction S\times\mathsf XS$ is end-preserving, monotone, and co-bijective, and therefore belongs to $\mathbf A$. Hence ${\in_G}\in\mathbf{X}\subseteq\mathbf L$.

We are left to show that if ${\sqsupset}\in \mathbf L_G^H$, for fans $G$ and $H$, then $\mathsf X^{\sqsupset}\in \mathbf{L}$. We further prove that if $\sqsupset$ is end-preserving so is $\mathsf X^\sqsupset$; this will give that both $\mathbf L$ and $\mathbf X$ are clique-closed. Since $\mathbf L\subseteq\mathbf E$ and fans are triangle-free, \autoref{CliqueFunctor} implies that $\mathsf X^{\sqsupset}$ belongs to $\mathbf B$. Since $\sqsupset$ is spoke-monotone, we can find $i'$ such that $(G^i)^\sqsubset\subseteq H^{i'}$, hence $\mathsf X^\sqsupset(\mathsf XG^i)\subseteq\mathsf XH^{i'}$, and the induced morphism is monotone and sends the root to the root. If further each inclusion as above is not strict, each inclusion of the form $\mathsf X^\sqsupset(\mathsf XG^i)\subseteq\mathsf XH^{i'}$ cannot be strict, therefore if $\sqsupset$ is end-preserving, so is $\mathsf X^\sqsupset$. This finishes the proof.
\end{proof}

\begin{prp}\label{prp:amalgfans}
The categories $\mathbf{X}$ and $\mathbf L$ are directed and have amalgamation.
\end{prp}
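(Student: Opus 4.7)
My plan is to combine the clique-closedness of both $\mathbf{X}$ and $\mathbf{L}$ established in \autoref{prop:fanscliqueclosed} with \autoref{cor:cliqueclosed} to reduce amalgamation to the subcategory of function morphisms. Directedness will then be automatic: the claw (a fan with three single-vertex spokes) is a weakly terminal object in both categories, as any nontrivial fan admits a spoke-monotone, end-preserving, co-bijective edge-preserving function onto it — simply map three designated spokes monotonically onto the three arms of the claw, and collapse every remaining spoke onto one of the arms.

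For amalgamation of functions in $\mathbf{X} \cap \mathbf{F}$, end-preservation together with spoke-monotonicity forces each spoke $s \in *G$ to map via $\sigma\restriction s$ onto a unique spoke $\sigma(s) \in *F$ as a monotone surjection in $\mathbf{A} \cap \mathbf{F}$. Given $\sigma \in \mathbf{X}^F_G$ and $\tau \in \mathbf{X}^F_H$, for each pair $(s,t) \in *G \times *H$ with $\sigma(s) = \tau(t)$ I invoke amalgamation in $\mathbf{A} \cap \mathbf{F}$ (\autoref{prp:amalgarc}) on $\sigma\restriction s$ and $\tau\restriction t$. Gluing the resulting paths at their roots produces a nontrivial fan $F'$ (since $|*F| \geq 3$ guarantees at least three such pairs by the surjectivity of $\sigma,\tau$ on ends) with the desired morphisms into $G$ and $H$; these are spoke-monotone, root-preserving, end-preserving, and surjective by construction.

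The case of $\mathbf{L}$ is the main obstacle, since morphisms need not preserve ends: a spoke $s$ of $G$ may map via $\sigma\restriction s$ to a proper subpath $r_s \subseteq r$ of some spoke $r \in *F$, or even collapse entirely to $\{0_F\}$. The strategy remains the same: for each \emph{compatible} pair $(s,t) \in *G \times *H$, meaning $\sigma(s) \cup \tau(t)$ lies in some common spoke $r \in *F$, set $r_{s,t} = r_s \cap r_t$, let $s' = (\sigma\restriction s)^{-1}(r_{s,t})$ and $t' = (\tau\restriction t)^{-1}(r_{s,t})$ (both subpaths containing the respective roots by monotonicity), and amalgamate the monotone surjections $\sigma\restriction s'$ and $\tau\restriction t'$ in $\mathbf{A} \cap \mathbf{F}$ to obtain $p_{s,t}$ with maps $\alpha_{s,t}\maps p_{s,t} \to s'\subseteq s$ and $\beta_{s,t}\maps p_{s,t} \to t'\subseteq t$. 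Then glue all the $p_{s,t}$ at their roots.

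The critical verification is global surjectivity of the resulting map $\alpha\maps F' \to G$. For each fixed $s \in *G$ one needs
\[
    \bigcup_t \alpha_{s,t}(p_{s,t}) \;=\; (\sigma\restriction s)^{-1}\!\Bigl(\bigcup_t (r_s \cap r_t)\Bigr) \;=\; (\sigma\restriction s)^{-1}(r_s) \;=\; s,
\]
where the union ranges over spokes $t$ of $H$ compatible with $s$. The key fact is that $\bigcup_t r_t = r$: since subpaths of $r$ containing $0_F$ are linearly ordered by inclusion, the surjectivity of $\tau$ onto the spoke $r$ forces some $t\in *H$ to satisfy $\tau(t) = r$. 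Fully collapsed spokes (where $r_s = \{0_F\}$) are accommodated by amalgamating over the singleton path $\{0_F\}$, which is a valid operation in $\mathbf{A}$. Finally, nontriviality of $F'$ and surjectivity of $\beta$ follow by the symmetric argument.
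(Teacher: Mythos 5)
Your proposal is correct and follows essentially the same route as the paper: reduce to functions via clique-closedness, amalgamate spoke-by-spoke using amalgamation in $\mathbf{A}$, glue the resulting paths at the ends mapping to the roots, and use the fact that images of spokes inside a common spoke of $F$ are nested (so some spoke of $H$ maps onto all of $r$) to verify surjectivity — your ``compatible pairs'' are exactly the paper's set $\Gamma$. The only details you gloss over (identifying which end of each amalgamated path is the one to glue, and checking the glued relation is root-preserving) are handled explicitly but routinely in the paper.
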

\begin{proof}
The idea is to amalgamate spoke-by-spoke, and then to glue all amalgamations.
Since we have a weakly terminal object (the claw), it is enough to show amalgamation and we will get directedness. Fix then nontrivial fans $F$, $G$, and $H$, and let ${\sqsupset}\in\mathbf L_{G}^F$ and ${\sqni}\in\mathbf L_{H}^F$ be two morphisms. (For amalgamation in $\mathbf X$, we will just show that if $\sqsupset$ and $\sqni$ belong to $\mathbf X$, so do the amalgamating morphisms). Since $\mathbf L$ (and $\mathbf X$) are clique-closed (\autoref{prop:fanscliqueclosed}), we can assume that both $\sqsupset$ and $\sqni$ are functions.

Define 
\[
\Gamma=\{(S,S')\in\ast G\times\ast H : S^{\sqsubset}\subseteq (S')^{\sqin}\text{ or }(S')^{\sqin}\subseteq S^{\sqsubset}\}.
\]
Fix $(S,S')\in\Gamma$ such that $S^{\sqsubset}\subseteq (S')^{\sqin}$ and let $R=S^\sqsubset$. By spoke-monotonicity, $R$ is a subpath of $F$ which has the root $0_F$ as one of its ends. In particular, if $R\neq\{0_F\}$, it is completely contained in a spoke of $F$.
Let 
\[
{\sqsupset_{S,S'}}:={\sqsupset}\restriction R\times S \text{ and }{\sqni_{S,S'}}:={\sqni}\restriction R\times (R^{\sqni}\cap S').
\]
Both $\sqsupset_{S,S'}$ and $\sqni_{S,S'}$ are monotone surjective morphisms which are furthermore induced by functions, i.e. they belong to $\mathbf A$. Therefore, by \autoref{prp:amalgarc}, there are a path $K=K(S,S')$ and morphisms ${\dashv_{S,S'}}\in \mathbf A_K^{S}\cap\mathbf F$  and ${\Dashv_{S,S'}}\in \mathbf A_K^{R^{\sqni} \cap S'}\cap\mathbf F$
such that
\[
{\sqsupset_{S,S'}\circ\dashv_{S,S'}}={\sqni_{S,S'}\circ\Dashv_{S,S'}}.
\]
If $(S,S')\in\Gamma$ and $(S')^{\sqin}\subseteq S^{\sqsubset}$, we define symmetrically $T=(S')^{\sqin}$ and amalgamate
\[
{\sqsupset_{S,S'}}:={\sqsupset}\restriction T\times (T^{\sqsupset}\cap S) \text{ and }{\sqni_{S,S'}}:={\sqni}\restriction (T\times S')
\]
to obtain a path $K=K(S,S')$ and amalgamating functions ${\dashv_{S,S'}}\in \mathbf A_K^{T^\sqsupset\cap S}\cap\mathbf F\text{ and }{\Dashv_{S,S'}}\in \mathbf A_K^{S'}\cap\mathbf F
$.

Let $e_{S,S'}\in\mathsf EK(S,S')$ be such that 
\[
e_{S,S'}^{\vdash_{S,S'}\circ\sqsubset_{S,S'}}=e_{S,S'}^{\vDash_{S,S'}\circ\sqin_{S,S'}}=\{0_F\}.
\]
(If $S^{\sqsubset}=\{0_F\}=(S')^{\sqin}$, any end of $K(S,S')$ would do, otherwise, $e_{S,S'}$ is uniquely determined, as all morphisms are monotone.)

Let $K$ be the fan whose spokes are the $K(S,S')$, where the ends $e_{S,S'}$ are glued together. Let
\[
{\dashv}=\bigcup_{(S,S')\in\Gamma}\dashv_{S,S'}\text{ and }{\Dashv}=\bigcup_{(S,S')\in\Gamma}\Dashv_{S,S'}.
\]
It is routine to check that $\dashv$ and $\Dashv$ give the required amalgamation, and that they are edge-preserving morphisms between fans. Further, as $\dashv_{S,S'}$ and $\Dashv_{S,S'}$ are monotone, $\dashv$ and $\Dashv$ are spoke-monotone. 

We just have to show these morphisms belong to $\mathbf L$ and, if $\sqsupset$ and $\sqni$ are in $\mathbf X$, then $\dashv$ and $\Dashv$ are in $\mathbf X$ too.
\begin{itemize}
\item Since both $\sqsupset$ and $\sqni$ are co-bijective, for every $S$ there is $S'$ such that $(S,S')\in\Gamma$ and therefore $\dashv_{S,S'}$ surjects onto $S$. This shows that $\dashv$ is surjective, and since $\dashv$ is a function, it belongs to $\mathbf B$. Symmetry ensures that $\Dashv$ is surjective. This gives that $\mathbf L$ has amalgamation.
\item If both $\sqsupset$ and $\sqni$ are in $\mathbf X$, then by end-preservation, we have that $R=S^{\sqsubset}=(S')^{\sqin}$ is a spoke whenever $(S,S')\in\Gamma$. In this case, $R^{\sqni}\cap S'=S'$. This implies that each $\dashv_{S,S'}$ is surjecting onto $S$ and each $\Dashv_{S,S'}$ is surjecting onto $S'$. Hence if $f$ is an end of $K(S,S')$ then $f^{\vdash_{S,S'}}$ is an end in $S$ (likewise, images of ends of $K(S,S')$ are ends in $S'$), meaning that $\dashv$ and $\Dashv$ are end-preserving, i.e. they belong to $\mathbf X$. This gives that $\mathbf X$ has amalgamation.
\end{itemize}
This concludes the proof.
\end{proof}

By Propositions~\ref{prp:amalgfans} and \ref{FraisseExists}, both $\mathbf X$ and $\mathbf L$ have a Fra\"iss\'e sequence.

\subsubsection{Smooth fans as spectra}

In this subsection we prove that every \emph{smooth fan}, i.e. a subfan of the Cantor fan, is the spectrum of a sequence in $\mathbf{L}$.
For that we carefully define a particular sequence of covers, and use \autoref{SequenceForSpace}.

For every $n, k \in \omega$ let $A_{n, k}$ be the open interval $(k 2^{-n} - 2^{-n}, k 2^{-n} + 2^{-n})$ in the real line, i.e. the open ball with center $x_{n, k} = k 2^{-n}$ and radius $2^{-n}$.
Note that the sequence of covers $(\{A_{n, k}: 0 \leq k \leq 2^n\})_n$, when intersected with the unit interval $[0, 1]$, yields an $\omega$-poset similar to \cite[Example~2.16]{BartosBiceV.Compacta} and \autoref{fig:comparison}.

\begin{obs} \label{IntervalCovers}
    Our sequence of covers has the following structure.
    \begin{enumerate}
        \item We have $A_{n, k} \setminus \bigcup_{k' \neq k} A_{n, k'} = \{x_{n, k}\}$, i.e. the center is the unique point not covered by the other sets at the same level.
        Moreover, if $x_{n, k} \in A_{m, j}$ for some $m \leq n$, then $A_{n, k} \subseteq A_{m, j}$.
        \item We have $A_{n, k} \supseteq A_{n + 1, j}$ if and only if $j \in \{2k - 1, 2k, 2k + 1\}$ and in fact $A_{n, k} = A_{n + 1, 2k - 1} \cup A_{n + 1, 2k} \cup A_{n + 1, 2k + 1}$.
        \item Given $A_{n + 1, k} \subseteq A_{n, j}$, either $k$ is even ($k = 2k'$) and $j = k'$ and the two sets share the center $x_{n + 1, 2k} = x_{n, k}$, or $k$ is odd ($k = 2k' + 1$) and $j \in \{k, k + 1\}$ and $A_{n + 1, k} = A_{n, k'} \cap A_{n, k' + 1}$.
    \end{enumerate}
\end{obs}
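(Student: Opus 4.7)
The approach is entirely elementary: each of the three claims is a direct computation from the defining formula $A_{n,k} = ((k-1)2^{-n}, (k+1)2^{-n})$. The plan is to rewrite this formula in units of $2^{-n}$ (or $2^{-n-1}$ when comparing consecutive levels) and then verify each item by comparing integer endpoints.

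For item (1), I would first note that $A_{n, k'}$ is disjoint from $A_{n,k}$ whenever $|k - k'| \geq 2$, so only the two neighbours $A_{n, k\pm 1}$ can contribute. A direct computation gives $A_{n, k-1} \cup A_{n, k+1} = ((k-2)2^{-n}, (k+2)2^{-n}) \setminus \{x_{n,k}\}$, from which the first assertion of (1) follows. For the second assertion, given $x_{n,k} \in A_{m,j}$ with $m \leq n$, writing the condition as $(j-1) 2^{n-m} < k < (j+1) 2^{n-m}$, the integrality of $(j\pm 1)2^{n-m}$ (which holds since $m \leq n$) upgrades the strict inequalities to $(j-1) 2^{n-m} \leq k - 1$ and $k + 1 \leq (j+1) 2^{n-m}$, yielding $A_{n,k} \subseteq A_{m,j}$.

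For item (2), I would write $A_{n,k} = ((2k-2)2^{-n-1}, (2k+2)2^{-n-1})$ and $A_{n+1, j} = ((j-1)2^{-n-1}, (j+1)2^{-n-1})$, so the containment $A_{n+1, j} \subseteq A_{n,k}$ is equivalent to $2k - 1 \leq j \leq 2k + 1$. The equality $A_{n,k} = A_{n+1, 2k-1} \cup A_{n+1, 2k} \cup A_{n+1, 2k+1}$ then follows by observing that $A_{n+1, 2k-1}$ and $A_{n+1, 2k+1}$ together cover $A_{n,k} \setminus \{x_{n,k}\}$, while $A_{n+1, 2k}$ contains $x_{n,k}$.

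For item (3), I would simply split on the three cases $j \in \{2k-1, 2k, 2k+1\}$ (rewritten in terms of the parity of $k$) supplied by item (2) and read off the centers and intersections from the explicit endpoints. The only subtle point, which I would flag as a sanity check rather than a real obstacle, is a small indexing discrepancy in the odd case: when $k = 2k'+1$, the containment $A_{n+1,k} \subseteq A_{n,j}$ holds precisely for $j \in \{k', k'+1\}$, and in either case $A_{n+1,k} = A_{n,k'} \cap A_{n,k'+1}$, which is exactly the content of the last clause. There is no genuine difficulty anywhere; the whole observation is a bookkeeping exercise about dyadic intervals.
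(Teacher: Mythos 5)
Your verification is correct and is exactly the direct dyadic-interval computation the paper intends -- the paper states this as an Observation without proof, so there is nothing to diverge from. You also rightly flag that the clause ``$j \in \{k, k+1\}$'' in the odd case of item (3) should read ``$j \in \{k', k'+1\}$'' (and similarly the displayed identity of centers in the even case should be $x_{n+1,2k'} = x_{n,k'}$); your handling of both cases is correct.
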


We represent the Cantor space as $2^\omega$ with the canonical basis $\{B_s: s \in 2^{<\omega}\}$, and we represent the Cantor fan as the quotient $X = ([0, 1] \times 2^\omega) / (\{0\} \times 2^\omega)$.
Let $\pi_{[0, 1]}\maps X \to [0, 1]$ denote the canonical projection.
Let $\alpha, \beta\maps \omega \to \omega$ be two strictly increasing functions, parametrizing our construction.
Finally, for every $n$ we put
\[
    G_n = \{(A_{\alpha(n), 0} \cap [0, 1]) \times B_\emptyset\} \cup \{(A_{\alpha(n), k} \cap [0, 1]) \times B_s: 0 < k \leq 2^{\alpha(n)}, s \in 2^{\beta(n)}\},
\]
i.e. we take the elementwise product of the $\alpha(n)$\textsuperscript{th} cover of the unit interval with the $\beta(n)$\textsuperscript{th} cover of the Cantor space, and we obtain a basic cover of the product $[0, 1] \times 2^\omega$. Then we replace the sets $(A_{\alpha(n), 0} \cap [0, 1]) \times B_s$, $s \in 2^{\beta(n)}$, with their union $(A_{\alpha(n), 0} \cap [0, 1]) \times B_\emptyset$, which corresponds to gluing the endpoints and obtaining the Cantor fan.

We turn the sequence $(G_n)$ of sets into a lax-sequence of graphs and relational morphisms by putting $g \sqcap g'$ if and only if $g \cap g' \neq \emptyset$ in every $G_n$, and by putting $g \sqsupset^m_n h$ if and only if $g^\in \supseteq h^\in$ for every $m \leq n$, $g \in G_m$, $h \in G_n$.

\begin{lemma}\label{lem:Cantorfansequence}
    $(G_n, \sqsupset^m_n)$ is eventually a sequence in $\mathbf{X}$ whose spectrum is the Cantor fan.
\end{lemma}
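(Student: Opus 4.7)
The plan is to apply \autoref{SequenceForSpace} to identify the spectrum with the Cantor fan $X$, and then to verify that the resulting sequence eventually lies in $\mathbf{X}$.

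First I would check the hypotheses of \autoref{SequenceForSpace}. That each $G_n$ is a minimal open cover of $X$ should follow from \autoref{IntervalCovers}(1) together with the analogous disjointness of the clopens $\{B_s : s \in 2^{\beta(n)}\}$: for a non-root set $(A_{\alpha(n), k} \cap [0,1]) \times B_s$ with $k > 0$, the point $(x_{\alpha(n), k}, s)$ lies in no other element of $G_n$, while for the root any point of the form $(y, t)$ with $y \in A_{\alpha(n), 0} \setminus \bigcup_{k > 0} A_{\alpha(n), k}$ works. For consolidation I would treat the two factors independently: non-root sets consolidate using \autoref{IntervalCovers}(2) iterated $\alpha(n+1)-\alpha(n)$ times on the $[0,1]$ factor together with the decomposition $B_s = \bigsqcup_{t \in 2^{\beta(n+1)-\beta(n)}} B_{st}$ on the Cantor factor, while the root at level $n$ decomposes as the root at level $n+1$ together with those non-root sets $(A_{\alpha(n+1), j} \cap [0,1]) \times B_{s'}$ of $G_{n+1}$ with $0 < j < 2^{\alpha(n+1)-\alpha(n)}$, whose union exhausts it. Strict monotonicity of $\alpha$ and $\beta$ forces $G_n \cap G_{n+1} = \emptyset$, and every product-basic open of $X$ is refined by some $G_n$. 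Applying \autoref{SequenceForSpace} yields that $(G_n, \sqsupset^m_n)$ is an edge-faithful sequence in $\mathbf{B}$ with spectrum (homeomorphic to) $X$.

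Next I would show each $G_n$ with $\beta(n) \geq 2$ is a nontrivial fan and that $\sqsupset^m_n$ is spoke-monotone and end-preserving. Two non-root sets $(A_{\alpha(n), k} \cap [0,1]) \times B_s$ and $(A_{\alpha(n), k'} \cap [0,1]) \times B_{s'}$ overlap if and only if $|k - k'| \leq 1$ and $s = s'$ (the latter because $s$ and $s'$ share the length $\beta(n)$, so the clopens $B_s$ and $B_{s'}$ are either equal or disjoint), while the root overlaps precisely with those non-root sets indexed by $k = 1$ and arbitrary $s$. Hence $G_n$ decomposes into $2^{\beta(n)}$ paths of length $2^{\alpha(n)}$ all meeting at the root, a nontrivial fan once $2^{\beta(n)} \geq 3$. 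For spoke-monotonicity, the root of $G_n$ sits inside the root of $G_m$, so it maps to the root, and the spoke of $G_n$ indexed by $s \in 2^{\beta(n)}$ maps into the spoke of $G_m$ indexed by $s \restriction \beta(m)$; the restriction coincides with the monotone inclusion relation between interval covers from \autoref{ex:arc}, hence it belongs to $\mathbf{A}$. End-preservation follows because the non-root end of a spoke, $(A_{\alpha(n), 2^{\alpha(n)}} \cap [0,1]) \times B_s$, contains $(1, s)$ and is therefore contained in $(A_{\alpha(m), 2^{\alpha(m)}} \cap [0,1]) \times B_{s \restriction \beta(m)}$, again by \autoref{IntervalCovers}(1).

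The main obstacle I expect is the careful bookkeeping around the root set, which is indexed by $\emptyset$ rather than by a member of $2^{\beta(n)}$ and so requires separate handling in the consolidation step and in the spoke decomposition; everything else reduces to independent factor-wise arguments on the interval and Cantor factors.
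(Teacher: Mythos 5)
Your proposal is correct and follows essentially the same route as the paper: verify the hypotheses of \autoref{SequenceForSpace} (minimality via the uncovered centers, consolidation, disjointness of consecutive levels, vanishing diameters) to get a sequence in $\mathbf{B}$ with spectrum the Cantor fan, then identify the spoke decomposition of each $G_n$ and check root-preservation, spoke-monotonicity via the interval refinement relations, and end-preservation via the tip point $(1,\cdot)$. The only cosmetic difference is that you justify monotonicity of the spoke restrictions by comparison with \autoref{ex:arc} where the paper invokes the edge-splitting construction (\autoref{EdgeSplitting}); both are valid.
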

\begin{proof}
    It is easy to see that for $n \geq 2$, $G_n$ is a fan with root $(A_{\alpha(n), 0} \times B_{\emptyset}) \cap X$ and with $2^{\beta(n)}$ spokes.
    We use \autoref{SequenceForSpace} to obtain that $(G_n, \sqsupset^m_n)$ is a sequence in $\mathbf{B}$ with spectrum $X$, but we need to verify its hypotheses.

    Every $G_n$ is a minimal cover of $X$ since $(A_{\alpha(n), k} \times B_s) \setminus \bigcup_{(k', s') \neq (k, s)} (A_{\alpha(n), k'} \times B_{s'}) = \{x_{\alpha(n), k}\} \times B_s$ (see \autoref{IntervalCovers}), and that $x_{\alpha(n), k} \in [0, 1]$ if and only if $0 \leq k \leq 2^{\alpha(n)}$.

    $G_n$ consolidates $G_{n + 1}$ essentially because the cover $\{A_{m, k} \cap [0, 1]\}_{0 \leq k \leq 2^m}$ consolidates $\{A_{m + 1, k} \cap [0, 1]\}_{0 \leq k \leq 2^{m + 1}}$, and $\{B_s\}_{s \in 2^m}$ consolidates $\{B_s\}_{s \in 2^{m + 1}}$.

    It is easy to see that $G_n \cap G_{n + 1} = \emptyset$.
    Next, clearly, $\diam(A_{\alpha(n), k})$ and $\diam(B_s)$ for $s \in 2^{\beta(n)}$ depend only on $n$ and go to $0$ as $n \to \infty$, so every open cover of $[0, 1] \times 2^\omega$ is refined by a cover consisting of the products of the members of the $\alpha(n)$\textsuperscript{th} and $\beta(n)$\textsuperscript{th} levels of the respective sequences of covers.
    It follows that every open cover of the quotient $X$ is refined by some $G_n$.
    Altogether we have verified all hypotheses of \autoref{SequenceForSpace}.
    
    Finally, the every $\sqsupset^n_{n + 1}$ is spoke-monotone and end-preserving, and so $(\sqsupset^m_n)$ is a sequence in $\mathbf{X}$.
    Every spoke of $G_n$ is of the form 
    \[
        S_{n, s} = \{(A_{\alpha(n), 0} \times B_{\emptyset}) \cap X\} \cup \{(A_{\alpha(n), k} \times B_s) \cap X: 0 < k \leq 2^{\alpha(n)}\}
    \]
    for some $s \in 2^{\beta(n)}$.
    Hence, $S_{n + 1, s}{}^{\sqsubset^n_{n + 1}} = S_{n, s\restriction \beta(n)}$, and $\sqsupset^n_{n+1}\restriction (S_{n + 1, s} \times S_{n, s\restriction \beta(n)})$ is monotone -- it is essentially the composition of the refinement relations $\dashv^m_{m + 1}$ on $\{A_{m, k} \cap [0, 1]\}_{k \leq 2^m} \times \{A_{m + 1, k} \cap [0, 1]\}_{k \leq 2^{m + 1}}$, which are monotone as every $\dashv^m_{m + 1}$ is the result of a variant of the edge-splitting construction (\autoref{EdgeSplitting}).
\end{proof}

Next, for every subfan $Y \subseteq X$ of the Cantor fan and $n \in \omega$, we put $\beta(n) = n$ for every $n$ and we pick suitable $\alpha$ (below).
Then we put
\[
    H_n = \{g \cap Y: g \in G_n,\ x_g \in \pi_{[0, 1]}(g \cap Y)\} 
\]
where $x_g$ denotes the center $x_{n, k}$ of the unique set $A_{n, k}$ such that $g = (A_{n, k} \times B_s) \cap X$, and define the corresponding lax-sequence of graphs $(H_n, \sqni^m_n)$.
Without loss of generality, we may identify every element $h \in H_n$ with the corresponding element $g \in G_n$ such that $h = g \cap X$, so that $H_n\subseteq G_n$.

We inductively define $\alpha$ depending on $Y$ as follows.
We put $\alpha(0) = 0$ and given $\alpha(n)$ we pick $\alpha(n + 1) > \alpha(n)$ large enough so that whenever $g \cap Y \in H_n$ for some $g = (A_{\alpha(n), k} \times B_s) \cap X \in G_n$, then for both $i = 0, 1$, if there is a spoke $[0, y] \times \{f\}$ of $Y$ for some $y \leq 1$ and $f \in 2^\omega$ with $f \supseteq s^{\smallfrown}i$, then there is such a spoke with $y \geq x_g - 2^{\alpha(n)} + 2^{\alpha(n + 1)}$, so for the leftmost set $g' = (A_{\alpha(n + 1), k'} \times B_{s^\smallfrown i}) \cap X \in G_{n + 1}$ contained in $g$ we have $g' \cap Y \in H_{n + 1}$.

\begin{lemma}
    There is $n_0$ such that above $n_0$ we have the following.
    \begin{enumerate}
        \item\label{itm:cov_fan} $H_n$ is a fan with $0_n \cap Y = 0_n$.
        \item\label{itm:cov_cov} For every $g \in G_n$ there is $h \in H_n$ such that $g \cap Y \subseteq h$, and so $H_n$ is a minimal cover of $Y$.
        \item\label{itm:cov_faithful} For every $n \geq m$, $g \in G_n$, $g' \in G_m$ such that $g \cap Y \in H_n$ and $g' \cap Y \in H_m$ we have $g \subseteq g'$ if and only if $g \cap Y \subseteq g' \cap Y$.
        If $n = m$, we also have $g \cap g' \neq \emptyset$ if and only if $g \cap g' \cap Y \neq \emptyset$.
        Altogether, $(H_n, \sqni^m_n)$ is a lax-restriction of $(G_n, \sqsupset^m_n)$.
        \item\label{itm:cov_cons} The sequence of covers $(H_n)$ is consolidating.
        \item\label{itm:cov_seq} $(H_n, \sqni^m_n)$ is a sequence in $\mathbf{L}$ with spectrum $Y$, and it is the unique co-bijective modification of the prime restriction of $(G_n, \sqsupset^m_n)$ induced by the traces $[Y]_n$.
    \end{enumerate}
\end{lemma}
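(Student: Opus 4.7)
The plan is to extract $n_0$ from the construction so that the covers $H_n$ become "honest" restrictions of $G_n$ to $Y$, reducing all claims to basic facts about intervals and the Cantor space factor together with a final appeal to \autoref{SequenceForSpace} and \autoref{UniqueCobijectiveModification}. First I would pick $n_0$ large enough that every spoke $[0,y] \times \{f\}$ of $Y$ satisfies $y > 2^{-\alpha(n_0)}$ (possible since $Y$ has only finitely many spokes longer than any fixed threshold — or, more robustly, since we only need the condition at every $n$, combined with the observation that spokes shorter than $2^{-\alpha(n)}$ do not register at level $n$ anyway). This ensures every $s \in 2^{\beta(n)}$ for which $B_s$ meets some spoke of $Y$ yields an element of $H_n$ lying on an entire interval-direction from the root, which is what makes $H_n$ a fan rather than a disconnected graph.

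For \ref{itm:cov_fan}, the root is handled by observing that the root element $0_{G_n} \cap Y$ meets every spoke of $Y$, hence is adjacent to at least one vertex per spoke of $H_n$; the non-root vertices carry degree at most two since their sets $A_{\alpha(n),k} \times B_s$ with $0 < k \leq 2^{\alpha(n)}$ have only two interval-neighbours and the $B_s$-coordinate is fixed per spoke. For \ref{itm:cov_cov}, given $g \in G_n$ with $g \cap Y \neq \emptyset$, take the spoke of $Y$ reaching deepest into $g$ and let $h$ be the element of $G_n$ on that spoke with the smallest interval-index whose centre still lies inside the interval-projection of the spoke; the centre-condition for $h$ is then immediate and $h \in H_n$ covers $g \cap Y$ by \autoref{IntervalCovers}. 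Minimality of $H_n$ as a cover of $Y$ uses that the centre $x_g$ of $g$ is not covered by any other set of level $n$. Item \ref{itm:cov_faithful} is then a short calculation: inclusions among $g \cap Y$ transfer to inclusions among $g$ because elements of $H_n$ retain their centres, and overlaps among $g \cap g' \cap Y$ follow from overlaps among $g \cap g'$ at equal levels because each spoke of $Y$ is a convex real-line piece, so whenever two adjacent interval-sets on the same spoke both satisfy the centre-condition their traces on $Y$ share the convex segment between their centres.

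For \ref{itm:cov_cons}, let $h \in H_n$ correspond to $g = (A_{\alpha(n),k} \times B_s) \cap X$. By \autoref{IntervalCovers} the three child intervals $A_{\alpha(n+1),2^{\alpha(n+1)-\alpha(n)}k - 1},\ A_{\alpha(n+1),2^{\alpha(n+1)-\alpha(n)}k},\ A_{\alpha(n+1),2^{\alpha(n+1)-\alpha(n)}k + 1}$ (and all intermediate ones) cover $A_{\alpha(n),k}$; combined with the split $B_s = B_{s^\smallfrown 0} \cup B_{s^\smallfrown 1}$ on the Cantor side we obtain a family of level-$(n+1)$ sets whose union is $g$. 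The inductive choice of $\alpha(n+1)$ guarantees that on each side $i \in \{0,1\}$ for which $B_{s^\smallfrown i}$ meets a spoke of $Y$, the leftmost child set satisfies the centre-condition and hence sits in $H_{n+1}$; for every other child set containing any of $Y$, using \ref{itm:cov_cov} at level $n + 1$ places its $Y$-trace inside some $h' \in H_{n+1}$, and $h' \subseteq h$ follows from \ref{itm:cov_faithful} and the inclusion of the supporting interval-sets. This proves $h = \bigcup\{h' \in H_{n+1} : h' \subseteq h\}$. Finally \ref{itm:cov_seq} is a matter of assembly: \ref{itm:cov_fan}--\ref{itm:cov_cons} plus the diameter-bound $\mathrm{diam}(h) \to 0$ let \autoref{SequenceForSpace} identify the spectrum of $(\sqni^m_n)_{n\geq n_0}$ with $Y$ and place the sequence in $\mathbf{B}$ with edge-faithfulness; spoke-monotonicity is inherited from $(\sqsupset^m_n)$ by \autoref{RestrictionsOfMonotone} applied spoke-by-spoke, putting $(\sqni^m_n)$ in $\mathbf{L}$. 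The final uniqueness statement is obtained by verifying that $[Y]_n$ is precisely the set of $g \in G_n$ with $g \cap Y \neq \emptyset$ (for $n \geq n_0$), that $H_n \subseteq [Y]_n$ is dense in the sense of \S\ref{s.cobijective_modification}, and then applying \autoref{UniqueCobijectiveModification} spoke-by-spoke using the end-thread $(0_{G_n})$ as the distinguished thread of ends preserved under $\sqsubset^m_n$.

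The main obstacle will be \ref{itm:cov_cons}: the inductive clause for $\alpha(n+1)$ only controls the \emph{leftmost} child carrying each new $B_{s^\smallfrown i}$, so one must argue that every other child set meeting $Y$ either inherits the centre-condition automatically (because the spoke of $Y$ now reaches deep into the parent interval) or gets absorbed via \ref{itm:cov_cov} into the leftmost neighbour — which in turn requires tracking, as a spoke of $Y$ enters a child interval from the left, that the leftmost child's trace on $Y$ does cover the traces of the subsequent child sets up to the point where the spoke terminates.
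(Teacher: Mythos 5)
Your overall route is the same as the paper's: derive the fan structure and the covering/faithfulness properties of the traces from connectivity of the spoke projections together with the centre condition, prove consolidation by combining the inductive $\alpha$-clause (which controls the leftmost child in each new direction $s^\smallfrown i$) with absorption of a terminal child into its left neighbour, and then feed everything into \autoref{SequenceForSpace} and the density/minimality machinery of \autoref{s.cobijective_modification}. Your identification of the delicate point in consolidation, and its resolution, matches the paper's (pointwise) argument.

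However, your choice of $n_0$ rests on a false premise. You cannot pick $n_0$ so that every spoke $[0,y]\times\{f\}$ of $Y$ has $y>2^{-\alpha(n_0)}$: for the Lelek fan the endpoints are dense, so $Y$ has spokes of arbitrarily small length and no such $n_0$ exists; and the parenthetical justification (``$Y$ has only finitely many spokes longer than any fixed threshold'') already fails for the Cantor fan, all of whose uncountably many spokes have length $1$. Fortunately nothing downstream in your argument actually uses this property --- spokes shorter than $2^{-\alpha(n)}$ simply do not register at level $n$, and their traces are swallowed by the root element or handled by item (2). What $n_0$ is actually needed for, and what you do not address, is that $H_n$ must be a \emph{nontrivial} fan, i.e.\ its root must have degree at least $3$: since $Y$ is a topological fan it has at least three spokes of positive length, and for $n$ large enough their directions separate in $2^{\beta(n)}$ and they reach past $x_{\alpha(n),1}$, so at least three directions contribute non-root vertices to $H_n$. (The connectivity you worry about is automatic: by connectivity of $\pi_{[0,1]}(([0,1]\times B_s)\cap Y)$, the registered vertices in each direction form an initial segment of the corresponding spoke of $G_n$ attached to the root, and by item (3) the overlap graph on $H_n$ is the induced subgraph of $G_n$, hence acyclic.) Two smaller points: in item (2) the covering element is the \emph{left neighbour} of $g$ (the set with the next smaller interval index on the same column), not ``the element with the smallest interval-index whose centre lies in the projection''; and \autoref{SequenceForSpace} also requires $H_n\cap H_{n+1}=\emptyset$, which you never verify --- the paper checks it by exhibiting, for each $h\in H_n$, a point of $h$ outside the unique candidate member of $H_{n+1}$ sharing its centre.
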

\begin{proof}
    Claim~\ref{itm:cov_fan}.
    Clearly, $0_n \cap Y \in H_n$.
    If $(A_{\alpha(n), k} \times B_s) \cap Y \in H_n$, then also $(A_{\alpha(n), k'} \times B_s) \cap Y \in H_n$ for every $0 < k' < k$ since $\pi_{[0, 1]}(([0, 1] \times B_s) \cap Y)$ is connected.
    Hence, $H_n$ is a fan as long as $\{s \in 2^n: (\{x_{\alpha(n), 1}\} \times B_s) \cap Y \neq \emptyset\}$ has at least three elements, which happens for every $n \geq n_0$ for suitable $n_0$.

    For \ref{itm:cov_cov} let $g \in G_n$.
    Without loss of generality, $g = (A_{\alpha(n), k} \times B_s) \cap X$ for some $k > 0$ (as $(A_{\alpha(n), 0} \times B_{\emptyset}) \cap Y \in H_n$) and $s \in 2^n$.
    If $g \cap Y \neq \emptyset$ but $x_g \notin \pi_{[0, 1]}(g \cap Y)$, then $(A_{\alpha(n), k} \times B_s) \cap Y \subseteq (A_{\alpha(n), k - 1} \times B_s)$ and $x_{\alpha(n), k - 1} \in \pi_{[0, 1]}((A_{\alpha(n), k - 1} \times B_s) \cap Y)$ as $Y$ is connected.
    Hence, $g \cap Y$ is covered by $g' \cap Y \in H_n$ where $g'$ is the unique vertex of $G_n$ adjacent to $g$.
    It follows that $H_n$ is a cover of $Y$.
    The minimality follows from the definition of the covers $H_n$ as $\{x_{\alpha(n), k}\} \times B_s$ is the part of $A_{\alpha(n), k} \times B_s$ not covered by the other sets $A_{\alpha(n), k'} \times B_{s'}$.

    In \ref{itm:cov_faithful}, we have $g = (A_{\alpha(n), k} \times B_s) \cap X$ and $g' = (A_{\alpha(m), k'} \times B_{s'}) \cap X$ for suitable $k, k', s, s'$.
    Suppose $g \cap Y \subseteq g' \cap Y$.
    Then clearly, $s \supseteq s'$.
    We also have $x_{\alpha(n), k} \in \pi_{[0, 1]}(g \cap Y) \subseteq A_{m, k'}$, and so $A_{\alpha(n), k} \subseteq A_{\alpha(m), k'}$ by \autoref{IntervalCovers}.
    Hence, $g \subseteq g'$.

    Next, suppose $n = m$, $k < k'$, and $g \cap g' \neq \emptyset$.
    Then $s \in \{s', \emptyset\}$, $k' = k + 1$, and $Y$ contains a spoke $[0, y] \times \{f\}$ for some $f \in 2^\omega$ such that $f \supseteq s'$ and $y \geq x_{\alpha(n), k'} > x_{\alpha(n), k}$.
    Hence, $g \cap g' \cap Y \supseteq (A_{\alpha(n), k} \cap A_{\alpha(n), k + 1} \cap [0, y]) \times \{f\} = (x_{\alpha(n), k}, x_{\alpha(n), k + 1}) \times \{f\} \neq \emptyset$.
    Therefore, we may identify $H_n$ with an induced subgraph of $G_n$.

    For \ref{itm:cov_cons}, let $(y, f) \in [0, 1] \times 2^\omega$ be a point of $(A_{\alpha(n), k} \times B_s) \cap Y \in H_n$.
    Let $i \in \{0, 1\}$ be such that $f \supseteq s^\smallfrown i$, and let $k' \in \omega$ be the greatest such that $x_{\alpha(n + 1), k'} \leq y$.
    We put $h' = (A_{\alpha(n + 1), k'} \times B_{s^\smallfrown i}) \cap Y \in H_{n + 1}$.
    Then we have $(y, f) \in h'$.
    Let $k'' \in \omega$ be the smallest such that $A_{\alpha(n + 1), k''} \subseteq A_{\alpha(n), k}$, and put $h'' = (A_{\alpha(n + 1), k''} \times B_{s^\smallfrown i}) \cap Y$.
    From the defining property of $\alpha$ we have $\alpha$ we have $h'' \in H_{n + 1}$.
    If $k' \geq k''$, then we have $(y, f) \in h' \subseteq g$.
    Otherwise, we have $k'' = k' + 1$ and $(y, f) \in h'' \subseteq g$.

    Claim~\ref{itm:cov_seq}.
    Let us first show that $H_n \cap H_{n + 1} = \emptyset$.
    Every $h = (A_{\alpha(n), k} \times B_s) \cap Y \in H_n$ contains a point $(x_h, f)$ for some $f \in 2^\omega$ with $f \supseteq s ^\smallfrown i$.
    Then $h' = (A_{\alpha(n + 1), 2^{\alpha(n + 1) - \alpha(n)} k} \times B_{s^\smallfrown i}) \cap Y$ is the only member of $H_{n + 1}$ containing the point $(x_h, f) = (x_{h'}, f)$.
    But $(x_{\alpha(n + 1), 2^{\alpha(n + 1) - \alpha(n)} k - 1}, f) \in h \setminus h'$.

    Next, $\max\{\diam(h): h \in H_n\} \to 0$ as $n \to 0$ since $H_n$ refines $G_n$.
    This together with \ref{itm:cov_cov}, \ref{itm:cov_cons}, and the previous paragraph are the hypotheses of \autoref{SequenceForSpace}, and so $(H_n, \sqni^m_n)$ is a sequence, it is in $\mathbf{B}$, and $Y$ is its spectrum.
    By \ref{itm:cov_faithful}, $(H_n, \sqni^m_n)$ is a restriction of $(G_n, \sqsupset^m_n)$.
    By \ref{itm:cov_fan}, every $H_n$ is a fan.
    It is easy to see that every $\sqni^m_n$ maps spokes into spokes, and so it is spoke-monotone by \autoref{RestrictionsOfMonotone}.
    Hence, $(\sqni^m_n)$ is in $\mathbf{L}$.

    Let $(\sqnii^m_n)$ be the prime restriction of $(\sqsupset^m_n)$ induced by the traces $[Y]_n$.
    For every $n$, the trace $[Y]_n$ is the cover $\{g \cap Y: g \in G_n, g \cap Y \neq \emptyset\}$.
    By \ref{itm:cov_cov}, $[Y]_{n + 1}$ refines $H_{n + 1}$ and so $H_n$. It follows that $(\sqni^m_n)$ is a dense restriction of $(\sqnii^m_n)$.
    Since for every $g \cap Y \in H_n$, $g$ is the unique element of $G_n$ containing the point $(x_g, f)$ of $Y$ for suitable $f \in 2^\omega$, $(\sqni^m_n)$ is the smallest dense restriction of $(\sqnii^m_n)$ and so its unique co-bijective modification.
\end{proof}

\begin{cor} \label{SmoothFansRealizable}
    The Cantor fan is the spectrum of a sequence in $\mathbf{X}$.
    Every smooth fan is the spectrum of a sequence in $\mathbf{L}$.
    \qed
\end{cor}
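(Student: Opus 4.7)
The plan is very short, since this corollary is essentially a direct consequence of the two preceding lemmas, which do all of the real work. My proof would simply be a compilation statement.

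For the first claim, that the Cantor fan is the spectrum of a sequence in $\mathbf{X}$, I would directly invoke \autoref{lem:Cantorfansequence}: the explicit sequence $(G_n, \sqsupset^m_n)$ built there using the product of the interval covers $\{A_{\alpha(n), k} \cap [0,1]\}$ and the Cantor covers $\{B_s : s \in 2^{\beta(n)}\}$ (after collapsing the $0$-th column to represent the root) is shown there to lie in $\mathbf{X}$ from level $n = 2$ onward, and to have spectrum equal to the Cantor fan. Passing to the tail subsequence $(G_n, \sqsupset^m_n)_{n \geq 2}$ does not change the spectrum (\autoref{Subsequence}), so we obtain a sequence in $\mathbf{X}$ whose spectrum is the Cantor fan.

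For the second claim, that every smooth fan $Y$ is the spectrum of a sequence in $\mathbf{L}$, I would invoke the second lemma above. Given $Y \subseteq X$ a subfan of the Cantor fan, the sequence $(H_n, \sqni^m_n)$ constructed from $Y$ (using the parameter function $\alpha$ chosen inductively depending on $Y$) is shown there to be a sequence in $\mathbf{L}$ whose spectrum is $Y$. Again, by passing to a tail (from the index $n_0$ at which $H_n$ becomes a fan onward) we have a bona fide sequence in $\mathbf{L}$, and by \autoref{Subsequence} this tail has the same spectrum $Y$.

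The only subtle point worth flagging in the write-up is that the second lemma produces a sequence in $\mathbf{L}$ rather than in $\mathbf{X}$, because end-preservation can fail for smooth fans whose spokes have different lengths: the image of an end of a spoke of $H_{n+1}$ under $\sqni^n_{n+1}$ need not be an end of the corresponding spoke of $H_n$ (this is precisely why the co-bijective modification machinery of \autoref{s.cobijective_modification} and \autoref{PathCobijectiveModification} is needed in the proof of the second lemma). Hence we really obtain the Cantor fan in $\mathbf{X}$ and only the general smooth fan in the larger category $\mathbf{L}$, which is exactly what the statement asserts. No main obstacle remains; the proof is a one-line appeal to the two lemmas.
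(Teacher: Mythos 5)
Your proposal is correct and matches the paper exactly: the corollary is stated with \qed precisely because it is an immediate compilation of \autoref{lem:Cantorfansequence} (for the Cantor fan in $\mathbf{X}$) and the subsequent lemma on the covers $H_n$ of a subfan (for smooth fans in $\mathbf{L}$), with tails handled by \autoref{Subsequence}. Your flag about why the second claim only lands in $\mathbf{L}$ is a sensible clarification but not a divergence from the paper's argument.
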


\subsubsection{The Cantor fan}

As in \autoref{ss.Cantor} and \autoref{ss.arc}, we intend to characterise Fra\"iss\'e and lax-Fra\"iss\'e sequences in $\mathbf X$. 

First of all, by \autoref{cor:FraisseLimit}, a lax-Fra\"iss\'e sequence in $\mathbf X$ has an edge-witnessing subsequence. Differently from the case of the interval, this does not suffice:

\begin{xpl}\label{xpl:cantorfannasty}
For $n\geq 2$, let $G_n$ be a fans with $2^n$ spokes, such that the first $2^n-1$ spokes have length $2^n$, and the last spoke has length $2$. Viewing each set $\{0,\ldots,2^n-1\}$ as a path (where $i\sqcap j$ if and only if $|i-j|\leq 1$), let ${\sqni_n}\subseteq \{0,\ldots,2^n-1\}\times \{0,\ldots,2^{n+1}-1\}$ be a monotone edge-witnessing morphism in $\mathbf A$ such that $0^{\sqin_n}=\{0\}$. For each spoke of $G_n$ of length $2^n$, we associate two spokes of $G_{n+1}$ of length $2^{n+1}$ via $\sqni_n$, while to the spoke of $G_n$ of length $2$, we associate one spoke of the $G_{n+1}$ of length $2^{n+1}$ via a monotone edge-witnessing morphism in $\mathbf A$, and the spoke of $G_{n+1}$ of length $2$ via the identity. Gluing appropriately such morphisms gives an edge-witnessing ${\sqsupset_{n+1}^n}\in\mathbf X$. The sequence $(\sqsupset_n^m)$ cannot be lax-Fra\"iss\'e, as it is not even cofinal:
 if $F$ is the fan with $3$ spokes each of length $3$, there is no $n$ such that $\mathbf X^F_{G_n}\neq\emptyset$.
\end{xpl}

Recall that, in $\mathbf X$, branches of spokes coincide with their surjective cores, as morphisms are end-preserving.  \autoref{xpl:cantorfannasty} shows that, for a sequence in $\mathbf X$, having an edge-witnessing subsequence and the fact that every branch of spokes has an edge-witnessing subsequence are not the same concept. This motivates the following:

\begin{dfn}
A morphism ${\sqsupset}\in\mathbf X_F^G$ is \emph{spokewise edge-witnessing} if for every $S\in\ast F$ the restriction ${\sqsupset}\restriction S^\sqsubset \times S$ is edge-witnessing. We define  \emph{spokewise star-refining} analogously.
\end{dfn}

\begin{prp}\label{propspokewisestuff}
In $\mathbf{X}$ we have the following.
\begin{enumerate}
\item Every spokewise star-refining/edge-witnessing morphism is star-refining/edge-witnessing;
\item Every star-refining morphism is already spokewise star-refining;
\item Every spokewise star-refining morphism is spokewise edge-witnessing;
\item The composition of two spokewise edge-witnessing morphisms is (spokewise) star-refining.
\item Spokewise edge-witnessing/star-refining morphism form lax-closed ideals.
\end{enumerate}
\end{prp}

\begin{proof}
The first implications are obvious. For the second one, note that the star of any non root point is contained in a single spoke, and so is its image, so the only point which may cause problems is the root. On the other hand, the image of the root is only the root.

To show that spokewise star-refining morphisms are spokewise edge-witnessing, note that this is the case when restricting to pairs of spokes, as one is dealing with maps in $\mathbf A$ (\autoref{StarEdgeSurjectiveEdgeWitnessing}). The same argument, together with \autoref{StarSurjectiveCoEdgeWitnessing}, gives that composing spokewise edge-witnessing morphisms gives (spokewise) star-refining ones. Once again, the last statements follows from that we are truly handling glued morphisms in $\mathbf A$, and the fact that the original properties form lax-closed ideals (\autoref{LaxClosed} and \autoref{Ideals}).
\end{proof}

\begin{prp}\label{CantorEquivalences}
Let $(G_n,\sqsupset_n^m)$ be a sequence in $\mathbf X$. The following are equivalent.
\begin{enumerate}
    \item \label{PCF1} $(\sqsupset_n^m)$ has a (spokewise) star-refining subsequence;
 \item\label{PCF2} $(\sqsupset_n^m)$ has a spokewise edge-witnessing subsequence;
 \item \label{PCF3} every branch of spokes is lax-Fra\"iss\'e in $\mathbf A$.
 \end{enumerate}
\end{prp}
\begin{proof}
The equivalence between \ref{PCF1} and \ref{PCF2} follows directly from \autoref{propspokewisestuff}. 
Since spokes are nondegenerate paths, all branches of spokes are nontrivial, and hence a branch of spokes is lax-Fra\"iss\'e if and only if it has a star-refining subsequence, by \autoref{AFraisseEquivalents}.
Clearly, if $(G_{n_k})$ is a spokewise star-refining subsequence, then $(S_{n_k})$ is a star-refining subsequence of $\bar{S}$ for every branch of spokes $\bar{S}$.
To prove the equivalence of \ref{PCF1} and \ref{PCF3}, we are left to show that if all branches of spokes have star-refining subsequences, then there exists a spokewise star-refining subsequence.
Suppose former.
We will show that for every $m \in \omega$ there is $n \geq m$ such that $\sqsupset^m_n$ is spokewise star-refining.
Otherwise there is $m$ such that the family
\[\textstyle
T = \{S \in \bigcup_{n \geq m} \ast G_n: {\sqsupset}\restriction S^{\sqsubset^m_n} \times S \text{ is not star-refining}\}
\]
intersects every $\ast G_n$, $n \geq m$.
Note that as star-refining morphisms form an ideal, if a spoke $S \in \ast G_n$ is a member of $T$, then so is every spoke $S^{\sqsubset^k_n}$ for $m \leq k \leq n$.
Since the tree of spokes is finitely branching, it follows that there is a branch of spokes $\bar{S}$ such that $S_n \in T$ for every $n \geq m$, and so $\bar{S}$ has no star-refining subsequence.
\end{proof}

Recall, for a morphism ${\sqsupset}\in \mathbf X$, its image under the end functor $\mathsf E(\sqsupset)$ was defined before \autoref{Def:treeofspokes}. If $(\sqsupset_n^m)$ is a sequence in $\mathbf X$, $(\mathsf E(\sqsupset_n^m))$ is a sequence in $\mathbf D$.

\begin{lemma}\label{lemma:endsplitting}
Let $(G_n,\sqsupset_n^m)$ be a sequence in $\mathbf X$. The following are equivalent.
\begin{itemize}
    \item $(\sqsupset_n^m)$ has an end-splitting subsequence;
    \item $(\mathsf E(\sqsupset_n^m))$ is Fra\"iss\'e in $\mathbf{D}$.
\end{itemize}
If further the spectrum is connected and  Hausdorff, then the above are equivalent to
\begin{itemize}
    \item $\mathsf EX$ is homeomorphic to the Cantor space.
\end{itemize}
\end{lemma}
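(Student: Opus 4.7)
The plan is to reduce both equivalences to the already-established characterisation of Fraïssé sequences in $\mathbf{D}$ (\autoref{DFraisseEquivalents}) via the end functor $\mathsf{E}\maps \mathbf{X}\to\mathbf{D}$ introduced just after \autoref{lem:locally_cobijective}. Unwinding the definitions, the sequence $(\sqni_n^m)$ is precisely the image $\mathsf{E}(\sqsupset_n^m)$, which is a genuine sequence (not just a lax-sequence) in $\mathbf{D}$ because $\mathsf{E}$ is a functor. Under the natural reading that a morphism ${\sqsupset}\in\mathbf{X}^G_F$ is \emph{end-splitting} exactly when $\mathsf{E}(\sqsupset)$ is anti-injective in $\mathbf{D}$ (i.e.\ every end of $G$ has at least two end-preimages in $F$), the first bullet says exactly that $(\sqni_n^m)$ has an anti-injective subsequence.

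With that observation in hand, the equivalence between the first two bullets is immediate from \autoref{DFraisseEquivalents}, which states that for a sequence in $\mathbf{D}$ the properties of having an anti-injective subsequence and being Fraïssé coincide. The only thing worth spelling out is the translation between subsequences: a subsequence of $(\sqsupset_n^m)$ composed of end-splitting morphisms corresponds bijectively, via $\mathsf{E}$, to a subsequence of $(\sqni_n^m)$ composed of anti-injective morphisms, because $\mathsf{E}$ preserves compositions and commutes with passage to subsequences.

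For the last bullet, assume the spectrum $X=\Spec{\PP}$ is connected and Hausdorff, so that \autoref{FanCorrespondence} applies. Part \ref{itm:Xends} of that proposition identifies $\mathsf{E}X$ with the closed subspace $\Spec{\RR}$ of $X$, where $\RR$ is the poset induced by $(\sqni_n^m)$. Thus $\mathsf{E}X\cong\Spec{\RR}$ is the spectrum of a sequence in $\mathbf{D}$, and by \autoref{DFraisseEquivalents} it is homeomorphic to the Cantor space if and only if $(\sqni_n^m)$ is Fraïssé in $\mathbf{D}$. Non-emptiness of $\mathsf{E}X$ (needed for it to actually be the Cantor space) is automatic here since $X$ is a fan by \autoref{FanCorrespondence}\ref{itm:fan}, and fans have endpoints.

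The only real subtlety — and the single thing that could go wrong — is pinning down the correct definition of \textbf{end-splitting} used in the statement. If the paper's definition of end-splitting happens to be given in terms of spokes rather than ends (e.g.\ ``every spoke of the codomain has at least two spokes of the domain mapping onto it''), then because morphisms in $\mathbf{X}$ are end-preserving and co-bijective and since spokes of a fan are in natural bijection with its ends, this reformulation is still equivalent to anti-injectivity of $\mathsf{E}(\sqsupset)$, and the argument above goes through unchanged.
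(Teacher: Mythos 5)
Your proposal is correct and follows essentially the same route as the paper: identify $(\sqni_n^m)$ with $\mathsf{E}(\sqsupset_n^m)$, observe that end-splitting is exactly anti-injectivity of the end restriction so that \autoref{DFraisseEquivalents} gives the first equivalence, and then use \autoref{FanCorrespondence} to identify $\mathsf{E}X$ with the spectrum of $(\sqni_n^m)$ for the second. Your worry about the definition of end-splitting is unfounded — the paper defines it as every end of the codomain lying above two distinct ends of the domain, which is precisely anti-injectivity of $\mathsf{E}(\sqsupset)$, as you assumed.
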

\begin{proof}
Clearly, $(\sqsupset^m_n)$ has an end-splitting subsequence if and only if $(\mathsf{E}(\sqsupset^m_n))$ has an anti-injective subsequence, which, by \autoref{DFraisseEquivalents}, is equivalent to $(\mathsf{E}(\sqsupset^m_n))$ being Fra\"{i}ss\'e in $\mathbf{D}$ and to its spectrum being the Cantor space.
Under the extra hypotheses, by \autoref{FanCorrespondence}\ref{itm:Xends}, $\mathsf{E}X$ is the spectrum of $(\mathsf{E}({\sqsupset}^m_n))$.
\end{proof}

\begin{thm}\label{FraisseCantorFan}
Let $(G_n,\sqsupset_n^m)$ be a sequence in $\mathbf X$.
The following are equivalent.
 \begin{enumerate}
 \item\label{CF1} $(\sqsupset_n^m)$ is a lax-Fra\"iss\'e sequence in $\mathbf X$;
 \item\label{CF3} every branch of spokes is lax-Fra\"iss\'e in $\mathbf A$ and $(\mathsf E(\sqsupset_n^m))$ is Fra\"iss\'e in $\mathbf{D}$;
 \item\label{CF2} $(\sqsupset_n^m)$ has a star-refining end-splitting subsequence;
 \item\label{CF4} $\Spec{\PP}$ is homeomorphic to the Cantor fan.
 \end{enumerate}
Moreover, $(\sqsupset_n^m)$ is Fra\"iss\'e precisely when every branch of spokes $\bar S$ is Fra\"iss\'e in $\mathbf A$.
\end{thm}

\begin{proof}
\ref{CF3}$\Leftrightarrow$\ref{CF2} follows from \autoref{CantorEquivalences} and \autoref{lemma:endsplitting}.

\ref{CF1}$\Rightarrow$\ref{CF2}: Since star-refining end-splitting morphisms form a wide ideal in $\mathbf X$ (by \autoref{Ideals} and \autoref{lem:starreffans} for star-refining, and \autoref{lemma:enddense} for end-splitting), \autoref{IdealFraisseSubsequence} gives that any lax-Fra\"iss\'e sequence has a star-refining end-splitting subsequence.

\ref{CF2}$\Rightarrow$\ref{CF1}: Fix $m$, a fan $F$, and ${\sqsupset}\in\mathbf X_F^{G_m}$. Let $M=\max\{|S|: S\in*F\}$. By applying condition \ref{PCF3} of \autoref{CantorEquivalences}, to every spoke of $G_m$, and using that $(\mathsf E(\sqsupset_n^m))$ is a Fra\"iss\'e sequence in $\mathbf D$, we can find $n$ large enough such that
\begin{itemize}
\item there is ${\dashv}\in\mathbf D_{\mathsf EG_n}^{\mathsf EF}$ such that $\mathsf E(\sqsupset_n^m)=\mathsf{E}({\sqsupset})\circ{\dashv}$, and
\item if $S\in*G_m$ and $T\in *G_n$ are such that $T^{\sqsubset_n^m}=S$, the morphism ${\sqsupset_n^m}\restriction S\times T$ can be written as a composition of $M$ edge-witnessing morphisms in $\mathbf A$. 
\end{itemize}
As in the proof of \autoref{AFraisseEquivalents}, we get that for every adjacent $g,h\in G_m$, $M\leq |\{g,h\}_{\sqsupset_n^m}|$, as each two adjacent vertices belong to the same spoke. 

Fix $S\in*G_m$ and $T\in *G_n$ such that $T^{\sqsubset_n^m}=S$. Let $e\in \mathsf ET\setminus\{0_n\}$, and let $S'\in*F$ be such that $e^{\vdash}\subseteq S'$. We now apply \autoref{TypeSubFactorisation} to $P=S'$, $Q=T$, $R=S$, and we get that there is ${\Dashv_{ST}}\in\mathbf A_T^{S'}$ such that 
\[
{({\sqsupset}\restriction S\times S')}\circ{\Dashv_{ST}}\subseteq{\sqsupset_n^m}\restriction S\times T.
\]
Note that $0_n^{\Dashv_{ST}}=\{0_F\}$, and hence we can glue together all morphisms of the form $\Dashv_{ST}$, for relevant $S$ and $T$, and get a morphism ${\Dashv}\subseteq F\times G_n$ such that ${\sqsupset\circ\Dashv}\subseteq{\sqsupset_n^m}$.
This shows that the sequence $(\sqsupset_n^m)$ is lax-Fra\"iss\'e, and proves \ref{CF1}.

\ref{CF4}$\Rightarrow$\ref{CF2}: If $\Spec{\PP}$ is homeomorphic to Cantor fan, it is Hausdorff and connected, and its set of endpoints is homeomorphic to the Cantor space.
Therefore, $(\sqsupset^m_n)$ has a star-refining subsequence by \autoref{ContinuumTreeLimit} and an end-splitting subsequence by \autoref{lemma:endsplitting}.

\ref{CF1}$\Rightarrow$\ref{CF4}: By \autoref{LaxFraisseSpectra}, all spectra of posets induced by lax-Fra\"iss\'e sequences are homeomorphic. Since the sequence given in \autoref{lem:Cantorfansequence} is lax-Fra\"iss\'e (it satisfies, for example, the requirements of \ref{CF2}), we have the thesis.

Checking the last statement is an easy generalisation of the fact that for sequences in $\mathbf A$, being Fra\"iss\'e is precisely equivalent to having a strictly anti-injective sequence (see \autoref{prp:amalgarc}).
\end{proof}

\subsubsection{The Lelek fan}\label{sss.Lelek}
We now focus on the category $\mathbf L$ and to the study the spectrum of the poset induced by a Fra\"iss\'e sequence in $\mathbf L$. 

We call a morphism ${\sqsupset}\in\mathbf L_G^H$ 
\begin{itemize}
    \item \emph{end-dense} if every $h\in H$ is above some $e\in \mathsf EG$, i.e. $e\sqsubset h$. 
    \item \emph{end-splitting} if every $e\in\mathsf EH$ is above two distinct $g$ and $g'$ in $\mathsf EG$.
    \end{itemize}
End-dense morphisms are as far as possible from morphisms in $\mathbf X$.
\begin{lemma}\label{lemma:enddense}
Both end-dense and end-splitting morphisms form a wide ideal in $\mathbf L$. Also, end-splitting morphisms are wide in $\mathbf X$ (and so form a wide ideal in it).

Furthermore, if $(G_n,\sqsupset_n^m)$ is a sequence in $\mathbf L$, then
\begin{enumerate}
    \item\label{end1} $(\sqsupset_n^m)$ has an end-dense subsequence if and only if for every $m$ and $g\in G_m$ there are $n\geq m$ and $h\in\mathsf EG_n$ such that $h\sqsubset_n^m g$;
    \item\label{end2} $(\sqsupset_n^m)$ has an end-splitting subsequence if and only if for every $m$ and $g\in \mathsf EG_m$ there are $n\geq m$ and two distinct $h,h'\in\mathsf EG_n$ such that $h,h'\sqsubset_n^m g$;
    \item\label{end3} $(\sqsupset_n^m)$ has an end-dense end-splitting subsequence if and only if for every $m$ and $g\in G_m$ there are $n\geq m$ and distinct $h,h'\in\mathsf EG_n$ such that $h,h'\sqsubset_n^m g$.
\end{enumerate}
\end{lemma}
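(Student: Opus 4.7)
The plan hinges on \autoref{lem:locally_cobijective}, which does more than assert end-surjectivity of every $\mathbf L$-morphism ${\sqsupset}\in\mathbf L^H_G$: it supplies, for each end $h\in\mathsf EH$, an end preimage $e\in\mathsf EG$ with $e^\sqsubset=\{h\}$. This sharper statement is the engine behind both the ideal property and the distinctness arguments that follow.

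For the ideal property I will check closure of each class under composition with arbitrary $\mathbf L$-morphisms. For end-denseness, a post-composition ${\sqni}\circ{\sqsupset}$ with $\sqsupset$ end-dense is handled by first using co-surjectivity of $\sqni$ to find a $\sqni$-preimage of each vertex in the codomain, then invoking end-density of $\sqsupset$; pre-composition is symmetric. The end-splitting case runs in parallel, with end-surjectivity of the outer morphism (in the post-composition step) replacing co-surjectivity. Since these arguments only use category-theoretic composition and properties inherited from $\mathbf L$, the ideal property in $\mathbf X$ comes for free once wideness there is established.

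For wideness, given a fan $H$ I will construct $G$ spoke-by-spoke. In the end-dense case I add, for each spoke $S=(0_H,v_1,\dots,v_k)$ of $H$ and each $i\in\{1,\dots,k\}$, a fresh spoke of $G$ of length $i$ mapping monotonely onto $[0_H,v_i]$ with its end sent to $v_i$, together with a single ``collapsing'' spoke of length one whose non-root vertex maps to $0_H$. The collapsing spoke is precisely what takes care of the awkward root vertex, which is never itself an end in a nontrivial fan. For end-splitting wideness in both $\mathbf L$ and $\mathbf X$, a cruder construction suffices: duplicate every spoke of $H$ in $G$ and map both copies isomorphically onto the original, which is evidently end-preserving and so lies in $\mathbf X$.

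The three subsequence characterisations follow a uniform template. The forward direction of each is immediate from \autoref{IdealSubsequences}\ref{itm:wide}: a subsequence in the appropriate ideal yields, for every $m$, some $n$ with $\sqsupset_n^m$ in that ideal, and the pointwise condition reads off the definition. For the converse of \ref{end1}, I pick $n_g\geq m$ and $h_g\in\mathsf EG_{n_g}$ below each $g\in G_m$, set $N=\max_g n_g$, and lift each $h_g$ to an end of $G_N$ via \autoref{lem:locally_cobijective}; this makes $\sqsupset_N^m$ end-dense and concludes by \autoref{IdealSubsequences}. For \ref{end2} the same procedure is applied to pairs $h_g\neq h_g'$, but now the singleton-image form of \autoref{lem:locally_cobijective} is essential: it forces the two lifts to have disjoint strict preimages $\{h_g\}$ and $\{h_g'\}$, and hence to be distinct. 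Part \ref{end3} combines both: the converse runs the \ref{end2} argument quantified over all $g\in G_m$ rather than just ends, and the forward direction picks two consecutive indices $m<n<n'$ in the subsequence, uses end-density of $\sqsupset_n^m$ to find an end $h\in\mathsf EG_n$ below any $g\in G_m$, then applies end-splitting of $\sqsupset_{n'}^n$ to split $h$ into two distinct ends of $G_{n'}$. The main obstacle throughout is this distinctness of lifts; plain end-surjectivity is insufficient, and one really must exploit the finer content of \autoref{lem:locally_cobijective}.
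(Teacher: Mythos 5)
Your proposal is correct and follows essentially the same route as the paper: closure under composition via co-surjectivity together with the end-surjectivity supplied by \autoref{lem:locally_cobijective}, explicit spoke-by-spoke constructions for wideness, and the ``take the maximum of the $n_g$ and lift ends upward'' argument combined with \autoref{IdealSubsequences} for the three subsequence characterisations. If anything, you are more explicit than the paper in two places it glosses over — the distinctness of the lifted ends (which really does require the singleton-image form $e^\sqsubset=\{h\}$ of \autoref{lem:locally_cobijective}, not mere end-surjectivity) and the two-step $m<n<n'$ composition in the forward direction of \ref{end3} — so no changes are needed.
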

\begin{proof}
We leave to the reader the easy verification that end-dense/end-splitting morphisms are wide in $\mathbf L$. Since `doubling' the spokes in an obvious way is an end-splitting morphism in $\mathbf X$, end-splitting morphisms are wide in $\mathbf X$. 

To show these classes of morphisms form ideals, fix fans $F$, $G$, and $H$, and morphisms ${\sqsupset}\in\mathbf L_G^F$ and ${\sqni}\in\mathbf L_H^G$. 

If $\sqni$ is end-dense, fix $f\in F$ and let $g\in G$ with $g\sqsubset f$. Fix now $h\in\mathsf EF$ such that $h\sqin g$, so that $h\sqin g\sqsubset f$. As $f$ was arbitrary, $\sqsupset\circ\sqni$ is end-dense. If $\sqsupset$ is end-dense, fix again $f\in F$, and pick $g\in\mathsf EG$ with $g\sqsubset f$. Since $g$ is an end in $G$, we can find $h\in\mathsf EH$ such that $h\sqin g$, so that $h\sqin g\sqsubset f$, which shows that $\sqsupset\circ\sqni$ is end-dense. 

Similarly, if $\sqni$ is end-splitting, fix $f\in\mathsf EF$. Fix distinct $g,g'\in\mathsf EG$ which are below $f$. Since every end in $G$ is above (at least) one end in $h$ and $h'$ in $\mathsf EG$ which are respectively below $g$ and $g'$, and therefore $f$. On the other hand, if $\sqsupset$ is end-splitting, any end of $F$ is above at least one end of $G$, which is in turn above two distinct ends in $H$. This shows $\sqni\circ\sqsupset$ is end-splitting. (As $\mathbf X$ is a subcategory of $\mathbf L$ in which end-splitting morphisms are wide, end-splitting morphisms are a wide ideal in $\mathbf X$ too.)

Fix now a sequence $(G_n,\sqsupset_n^m)$ in $\mathbf L$. In light of \autoref{IdealSubsequences}, to prove \ref{end1} one just has to prove that 
\[
\forall m\in\mathbb N\, \forall g\in G_m\, \exists n\geq m\, \exists h\in\mathsf EG_n \, (h\sqsubset_n^m g)
\]
is equivalent to its uniform version, that is
\[
\forall m\in\mathbb N\,\exists n\geq m\, \forall g\in G_m\, \exists h\in\mathsf EG_n \, (h\sqsubset_n^m g).
\]
One implication is obvious. For the other one, fix $m$ and for every $g\in G_m$ find $n_g\geq m$ and $h_g\in\mathsf EG_{n_g}$ which is below $g$. Let $n=\max_{g\in G_m}n_g$. Since each $\sqsupset_{n}^{n_g}$ is end-surjective (\autoref{lem:locally_cobijective}) we can find $k_g\in\mathsf EG_n$ below each $h_g$. This concludes the proof. The proofs of \ref{end2} and \ref{end3} are analogous and left to the reader.
\end{proof}

\begin{lemma} \label{EndDenseSubsequence}
    Let $(G_n, \sqsupset^m_n)$ be a sequence in $\mathbf L$ with induced poset $\PP$ such that its spectrum $\Spec{\PP}$ is connected and Hausdorff.
    Then $(\sqsupset_n^m)$ has an end-dense subsequence if and only if the set of endpoints is dense in $\Spec{\PP}$.
\end{lemma}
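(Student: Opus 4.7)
The plan is to use the bijection established in \autoref{FanCorrespondence} between the endpoints of $X = \Spec{\PP}$ and nondegenerate branches of spokes $\bar S$, via $\bar S \mapsto (e_{\bar S, k})^\leq$, where $(e_{\bar S, k})$ is the thread of non-root ends of the co-bijective modification $\bar T''$ of the surjective core $\bar T$ of $\bar S$ (cf.\ \autoref{lem:BranchOfSpokes}). Then end-density of the sequence will be translated into density of endpoints through \autoref{lem:locally_cobijective} in one direction and \autoref{lem:end_threads} in the other. Throughout, the hypothesis that $\Spec{\PP}$ is connected and Hausdorff ensures (via \autoref{ContinuumTreeLimit}) that $X$ is a hereditarily unicoherent tree-like continuum with the structure of a fan, and that $(\sqsupset^m_n)$ is edge-faithful with both edge-witnessing and star-refining subsequences.

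For the forward direction, fix $p \in G_m$. By \autoref{lemma:enddense}\ref{end1} there exist $n \geq m$ and $e \in \mathsf{E}G_n$ with $e \sqsubset^m_n p$. Since the root of every nontrivial fan has degree at least three, $e \neq 0_{G_n}$, so $e$ is the leaf end of a unique spoke $S_n \in *G_n$. Recursively applying \autoref{lem:locally_cobijective} to the morphism $\sqsupset^k_{k+1}$ and the spoke $S_k$ (starting from $k=n$) produces a spoke $S_{k+1} \in *G_{k+1}$ and a leaf end $e_{k+1} \in \mathsf{E}G_{k+1} \cap S_{k+1}$ with $e_{k+1}^{\sqsubset^k_{k+1}} = \{e_k\}$; extending the branch below level $n$ as in the proof of \autoref{EverySpoke} yields a nondegenerate branch of spokes $\bar S$ together with a thread of leaf ends $(e_k)_{k \geq n}$, $e_n = e$, satisfying $e_{k'}^{\sqsubset^k_{k'}} = \{e_k\}$ for $n \leq k \leq k'$. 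Each $e_k$ admits predecessors in every later $S_{k'}$, hence lies in the surjective core $\bar T$, so \autoref{UniqueCobijectiveModification} applied to $\bar T$ with the thread $(e_k)$ forces $T''_k = T_k$ with $e_k$ as the non-root end of $T''_k$; that is, $e_k = e_{\bar S, k}$. Consequently the endpoint $(e_{\bar S, k})^\leq \in \mathsf{E}X$ contains $p$ (since $e_n \leq_{\PP} p$), and so lies in $p^\in$.

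For the backward direction, fix $g \in G_m$ and, by density of $\mathsf{E}X$, pick an endpoint $\xi \in g^\in$. By \autoref{FanCorrespondence}\ref{itm:spokes} we have $\xi = (e_{\bar S, k})^\leq$ for some nondegenerate branch of spokes $\bar S$, and $g \in \xi$ gives $g \geq_{\PP} e_{\bar S, k_0}$ for some $k_0 \geq m$. By \autoref{lem:BranchOfSpokes}, the surjective core $\bar T = (T_n, \sqsupset^{m'}_{n'} \restriction T_{m'} \times T_{n'})$ is a monotone surjective sequence of paths in $\mathbf{S}$ whose co-bijective modification in $\mathbf{A}$ has $(e_{\bar S, k})$ as its thread of non-root ends. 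Applying \autoref{lem:end_threads} to $\bar T$ with this thread yields $n \geq k_0$ and $e' \in \mathsf{E}T_n$ with $e_{\bar S, k_0} \sqsupset^{k_0}_n e'$, and by the proof of that lemma $e'$ is the non-root end of $T_n$ (a vertex of $S_n$), so $e' \leq_{\PP} g$. If $e'$ is itself the leaf end of $S_n$, then $e' \in \mathsf{E}G_n$ and we are done; otherwise $e'$ is an interior vertex of $S_n$, and one invokes \autoref{lem:locally_cobijective} applied to $\sqsupset^n_{n'}$ for a sufficiently large $n' \geq n$ — chosen so that the far end of $S_n$ no longer has preimages in $S_{n'}$ (which is forced by $e' \in T_n$ being the stabilised far end of the intersection $\bigcap_{k' \geq n} S_{k'}^{\sqsubset^n_{k'}}$) — to extract a leaf end of $G_{n'}$ lying below $e'$, and hence below $g$.

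The principal technical obstacle is this upgrade step in the backward direction: \autoref{lem:end_threads} produces only an end of the auxiliary path $T_n$ (the surjective core restricted to the branch $\bar S$), whereas end-density requires an actual leaf end of the ambient fan $G_{n'}$. Navigating this gap cleanly requires combining the structural description of the surjective core as a stable intersection of spoke images together with the fact, furnished by \autoref{lem:locally_cobijective}, that leaf ends of spokes are always witnessed by leaf ends of spokes under co-bijective spoke-monotone morphisms.
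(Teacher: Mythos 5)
Your forward direction is correct and is essentially the paper's own argument: the paper delegates the recursive construction of the branch to \autoref{EverySpoke}, while you unfold that recursion explicitly via \autoref{lem:locally_cobijective}; the identification $e_{\bar S,k}=e_k$ is justified exactly as you say.

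The backward direction has a genuine gap at precisely the step you single out as the principal obstacle. You are right that \autoref{lem:end_threads}, applied to the surjective core $\bar T$, only returns the non-root end $e'$ of the path $T_n$, which may be an interior vertex of the spoke $S_n$ and hence not a member of $\mathsf{E}G_n$. But your repair does not close this gap: \autoref{lem:locally_cobijective}, applied to ${\sqsupset}^n_{n'}$ and the spoke $S_n$, produces a spoke $S'\in *G_{n'}$ and a leaf $e''\in\mathsf{E}G_{n'}\cap S'$ with $e''{}^{\sqsubset^n_{n'}}=\mathsf{E}G_n\cap S_n=\{f_n\}$, where $f_n$ is the \emph{leaf} of $S_n$ --- not the interior vertex $e'$. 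Your side condition that ``the far end of $S_n$ no longer has preimages in $S_{n'}$'' only says that the branch's spoke $S_{n'}$ misses $f_n$; by co-bijectivity $f_n$ is then covered by a \emph{different} spoke of $G_{n'}$, and the leaf supplied by \autoref{lem:locally_cobijective} lies in that other spoke and sits below $f_n$, with no reason to lie below $e'$, hence none to lie below $g$. So the lemma hands you a leaf under the wrong vertex.

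That something more is genuinely needed here can be seen from a one-step picture: a five-vertex spoke $(v_0,\dots,v_4)$ can map monotonically and edge-preservingly onto the initial segment $[0,b]$ of a longer spoke $(0,a,b,c)$ via $v_0^\sqsubset=\{0\}$, $v_1^\sqsubset=\{a\}$, $v_2^\sqsubset=v_3^\sqsubset=\{a,b\}$, $v_4^\sqsubset=\{a\}$, so the leaf $v_4$ of the branch's spoke folds back and is \emph{not} below the far end $b$ of the image; if this pattern persists, the far end $e'=t_n$ of the surjective core has no leaf of the branch's own spokes below it, and one must either analyse where the leaves of the other spokes of $G_{n'}$ covering $S_n$ land, or invoke the density of $\mathsf{E}X$ again at a deeper level. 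Neither your write-up nor the sketch above supplies that argument. (For comparison, the paper's proof passes over this point silently, quoting \autoref{lem:end_threads} as though it directly returned an element of $\mathsf{E}G_k$; you deserve credit for flagging the issue, but the fix as written does not work.)
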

\begin{proof}
    By \autoref{FanCorrespondence}, the endpoints of $\Spec{\PP}$ are exactly the selectors $(e_{\bar{S}, n})^\leq$ where $\bar{S}$ is a nondegenerate branch of spokes.
    
    Let us first show the forward direction.
    As $\{p^\in\}_{p\in\PP}$ is a base for the topology of $X$, we just need to check that every $p^\in$ contains an endpoint.
    Fix $p$ and $n$ such that $p\in G_n$.
    Since $(\sqsupset_n^m)$ has an end-dense subsequence, by \autoref{lemma:enddense} we can find $k \geq n$, $S_k \in *G_k$, and $e_k\in\mathsf EG_k \cap S_k$ such that $e_k \sqsubset^n_k p$.
    By \autoref{EverySpoke}, $S_k$ can be extended to a nondegenerate branch of spokes $\bar{S} = (S_n, \sqni^m_n)$ with $(\sqni^m_n)_{n \geq m \geq k}$ in $\mathbf{A}$.
    Hence, $e_{\bar{S}, k} = e_k \leq p$, and so $p^\in$ contains the endpoint $(e_{\bar{S}, n})^\leq$.
    
    Vice versa, assume that the endpoints are dense.
    Fix $m \in \omega$ and $g\in G_m$.
    As $(\sqsupset^m_n)$ is co-bijective, $g^\in$ is non-empty, and so contains the endpoint $(e_{\bar{S}, n})^\leq$ for some nondegenerate branch of spokes $\bar{S}$ where $S_n=[0_n,s_n]$.  Hence, $g \sqsupset^m_n e_{\bar{S}, n}$ for some $n \geq m$.

    Since $(e_{\bar{S}, n})$ is a thread of ends of the co-bijective modification of the surjective core $([0_n,t_n])$ of $\bar{S}$, by condition \ref{lemmacobijarc2} of \autoref{PathCoBijectiveModification} there is $k\geq n$ such that $t_k\sqsubset_k^n e_{\bar{S}, n}$. Now, fix $k'\geq k$ such that $S_{k'}{}^{\sqsubset_{k'}^k}=[0_k,t_k]$. If $t_{k'}{}^{\sqsubset_{k'}^k}=\{t_k\}$, then every $h\in [t_k',s_{k'}]$ must satisfy the same, otherwise $t_{k'}$ would separate $[0_k,t_k)^{\sqsupset_{k'}^k}$. In particular if this is the case $s_{k'}\sqsubset_{k'}^k t_k\sqsubset_k^m g$. Otherwise, ${\sqsupset_k^k}\restriction [0_k,t_k]\times [0_{k'},t_{k'}]$ is not co-bijective, hence $t_{k}\sim e_{\bar S,k}$, and therefore $s_{k'}\sqsubset_{k'}^k e_{\bar S,k}$. This is the thesis.
 \end{proof}

\begin{rmk}
Having seen the Cantor fan as the Fra\"iss\'e limit of $\mathbf X$, we have a second way to see the Lelek fan as a limit of a sequence in $\mathbf L$. Since this is not needed, thanks to \autoref{SmoothFansRealizable}, we just roughly sketch this argument below.

We pick a sequence in $\mathbf X$ defined recursively as follows: $G_0$ is the claw. If $G_n$ has been defined, and it has spokes $S_1,\ldots,S_k$, we let $G_{n+1}$ be a fan with $\sum_{i\leq k}|S_i|$ many spokes, where the $(\sum_{i<m}|S_i|+j)$-th spoke, for $m\leq k$ and $j\leq |S_m|$, is $\mathsf XS_i$ (for $m=1$ we set $\sum_{i<m}|S_i|=0$). To define the morphism $\sqsupset_{n+1}^n$, we glue together the morphisms $\in_{S_i}$ between the $(\sum_{i<m}|S_i|+j)$-th spoke, for $m\leq k$ and $j\leq |S_m|$, and $S_i$. This sequence is lax-Fra\"iss\'e in $\mathbf X$, and therefore has the Cantor fan as its spectrum.

Our intention is to define subsets $H_n\subseteq G_n$ giving a prime restriction with the Lelek fan as its spectrum. Let $H_0=G_0$. If $H_n$ has been defined, re-enumerate the spokes of $H_n$ so that $T_i\subseteq S_i$ be the $i$-th spoke of $H_n$. For each $i$, list $S_i=\{x_1,\ldots,x_{|S_i|}\}$ in such a way that elements with consecutive indexes are adjacent and $x_1=0_n$. Let $m_i$ be the natural such that $T_i=[x_1,x_{m_i}]$. For a fixed $i$, list all the spokes which are $\sqsupset_{n+1}^n$ predecessors of $S_i$ in $G_{n+1}$ as $U_{1,i},\ldots,U_{|S_i|,i}$. For $j\leq m_i$, let $W_{j,i}=\{x\in U_j: x^{\sqsubset_{n+1}^n}\subseteq [x_1,x_j]\}$, and if $j>m_i$, let $W_{j,i}=\{x\in U_j: x^{\sqsubset_{n+1}^n}\subseteq [x_1,x_{m_i}]\}$. Setting $H_{n+1}=\bigcup_{i,j}W_{i,j}$ and ${\sqni_{n+1}^n}={\sqsupset_{n+1}^n}\restriction H_n\times H_{n+1}$ gives the required relations, in light of \autoref{EndDenseSubsequence}.
\end{rmk}

Since star-refining/end-dense morphisms form wide ideals in $\mathbf L$, lax-Fra\"iss\'e sequences must have star-refining and end-dense subsequence. The condition on the sequence of ends being Fra\"iss\'e, needed in the case of $\mathbf X$, is in this case replaced by asking for an end-splitting subsequence.

\begin{thm}\label{ThmLelekFraisse}
Let $(G_n,\sqsupset_n^m)$ be a sequence in $\mathbf L$. The following are equivalent.
\begin{enumerate}
 \item\label{LF1} $(\sqsupset_n^m)$ is a lax-Fra\"iss\'e sequence in $\mathbf L$;
\item\label{LF2} $(\sqsupset_n^m)$ has a star-refining end-dense end-splitting subsequence;
\item\label{LF4} $\Spec{\PP}$ is homeomorphic to the Lelek fan.
\end{enumerate}
 
The sequence $(\sqsupset_n^m)$ is Fra\"iss\'e precisely when the surjective core of every branch of spokes has a  strictly anti-injective subsequence.
 \end{thm}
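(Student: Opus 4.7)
I would structure the proof around the same skeleton as the corresponding result for $\mathbf{X}$ (\autoref{FraisseCantorFan}), exploiting the ideal structure of the three families of morphisms involved.

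\smallskip

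\emph{(1)$\Rightarrow$(2):} First I would verify that star-refining, end-dense, and end-splitting morphisms form wide lax-closed ideals in $\mathbf{L}$. Wideness of star-refining is \autoref{lem:starreffans}; the ideal property comes from \autoref{Ideals} combined with the fact that $\mathbf{L}\subseteq\mathbf{E}$ (which follows from \autoref{cor:FraisseLimit}\ref{itm:edge-surjective} using clique-closedness of $\mathbf{L}$, \autoref{prop:fanscliqueclosed}). Wideness and the ideal property for end-dense and end-splitting are \autoref{lemma:enddense}. Lax-closedness is immediate from the definitions (adding pairs to a relation can only enlarge the preimages). Then \autoref{IdealFraisseSubsequence} and \autoref{IdealSubsequences} yield a subsequence in the intersection.

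\smallskip

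\emph{(2)$\Rightarrow$(1):} This is the main technical step, modelled on the implication \ref{CF2}$\Rightarrow$\ref{CF1} of \autoref{FraisseCantorFan}. Given ${\dashv}\in\mathbf{L}^{G_m}_F$, I would pass to a star-refining, end-dense, end-splitting subsequence and let $M=\max\{|S'|:S'\in *F\}$. For every pair $(S,T)$ with $T\in *G_n$ and $T^{\sqsubset^m_n}\subseteq S\in *G_m$, the restriction $\sqsupset^m_n\restriction S\times T$ lies in $\mathbf{A}$ by \autoref{lem:locally_cobijective}. Using the star-refining property as in \autoref{AFraisseEquivalents}, for $n$ sufficiently large this restriction is a composition of $M$ edge-witnessing morphisms in $\mathbf{A}$, so \autoref{TypeSubFactorisation} yields a sub-factorization through any prescribed spoke $S'\in *F$ with $(S')^\vdash\subseteq S$. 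The combinatorial problem is then to assign to each $T\in *G_n$ a target spoke $S'\in *F$ with $(S')^\vdash\subseteq T^{\sqsubset^m_n\cdot}$ (the unique $S$) so that every $S'$ is hit at least once; end-density and end-splitting, applied repeatedly, guarantee that for large $n$ there are enough spokes of $G_n$ below each spoke of $G_m$ to distribute among the finitely many $S'$-preimages. Gluing the resulting $\mathbf{A}$-morphisms along the common root (handled as in \autoref{prp:amalgfans}) gives ${\Dashv}\in\mathbf{L}^F_{G_n}$ with ${\dashv}\circ{\Dashv}\subseteq{\sqsupset^m_n}$. This is the step I expect to require most care, particularly in verifying that the global gluing is itself co-bijective and spoke-monotone.

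\smallskip

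\emph{(1)$\Leftrightarrow$(3):} For (1)$\Rightarrow$(3): by \autoref{SmoothFansRealizable} the Lelek fan is the spectrum of some sequence in $\mathbf{L}$; using (3)$\Rightarrow$(2) (established just below) and (2)$\Rightarrow$(1), that sequence is itself lax-Fraïssé, so \autoref{LaxFraisseSpectra} identifies its spectrum with that of $(\sqsupset^m_n)$. For (3)$\Rightarrow$(2): star-refining follows from \autoref{ContinuumTreeLimit} since the Lelek fan is a Hausdorff continuum; end-density follows from \autoref{EndDenseSubsequence} since the endpoints of the Lelek fan are dense; end-splitting is the delicate one, and I would deduce it from the fact that the endpoint set of the Lelek fan is a dense perfect subspace. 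Concretely, fix $g\in\mathsf{E}G_m$ and the unique spoke $S\ni g$; by \autoref{EverySpoke} and \autoref{FanCorrespondence}\ref{itm:spokes} the spectrum contains a spoke $A_{\bar S}$ through $g^\in$. Density + perfection produce two distinct endpoints of the Lelek fan inside $g^\in$, corresponding to two distinct nondegenerate branches of spokes that coincide with $\bar S$ at level $m$ but split at some later level $k$; then \autoref{lem:end_threads} applied to each branch produces, for some $n\geq k$, two distinct ends $h,h'\in\mathsf{E}G_n$ with $h,h'\sqsubset^m_n g$.

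\smallskip

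\emph{Fraïssé case:} I would first observe that being Fraïssé forces every branch of spokes to satisfy the full (non-lax) absorption property in $\mathbf{A}$ once one restricts ${\dashv}$ to that branch (which still gives a valid morphism in $\mathbf{L}$ by extending by zero on other spokes), so by \autoref{AFraisseEquivalents} the surjective core of every branch is Fraïssé in $\mathbf{A}$, i.e. has a strictly anti-injective subsequence. Conversely, given a lax-Fraïssé sequence in $\mathbf{L}$ in which every branch of spokes is Fraïssé in $\mathbf{A}$, I would upgrade the sub-factorization produced in (2)$\Rightarrow$(1) to an on-the-nose factorization by arguing spoke by spoke: the extra room provided by strict anti-injectivity at each branch of spokes lets one replace the $\subseteq$ in \autoref{TypeSubFactorisation} with equality, exactly as in the Fraïssé half of \autoref{AFraisseEquivalents}, and the gluing preserves equality since the root vertex behaves uniformly across spokes.
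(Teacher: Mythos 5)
Your proposal matches the paper's proof in both structure and in every key ingredient: (1)$\Rightarrow$(2) via the wide-ideal argument and \autoref{IdealFraisseSubsequence}, (2)$\Rightarrow$(1) via the spoke-by-spoke sub-factorization through \autoref{TypeSubFactorisation} glued at the common root, the equivalence with (3) via \autoref{SmoothFansRealizable} together with uniqueness of lax-Fraïssé spectra, \autoref{ContinuumTreeLimit} and \autoref{EndDenseSubsequence}, and the same reduction of the Fraïssé case to the $\mathbf{A}$ argument. Your (3)$\Rightarrow$(2) end-splitting step is in fact spelled out in slightly more detail than the paper's one-line version.
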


\begin{proof}
\ref{LF1}$\Rightarrow$\ref{LF2}: This follows from that star-refining end-dense end-splitting morphisms form a wide ideal in $\mathbf L$ and \autoref{IdealFraisseSubsequence}.

\ref{LF2}$\Rightarrow$\ref{LF1}: Fix $m$, a fan $F$, and assume that ${\sqsupset}\in\mathbf L_F^{G_m}$. Let $M=\max_{S\in *F}|S|$, and let $n$ such that 
\begin{itemize}
\item $\sqsupset_n^m$ is a composition of $M$ star-refining morphisms, and 
\item every vertex of $G_m$ has below at least $|F|$ elements of $\mathsf EG_n$.
\end{itemize}
For the first condition, we use that $\sqsupset_n^m$ has a star-refining subsequence.
For the second one, first find $m'$ such that $\sqsupset_{m'}^m$ is end-dense and then find $n$ large enough so that $|g^{\sqsupset_n^{m'}}\cap \mathsf EG_n|\geq |F|$ for all $g\in\mathsf EG_{m'}$.

Now, fix a vertex $g\in G_m$. By construction, we have that
\[
|\{S\in\ast F: S^{\sqsubset}=[0_n,g]\}|\leq |\{S\in\ast G_n: S^{\sqsubset_n^m}=[0_n,g]|,
\]
therefore there is a surjective function $f\colon {\ast G_n}\to\ast F$ with the property that $S^{\sqsubset_n^m}=f(S)^{\sqsubset}$. For every $S\in \ast G_n$, consider 
\[
{\sqsupset_n^m}\restriction S^{\sqsubset_n^m}\times S \quad\text{ and }\quad {\sqsupset}\restriction f(S)^{\sqsubset}\times S.
\]
Since $\sqsupset_n^m$ is a composition of $M$ star-refining morphisms and by the choice of $M$, we can find ${\sqni}_{S}\in \mathbf A_{S}^{f(S)}$ such that 
\[
{{({\sqsupset}\restriction f(S)^{\sqsubset}\times f(S))}\ \circ\sqni_{S} }\subseteq {\sqsupset_n^m}\restriction S^{\sqsubset_n^m}\times S.
\]
Gluing all the $\sqni_{S}$ at the root of $G_n$ gives a morphism (in $\mathbf X$) from $G_n$ to $F$, which amalgamates, proving that $(\sqsupset_n^m)$ is lax-Fra\"iss\'e.

\ref{LF4}$\Rightarrow$\ref{LF2}. By \autoref{EndDenseSubsequence}, since endpoints are dense in the Lelek fan, $(\sqsupset_n^m)$ has an end-dense subsequence. Further, since in every non-empty open set there are at least two endpoints of the Lelek fan, then every element of $\PP$ has at least two ends below. Lastly, since the Lelek is Hausdorff, $(\sqsupset_n^m)$ has a star-refining subsequence. 

\ref{LF2}$\Rightarrow$\ref{LF4}: Since the Lelek fan is a smooth fan, \autoref{SmoothFansRealizable} gives a given sequence in $\mathbf L$ having the Lelek fan as spectrum. Since \ref{LF4}$\Rightarrow$\ref{LF2}, this sequence is lax-Fra\"iss\'e. Since all lax-Fra\"iss\'e sequence give homeomorphic spectra, any given lax-Fra\"iss\'e sequence must have the Lelek fan as a spectrum.

The proof of the last statement (that Fra\"iss\'e sequence correspond to those which have strictly anti-injective subsequences) is the same as in the case of $\mathbf A$. This concludes the proof.
\end{proof}
\begin{rmk}
    We should note that our proof gives in fact as a byproduct the uniqueness of the Lelek fan. If we start with two distinct Lelek fans $L_1$ and $L_2$ (i.e. smooth fans whose endpoints are dense), by \autoref{SmoothFansRealizable} we obtain two sequences in $\mathbf L$ having $L_1$ and $L_2$ as spectra, respectively. By \autoref{ThmLelekFraisse}, both these sequences are lax-Fra\"iss\'e, and therefore these have homeomorphic spectra. We conclude that Lelek fans are unique.
\end{rmk}

\color{black}

\section{Concluding remarks}\label{s.Concluding}
\subsection{Other easy examples}
There are certainly other categories with amalgamation we might have considered, which give rise to other interesting continua. In these cases, one can characterise lax-Fra\"iss\'e sequence in a similar fashion to what we did in the case of the arc and for fans. Since the proofs are similar, we only sketch the non obvious part of the arguments, leaving to the reader to sort out the details from our previous work.

For example, recall that a graph is a cycle if every vertex has degree exactly $2$. Let $\mathbf C$ be the category whose objects are cycles and whose morphisms are monotone relations in $\mathbf B$ (note that, once the cycles are long enough, co-injectivity follows directly from monotonicity, by the same reason that in the path of setting co-injectivity only fails at ends, see \autoref{OnlyEndpointRedundantLemma}).
A cycle with three vertices is a weakly terminal object.
One can show that this category has amalgamation, and therefore Fra\"iss\'e sequences.
\begin{prp}
Let $(G_n,\sqsupset_n^m)$ be a sequence in $\mathbf C$ with induced poset $\PP$.
The following are equivalent.
\begin{enumerate}
\item\label{C1} $(\sqsupset_n^m)$ is a lax-Fra\"iss\'e sequence in $\mathbf C$;
\item\label{C2} $(\sqsupset^m_n)$ has an edge-witnessing subsequence;
 \item\label{C3} $(\sqsupset^m_n)$ has a star-refining subsequence;
 \item $\mathsf{S}\mathbb{P}$ is homeomorphic to the circle.
\end{enumerate}
A lax-Fra\"iss\'e sequence is Fra\"iss\'e precisely when it also has a strictly anti-injective subsequence.
\end{prp}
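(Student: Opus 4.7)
The plan is to mirror the proof of \autoref{AFraisseEquivalents}, with cycles playing the role of paths throughout. As a first step I would establish the categorical infrastructure: $\mathbf{C}$ is directed (since the triangle $C_3$ is weakly terminal and amalgamation is asserted), and the classes of edge-witnessing, star-refining, and strictly anti-injective morphisms form wide ideals in $\mathbf{C}$. Wideness of the first two comes from applying the edge-splitting construction of \autoref{EdgeSplitting} to a cycle $C_n$, which produces a monotone co-bijective edge-witnessing star-refining morphism $C_{3n} \to C_n$; wideness of strictly anti-injective morphisms is witnessed by the monotone ``triple cover'' $C_{3n} \to C_n$ collapsing each block of three consecutive vertices to a single vertex. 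Combining this with \autoref{Ideals}, \autoref{LaxClosed}, \autoref{cor:FraisseLimit}, and \autoref{IdealFraisseSubsequence} immediately yields (1) $\Rightarrow$ (2), (1) $\Rightarrow$ (3), and the forward direction of the Fra\"iss\'e characterisation at the end of the statement.

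For (2) $\Leftrightarrow$ (3), I would adapt \autoref{AMorphisms} to cycles: every star-refining morphism in $\mathbf{C}$ is edge-witnessing by \autoref{StarEdgeSurjectiveEdgeWitnessing} applied to a cycle of length at least $4$, and a composition of two edge-witnessing morphisms in $\mathbf{C}$ is star-refining by \autoref{StarSurjectiveCoEdgeWitnessing} (the hypothesis $\deg(F)\leq 2$ is automatic for cycles), once one checks, via a cyclic analog of \autoref{EdgevsCoEdgeWitnessing}, that every edge-witnessing monotone morphism between cycles is co-edge-witnessing; the key point is that monotonicity forces the union of the preimages of any two adjacent vertices to be a connected subset of the domain cycle. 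For (2) $\Rightarrow$ (1), I would set up a cyclic version of the type analysis from \autoref{TypeExistence}--\autoref{TypeSubFactorisation}: given ${\sqsupset} \in \mathbf{C}^{G_m}_P$, choose $n > m$ such that $\sqsupset^m_n$ factors as a composition of at least $|P|$ edge-witnessing morphisms, which yields $|\{g,h\}_{\sqsupset^m_n}| \geq |P|$ for all adjacent $g, h \in G_m$, and then construct a monotone function $\phi \in (\mathbf{C} \cap \mathbf{F})^P_{G_n}$ with ${\sqsupset} \circ \phi \subseteq {\sqsupset^m_n}$ by selecting, for each vertex of $P$, a preferred preimage in the appropriate strict-preimage arc, and extending monotonically across each edge-preimage arc. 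The Fra\"iss\'e upgrade is handled exactly as in the last paragraph of the proof of \autoref{AFraisseEquivalents}: a lax-Fra\"iss\'e sequence with a strictly anti-injective subsequence eventually satisfies $\mathsf{t}_{\sqsupset^m_n} \geq \mathsf{t}_\sqsupset$, producing an exact factorisation ${\sqsupset} \circ \phi = {\sqsupset^m_n}$.

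For the link with (4), observe that the circle admits a canonical sequence of minimal open covers by $3\cdot 2^n$ overlapping arcs arranged cyclically so that each overlaps only its two neighbours, with each level consolidating the next; this setup satisfies the hypotheses of \autoref{SequenceForSpace} and produces a concrete sequence in $\mathbf{C}$ with edge-witnessing and star-refining subsequences whose spectrum is $S^1$. By (2) $\Rightarrow$ (1) this sequence is lax-Fra\"iss\'e, so \autoref{LaxFraisseSpectra} gives (1) $\Rightarrow$ (4). The converse (4) $\Rightarrow$ (3) requires a cyclic analog of \autoref{ContinuumTreeLimit}: when the spectrum is connected the sequence must have an edge-witnessing subsequence (hence is edge-faithful by \autoref{OverlappingCoherence}), after which \autoref{HausdorffLimit} applies because $S^1$ is Hausdorff. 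The main obstacle throughout is the cyclic structure itself: monotone co-bijective morphisms between cycles carry a winding number that has no analog in the path case, so both amalgamation and the explicit construction of the factoring morphism $\phi$ must be executed with care to ensure that the cyclic closure conditions are respected, and the analog of \autoref{ContinuumTreeLimit} for cycles demands a genuinely different argument since removing a single edge of a cycle no longer disconnects it.
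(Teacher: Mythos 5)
Your handling of the equivalences \ref{C1}$\Leftrightarrow$\ref{C2}$\Leftrightarrow$\ref{C3} and of the implication \ref{C1}$\Rightarrow$(4) matches the paper's proof, which simply declares the first three equivalences to be ``exactly as in the proof of \autoref{AFraisseEquivalents}'' (noting that cycles of length at least $4$ are triangle-free and the morphisms are monotone) and then exhibits essentially the same concrete sequence of arc-covers centred at roots of unity, fed into \autoref{SequenceForSpace} and \autoref{LaxFraisseSpectra}. The winding-number worry you raise is milder than you suggest: monotonicity forces the preimage of each vertex to be a connected arc of the domain cycle, so the type analysis of \autoref{TypeExistence}--\autoref{TypeSubFactorisation} transfers with little more than an orientation choice.

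The genuine gap is in your route from (4) to \ref{C3}. The lemma you propose -- a cyclic analog of \autoref{ContinuumTreeLimit} asserting that connectedness of the spectrum forces an edge-witnessing subsequence -- is false for cycles, and the difficulty you flag in your last sentence (removing an edge of a cycle does not disconnect it) is exactly why. If a single edge $(g,g')$ of $G_m$ is never witnessed by any $\sqsupset^m_n$, then $\wedge$ restricted to $G_m$ is the cycle with that edge deleted, i.e.\ a path, which is still connected, so by the criterion in the proof of \autoref{ConnectedLimit} the spectrum remains connected. Concretely, one can close up the path sequence of \autoref{ex:arc} into cycles by adding one never-witnessed edge per level; the result is a monotone co-bijective sequence of cycles with no edge-witnessing subsequence whose spectrum is a connected arc. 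The paper's argument for this direction is accordingly different: if the edge $(g,g')$ is never witnessed then $g^\in\cap g'^\in=\emptyset$, so either the spectrum is disconnected or the minimal open covers coming from the levels are chains rather than cycles, and the circle admits no such (coinitial family of) chain covers. You need to replace your connectedness lemma by an argument of this kind, which uses a property of the circle beyond being connected and Hausdorff.
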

\begin{proof}[Sketch of the proof]
The equivalences between \ref{C1}, \ref{C2} and \ref{C3} are exactly as in the proof of \autoref{AFraisseEquivalents}, as we are dealing with triangle-free graphs (as long as $|G_n|\geq 4$) and monotone maps.
To show that $\Spec{\PP}$ is the circle, we choose a suitable sequence of minimal covers an apply Proposition~\ref{SequenceForSpace}.
This can be done by arranging open balls of smaller and smaller radius around roots of the unit.
For example, let $C_{n, k}$ be the intersection of the unit circle in $\mathbb{C} = \mathbb{R}^2$ and of the open ball with center $c_{n, k} = e^{(2k\pi i)/2^n}$ and radius such that the boundary contains the neighboring centers $c_{n, k + 1}, c_{n, k - 1}$.

To show that a sequence of cycles without an edge-witnessing subsequence cannot give the circle as the spectrum of its associated poset, note that if $G_m$ is a cycle and there is no $n\geq m$ such that $\sqsupset_n^m$ is edge-witnessing, then there is a pair of adjacent vertices $g,g'\in G_m$ such that $g^{\in}$ and $g'^{\in}$ are disjoint open sets. This gives either that the spectrum is disconnected (in the case $\wedge$ divides $G_m$ in at least two connected components), or a minimal cover of $\mathsf S\PP$ which is a chain.
Since morphisms in $\mathbf{C}$ are edge-surjective, $m$ could be arbitrarily large, and therefore $\mathsf S\PP$ cannot be the circle.
\end{proof}

When we do not consider only monotone maps, and we allow cycles to wrap and unwrap multiple times, we expect again a category with amalgamation (this can easily be checked by hand). In this case, the spectrum of the poset associated to a lax-Fra\"iss\'e sequence should depend on how many times we allow our cycles to wrap. We speculate that $P$-adic solenoids, where $P$ is a set of primes, can be obtained this way.

\subsection{The pseudoarc} \label{ss.pseudoarc}
We are left to comment the elephant in the room from \autoref{ss.arc}, that is, the category $\mathbf P$ whose objects are paths and morphisms are co-bijective edge-preserving relations, i.e., elements of $\mathbf B$. 

Since the clique graph of a path is a path, \autoref{TreesCliqueClosed} gives that $\mathbf P$ is clique-closed. By \autoref{cor:cliqueclosed}, to check amalgamation for $\mathbf P$ it thus suffices to check amalgamation for surjective edge-preserving functions between paths. This was done in \cite{IrwinSolecki2006}, hence $\mathbf P$ is a Fr\"aiss\'e category. What about its Fr\"aiss\'e limit? 

Intuition (and \cite{IrwinSolecki2006}, and subsequent work) suggests that the answer here is probably one of the most interesting one-dimensional continua, Bing's pseudoarc. This is a planar continuum which can be obtained as the intersection of a sequence of chains that get nested into each other in a crooked way. The pseudoarc was introduced by Bing in \cite{Bing1948}, and since then it has received lot of attention, being in some sense very close to an arc yet very intricate. As mentioned, the pseudoarc was treated as a projective Fra\"iss\'e limit by Irwin and Solecki in \cite{IrwinSolecki2006} (indeed, this was the first example of a topological space viewed from a Fra\"iss\'e-theoretic point of view), and by Bartoš and Kubiś in \cite{BartosKubis2022} in the approximate setting. 

This intuition was formalised by the second author and Malicki, who in \cite[\S4]{BiceMalicki} deeply analysed the notions of tangled morphisms and posets from \cite[\S2]{BartosBiceV.Compacta} to show that Bing's pseudoarc is indeed the \Fraisse limit of the category $\mathbf P$. This approach was subsequently used to reprove the existence and the uniqueness of the pseudoarc and its homogeneity, and moreover to show that the autohomeomorphisms of the pseudoarc have a dense conjugacy class.
In particular, Theorem~4.12 in \cite{BiceMalicki}  gives equivalent conditions (related respectively to the poset and to a weaker Fra\"iss\'e condition on the sequence of interest called sub-Fra\"iss\'e) to ensure that the spectrum of a sequence in $\mathbf P$ is hereditarily indecomposable. We do not know whether the exact analog of \autoref{AFraisseEquivalents} holds in this setting, and specifically whether all sequences having the pseudoarc as a spectrum are lax-Fra\"iss\'e, i.e. whether all sub-Fra\"iss\'e sequences in $\mathbf{P}$ are lax-Fra\"iss\'e.

\bibliography{Maths2}
 \bibliographystyle{amsalpha}
\end{document}